\documentclass{amsart}

\usepackage{amssymb}
\usepackage{amsthm}
\usepackage{amsmath}
\usepackage{enumitem}
\usepackage[mathscr]{euscript}
\usepackage{mathtools}
\usepackage{stmaryrd}
\usepackage{cancel}
\usepackage{soul}
\usepackage{tikz}
\usepackage{tikz-cd}
\usepackage[normalem]{ulem}
\usepackage[unicode]{hyperref}
\hypersetup{colorlinks,linkcolor=blue}
\usepackage{cleveref}
\usetikzlibrary{fadings}
\usepackage{float}
\usetikzlibrary{patterns}
\usetikzlibrary{shadows.blur}
\usetikzlibrary{shapes}
\usepackage{xcolor}

\usepackage{todonotes}
\usepackage{comment}
\usepackage{eqparbox}

\theoremstyle{plain}

\newtheorem{theoremalpha}{Theorem}

\newtheorem{proposition}{Proposition}[section]
\newtheorem{theorem}[proposition]{Theorem}
\newtheorem{lemma}[proposition]{Lemma}
\newtheorem{corollary}[proposition]{Corollary}
\theoremstyle{definition}
\newtheorem{example}[proposition]{Example}
\newtheorem{definition}[proposition]{Definition}
\newtheorem{observation}[proposition]{Observation}
\theoremstyle{remark}
\newtheorem{remark}[proposition]{Remark}
\newtheorem{notation}[proposition]{Notation}

\DeclareMathOperator{\Aut}{Aut}

\DeclareMathOperator{\diam}{diam}

\DeclareMathOperator{\supp}{supp}

\DeclareMathOperator{\Cay}{Cay}
\DeclareMathOperator{\Curr}{Curr}

\DeclareMathOperator{\SO}{\mathsf{SO}}

\DeclareMathOperator{\Hom}{Hom}
\DeclareMathOperator{\Out}{Out}

\DeclareMathOperator{\Teich}{Teich}

\DeclareMathOperator{\id}{id} 
 
\DeclareMathOperator{\Haus}{Haus} 

\DeclareMathOperator{\Mod}{Mod}
\DeclareMathOperator{\Isom}{Isom}

\DeclareMathOperator{\Gal}{Gal} 
\DeclareMathOperator{\Graph}{Graph}
\DeclareMathOperator{\PML}{\mathbb{P}\mathcal{ML}}
\DeclareMathOperator{\PCurr}{\mathbb{P}\mathrm{Curr}}
\DeclareMathOperator{\Axis}{Axis}

\DeclareMathOperator{\Ac}{\mathcal{A}}
\DeclareMathOperator{\Bc}{\mathcal{B}}
\DeclareMathOperator{\Cc}{\mathcal{C}}
\DeclareMathOperator{\Ec}{\mathcal{E}}

\DeclareMathOperator{\Gc}{\mathcal{G}}
\DeclareMathOperator{\Hc}{\mathcal{H}}

\DeclareMathOperator{\Lc}{\mathcal{L}}
\DeclareMathOperator{\MM}{\mathcal{M}}
\DeclareMathOperator{\Nc}{\mathcal{N}}

\DeclareMathOperator{\Pc}{\mathcal{P}}

\DeclareMathOperator{\Tc}{\mathcal{T}}
\DeclareMathOperator{\Uc}{\mathcal{U}}

\DeclareMathOperator{\Xc}{\mathcal{X}}

\DeclareMathOperator{\Hb}{\mathbb{H}}

\DeclareMathOperator{\Nb}{\mathbb{N}}

\DeclareMathOperator{\Rb}{\mathbb{R}}
\DeclareMathOperator{\Sb}{\mathbb{S}}

\DeclareMathOperator{\Zb}{\mathbb{Z}}
\DeclareMathOperator{\Qb}{\mathbb{Q}}

\DeclareMathOperator{\Y}{\Upsilon}

\DeclareMathOperator{\Ebdd}{\Ec_{bdd}}
\DeclareMathOperator{\Econt}{\Ec_{cont}}

\DeclareMathOperator{\Eesc}{\Ec_{esc}}
\DeclareMathOperator{\Emesc}{\Ec_{esc,m}}
\DeclareMathOperator{\Ehausesc}{\Ec_{esc,h}}
\newcommand{\Epmesc}{\Ec'_\mathrm{esc,m}}

\DeclareMathOperator{\hEbdd}{\widehat{\Ec}_{bdd}}
\DeclareMathOperator{\hEcont}{\widehat{\Ec}_{cont}}
\DeclareMathOperator{\hEmcont}{\widehat{\Ec}_{cont,m}}

\DeclareMathOperator{\hEesc}{\widehat{\Ec}_{esc}}
\DeclareMathOperator{\hEmesc}{\widehat{\Ec}_{esc,m}}

\newcommand{\hEpmesc}{\widehat{\Ec'}_\mathrm{esc,m}}

\newcommand{\de}{\mathrm{d}}
\newcommand{\abs}[1]{\left|#1\right|}

\newcommand{\norm}[1]{\left\|#1\right\|}

\newcommand{\wh}[1]{\widehat{#1}}
\newcommand{\ip}[1]{\left\langle #1\right\rangle}
\renewcommand{\epsilon}{\varepsilon}

\begin{document}

\title[Ending currents of hyperbolic group actions]{Ending currents of hyperbolic group actions}

\author{Yair N. Minsky}
\address{Department of Mathematics, Yale University}
\email{yair.minsky@yale.edu}
\thanks{}

\author{Tianqi Wang}
\address{Department of Mathematics, Yale University}
\email{tq.wang@yale.edu}
\thanks{}

\date{\today}

\begin{abstract}  
    We develop a theory of ending currents for properly discontinuous isometric actions of hyperbolic groups on Gromov hyperbolic spaces, generalizing Thurston's ending laminations. Several natural generalizations are considered,	which agree in the 3-dimensional hyperbolic setting but are otherwise distinct. We illustrate these invariants and their differences by several examples, including closed surface group actions on hyperbolic group extensions arising from Teichm\"uller theory and a geometrically finite action of a closed surface group on $\Hb^4$ with a single cusp that corresponds to a non-simple curve on the surface. We also discuss the conditions under which ending currents characterize geometrically finite actions, obtaining several equivalent conditions and some surprises, illustrated by an example of a geometrically infinite action whose ending currents are all supported on a single orbit of peripheral subgroups.
\end{abstract}

\maketitle
\setcounter{tocdepth}{1}
\tableofcontents

\section{Introduction}

Thurston's ending laminations, introduced in the late 1970s, encode the failure of a Kleinian group to be convex-cocompact. In this setting, these laminations simultaneously capture a number of a priori distinct ``degeneration'' phenomena for a discrete faithful representation $\rho:\Gamma \to G$, where $\Gamma$ is the fundamental group of a 3-manifold and $G$ is the isometry group of hyperbolic $3$-space:
\begin{enumerate}
    \item {\em Distortion}, the vanishing of \[\inf_{\gamma\in \Gamma\setminus\{\id\}} \frac{\ell_\rho(\gamma)}{\norm{\gamma}}\] where $\ell_\rho$ denotes translation length in the image and $\norm{\gamma}$ denotes minimum word length of a conjugacy class in $\Gamma$.
    \item {\em Escape}, the ability of geodesic representatives of elements of $\Gamma$ to exit any compact set in the image manifold.
    \item {\em Collapse at infinity}, which is the failure of an associated Cannon--Thurston map (when one exists) to be injective.
\end{enumerate}
Each of these phenomena leads to a limiting process (where the type of limit taken is also negotiable), and the limiting object is the same for all of them. In particular it is represented by a {\em lamination}, a system of non-intersecting geodesics supported on the boundary of the corresponding 3-manifold. It plays a central role in the study of hyperbolic 3-manifolds and their deformation spaces (e.g. \cite{minsky2010classification,brock2012classification}).

One can consider this in a variety of more general settings where $\Gamma$ is a finitely generated hyperbolic group, and $G$ is the isometry group of some $\delta$-hyperbolic space $X$. Bowditch \cite{bowditch:markoff} considered the case where $G$ is still $\Isom(\Hb^3)$ but $\rho$ is not necessarily discrete. Mj \cite{mitra1997ending,mitra1998cannon,mitra1999theorem} and Mj--Rafi \cite{mj2018algebraic} considered normal subgroups $\Gamma$ of a $\delta$-hyperbolic group $G$, and particularly interesting cases of this arise for mapping class groups through Birman's exact sequence by work of Farb--Mosher \cite{farb2002convex} and Hamenst\"adt \cite{hamenstadt2005word}. See Section~\ref{introduction section: hyperbolic group extensions} for details on this.

The tight correlation between the different degeneration phenomena in dimension $3$ breaks down in more general contexts. In this paper we give a systematic study of several different definitions of ``ending current'' that capture these phenomena, develop the relations between them and through examples show some of the distinct behaviors. 

Even when replacing $\Hb^3$ by a higher-dimensional hyperbolic space, the non-intersecting property that leads the invariant to be a lamination fails. We illustrate this in the following result, which is closely related to a theorem of Lee \cite{Lee11}.

\begin{theoremalpha}\label{main theorem: the example of simple-stable representations}
    For any closed oriented hyperbolic surface $\Sigma$, there exists a geometrically finite representation $\rho_0: \pi_1(\Sigma) \to \Isom^+(\Hb^4)$ which has a single parabolic conjugacy class that corresponds to a non-simple closed geodesic on $\Sigma$.
\end{theoremalpha}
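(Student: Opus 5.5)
We construct $\rho_0$ by deforming a Fuchsian representation inside $\Isom^+(\Hb^4)$ until a chosen non-simple curve becomes parabolic. We follow the approach suggested by Lee \cite{Lee11}: first find a geometrically finite cusped representation of a simpler group, then amalgamate or pass to a finite cover.

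\textbf{Setup.} Start with a Fuchsian representation $\rho_F:\pi_1(\Sigma)\to \Isom^+(\Hb^2)\subset \Isom^+(\Hb^4)$, where $\Hb^2$ is a totally geodesic plane. Its centralizer contains $\SO(2)$, acting on the normal directions, and $\rho_F$ is convex-cocompact. Pick a pair of pants $P\subset\Sigma$ with boundary curves $a,b,c$, and let $\gamma=ab^{-1}$ (or $a^2b$), a standard non-simple closed geodesic in $P$.

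\textbf{Step 1: a geometrically finite representation of the free group $\pi_1(P)=F(a,b)$ in which only $\gamma$ is parabolic.} Choose a Schottky-type configuration in $\Hb^4$, via ping-pong with round balls in $\partial\Hb^4=S^3$. Let $A=\rho(a)$ be loxodromic. Take $B=\rho(b)$ loxodromic with $AB^{-1}$ parabolic, for instance by composing Möbius maps whose isometric spheres are tangent in $S^3$. The extra dimensions are what allow this while keeping $a$, $b$, and $ab$ (the boundary curve $c$) loxodromic. In $\Hb^3$, requiring $\gamma$ parabolic tends to force some simple curve to be parabolic or the group to be non-discrete. In $\Hb^4$, a rotational part in the normal $\SO(2)$ can twist the ping-pong domains so that they are tangent only at the orbit of the fixed point of $\gamma$. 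We verify geometric finiteness with a Klein–Maskit combination argument: a finite-sided fundamental domain whose only ideal tangencies sit at the parabolic fixed point.

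\textbf{Step 2: extend to $\Sigma$.} Write $\Sigma=P\cup_{\partial} \Sigma'$. Restrict $\rho_F$ to $\pi_1(\Sigma')$, which is convex-cocompact with boundary elements loxodromic. Conjugate and deform so that the boundary images of $\rho|_{\pi_1 P}$ agree with those from $\Sigma'$; since the translation lengths and rotation angles of $a,b,c$ can be prescribed in Step 1, this matching is possible. Then apply the Maskit combination theorems (amalgamated products and HNN extensions along loxodromic cyclic subgroups), requiring precisely invariant disjoint half-spaces. The result is geometrically finite, and its parabolics are exactly the conjugates of $\rho(\gamma)$. This works because parabolics in a combination come only from the factors, and the only parabolic conjugacy class in the factors is $\langle\gamma\rangle$. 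Faithfulness follows from the combination theorem.

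\textbf{Main obstacle.} The hardest step is Step 1: producing, with explicit control, a discrete geometrically finite $F_2$-action on $\Hb^4$ in which the non-simple word $\gamma$ is parabolic while all other primitive classes, in particular the three boundary curves, remain loxodromic. It must also have enough room near the boundary curves to run the combination in Step 2. We will try an explicit quaternionic/Clifford-matrix construction in $\mathrm{SL}(2,\mathbb{H})$. Impose $\operatorname{tr}(AB^{-1})$ equal to that of a unipotent, and verify ping-pong on an explicit set of spheres. If this proves too rigid, an alternative is to take a surface group acting geometrically finitely on $\Hb^4$ with $\gamma$ parabolic in a finite-index subgroup and descend.
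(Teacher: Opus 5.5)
Your proposal has a genuine gap. Nothing is actually constructed, and the place where the fourth dimension must do work is misidentified.

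The part of Step~1 you single out is easy and needs no extra dimension. That part is a geometrically finite $F(a,b)$ in which $\gamma=ab^{-1}$ is the only parabolic class. Since $ab^{-1}$ is primitive (basis $\{ab^{-1},b\}$), take a Klein combination in $\Hb^3$ of a parabolic $\langle x\rangle$ and a loxodromic $\langle y\rangle$, with $x=ab^{-1}$ and $y=b$. This is geometrically finite, $a=xy$, $b=y$ and $ab=xy^2$ are loxodromic, and the only parabolics are conjugates of powers of $x$.

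The real obstruction is Step~2. Nothing in your outline (matching conjugacy classes, precisely invariant regions, Maskit combination) depends on dimension. If it went through as written, the same argument inside $\Isom^+(\Hb^3)$ would give a geometrically finite closed surface group with a non-simple cusp. That is impossible: the quotient is $\Sigma\times[0,1]$ minus embedded multicurves on $\Sigma\times\{0,1\}$. Concretely, a cusp curve on $\partial(P\times I)$ disjoint from the gluing annuli $\partial P\times I$ lies in $P\times\{0,1\}$, so it is simple in $P$. Hence the whole content of the theorem is an action in $\Hb^4$ with $\gamma$ parabolic together with precisely invariant separating regions for $\langle a\rangle$, $\langle b\rangle$, $\langle ab\rangle$. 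These regions must be compatible both with the cusp and with the Fuchsian piece on $\Sigma'$. You also assert without justification that the boundary conjugacy classes can be prescribed to match. ``A rotational part can twist the ping-pong domains'' is not an argument, and you explicitly leave this step as something to try.

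The paper avoids explicit $\Hb^4$ geometry entirely.
\begin{itemize}
\item It takes a double cover $\widetilde\Sigma\to\Sigma$ with deck involution $\tau$, and a simple closed geodesic $\widetilde c$ on $\widetilde\Sigma$ whose projection $c$ is non-simple.
\item Simultaneous uniformization gives a geometrically finite $\rho:\pi_1(\widetilde\Sigma)\to\Isom^+(\Hb^3)$ with ending invariants $\nu^+=(\widetilde c,\mu)$ and $\nu^-=\tau(\nu^+)$.
\item Let $\Phi_0$ be orientation-reversing. Then $\rho\circ\tau$ and $\Phi_0\rho\Phi_0^{-1}$ have the same ending invariants. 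The uniqueness part of the theorem then yields an orientation-reversing $\Phi$ with $\Phi\rho(\gamma)\Phi^{-1}=\rho(\tau(\gamma))$.
\item This extends $\rho$ to $\rho_0:\pi_1(\Sigma)\to\mathsf{O}^+(3,1)$. The quotient is a twisted $I$-bundle over $\Sigma$, whose boundary $\widetilde\Sigma$ carries the simple cusp $\widetilde c$.
\item Geometric finiteness passes to this index-two extension, and the two cusp classes $\widetilde c$ and $\tau(\widetilde c)$ merge into the single class of $c$.
\item The fourth dimension is used only to make $\Phi$ orientation-preserving, via $\mathsf{O}^+(3,1)\hookrightarrow\SO^+(4,1)$ with $\pm1$ in the normal coordinate.
\end{itemize}

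Your fallback (``a finite-index subgroup and descend'') points in this direction but omits the essential ingredients. You need to choose the cover so that $c$ lifts to a simple curve, choose $\tau$-symmetric ending data, and use rigidity to produce the symmetry that lets the representation extend. Representations of finite-index subgroups do not descend in general.
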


For a general $\delta$-hyperbolic group $\Gamma$, the notion of intersection of lines is no longer natural anyway, and so we consider the space of {\em geodesic currents of a group}:

Denote by $\partial_\infty^{(2)}\Gamma$ the space of ordered pairs of distinct points in $\partial_\infty\Gamma$, where $\partial_\infty\Gamma$ is the Gromov boundary. We let $\Curr(\Gamma)$ denote Bonahon's space of {\emph geodesic currents} on $\Gamma$, namely the space of locally finite Borel measures on $\partial_\infty^{(2)}\Gamma$ which are invariant by $\Gamma$ and by the involution in $\partial_\infty^{(2)}\Gamma$, endowed with the weak-$*$ topology. Let $\PCurr(\Gamma) = (\Curr(\Gamma)\setminus\{0\} )/\Rb_{>0}$, the compact space of {\em projectivized currents.}

Throughout the paper, we consider a hyperbolic group $\Gamma$ acting properly discontinuously by isometries on a $\delta$-hyperbolic space $X$ via a representation \[\rho:\Gamma\to \Isom(X)~.\]

For each $\gamma\in \Gamma$ of infinite order, we denote by $\norm{\gamma}$ the stable word length of $\gamma$ with respect to some fixed generating set of $\Gamma$, by $\ell_\rho(\gamma)$ the stable length of $\rho(\gamma)$ in $X$, and by $\widehat{\gamma}\in \Curr(\Gamma)$ the counting measure of the $\Gamma$-orbit of the pair of endpoints of the axis of $\gamma$. (We also sometimes use $\widehat\gamma$ to denote the image in $\PCurr(\Gamma)$). 

Fixing a base point $o\in X$, let $\tau_{\rho,o}:\Gamma\to X$ be the orbit map $\tau_{\rho,o}(\gamma) = \rho(\gamma)o$ and let $p:X\to \rho(\Gamma)\backslash X$ be the natural projection. For $\gamma\in \Gamma$, if $\rho(\gamma)$ is loxodromic, we let $r(\gamma)$ be the distance from $p(o)$ to the $p$-image of (some) axis of $\rho(\gamma)$ in $\rho(\Gamma)\backslash X$, and if $\rho(\gamma)$ is parabolic, we set $r(\gamma)=+\infty$. If $\rho(\gamma)$ is elliptic or id we let $r(\gamma)=0$.

A {\em weighted multigeodesic} is a finite formal sum $\Y= \sum_i a_i \gamma_i$, where $a_i\in \Rb_{> 0}$ and $\gamma_i\in \Gamma$ are of infinite order and in distinct conjugacy classes. We define length and norm in the obvious way: $\ell_\rho(\Y) = \sum_ia_i\ell_\rho(\gamma_i)$ and $\norm{\Y} = \sum_i a_i \norm{\gamma_i}$. We let $\widehat\Y$ denote the weighted sum of measures $\sum_i a_i\widehat{\gamma_i}$, or its projectivization.

We define $r(\sum_i a_i\gamma_i) = \min_i r(\gamma_i)$. For convenience we let $\MM(\Gamma)$ denote the set of weighted multigeodesics. 

We now introduce several notions of ending currents (see Section~\ref{section: ending currents} for details). 

\medskip \noindent {\bf Distortion currents.}
The \emph{bounded ending current} of $\rho$ is the collection of limits of counting
measures on closed geodesics whose $\ell_\rho$-lengths are uniformly bounded while their
stable word lengths diverge, that is: 
\[\begin{aligned} \hEbdd(\rho) =  \left\{\lim_{n \to \infty}\widehat{\gamma}_n\in \PCurr(\Gamma): (\gamma_n)\subset\Gamma, \quad
    \begin{array}{l}
    \sup_n\ell_\rho(\gamma_n) < \infty,\\ 
    \lim_{n \to     \infty}\norm{\gamma_n}=+\infty 
    \end{array}\right\}
\end{aligned}~.\]

The \emph{contracting ending current} of $\rho$ is the collection of limits of counting measures on closed geodesics with sublinear $\ell_\rho$-length growth, that is,
\[\hEcont(\rho) = \left\{ \lim_{n \to \infty}\widehat{\gamma}_n\in \PCurr(\Gamma): (\gamma_n)\subset\Gamma,  \quad \lim_{n \to \infty}\frac{\ell_\rho(\gamma_n)}{\norm{\gamma_n}} = 0\right\}~.\] 

\medskip \noindent {\bf Escaping currents.}
The \emph{single escaping ending current} of $\rho$ is the collection of limits of counting measures on geodesics that escape from any compact set in $\rho(\Gamma) \backslash X$:
\[\hEesc(\rho) = \left\{ \lim_{n \to \infty}\widehat{\gamma}_n\in \PCurr(\Gamma): (\gamma_n)\subset\Gamma, \quad r(\gamma_n) \to +\infty\right\}~.\]

The \emph{multiple escaping ending current} of $\rho$ is the collection of limits of weighted multigeodesics that uniformly escape from any compact set in $\rho(\Gamma)\backslash X$:
\[ \hEmesc(\rho) = \left\{ \lim_{n \to \infty}\Y_n\in \PCurr(\Gamma): (\Y_n)\subset\MM(\Gamma), \quad r(\Y_n) \to +\infty\right\}~.\]

Note that these are all {\em collections of currents} rather than currents. For each such collection $\widehat \Ec$ we denote the corresponding union of supports by
\[\Ec=\bigcup_{\mu\in \widehat{\Ec}}\supp\mu\subset \partial_\infty^{(2)}\Gamma~.\]
(Thus we have $\Econt, \Eesc,$ etc). We will see later that such a union of supports may not be a closed set, and hence is not itself the support of a single current. 

If we replace the above weak-$*$ limits defining $\hEesc$ by Hausdorff limits, we obtain the {\em topologically escaping set}, 
\[\Ehausesc(\rho) = \{ \lim_{n \to \infty}(\gamma_n^+,\gamma_n^-) \in \partial_\infty^{(2)}\Gamma: (\gamma_n)\subset \Gamma, \quad \lim_{n \to \infty} r(\gamma_n) = +\infty\}~.\]

We also recall from \cite{mitra1997ending} the \emph{algebraic ending prelamination} of $\rho$, defined by
\[\begin{aligned}\Lambda(\rho) = \Gamma\cdot \left\{ \lim_{n \to \infty}(\gamma_n^+,\gamma_n^-) \in \partial_\infty^{(2)}\Gamma: \begin{array}{l}\{\gamma_n\}\subset \Gamma, \text{  an axis of } \gamma_n \text{ passes through } \id, \\ d_X(o,\Axis_{\rho(\gamma_n)}) \to  +\infty \end{array}\right\}~,\end{aligned}\] where $\Axis_{\rho(\gamma_n)}$ denote the (some) axis of $\rho(\gamma_n)$ in $X$.

\medskip \noindent {\bf Cannon--Thurston currents.}
If $\rho$ admits a Cannon--Thurston map $\xi_\rho: \partial_\infty \Gamma \to \partial_\infty X$ (see Definition~\ref{definition: Cannon--Thurston map}), we denote its gluing locus by \[\Omega(\rho) = \{(x,y)\in \partial_\infty^{(2)}\Gamma: \xi_\rho(x) = \xi_\rho(y)\}~.\] In this setting Mj \cite[Lemma~2.2]{mitra1999theorem} proved that $\Lambda(\rho)=\Omega(\rho)$.

\medskip \noindent {\bf Relations among the currents.}
From now on we assume that $X$ admits a ``unit tangent bundle'' $\Gc(X)$ in the sense of
Definition~\ref{definition: the geodesic flow}, which includes, for example, $\Hb^n$ with its usual
unit tangent bundle, and the Cayley graph of a hyperbolic group equipped with Mineyev's
flow space \cite{Mineyev}. Topologically $\Gc(X) = \partial_\infty^{(2)} X \times \Rb$ and
there is a metric on $\Gc(X)$ that is compatible with the product topology. 

\begin{theoremalpha}\label{main theorem: the relations among ending currents}
    Let $\rho:\Gamma \to \Isom(X)$ be a discrete representation with finite kernel. Then
    \begin{enumerate}
        \item $\hEbdd(\rho) \subset \hEesc(\rho) \subset \hEmesc(\rho) \subset \hEcont(\rho)$;
        \item $\Ehausesc(\rho) \subset \Lambda(\rho)$;
    \end{enumerate}
    If $\rho$ further admits a Cannon--Thurston map, then
    \begin{enumerate}[resume]
        \item $\hEmesc(\rho) = \hEcont(\rho)$;
        \item $\Emesc(\rho) = \Econt(\rho) \subset \Ehausesc(\rho) \subset \Lambda(\rho) = \Omega(\rho)$;
        \item If $\mu\in \PCurr(\Gamma)$ is such that $\supp \mu \subset \Omega(\rho)$, then $\mu \in \hEcont(\rho)$.
    \end{enumerate}
\end{theoremalpha}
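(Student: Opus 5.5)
I would prove the five parts separately, organized around one mechanism. Closed geodesics that stay in a compact part of the quotient have length comparable to word length. This is the Milnor--\v{S}varc principle applied to the thick part. Conversely, sublinear length forces most of the geodesic to lie deep in the quotient.

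\textbf{Part (1).} For $\hEbdd\subset\hEesc$, I take $\gamma_n$ with $\ell_\rho(\gamma_n)$ bounded and $\norm{\gamma_n}\to\infty$, and show $r(\gamma_n)\to\infty$. Suppose instead that a subsequence has axes meeting a fixed compact ball $B$ around $p(o)$. Translating by $\Gamma$, the axes pass near $o$. Then $\rho(\gamma_n)$ moves some point of a fixed neighborhood of $o$ by at most $\ell_\rho(\gamma_n)+C\delta$. Proper discontinuity and finite kernel then leave only finitely many possibilities for $\gamma_n$ up to conjugacy, which contradicts $\norm{\gamma_n}\to\infty$. The inclusion $\hEesc\subset\hEmesc$ is trivial, taking one-term multigeodesics.

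For $\hEmesc\subset\hEcont$, take $\Y_n=\sum_i a_i\gamma_i$ with $r(\Y_n)\to\infty$. I would show that $\ell_\rho(\gamma)/\norm{\gamma}\le \epsilon(R)$ whenever $r(\gamma)\ge R$, with $\epsilon(R)\to0$. The idea is that an axis far from $p(o)$ gives a closed loop whose word-length cost per unit of $X$-length is huge. Roughly, coning from the orbit point $o$ shows $\norm{\gamma}\gtrsim e^{cR}\ell$ in the hyperbolic-space setting, or at least superlinearly, via a properness gauge. The honest route uses a function $\phi(R)\to\infty$ with $\norm{\gamma}\ge\phi(R)\ell_\rho(\gamma)$. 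This comes from comparing segments of the axis of length $L$, which have endpoints at least $R$ from the orbit, to orbit paths.

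I then need to pass from multigeodesics to single closed geodesics with the same projective limit. I would use rational approximation together with the standard fact that a weighted multicurve current can be approximated by currents of single closed curves. These are formed by concatenating high powers $\gamma_1^{k_1}\cdots\gamma_m^{k_m}$ joined by bounded connectors. Length ratios are preserved up to $o(1)$, since the connectors contribute a bounded error in both $\ell_\rho$ and $\norm{\cdot}$. The unit tangent bundle $\Gc(X)$ is used here to control the $\rho$-length of such concatenations via quasigeodesic stability.

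\textbf{Part (2).} Take $(\gamma_n^+,\gamma_n^-)\to(x,y)$ with $r(\gamma_n)\to\infty$. I conjugate each $\gamma_n$ so that its $\Gamma$-axis passes within bounded distance of $\id$ along a segment converging to the geodesic $(x,y)$. Since $r$ is conjugation invariant, $d_X(o,\Axis_{\rho(\gamma_n)})\to\infty$, and the limit lies in $\Lambda(\rho)$ by definition. Some care is needed to pick conjugates whose endpoints still converge; one uses that the geodesic $(x,y)$ passes near $\id$ after translation.

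\textbf{Parts (3)--(5), with a Cannon--Thurston map.} The reverse inclusion in (3), and (5), come from one claim: if $\supp\mu\subset\Omega(\rho)$, then $\mu$ is a limit of weighted multigeodesics escaping uniformly. Given (5), any $\mu\in\hEcont$ has support in $\Omega$, so (3) follows once I show $\Econt\subset\Omega$. For that inclusion, a contracting sequence gives long $\Gamma$-geodesic segments whose $\rho$-images have sublinear length. By continuity of $\xi_\rho$, generic points of the support are therefore glued. For the claim itself, I would approximate $\mu$ by closed curves built from long segments of leaves in $\supp\mu$. Since these leaves are glued, Mj's criterion $\Lambda=\Omega$ says long segments of them map far from $o$. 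I close up the segments with short connectors and use uniform continuity of $\xi_\rho$ to keep the closed axes deep. The connectors are the risky part: they must also stay far. I would handle this by choosing closing points at recurrence times where the leaf's image is already deep.

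For (4), the equality $\Emesc=\Econt$ follows from (3), the middle inclusion $\Econt\subset\Ehausesc$ uses the approximation just described, and the last inclusion is (2) together with Mj's lemma. I expect the main obstacle to be the closing-up step, in both (1) and (5): keeping the constructed closed geodesics deep in the quotient and of controlled length.
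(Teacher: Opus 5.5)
Your architecture matches the paper's.

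\begin{itemize}
\item For $\hEbdd(\rho)\subset\hEesc(\rho)$ you use properness, as the paper does.
\item For $\hEmesc(\rho)\subset\hEcont(\rho)$ you use a uniform ratio bound for far-away axes plus concatenation of high powers. The concatenation is exactly the paper's Lemma~\ref{lemma: contracting definitions agree}. There only the ratio matters, so the depth of the concatenated curve is irrelevant, and the upper bound on $\ell_\rho$ is just the triangle inequality; you do not need $\Gc(X)$.
\item For (2) you conjugate so the axis passes through $\id$ and pass to a subsequence over finitely many choices.
\item You reduce (3)--(5) to $\Econt(\rho)\subset\Omega(\rho)$ plus the claim that a current supported in $\Omega(\rho)$ is a limit of uniformly escaping weighted multigeodesics.
\end{itemize}

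Your bound $\norm{\gamma}\geqslant\phi(r(\gamma))\,\ell_\rho(\gamma)$ is correct once made precise by strong contraction of closest-point projection onto a quasi-axis of $\rho(\gamma)$. The orbit path along the Cayley axis stays about $r(\gamma)$ away from that quasi-axis. It therefore advances its projection by only $O(1)$ per length $r(\gamma)/2$, which gives $\ell_\rho(\gamma)\lesssim\norm{\gamma}/r(\gamma)$. This is a sound substitute for the paper's Morse-lemma contradiction. It also avoids checking uniform constants for the piecewise geodesic through the points $\rho(\gamma_n^k)o$, whose additive error a priori grows with $\norm{\gamma_n}$. Your heuristic for $\Econt(\rho)\subset\Omega(\rho)$ is a counting version of the paper's argument that $\mu_*\otimes\de t$ gives zero mass to a fundamental domain in $\Gc(X)$, and it can be made rigorous.

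The gap is in that last claim, which carries the reverse inclusion in (3), item (5), and $\Econt(\rho)\subset\Ehausesc(\rho)$.

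\emph{How the paper proves it.} It uses its strong density theorem (Theorem~\ref{theorem: density of multigeodesics}): weighted multigeodesics converge weak-$*$ to $\mu$, and the $\Gamma$-orbits of their axes converge in Hausdorff distance in $\Gc(\Gamma)$ to $\supp\mu\times\Rb$. This is built from a global transversal, the closing lemma (Theorem~\ref{theorem: a closing lemma}), returning refinements, tracks satisfying the balance equations, and a loop decomposition. Escape then follows from uniform continuity of $\xi_\rho$. Every translate of an approximating axis passing near $\id$ has endpoints close to a glued pair, so their $\xi_\rho$-images are close and the $X$-axis is far from $o$. Translates far from $\id$ have visually close endpoints anyway.

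\emph{What your sketch is missing.} It supplies neither half.

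\begin{itemize}
\item Weak-$*$ convergence: closing up long segments of individual leaves only sees one ergodic component at a time. A non-ergodic $\mu$ needs weighted families matched to it, for instance via ergodic decomposition of $\mu\otimes\de t$ on the compact quotient with Birkhoff-generic recurrent segments, or via the paper's tracks.
\item Escape: you are controlling the wrong quantity. $\Lambda(\rho)=\Omega(\rho)$ concerns $X$-geodesics joining images of far-apart points of a leaf. The orbit image of any leaf consists of orbit points, so ``closing where the leaf's image is already deep'' has no content.
\end{itemize}

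What is actually needed is that the entire closed orbit, junction included, stays $\epsilon$-close in $\Gc(\Gamma)/\Gamma$ to $\supp\mu\times\Rb$. A connector that is merely short in the word metric does not give this. At a genuine corner, the closed axis locally has its forward endpoint from one leaf and its backward endpoint from another, and such a mixed pair need not be glued.

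For example, in a geometrically finite action with cusp element $c$, take the words $c^k s$ for a suitable fixed short $s\notin\langle c\rangle$. They shadow the glued axis of $c$ for as long as you like, yet they are loxodromic with axes meeting the thick part, so $r(c^k s)$ stays bounded.

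\emph{The fix.} Close only at return times where the flow-space point comes back $\epsilon$-close in both the forward and backward directions, and apply a closing lemma with uniform shadowing. Then the connector problem disappears and depth follows automatically from uniform continuity of $\xi_\rho$.
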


In the $3$-dimensional Kleinian surface group setting, these notions recover the ending laminations.

\begin{proposition}\label{proposition: 3 dimensional ending current union of ending laminations}
    Suppose $\Sigma$ is a hyperbolic surface and $\rho:\pi_1(\Sigma) \to \Isom^+(\Hb^3)$ is a discrete and faithful representation. Then \[\Ebdd(\rho)=\Eesc(\rho)=\Emesc(\rho)=\Econt(\rho)\] is the union of the ending laminations of $\rho$.
\end{proposition}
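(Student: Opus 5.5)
By Theorem~\ref{main theorem: the relations among ending currents}(1) we already have $\hEbdd(\rho)\subset\hEesc(\rho)\subset\hEmesc(\rho)\subset\hEcont(\rho)$, hence $\Ebdd\subset\Eesc\subset\Emesc\subset\Econt$. The plan is to close the chain by proving two further inclusions. First, $\Econt(\rho)$ is contained in the union $\Lc$ of the ending laminations, viewed as $\pi_1(\Sigma)$-invariant subsets of $\partial^{(2)}_\infty\pi_1(\Sigma)$. Second, $\Lc\subset\Ebdd(\rho)$. We also include the parabolic curves as degenerate ``laminations'', in the usual convention, since their counting currents lie in $\hEbdd$ trivially.

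For $\Econt\subset\Lc$, we use the Cannon--Thurston map, which exists for every discrete faithful surface group in $\Isom^+(\Hb^3)$ by Mj's theorem. Theorem~\ref{main theorem: the relations among ending currents}(4) then gives $\Econt(\rho)\subset\Lambda(\rho)=\Omega(\rho)$. Mj's identification of the gluing locus says that $(x,y)\in\Omega(\rho)$ exactly when $x,y$ are endpoints of a leaf of an ending lamination, or ideal endpoints of a complementary polygon, or of a crown or cusp region. Fix $\mu\in\hEcont$. Its support consists of geodesics whose endpoint pairs lie in $\Omega$. The support of a current is closed and $\pi_1$-invariant. A pair of points that are non-adjacent vertices of an ideal polygon gives a geodesic crossing leaves transversally, so it cannot recur. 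Combining these, such diagonals cannot lie in $\supp\mu$. Hence $\supp\mu$ is contained in the leaves of the ending laminations together with parabolic curves, and so $\Econt\subset\Lc$.

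For $\Lc\subset\Ebdd$, we realize each leaf. Let $\lambda$ be an ending lamination, supported on a degenerate end (or subsurface) $S$. Thurston and Bonahon's tameness theory gives simple closed curves $\alpha_n\subset S$ with geodesic representatives exiting the end and with $\ell_\rho(\alpha_n)$ uniformly bounded, by Bonahon's bounded pleated surfaces. These satisfy $\alpha_n\to\lambda$ in the Hausdorff sense, and after rescaling $\widehat{\alpha}_n$ converge to a transverse measure on $\lambda$. Boundedly many curves in a fixed conjugacy class cannot all have bounded length, by discreteness, so $\norm{\alpha_n}\to\infty$. Any limit measure is supported on $\lambda$. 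When $\lambda$ is minimal its support is all of $\lambda$, since it is minimal and filling on $S$. So every leaf of $\lambda$ lies in $\supp$ of an element of $\hEbdd$, which gives $\lambda\subset\Ebdd$.

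The main obstacle is the first step: excluding the ``diagonals'' of the gluing locus from supports of contracting currents. This rests on carefully quoting Mj's description of $\Omega$ for surface groups, possibly with accidental parabolics and several ending laminations on subsurfaces. We then need a clean argument that a closed invariant set inside $\Omega$ consists only of leaves. My first attempt would be to show that a geodesic joining non-adjacent ideal vertices of a complementary region transversally meets some leaf, and that its translates accumulate onto a set that is not contained in $\Omega$. A fallback is to show directly that the associated recurrent geodesics on $\Sigma$ would have to be non-simple, while limits of $\ell_\rho$-short curves are simple.
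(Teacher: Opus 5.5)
Your reduction is the natural one: close the chain of Theorem~\ref{main theorem: the relations among ending currents}(1) using $\Econt(\rho)\subset\Omega(\rho)$ and Mj's description of $\Omega(\rho)$ on one side, and bounded-length curves exiting the degenerate ends on the other. The paper itself states this proposition without a written proof. Your second step is fine. The clean reason for $\norm{\alpha_n}\to\infty$ is that geodesics exiting an end represent infinitely many conjugacy classes, while only finitely many classes have bounded stable length. The step you flag as the main obstacle, however, has a genuine gap: the reason you give is false, and both strategies you propose for it would fail.

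\textbf{Why your argument for excluding diagonals fails.} Let $P$ be a complementary ideal polygon of $\widetilde\lambda$ and $d$ a geodesic joining non-adjacent vertices $v_i,v_j$.
\begin{itemize}
\item By convexity, $d$ lies in the open polygon and meets no leaf of $\widetilde\lambda$. So the ``transverse crossing'' claim is wrong.
\item Since $P$ maps injectively into $\Sigma$, $d$ projects to a \emph{simple} geodesic. So the non-simplicity fallback cannot work. It also conflates $\hEcont(\rho)$ with limits of $\ell_\rho$-short curves, whereas $\hEcont(\rho)$ only requires $\ell_\rho/\norm{\cdot}\to 0$.
\item Your first attempt fails too: the translates of $d$ accumulate exactly onto $\widetilde\lambda\subset\Omega(\rho)$.
\end{itemize}
In fact $\Gamma d\cup\widetilde\lambda$ is a closed, $\Gamma$-invariant subset of $\Omega(\rho)$. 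So no argument using only that $\supp\mu$ is closed, invariant and contained in $\Omega(\rho)$ can exclude $d$; the measure has to be used.

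\textbf{The missing idea: isolation plus local finiteness.}
\begin{enumerate}
\item Any pair with endpoints near $v_i,v_j$ spans a geodesic through the interior of $P$. It is therefore neither a leaf of $\widetilde\lambda$ nor a diagonal of another complementary region of $\widetilde\lambda$.
\item The remaining constituents of $\Omega(\rho)$ in Mj's description are the leaves and complementary regions of the other ending laminations, and the parabolic axes. They form a closed set not containing $d$, because distinct ending laminations give distinct identification classes. Hence some neighbourhood $U$ of $d$ satisfies $U\cap\Omega(\rho)=\{d\}$.
\item So $d\in\supp\mu\subset\Omega(\rho)$ forces $\mu(\{d\})>0$.
\item But $d$ projects to a non-closed geodesic that returns infinitely often to a fixed compact part of $\Sigma$: all of $\Sigma$ if it is closed, and a neighbourhood of the compact lamination $\lambda$ in general.
\item Hence infinitely many distinct translates $\gamma d$ lie in a single compact subset of $\partial^{(2)}_\infty\Gamma$. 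By $\Gamma$-invariance that set has infinite $\mu$-mass, contradicting local finiteness.
\end{enumerate}

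\textbf{Crown regions.} A pair (crown vertex, parabolic fixed point) is not isolated in $\Omega(\rho)$, since crown vertices accumulate at the parabolic fixed points. Still, near such a pair $\Omega(\rho)$ meets no leaf and consists only of countably many non-closed geodesics, namely pairs from one countable identification class. So $\mu$ would again need an atom on a non-closed geodesic, which is excluded as above.

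With this argument replacing the crossing claim, your chain of inclusions closes.
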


In higher dimension, however, these notions of ending current capture genuinely new phenomena. Besides the example constructed in Theorem~\ref{main theorem: the example of simple-stable representations}, where all ending currents consist of a single self-intersecting closed geodesic, we also show that the bounded and contracting ending currents may differ.

\begin{theoremalpha}\label{main theorem: triangles limit to a polygon}
    Suppose $\Sigma$ is a closed hyperbolic surface. Then there exists a Gromov hyperbolic space $X$ and a discrete and faithful representation $\rho: \pi_1(\Sigma) \to \Isom(X)$ such that \[\Ebdd(\rho) \subsetneq \Econt(\rho)~.\]
\end{theoremalpha}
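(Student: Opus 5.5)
The plan is to build $X$ as a graph of spaces over the closed surface and to engineer, inside it, a family of closed curves whose $\ell_\rho$-lengths grow sublinearly but unboundedly. Their limit will be a current supported on a geodesic polygon (a union of finitely many geodesics forming a cycle of bi-infinite leaves). Meanwhile every bounded-length family will be forced to limit onto a different, smaller set. The title suggests the mechanism: a sequence of ``triangles'' limits to a polygon.

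\textbf{Step 1: the construction.} Fix a filling system on $\Sigma$, for instance a pants decomposition together with a choice of closed curves $\alpha_1,\dots,\alpha_k$. Start with the universal cover $\widetilde\Sigma$ (a copy of $\Hb^2$). For each lift of a chosen closed geodesic $\alpha$, glue in an equivariant family of horoball-like but slowly contracting regions: a ``logarithmic'' combinatorial horoball over the lift of $\alpha$. In such a region, a path of length $n$ along the lift of $\alpha$ can be shortcut by a path of length $\approx \log n$, or more generally by $f(n)$ with $f$ sublinear and unbounded. Choosing the standard Groves--Manning combinatorial horoball would make $\alpha$ parabolic, so that $\rho$ is not proper on $\langle\alpha\rangle$. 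Instead, I would glue along a fattened geodesic a horoball truncated at depth depending on position, or, more cleanly, cone off each lift of $\alpha$ with ``ladders'' of height growing along the line. The action of $\pi_1(\Sigma)$ remains properly discontinuous and cocompact away from these regions. Gromov hyperbolicity of $X$ follows from the combination theorem for spaces glued along quasiconvex subspaces (Bestvina--Feighn, or hyperbolicity of coned-off/horoball-augmented spaces).

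\textbf{Step 2: compute $\Econt$ and $\Ebdd$.} Take $\gamma_n = \alpha^n\beta$ for a transverse curve $\beta$. Its $X$-length is about $f(n)+C$, which is sublinear in $\|\gamma_n\|\approx n$. By Theorem~\ref{main theorem: the relations among ending currents}(1), $\widehat\gamma_n \to \widehat\alpha$, which lies in $\hEcont$. To get a strict inclusion I instead use products of two (or three) such curves, such as $\alpha_1^{n}\alpha_2^{n}\alpha_3^{n}$ forming a triangle. Normalized, these converge to $\widehat\alpha_1+\widehat\alpha_2+\widehat\alpha_3$; this is the easy part. The genuinely new leaves come from arranging the gluing regions along the sides of an ideal polygon. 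Then curves winding around a triangle with vertices pushed to infinity have length $\sim 3f(n)$ but word length $\sim n$, and their limit is supported on the polygon's sides. On the other hand, a bounded-length family must stay, up to bounded error, inside a single gluing region, because the transitions between regions cost a definite amount. Hence any bounded-$\ell_\rho$ sequence converges to a current supported on a single side. So $\Ebdd$ consists only of individual leaves, while $\Econt$ contains the diagonal leaves produced by the limits of the triangle curves.

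\textbf{Main obstacle.} The hard part is the lower bound showing $\Ebdd \ne \Econt$. One must prove that any sequence of uniformly bounded $\ell_\rho$-length has a limit whose support avoids the extra leaves. I would do this with a quasi-geodesic analysis in the glued space: a closed geodesic of bounded length visits only boundedly many gluing regions. Its word length therefore comes from long travel inside one region, and this forces the limit to be supported on the boundary lines of a single region. Verifying hyperbolicity and properness of the modified horoballs is routine by comparison.
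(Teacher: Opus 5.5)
Your construction cannot give a strict inclusion, and the mechanism you describe for the extra leaves does not exist.

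First, ``sublinear but unbounded'' distortion of $\alpha$ is not a separate regime. $\ell_\rho$ is the stable length, so sublinear growth of $d_X(o,\rho(\alpha)^n o)$ already forces $\ell_\rho(\alpha)=0$. In other words $\rho(\alpha)$ is parabolic. (Contrary to your worry, parabolics do act properly on Groves--Manning cusp spaces.) Then $\gamma_n=\alpha^n$ has $\ell_\rho(\gamma_n)=0$ and $\norm{\gamma_n}\to\infty$, so $\widehat\alpha\in\hEbdd(\rho)$ and $[\alpha]\subset\Ebdd(\rho)$.

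Consequently, any limit of curves like $\alpha_1^n\alpha_2^n\alpha_3^n$ has support $[\alpha_1]\cup[\alpha_2]\cup[\alpha_3]$, which already lies in $\Ebdd$. Since $\Ebdd$ and $\Econt$ are unions of supports, finite combinations of bounded ending currents never separate them.

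More globally, consider any $\Gamma$-equivariant version of your gluing. The position-dependent truncations you mention are not $\langle\alpha\rangle$-invariant, so they do not define a $\Gamma$-space at all. An equivariant version attaches horoball-like regions along the lifts of finitely many primitive closed geodesics, with cocompact complement. Wherever that is hyperbolic, it is a geometrically finite action with parabolic subgroups $\langle\alpha_i\rangle$. Theorem~\ref{theorem: geometrically finite implies ending currents are parabolics} then gives $\Ebdd=\Econt=\bigcup_i[\alpha_i]$.

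Finally, the ``ideal polygon'' step is not realizable. Distinct lifts of closed geodesics on a closed surface never share an endpoint, so they cannot be the sides of an ideal polygon. Your text also alternately says the triangle curves limit onto the sides (already in $\Ebdd$) and onto diagonals, with no argument producing the latter.

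The missing idea is that the gap between $\Ebdd$ and $\Econt$ is a closure phenomenon. It requires a geometrically infinite action with infinitely many distinct bounded ending currents. The mechanism is as follows:
\begin{enumerate}
\item The union $\bigcup_{\mu\in\hEbdd}\supp\mu$ need not be closed.
\item A countable sum $\sum_n c_n\mu_n$ with $\mu_n\in\hEbdd$ can be realized as a limit of weighted multigeodesics with $\ell_\rho/\norm{\cdot}\to 0$. By Lemma~\ref{lemma: contracting definitions agree} it therefore lies in $\hEcont$.
\item Its support is $\overline{\bigcup_n\supp\mu_n}$, which can contain Hausdorff-limit leaves lying in no single bounded support.
\end{enumerate}

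The paper realizes this with the hyperbolic extension of $\pi_1(\Sigma)$ by a convex-cocompact free group $H=\langle f,g\rangle\subset\Mod(\Sigma)$. Here $\Lambda_f^+$ is maximal and $\Lambda_g^+$ has a complementary ideal polygon with at least four sides.
\begin{enumerate}
\item Theorem~\ref{theorem: limit set of convex-cocompact representations} identifies $\Ebdd(\rho)=\bigcup_{z}\supp\xi_H(z)$.
\item The maximal laminations $g^n\Lambda_f^+$ (whose complementary regions are the ``triangles'' of the title) Hausdorff-converge to a set containing a diagonal $\ell$ of that polygon. Hence $\ell\in\Econt(\rho)$.
\item $\ell$ lies in no $\supp\xi_H(z)$, because those laminations are minimal and pairwise share no leaves.
\end{enumerate}

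Your bounded-length analysis (``boundedly many gluing regions'') cannot substitute for this. In your setting there is nothing for it to exclude.
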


To understand the gap between the topological endings and the measure-theoretic ending currents, we introduce a density result in the space of geodesic currents. The density of counting measures of closed geodesics in the space of currents is classical, due to Bonahon \cite{bonahon1988geometry,bonahon1991geodesic}. In our setting, we prove a stronger density property that every geodesic current can be approximated by weighted multigeodesics in both weak* topology and Hausdorff distance sense.

\begin{theoremalpha}\label{main theorem: density of multigeodesics}
    For any geodesic current on $\Gamma$, there exists a sequence of weighted multigeodesics that strongly converges to it.
\end{theoremalpha}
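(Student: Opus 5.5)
We read ``strongly converges'' to mean: $\Y_n \to \mu$ in the weak-$*$ topology on $\Curr(\Gamma)$, and $\supp \widehat{\Y}_n \to \supp\mu$ in the Hausdorff (Chabauty) sense on closed subsets of $\partial_\infty^{(2)}\Gamma$, measured in a fixed compact fundamental region. The plan is to start from Bonahon's approximation by multiples of closed geodesics, then correct it so the supports do not overshoot $\supp\mu$, while ensuring every point of $\supp\mu$ is eventually approached.

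\textbf{Step 1 (combinatorial model).} Fix a finite generating set and work in the Cayley graph, or Mineyev's flow space. A current $\mu$ then corresponds to a shift-invariant measure on bi-infinite geodesic words, up to bounded ambiguity. For each $n$, cover a compact fundamental domain $K$ for the $\Gamma$-action on $\partial^{(2)}_\infty\Gamma$ by finitely many small ``cylinder'' sets $C_1,\dots,C_N$. These are pairs of endpoints of geodesics passing near $\id$ that fellow-travel a fixed word of length $L_n$ through $\id$. Record their $\mu$-masses $m_j=\mu(C_j)$, and keep only the cylinders meeting $\supp\mu$. This gives a weighted ``subword frequency'' vector, which is approximately invariant under extension (Kirchhoff-type conditions on a de Bruijn-style graph of words of length $L_n$).

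\textbf{Step 2 (closing).} Build a finite directed graph $G_n$ whose vertices are length-$L_n$ geodesic words $w$ with $\mu(C_w)>0$. Put an edge $w\to w'$ when $w'$ is a one-letter shift of a length-$(L_n+1)$ word with positive mass. By shift-invariance the masses define a flow on $G_n$ that satisfies conservation. Decompose this flow as a positive combination of simple cycles, as in the standard cycle decomposition of a circulation. Each cycle reads a cyclic word. By hyperbolicity, a cyclic word all of whose length-$L_n$ subwords are geodesic (and $L_n$ larger than a local-to-global constant) is a quasigeodesic. It therefore represents an infinite-order $\gamma_i$ whose axis fellow-travels only geodesics of $\supp\mu$ locally. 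Set $\Y_n=\sum_i a_i\gamma_i$ with the cycle weights, normalized, merging conjugate terms. Weak-$*$ convergence then follows because the masses of all cylinders of length $\le L_n$ agree up to errors $\to0$.

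\textbf{Step 3 (Hausdorff control).} There are two containments to check.

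\emph{No overshoot.} Every leaf of $\supp\widehat{\Y}_n$ is a bi-infinite path all of whose $L_n$-windows are words carried by $\supp\mu$. We need such a path to be close to an actual leaf of $\supp\mu$ on a window of length comparable to $L_n$ around $\id$; this requires comparing local words with global geodesics via the Morse lemma and the closedness of $\supp\mu$.

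\emph{Full coverage.} We must ensure every cylinder meeting $\supp\mu$ is used by some cycle with positive weight. The cycle decomposition of a positive circulation uses every edge carrying positive flow, which ensures this.

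\textbf{Main obstacle.} The main obstacle is the non-uniqueness of geodesics and the bounded ambiguity between geodesic words and boundary pairs. The conservation law holds only approximately, so the flow on $G_n$ must first be perturbed to an exact circulation. We would do this by small positive corrections supported on edges already present, using that shift-invariance gives mass balance up to $O(\mu(\partial C))$ errors. Choosing the cylinders with $\mu$-null boundaries (possible by a generic choice of radii) makes these errors vanish in the limit. If needed, we would instead pass to Mineyev's flow space, where the geodesic flow is genuinely a flow and the cylinders become flow boxes.
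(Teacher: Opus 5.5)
Your architecture is the paper's: encode $\mu$ as a balanced weighted graph, decompose it into cycles, and close each cycle. The difference is the encoding. The paper uses first returns of the flow on $\Gc(\Gamma)$ to a global transversal, where the balance law (Observation~\ref{observation: system of weight equations}) is exact because $\mu\otimes\mathrm{d}t$ is an honest flow-invariant measure.

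The genuine gap in your version is Step~1. In a general hyperbolic group a pair $(x,y)$ may be joined by many geodesics. The correspondence between currents and shift-invariant measures on bi-infinite geodesic words is exactly what has to be constructed, and you do not construct it.

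\begin{itemize}
\item \emph{Balance.} The numbers you actually use, $\mu(C_u)$ for endpoint-pair cylinders, count each pair once for every geodesic window it admits. Balance at a vertex $w$ therefore compares $\int N^+_w\,\mathrm{d}\mu$ with $\int N^-_w\,\mathrm{d}\mu$. Here $N^\pm_w(x,y)$ is the number of letters extending a $w$-window forward, respectively backward, along geodesics joining $y$ to $x$. Nothing forces these to agree; by $\iota$-invariance the question is whether $w$ and its inverse word branch forward equally often.
\item \emph{Your patch.} This defect is not a boundary effect, so $\mu$-null boundaries do not remove it. Perturbing to an exact circulation on the same edge set is not available in general. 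An edge of positive weight need not lie on a directed cycle of the support. Keeping every such edge positive, which your full-coverage step needs, requires corrections that are small relative to edge weights you do not control.
\item \emph{Weak-$*$ convergence.} Even where balance holds, the cycles reproduce this multiplicity-weighted measure rather than $\mu$, so the projective weak-$*$ limit can be wrong. Also, the axis of $\gamma_i$ only stays at bounded distance from the periodic word and need not read it. So $\widehat{\Y}_n(C_w)$ is not the cycle count in any case.
\end{itemize}

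To repair your route, lift $\mu$ honestly. Let $\mathcal{W}$ be the compact space of bi-infinite $L$-local geodesic words based at $\id$, with $L$ a fixed local-to-global constant, and let $T$ be the shift. For a $T$-invariant, reversal-symmetric finite measure $m$ on $\mathcal{W}$, spread $m$ over $\Gamma$-translates and normalize per unit time; this gives a current $\mu_m$.
\begin{itemize}
\item The map $m\mapsto\mu_m$ is linear and weak-$*$ continuous. For a fixed compact $A_0\subset\partial_\infty^{(2)}\Gamma$, $\mu_m(A_0)$ is comparable to $m(\mathcal{W})$, so the image is a closed convex cone.
\item The cone contains a positive multiple of every $\widehat\gamma$: take time averages along a geodesic from $\gamma^-$ to $\gamma^+$. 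By Bonahon's density theorem it is therefore all of $\Curr(\Gamma)$.
\item For $m$ with $\mu_m=\mu$, the symbolic cylinder masses satisfy Kirchhoff's law exactly.
\item For $L_n\geqslant L$, each cycle's periodic word lies in $\mathcal{W}$, and the cycle measures agree with $m$ on every cylinder of length at most $L_n+1$. Weak-$*$ convergence then follows from continuity of $m\mapsto\mu_m$.
\item Since $\supp\mu_m=\overline{\Gamma\, e(\supp m)}$, with $e$ the endpoint map, your two Hausdorff containments go through as you describe.
\end{itemize}

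Alternatively, as you suggest, you can work in Mineyev's flow space, where balance is automatic. But then your local-to-global closing of words no longer applies, and you need what the paper supplies:
\begin{itemize}
\item the global transversal (Lemma~\ref{lemma: existence global transversal});
\item the contraction estimate (Lemma~\ref{lemma: the geodesic flow is coarsely Anosov});
\item the closing lemma (Theorem~\ref{theorem: a closing lemma});
\item returning refinements, so that each closing block has flow time in $[T_2,T_3]$.
\end{itemize}
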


To complete the proof, we also establish an Anosov closing lemma in general on the geodesic flow (such as Mineyev's flow) over a hyperbolic group (see Theorem~\ref{theorem: a closing lemma}).

\subsection{Hyperbolic group extensions}\label{introduction section: hyperbolic group extensions}

The study of Cannon--Thurston maps originally arose from Kleinian groups and surface bundles over the circle \cite{cannon-thurston}. In a series of papers, Mj \cite{mitra1997ending,mitra1998cannon,mitra1999theorem} and Mj--Rafi \cite{mj2018algebraic} generalized this phenomenon to abstract hyperbolic group extensions. In particular, \cite{mitra1998cannon} shows that if $\Gamma$ and $G$ are hyperbolic groups and $\Gamma$ embeds in $G$ as a normal subgroup, then the Cannon--Thurston map $\partial_\infty\Gamma\to \partial_\infty G$ exists.

When $\Gamma = \pi_1(\Sigma)$ where $\Sigma$ is a closed hyperbolic surface, these extensions are given by convex-cocompact subgroups of the mapping class group. More precisely, Farb--Mosher \cite{farb2002convex} introduced the notion of convex-cocompact subgroup of the mapping class group. If $H\subset \Mod(\Sigma)$ is convex-cocompact, Farb--Mosher \cite{farb2002convex} and Hamenst\"adt \cite{hamenstadt2005word} showed that the corresponding group extension $G$ given by the short exact sequence \[1 \longrightarrow \pi_1(\Sigma) \stackrel{\rho}{\longrightarrow} G \stackrel{p}{\longrightarrow} H \longrightarrow 1~,\] is hyperbolic, and conversely every hyperbolic extension $G$ of $\pi_1(\Sigma)$ arises in this way up to finite index. Equivalently, by the work of Kent--Leininger \cite{kent2008shadows} and independently Hamenst\"adt \cite{hamenstadt2005word}, convex-cocompactness is characterized by the condition that an orbit map to the curve complex $H\to \mathcal{C}(\Sigma)$ is a quasi-isometric embedding. The Gromov boundary of the curve complex is identified with the ending lamination space (\cite{klarreich2022boundary}). Hence a convex-cocompact subgroup $H$ comes equipped with an equivariant limit map \[\xi_H:\partial_\infty H\to \PML(\Sigma)~,\] whose image consists of filling and uniquely ergodic laminations.

On the other hand, Mj and Mj--Rafi \cite{mitra1997ending,mj2018algebraic} introduced the algebraic ending prelamination to describe the Cannon--Thurston lamination of the extension. In particular, in the present setting they prove \[\Lambda(\rho) = \Omega(\rho) = \bigcup_{z \in \partial_\infty H} (\supp \xi_H(z))^d~,\] where $(\supp \xi_H(z))^d$ denotes the diagonal closure of $\supp \xi_H(z)$. 

Passing from the topological framework to our measure-theoretic one, we show the following theorem. We regard $\PML(\Sigma)$ as a subset of $\PCurr(\pi_1(\Sigma))$ via the canonical embedding of measured laminations into geodesic currents (\cite{bonahon1988geometry,bonahon1991geodesic}).

\begin{theoremalpha}\label{main theorem: limit set of convex-cocompact representations}
    In the setting above, \[\hEbdd(\rho)=\xi_H(\partial_\infty H) \quad \text{ and hence } \quad \Ebdd(\rho)=\bigcup_{z\in \partial_\infty H}\supp \xi_H(z)~.\]
\end{theoremalpha}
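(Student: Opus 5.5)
We prove the two inclusions $\hEbdd(\rho)\subset\xi_H(\partial_\infty H)$ and $\xi_H(\partial_\infty H)\subset\hEbdd(\rho)$ separately. The statement about supports then follows by taking unions, since each $\xi_H(z)$ is a uniquely ergodic lamination whose support is $\supp\xi_H(z)$. Throughout we use a concrete model for $G$. Choose a section of $H$ and a hyperbolic metric on $\Sigma$; then $G$ is quasi-isometric to the $\Sigma$-bundle over a quasi-convex hull of an $H$-orbit in Teichm\"uller space. In this model, the $\ell_\rho$-length of $\gamma\in\pi_1(\Sigma)$ is coarsely $\min_{h\in H} \ell_{h\cdot\sigma_0}(\gamma)$ up to additive error, where $\sigma_0$ is a base hyperbolic structure. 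This holds because $\gamma$ is conjugate in $G$ to $h\gamma h^{-1}$, and translation length in $G$ is coarsely realized in a fiber over the base.

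\emph{Inclusion $\xi_H(\partial_\infty H)\subset\hEbdd(\rho)$.} Fix $z\in\partial_\infty H$ and a quasi-geodesic ray $h_n\to z$ in $H$.
\begin{itemize}
\item By Kent--Leininger and Hamenst\"adt, the points $h_n\sigma_0$ converge in the Thurston compactification to the projective class of $\xi_H(z)$.
\item Choose $\gamma_n$ to be a systole, or any curve of bounded length, on $h_n\sigma_0$. Then $\ell_\rho(\gamma_n)$ is uniformly bounded, since $\gamma_n = h_n\alpha_n h_n^{-1}$ with $\alpha_n$ of length at most the Bers constant on $\sigma_0$.
\item The curves $\gamma_n$ leave every finite set, because the $h_n$ diverge, so $\norm{\gamma_n}\to\infty$.
\item A standard Thurston-boundary argument (bounded-length curves on $\sigma_n\to[\lambda]$ satisfy $i(\gamma_n,\lambda)/\ell_{\sigma_0}(\gamma_n)\to0$) shows that any projective limit $\mu$ of $\widehat{\gamma}_n$ satisfies $i(\mu,\xi_H(z))=0$.
\item Unique ergodicity and fillingness of $\xi_H(z)$ then force $\mu=\xi_H(z)$ projectively.
\end{itemize}

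\emph{Inclusion $\hEbdd(\rho)\subset\xi_H(\partial_\infty H)$.} Take $\gamma_n$ with $\ell_\rho(\gamma_n)\le C$ and $\norm{\gamma_n}\to\infty$, and suppose $\widehat{\gamma}_n\to\mu$.
\begin{itemize}
\item By the coarse formula, there are $h_n\in H$ with $\ell_{h_n\sigma_0}(\gamma_n)\le C'$.
\item Since $\norm{\gamma_n}\to\infty$, the $h_n$ are unbounded. Passing to a subsequence, $h_n\to z\in\partial_\infty H$: this uses compactness of $H\cup\partial_\infty H$ together with properness.
\item Repeating the intersection argument gives $i(\mu,\xi_H(z))=0$.
\item If $\mu$ were not a measured lamination, we would have $i(\mu,\mu)>0$. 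But $\mu$ has zero intersection with a filling lamination, and a current with $i(\mu,\lambda)=0$ for $\lambda$ filling and uniquely ergodic is a multiple of $\lambda$ (Otal/Bonahon). Hence $\mu=\xi_H(z)$.
\end{itemize}

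\emph{Main obstacle.} The hardest step is the coarse length formula $\ell_\rho(\gamma)\asymp\min_h\ell_{h\sigma_0}(\gamma)$, in particular its upper direction: that bounded $G$-translation length forces the curve to be short on some fiber. For this I would use the Farb--Mosher/Hamenst\"adt structure of $G$ as a metric bundle over the hyperbolic base, with fibers uniformly quasi-isometric to $\widetilde\Sigma$. The argument goes as follows.
\begin{itemize}
\item A $G$-geodesic axis of $\rho(\gamma)$ of bounded translation length, after quotienting by $\langle\rho(\gamma)\rangle$, is a bounded loop.
\item Projecting this loop to the base gives a bounded set, so the loop lies near one fiber over some $h$.
\item Projecting along horizontal lifts into that fiber gives a loop of bounded length in the fiber metric $h\sigma_0$.
\end{itemize}
Flaring and the uniform fiber quasi-isometries should make this projection controlled, but checking the constants is where care is needed.
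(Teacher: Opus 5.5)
Your argument is correct in substance and follows the paper's skeleton. Both prove two inclusions, both use the Thurston-compactification convergence $c_nL_{h_n\sigma_0}\to\xi_H(z)$ with $c_n\to 0$, both get $i(\mu,\xi_H(z))=0$ from continuity of the intersection form, and both use filling plus unique ergodicity to identify $\mu$.

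The real difference is in what you call the main obstacle, where the paper needs no bundle geometry. Since $G$ is hyperbolic, elements of bounded stable length have bounded minimal displacement, so they lie in finitely many $G$-conjugacy classes. Since $\pi_1(\Sigma)$ is normal, the representatives can be taken in $\pi_1(\Sigma)$. After passing to a subsequence this gives $\gamma_n=\widetilde h_n\alpha\widetilde h_n^{-1}$ for one fixed $\alpha$. Your fiber-projection argument proves the same qualitative fact, and flaring plays no role in it: all it uses is that $x^{-1}\gamma x$ is a short element of $G$ lying in $\pi_1(\Sigma)$.

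Having a single $\alpha$ also makes the paper's identification of the limit cheaper. It gets $\ell_{h_nx_0}(\gamma_n)=\ell_{x_0}(\alpha)$ exactly, and $i(\widehat\gamma_n,\widehat\gamma_n)=i(\widehat\alpha,\widehat\alpha)$ is constant. So the normalized limit has zero self-intersection, is a measured lamination, and unique ergodicity is only needed among laminations. You instead invoke current-level rigidity: a current with zero intersection with a filling uniquely ergodic lamination is a multiple of it. That is true but a heavier citation; what it buys you is not having to reduce to a single $\alpha$. In the forward inclusion, your systoles on $h_n\sigma_0$ and the paper's $h_n(\alpha)$ for a fixed simple $\alpha$ are the same thing up to subsequences.

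Two things need correcting.

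First, the coarse formula $\ell_\rho(\gamma)\approx\min_h\ell_{h\sigma_0}(\gamma)$ is false as stated, with additive or even multiplicative error. In a mapping torus group, $\phi^n(a)\phi^{-n}(b)$ has $G$-length $O(n)$. On the fiber $\phi^k\sigma_0$ it has the length of $\phi^{n-k}(a)\phi^{-n-k}(b)$, which for typical $a,b$ is exponential in $n$ for every $k$. Only the qualitative statement holds: bounded $\ell_\rho$ if and only if bounded length on some fiber. Fortunately that is all you actually use.

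Second, ``$\norm{\gamma_n}\to\infty$ because the $h_n$ diverge'' is not sufficient on its own. For $h_n=T_c^n$, the curve $c$ stays short on every $h_n\sigma_0$. You need convex-cocompactness here, in one of two ways:
\begin{itemize}
\item As in the paper: a curve recurring infinitely often would have infinite stabilizer in $H$, hence be fixed by an infinite-order, reducible element of $H$, which is impossible.
\item Within your own setup: a curve $c$ recurring infinitely often would satisfy $i(\widehat c,\xi_H(z))=0$ by your intersection bullet, contradicting that $\xi_H(z)$ fills.
\end{itemize}
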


The theorem may be viewed as a measure-theoretic refinement of the algebraic ending lamination picture for hyperbolic group extensions. By Proposition~\ref{proposition: 3 dimensional ending current union of ending laminations}, when $H = \Zb$ is generated by a pseudo-Anosov mapping class $\phi$, the extension $G$ is a mapping torus group and \[\Ebdd(\rho) = \Econt(\rho)\] is the union of the attracting and repelling laminations of $\phi$ (see Example~\ref{example: mapping torus}). In contrast, when $H$ is non-elementary, we construct an example such that \[\Ebdd(\rho) \subsetneq \Econt(\rho)\] (see Example~\ref{example: triangles limit to a polygon}), as a proof of Theorem~\ref{main theorem: triangles limit to a polygon}.

\subsection{Geometric finiteness}

A second application concerns geometric finiteness. We ask to what extent geometric finiteness can be characterized purely in terms of the various ending data introduced above.

For a quasiconvex subgroup $P\subset\Gamma$, we set $[P]:=\Gamma\cdot\partial_\infty^{(2)}P$, the closure of collection of ordered pairs of endpoints of axes of elements in $P$ or its conjugation. 
For an infinite-order element $\alpha\in\Gamma$, we set $[\alpha]:=[\langle\alpha\rangle]$, then
\[[\alpha] =\Gamma\cdot\{(\alpha^+,\alpha^-),(\alpha^-,\alpha^+)\} = \supp\widehat{\alpha}~.\]

\begin{theoremalpha}\label{main theorem: geometric finiteness and ending currents}
    Suppose $\rho:\Gamma \to \Isom(X)$ is a discrete representation with finite kernel that admits a Cannon--Thurston map $\xi_\rho$ and $\Pc$ is a finite collection of finitely generated, infinite, quasiconvex subgroups of $\Gamma$. For the following conditions,
    \begin{enumerate}
        \item\label{item: geometrically finite} $\rho$ is geometrically finite with collection of parabolic subgroups $\Pc$;
        \item\label{item: yaman} $\Omega(\rho) = \bigcup_{P\in \Pc} [P]$;
        \item\label{item: topological ending} $\Ehausesc(\rho) = \bigcup_{P\in \Pc} [P]$;
        \item\label{item: contracting measure} $\Econt(\rho) = \bigcup_{P\in \Pc} [P]$;
        \item\label{item: single far away} $\Eesc(\rho) = \bigcup_{P\in \Pc} [P]$;
        \item\label{item: bounded measure} $\Ebdd(\rho) = \bigcup_{P\in \Pc} [P]$;
    \end{enumerate}
    we have \eqref{item: geometrically finite} $\Leftrightarrow$ \eqref{item: yaman} $\Leftrightarrow$ \eqref{item: topological ending} $\Rightarrow$ \eqref{item: contracting measure} $\Rightarrow$ \eqref{item: single far away} $\Rightarrow$ \eqref{item: bounded measure}.
\end{theoremalpha}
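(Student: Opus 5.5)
The chain of implications splits into three parts: a topological equivalence (1)$\Leftrightarrow$(2)$\Leftrightarrow$(3), then the passage to measures (3)$\Rightarrow$(4), then the two measure-theoretic downward steps (4)$\Rightarrow$(5)$\Rightarrow$(6). Throughout I use Theorem~\ref{main theorem: the relations among ending currents}. In the Cannon--Thurston setting it gives
\[\Ebdd\subset\Eesc\subset\Emesc=\Econt\subset\Ehausesc\subset\Lambda=\Omega,\]
together with the saturation statement (5): any current supported in $\Omega$ lies in $\hEcont$. I also use that $[P]=\bigcup_{\gamma\in P}[\gamma]$ up to closure, and that $[P]=\supp$ of the currents built from $P$.

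\textbf{(1)$\Leftrightarrow$(2).} This is essentially Yaman's/Bowditch's characterization of geometric finiteness via the Cannon--Thurston map. If $\rho$ is geometrically finite with parabolics $\Pc$, then $\xi_\rho$ identifies exactly the boundary points of conjugates of $P$ with the corresponding parabolic fixed point, so $\Omega=\bigcup[P]$. Conversely, if the gluing locus is $\bigcup[P]$, then $\partial_\infty\Gamma$ modulo collapsing each $\partial P$ (a Bowditch boundary) maps homeomorphically onto the limit set. In that limit set every point is conical (images of points outside $\bigcup\partial P$) or bounded parabolic, so Yaman's criterion gives geometric finiteness. I would cite this rather than reprove it.

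\textbf{(2)$\Rightarrow$(3) and (3)$\Rightarrow$(2).} For (2)$\Rightarrow$(3), the theorem gives $\Ehausesc\subset\Omega=\bigcup[P]$. For the reverse inclusion, take $\alpha\in P$; using geometric finiteness via (1), its axis pairs are limits of axes of $\gamma_n=\alpha^n\beta$ (with $\beta$ fixed loxodromic), whose geodesics go deep into the cusp. Hence $r(\gamma_n)\to\infty$, the axes converge to $(\alpha^+,\alpha^-)$, and closedness gives $[P]\subset\Ehausesc$. For (3)$\Rightarrow$(2), the plan is to show $\Omega\subset\Ehausesc$ always holds with a Cannon--Thurston map, up to diagonal closure. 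A pair glued by $\xi_\rho$ is a limit of axes whose $\rho$-geodesics leave every ball, by Mitra's $\Lambda=\Omega$ plus passing to translates. The subtlety is the difference between $\Lambda$ (based at the identity) and the escaping-in-quotient $r$. So I would instead argue via (1): first show (3) forces no conical glued points, and then apply Yaman. This is the step I expect to be the main obstacle, since $\Ehausesc\subsetneq\Lambda$ may occur in general. The key use of the hypothesis is that the $P$'s are quasiconvex, so each $\bigcup[P]$ is closed and diagonally closed.

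\textbf{(3)$\Rightarrow$(4).} We have $\Econt\subset\Ehausesc=\bigcup[P]$. Conversely, $\widehat\alpha$ for $\alpha\in P$ is supported on $[\alpha]\subset\Omega$, so by (5) of the theorem it lies in $\hEcont$. Taking unions of $[\alpha]$ over $\alpha\in P$, together with the density of Theorem~\ref{main theorem: density of multigeodesics} to get all of $[P]$ (including limits of axes), gives $\bigcup[P]\subset\Econt$.

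\textbf{(4)$\Rightarrow$(5) and (5)$\Rightarrow$(6).} For (4)$\Rightarrow$(5), we have $\Eesc\subset\Econt$. For the reverse, show each $[\alpha]$, $\alpha\in P$, is in $\Eesc$. Since $\widehat\alpha\in\hEcont=\hEmesc$, there are multigeodesics $\Y_n\to\widehat\alpha$ with $r(\Y_n)\to\infty$. An atomic limit forces some component $\gamma_n$ of $\Y_n$ to carry mass near $\widehat\alpha$, and a single-geodesic escaping sequence then converges to $\widehat\alpha$, possibly after replacing the $\gamma_n$ by closing-lemma approximations (Theorem~\ref{theorem: a closing lemma}). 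Closure then gives $[P]$. The step (5)$\Rightarrow$(6) is analogous, now using that $\alpha$ with $\ell_\rho(\alpha^n\beta)$ bounded is witnessed in bounded length. This holds when $\rho(\alpha)$ is parabolic. If instead $\rho(\alpha)$ were loxodromic, a sequence $\gamma_n$ with $r(\gamma_n)\to\infty$ and $\widehat{\gamma}_n\to\widehat\alpha$ would have to fellow-travel a compactly visible axis, contradicting $r\to\infty$. So $\alpha$ is parabolic, and the bounded-length sequence exists.
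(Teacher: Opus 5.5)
Your argument for (3)$\Rightarrow$(2) is not a proof. You correctly locate the difficulty: escape from compact sets of $X$, which defines $\Lambda(\rho)=\Omega(\rho)$, versus escape in the quotient, which defines $\Ehausesc(\rho)$. But you then only propose to ``show (3) forces no conical glued points'', so the equivalence of (3) with (1) and (2) is left open. The missing ingredient is Theorem~\ref{theorem: escaping loxodromic implies geometrically infinite}: for a relatively hyperbolic pair $(\Gamma,\Pc)$, $\rho$ is geometrically infinite if and only if some relatively loxodromic $\gamma_n$ have $r(\gamma_n)\to\infty$. The paper proves (3)$\Rightarrow$(1) with it, as follows.
\begin{enumerate}
\item $\bigcup_P[P]=\Ehausesc(\rho)\subset\Omega(\rho)$ makes $\xi_\rho$ constant on each $\partial_\infty P$, so $\rho(P)$ is parabolic.
\item Hence $\partial_\infty^{(2)}P\subset\Emesc(\rho)\subset\Ehausesc(\rho)$, so $\Emesc(\rho)=\bigcup_P[P]$.
\item Lemma~\ref{lemma: ending currents contained close curves are parabolic} then yields almost malnormality, hence relative hyperbolicity of $(\Gamma,\Pc)$ by Theorem~\ref{theorem: conditions on relative hyperbolicity}.
\item If $\rho$ were geometrically infinite, the escaping relatively loxodromic axes would contribute to $\Ehausesc(\rho)$ outside $\bigcup_P[P]$.
\end{enumerate}
Your (2)$\Rightarrow$(1) sketch likewise needs this almost malnormality before you may call the collapsed boundary a Bowditch boundary.

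Several of your witnessing sequences are also wrong. In (2)$\Rightarrow$(3) you take $\gamma_n=\alpha^n\beta$ with $\rho(\beta)$ loxodromic. In a geometrically finite action every loxodromic closed geodesic meets the compact part of the core (one direction of Theorem~\ref{theorem: escaping loxodromic implies geometrically infinite}), so $r(\gamma_n)$ stays bounded. In any case the axes converge to $(\alpha^+,\beta^{-1}\alpha^-)$, which is not even in $\Omega(\rho)$. Nor is $\ell_\rho(\alpha^n\beta)$ bounded when $\rho(\alpha)$ is parabolic: in the geometrically finite case it tends to $\infty$, since bounded length with $\norm{\alpha^n\beta}\to\infty$ would force $r\to\infty$ by Proposition~\ref{proposition: bounded ending currents contained in far away currents}. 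So it cannot serve as the bounded-length witness in (5)$\Rightarrow$(6).

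The right witnesses are the elements of $P$ themselves. Once $\rho(\alpha)$ is parabolic, $r(\alpha)=+\infty$ and $\ell_\rho(\alpha^n)=0$ with $\norm{\alpha^n}\to\infty$, so $\widehat\alpha\in\hEbdd(\rho)\subset\hEesc(\rho)$. Parabolicity needs neither your extraction of a single escaping component from $\Y_n$ in (4)$\Rightarrow$(5) (unsupported as stated) nor the fellow-traveling claim in (5)$\Rightarrow$(6) (weak-$*$ convergence gives no such control). Indeed, $(\alpha^+,\alpha^-)\in\Econt(\rho)\subset\Omega(\rho)$ gives $\xi_\rho(\alpha^+)=\xi_\rho(\alpha^-)$. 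This is impossible if $\rho(\alpha)$ is loxodromic, since $\xi_\rho(\alpha^\pm)$ are then its two distinct fixed points. This is Lemma~\ref{lemma: ending currents contained close curves are parabolic}, and it gives (4)$\Rightarrow$(5)$\Rightarrow$(6) at once. Similarly, (1)$\Rightarrow$(3) is just the sandwich $\bigcup_P[P]\subset\Ebdd(\rho)\subset\cdots\subset\Ehausesc(\rho)\subset\Omega(\rho)=\bigcup_P[P]$.

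Your direct (3)$\Rightarrow$(4), via $\Econt(\rho)\subset\Ehausesc(\rho)$ and Theorem~\ref{main theorem: the relations among ending currents}(5), is fine and avoids passing through (1). However, what you need there is a single current with support exactly $[P]$, not Theorem~\ref{main theorem: density of multigeodesics}. The same applies wherever you say ``closure then gives $[P]$'', since these unions of supports need not be closed. A suitable current is $\sum_k c_k\widehat{\alpha_k}$ with summable weights and $\alpha_k\in P$ whose axis pairs are dense in $\partial_\infty^{(2)}P$.
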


The equivalence among \eqref{item: geometrically finite}, \eqref{item: yaman} and \eqref{item: topological ending} is completely clean. However, \eqref{item: contracting measure}, \eqref{item: single far away} and \eqref{item: bounded measure} involve a subtle ``mass vanishing'' phenomenon, that is, a sequence of escaping loxodromic elements in $\Gamma$ may have their axes spend most of their length deep inside parabolic regions, so that its normalized counting measures converge entirely to parabolic components. We construct such an example.

\begin{theoremalpha}\label{main theorem: geometrically infinite example}
    Let $\Gamma$ be the free group on two generators $\alpha,\beta$. There exists a Gromov hyperbolic space $X$ and a discrete faithful representation $\rho:\Gamma \to \Isom(X)$ such that 
    \begin{enumerate}
        \item $\hEcont(\rho)= \{\widehat \alpha\}$ and hence $\Econt(\rho) = [\alpha]$;
        \item $\rho$ admits a Cannon--Thurston map;
        \item $\Econt(\rho) \subsetneq \Omega(\rho)$.
    \end{enumerate}
\end{theoremalpha}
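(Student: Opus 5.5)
We construct $X$ as a tree of horoballs over the free group, with the parabolic region attached along $\langle\alpha\rangle$ tuned so that mass vanishes. Start from the Cayley tree $T$ of $F_2=\langle\alpha,\beta\rangle$. Along each coset $g\langle\alpha\rangle$, i.e.\ each $\alpha$-line in $T$, glue a ``partial horoball'' in which $\alpha$ does not become parabolic but is strongly distorted at a controlled subexponential rate. Concretely, we attach a combinatorial horoball truncated in a $g$-dependent way, or more simply a ``ladder'' in which the distance between $g\alpha^{n}$ and $g\alpha^{m}$ becomes roughly $f(|n-m|)$ for a sublinear function $f$ such as $\log$ or $\sqrt{\cdot}$, chosen so the space stays Gromov hyperbolic. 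The logarithmic choice is exactly a horoball, which would make $\rho$ geometrically finite with $\Pc=\{\langle\alpha\rangle\}$. Since we need $\Econt\subsetneq\Omega$, we make the geometry inhomogeneous instead: we also distort a sparse sequence of elements $w_k$ (e.g.\ $w_k=\beta\alpha^{n_k}$ with $n_k\to\infty$ very fast) by attaching a horoball-like region along their axes whose depth is bounded by the $\alpha$-block lengths. Alternatively, we can take $X$ to be the Bass--Serre-type space of a tower of amalgamations; we would settle the precise choice by checking hyperbolicity via the Bowditch/Groves--Manning criterion for coned-off or horoball-glued graphs.

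With $X$ fixed, we would prove the three items in order. For (2), we use Mitra's ladder criterion: hyperbolicity of $X$ together with the fact that geodesics in $T$ far from $\id$ have quasigeodesic representatives in $X$ far from $o$. Inside the glued regions, the projection to $T$ of an $X$-geodesic stays near the $T$-geodesic, and depth in the horoball is bounded by distance to $o$. For (3), we exhibit points of $\Omega(\rho)$ not in $[\alpha]$. The sequence $w_k$ has $\ell_\rho(w_k)$ bounded, or at least $r(w_k)\to\infty$, so limits of their endpoint pairs, for instance pairs $(\beta\alpha^{\infty},\alpha^{-\infty}\cdots)$ with $\beta$ letters interspersed, lie in $\Ehausesc\subset\Omega$ by Theorem~\ref{main theorem: the relations among ending currents}(4). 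Because they contain $\beta$-letters, these pairs are not in $\Gamma\cdot\{(\alpha^{\pm},\alpha^{\mp})\}$.

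The main obstacle is (1): showing $\hEcont(\rho)=\{\widehat\alpha\}$, i.e.\ every sequence with $\ell_\rho(\gamma_n)/\|\gamma_n\|\to0$ has $\widehat\gamma_n$ converging projectively to $\widehat\alpha$. First, $\widehat\alpha\in\hEcont$ via $\gamma_n=\alpha^n$, or via Theorem~\ref{main theorem: the relations among ending currents}(5) since $[\alpha]\subset\Omega$. For the converse, write a cyclically reduced $\gamma_n$ as a product of syllables $\alpha^{m_i}\beta^{\pm1}\cdots$. We will establish a length estimate
\[
\ell_\rho(\gamma_n)\;\ge\; c\Bigl(\#\{\beta\text{-letters}\}+\sum_i f(|m_i|)\Bigr)-C\,\#\{\beta\text{-letters}\},
\]
with $f$ sublinear. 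The estimate says the only distortion comes from $\alpha$-syllables, and $\beta$-letters cost coarsely linear length even at the junctions with the extra $w_k$-regions; this is where the sparseness of $n_k$ is used. Sublinearity of $\ell_\rho/\|\cdot\|$ then forces the proportion of $\beta$-letters, and of short $\alpha$-syllables, in $\gamma_n$ to tend to $0$. Hence the $\alpha$-cylinder mass dominates, and every weak-$*$ limit of $\widehat\gamma_n/\|\gamma_n\|$ is supported on the shift-invariant set $[\alpha]$, so it equals $\widehat\alpha$ up to scale. The delicate point is making the $w_k$-regions distort enough to enter $\Omega$ yet contribute no mass. This is the ``mass vanishing'': $w_k$ spends a fraction $n_k/(n_k+1)\to1$ of its length in $\alpha$-syllables, so $\widehat{w_k}\to\widehat\alpha$ anyway, and we will verify that the length estimate holds uniformly.
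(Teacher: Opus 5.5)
There is a genuine gap. Your mechanism for (3) is not merely unverified; it is incompatible with (1) and (2).

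Suppose $X$ satisfies (1) and (2). Since $\widehat\alpha\in\hEcont(\rho)$ and $\Econt(\rho)\subset\Omega(\rho)$, we get $\xi(\alpha^+)=\xi(\alpha^-)$. So $\rho(\alpha)$ is parabolic with fixed point $p=\xi(\alpha^{\pm})$.

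The stabilizer of $p$ in $\Gamma$ is then exactly $\langle\alpha\rangle$. There are two cases for $g\notin\langle\alpha\rangle$ fixing $p$:
\begin{itemize}
\item If $\rho(g)$ were loxodromic, the pairwise distinct elements $g^{-n}\alpha g^{n}$ (with $n\to+\infty$ or $n\to-\infty$, according to whether $p$ is the attracting or repelling point of $\rho(g)$) would all move $o$ by a bounded amount. This contradicts properness.
\item If $\rho(g)$ were parabolic, the nontrivial commutator $c=[\alpha,g]$ would also fix $p$, so it is parabolic and $\ell_\rho(c)=0$. Then $\widehat c\in\hEcont(\rho)=\{\widehat\alpha\}$, which forces $c$ to be conjugate to a nonzero power of $\alpha$. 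That is impossible, because $c$ is trivial in $H_1(\Gamma;\Zb)$.
\end{itemize}
Hence $\xi(\beta\alpha^+)=\rho(\beta)p\neq p=\xi(\alpha^-)$. So your witness $(\beta\alpha^+,\alpha^-)=\lim_k(w_k^+,w_k^-)$ can never lie in $\Omega(\rho)$.

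Because $\Ehausesc(\rho)\subset\Omega(\rho)$, every $X$ satisfying (1) and (2) must have $r(\beta\alpha^{n_k})$ bounded, and hence $\ell_\rho(\beta\alpha^{n_k})\to\infty$. Attaching regions that make the $w_k$ escape, or have bounded length, therefore necessarily destroys (1) or (2). The same argument rules out any escaping family of elements not conjugate into $\langle\alpha\rangle$ with a bounded number of $\beta$-letters. The limiting leaves of such a family are of the form $(g\alpha^{\pm},h\alpha^{\pm})$ with $g\langle\alpha\rangle\neq h\langle\alpha\rangle$, and these are never in $\Omega(\rho)$.

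What is missing is a construction that glues $p$ to an \emph{irrational} point of $\partial_\infty\Gamma$, that is, one outside $\Gamma\cdot\{\alpha^{\pm}\}$. This needs elements with controlled $\ell_\rho$ whose number of $\beta$-letters tends to infinity while their $\beta$-density tends to $0$. Your gluing has no such self-similar structure. Beyond that, your $X$ is never actually fixed, and its hyperbolicity, the properness of the action and the Cannon--Thurston map are left unchecked. Also, an infinite-order element with sublinear displacement under a proper action is automatically parabolic, so ``partial horoballs in which $\alpha$ is not parabolic'' do not exist.

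The paper gets the needed structure from the ascending HNN extension $G$ of $\Gamma$ by $\phi(\alpha)=\alpha^m$, $\phi(\beta)=\beta\alpha\beta\alpha^2\cdots\beta\alpha^{m-1}\beta$:
\begin{itemize}
\item $G$ is hyperbolic relative to $Q=\langle\alpha,t\rangle\cong\mathrm{BS}(1,m)$ by $C'(1/6)$ small cancellation.
\item $X$ is a Gromov model of $(G,Q)$.
\item The Cannon--Thurston map comes from Mj--Pal's theorem for trees of relatively hyperbolic spaces.
\item The point $x=\lim_n\phi^n(\beta)\notin\Gamma\cdot\{\alpha^{\pm}\}$ is fixed by $t\in Q$, so $\xi(x)=\xi(\alpha^+)$.
\end{itemize}
In that example $\ell_\rho(\phi^n(\gamma))=\ell_\rho(\gamma)$ while $B(\phi^n(\gamma))=m^nB(\gamma)$. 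So $\beta$-letters do not cost coarsely linear length, as your estimate for (1) requires; as written, with the $-C\,\#\{\beta\}$ term, your estimate would not control the $\beta$-density anyway. Instead the paper proves $\bigl(1+\tfrac{r(m-1)}{2}\bigr)B(\gamma)\le\norm{\gamma}+m^{r+1}\ell_\rho(\gamma)$ for every $r$. This yields $B(\gamma_n)/\norm{\gamma_n}\to0$ whenever $\ell_\rho(\gamma_n)/\norm{\gamma_n}\to0$.
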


\subsection{Related topics on mapping class group dynamics}

The example we constructed in Theorem~\ref{main theorem: the example of simple-stable representations} can also be applied in the study of the dynamics of mapping class group action on character varieties. Let $\Gamma$ be a hyperbolic group and let $G$ be a semisimple Lie group of rank $1$. The outer automorphism group $\Out(\Gamma)$ acts by precomposition on the character variety \[\chi(\Gamma,G) = \Hom(\Gamma,G) // G~,\] the GIT quotient of $\Hom(\Gamma,G)$ under the conjugation action of $G$. It is well-known that $\mathcal{CC}(\Gamma,G)$, the set of convex-cocompact characters of $\Gamma$ into $G$ is a domain of discontinuity in $\chi(\Gamma,G)$, that is, an open subset with the $\Out(\Gamma)$-action properly discontinuous.

When $\Gamma$ is a free group, the first author \cite{minsky2013dynamics} introduced the notion of primitive-stable representations and proved that they form a domain of discontinuity strictly containing $\mathcal{CC}(\Gamma,G)$.

Constructing such larger domains of discontinuity is more subtle when $\Gamma$ is a closed hyperbolic surface group. We still assume that $\Sigma$ is a closed oriented surface of genus at least $2$, and view the mapping class group $\Mod(\Sigma)$ as an index $2$ subgroup of $\Out(\pi_1(\Sigma))$, acting on the character variety $\chi(\pi_1(\Sigma),G)$.

When $G = \Isom^+{\Hb^2}$, Goldman \cite{goldman2005mapping} conjectured that the $\Mod(\Sigma)$-actions on the components of $\chi(\pi_1(\Sigma),\Isom^+{\Hb^2})$ except the two Teichm\"uller components are ergodic. When $G = \Isom^+{\Hb^3}$, Canary--Storm \cite{canary2013moduli} and Souto--Storm \cite{SS06} showed that the domain of quasi-Fuchsian representations (i.e., convex-cocompact representations) in $\chi(\pi_1(\Sigma),\Isom^+(\Hb^3))$ is a maximal connected domain of discontinuity (see also \cite[Proposition~17]{Lee11}).

In contrast, in higher dimension there exist non-convex-cocompact representations which still lie in a domain of discontinuity. More precisely, we show that the representation $\rho_0$ from Theorem~\ref{main theorem: the example of simple-stable representations} is not convex-cocompact, but deforms to a Fuchsian representation and lies in a domain of discontinuity, namely, the domain of simple-stable characters (see Definition~\ref{definition: simple-stable}). As a consequence, we obtain the following.

\begin{theoremalpha}\label{main theorem: simple-stable form large domain of discontinuity}
    The set of simple-stable characters $\mathcal{SS}(\pi_1(\Sigma),\Isom^+(\Hb^n))$ is a domain of discontinuity in $\chi(\pi_1(\Sigma),\Isom^+(\Hb^n))$ for the $\Mod(\Sigma)$-action, which contains $\mathcal{CC}(\pi_1(\Sigma),\Isom^+(\Hb^n))$. When $n\geqslant 4$, $\mathcal{SS}(\pi_1(\Sigma),\Isom^+(\Hb^n))$ has a component that strictly contains a component of $\mathcal{CC}(\pi_1(\Sigma),\Isom^+(\Hb^n))$.
\end{theoremalpha}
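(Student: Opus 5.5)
We need to prove Theorem~\ref{main theorem: simple-stable form large domain of discontinuity}. The definition of simple-stable is not yet visible, so we take it in the sense of \cite{minsky2013dynamics} and \cite{Lee11}: a character $[\rho]$ is simple-stable if there are constants $L,c>0$ such that, for every simple closed curve $\gamma$ on $\Sigma$, every $L$-local geodesic in the Cayley graph along the axis of $\gamma$ is mapped by an orbit map $\tau_{\rho,o}$ to a $c$-quasigeodesic in $\Hb^n$.

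\textbf{Step 1: openness.} We use the local-to-global principle for quasigeodesics in $\Hb^n$. If the stability condition holds with constants $(L,c)$, it depends only on the images of the finitely many group elements of word length at most $L$. After enlarging $L$, the condition is therefore stable under small perturbations of $\rho$, with slightly worse constants. Hence $\mathcal{SS}$ is open.

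\textbf{Step 2: invariance.} A mapping class permutes the simple closed curves. It changes the word metric on $\pi_1(\Sigma)$ by a quasi-isometry, so the constants change but the condition is preserved. Hence $\mathcal{SS}$ is $\Mod(\Sigma)$-invariant.

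\textbf{Step 3: proper discontinuity.} This follows the argument of \cite{minsky2013dynamics}. Take a compact set $K\subset\mathcal{SS}$; we may use uniform constants on $K$. Then there is $C$ such that
\[
\ell_\rho(\gamma)\geqslant C^{-1}\norm{\gamma}-C
\]
for every simple $\gamma$ and every $\rho\in K$. Conversely, $\ell_\rho(\gamma)\leqslant C\norm{\gamma}$ holds for all $\gamma$. If $\phi\cdot K\cap K\neq\emptyset$, then $\norm{\phi(\gamma)}$ is bounded in terms of $\norm{\gamma}$ for every simple $\gamma$ in a fixed finite filling set. Only finitely many mapping classes have this property, so the action is properly discontinuous.

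\textbf{Step 4: containment of $\mathcal{CC}$.} A convex-cocompact orbit map is a quasi-isometric embedding, so convex-cocompact characters are trivially simple-stable.

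\textbf{Step 5: strict containment for $n\geqslant 4$.} Take $\rho_0$ from Theorem~\ref{main theorem: the example of simple-stable representations}, composed with the inclusion $\Isom^+(\Hb^4)\subset\Isom^+(\Hb^n)$. It is not convex-cocompact, since it has a parabolic. We must show two things.

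First, $\rho_0$ is simple-stable. Geometric finiteness with the single cusp group $\langle\alpha\rangle$, where $\alpha$ is non-simple, gives a quasi-isometric embedding of $\pi_1(\Sigma)$ coned off along the conjugates of $\langle\alpha\rangle$. A simple geodesic cannot fellow-travel $\alpha^k$ for long: if it did, it would contain long powers of a non-simple curve, which forces a self-intersection. So the axis of a simple curve spends only bounded word length in each horoball. Its $L$-local segments then map to uniform quasigeodesics, using the structure of geometrically finite quotients (relative quasiconvexity).

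Second, $\rho_0$ lies in the component containing a Fuchsian representation. This needs a path from $\rho_0$ to a Fuchsian representation inside $\mathcal{SS}$. It should come from the construction in Theorem~\ref{main theorem: the example of simple-stable representations}, presumably a deformation in which $\alpha$ is pinched to parabolic while the intermediate representations remain convex-cocompact. Along that path $\rho_0$ sits at the boundary of a $\mathcal{CC}$ component. Since $\mathcal{SS}$ is open, a neighborhood of $\rho_0$ lies in $\mathcal{SS}$, so the $\mathcal{SS}$ component through it contains the $\mathcal{CC}$ component and the non-convex-cocompact point $\rho_0$.

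\textbf{Main obstacle.} The hardest step is the uniform bound in Step 5 on how long simple geodesics spend near the cusp. This is a combinatorial fact: a simple closed geodesic cannot contain arbitrarily long subwords $\alpha^k$ for a non-simple $\alpha$. We would prove it on the universal cover. The lifts of $\alpha$ that cross one another inside a fundamental domain force a simple geodesic following a lift for more than a couple of periods to cross its own translate.
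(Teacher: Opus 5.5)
\textbf{The gap is in the second half of Step 5.} The last clause of the theorem needs a component of $\mathcal{SS}$ that strictly contains a component of $\mathcal{CC}$. Knowing $\rho_0\in\mathcal{SS}\setminus\mathcal{CC}$ only gives $\mathcal{SS}\supsetneq\mathcal{CC}$. What you need is a continuous path of convex-cocompact characters accumulating on $\rho_0$. The statement of Theorem~A provides no such path, and you only presume one.

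Your guess, pinching $\alpha$ while staying convex-cocompact, does not make sense on $\Sigma$ itself, because a non-simple curve cannot be pinched in a hyperbolic structure on $\Sigma$.

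The missing idea is the double-cover symmetry built into $\rho_0$:
\begin{enumerate}
\item The curve $c$ lifts to a simple curve $\widetilde c$ on a double cover $\widetilde\Sigma$ with deck involution $\tau$.
\item The restriction $\rho=\rho_0|_{\pi_1(\widetilde\Sigma)}$ is the geometrically finite Kleinian group with ending invariants $(\widetilde c,\mu)$ and $\tau(\widetilde c,\mu)$.
\item It extends over $\pi_1(\Sigma)$ via an orientation-reversing isometry $\Phi$ conjugating $\rho$ to its $\tau$-twist; $\Phi$ exists by uniqueness in simultaneous uniformization.
\item $\rho$ is a limit of quasi-Fuchsian groups, which must be chosen $\tau$-symmetric so that $\Phi$ persists and varies continuously.
\item Each of these extends to a representation of $\pi_1(\Sigma)$ into $\mathsf{O}^+(3,1)\subset\SO^+(4,1)$. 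It is convex-cocompact because its restriction to a finite-index subgroup is.
\item One then continues to a representation into $\SO^+(2,1)\times\Zb/2\Zb\subset\SO^+(2,1)\times\SO(2)$. Finally the $\SO(2)$-character is deformed to the trivial one through $H_1(\Sigma,\Zb)$.
\end{enumerate}
Only with this path in hand does your component argument go through. This step, not the bounded-penetration lemma you flagged as the main obstacle, is the substantive missing piece.

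\textbf{The rest of your proposal.} Steps 1--4 agree with what the paper cites from Minsky's and Lee's local-to-global arguments.

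For the simple-stability of $\rho_0$ you take a different route from the paper. The paper runs Lee's argument for twisted $I$-bundle groups: a discrete representation in which every simple curve is loxodromic is simple-stable. Your route combines geometric finiteness with the observation that a simple closed geodesic cannot fellow-travel a lift of the non-simple cusp curve for more than a couple of periods. Otherwise it would cross a translate of itself near a self-crossing of that curve.

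Your route is sound and more general: it applies to any geometrically finite representation whose cusp groups are cyclic and generated by non-simple curves. But the final step needs an actual argument, not a one-line appeal to relative quasiconvexity. You need two facts:
\begin{itemize}
\item the orbit map of a geometrically finite action extends to a quasi-isometric embedding of the cusped space;
\item Cayley-graph geodesics with uniformly bounded peripheral coset penetration are uniform quasigeodesics in that cusped space.
\end{itemize}
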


Lee \cite{Lee11} proved essentially this result for $n=3$ when $\Sigma$ is nonorientable, and our argument is a variation on her ideas. A recent work of Remfort-Aurat \cite{remfort2026simple} gave a similar result in $\mathsf{PU}(2,1)$.

As a direct corollary from the theorem,

\begin{corollary}
    For $n\geqslant 4$, there exist simple-stable representations of $\pi_1(\Sigma)$ into $\SO^+(n,1)$ with dense image.
\end{corollary}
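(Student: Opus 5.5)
The corollary follows from Theorem~\ref{main theorem: simple-stable form large domain of discontinuity}, combined with an openness-plus-density argument inside the strictly larger component. Fix $n\geqslant 4$ and let $\mathcal{U}$ be the component of $\mathcal{SS}(\pi_1(\Sigma),\Isom^+(\Hb^n))$ from the theorem. It strictly contains a component of $\mathcal{CC}$, and it contains the geometrically finite but non-convex-cocompact character $[\rho_0]$ of Theorem~\ref{main theorem: the example of simple-stable representations}. Since $\mathcal{SS}$ is a domain of discontinuity, it is open in $\chi(\pi_1(\Sigma),\Isom^+(\Hb^n))$. The plan is to show that characters with dense image are dense near a suitable point of $\mathcal U$, or at least that one of them lies in $\mathcal U$. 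Passing from $\Isom^+(\Hb^n)$ to $\SO^+(n,1)$ is just the identification $\Isom^+(\Hb^n)\cong \SO^+(n,1)$.

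\textbf{Step 1: reduce to a Zariski-dense point.} First I find a point $[\rho_1]\in\mathcal U$ whose image is Zariski dense in $\SO^+(n,1)$. Starting from $\rho_0$, which may land in a copy of $\Isom^+(\Hb^4)$, I deform within $\mathcal U$; this is possible because $\mathcal U$ is open. Representations with Zariski-dense image form a Zariski open subset of the relevant component of $\Hom$. This subset is nonempty, since $\pi_1(\Sigma)$ surjects onto a free group of rank $\geqslant 2$ and free groups have Zariski-dense representations. So, provided we work in an irreducible component of $\Hom$ meeting $\mathcal U$, Zariski-dense representations are dense there and one lies in $\mathcal U$. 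Verifying that the component through $\rho_0$ is not contained in a proper algebraic family needs care. I would instead deform along bending or pinching constructions used in the proof of the theorem.

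\textbf{Step 2: make the image non-discrete while Zariski dense.} A Zariski-dense subgroup of the simple group $\SO^+(n,1)$ is either discrete or dense; this uses the identity component of its closure, which is normalized by a Zariski-dense group and hence is trivial or everything. So it suffices to find a point of $\mathcal U$ that is Zariski dense and non-discrete. Non-discrete representations are dense in the character variety away from a nowhere dense set. Concretely, I would perturb one element, say the image of a simple closed curve $c$ whose translation length is a continuous function on $\mathcal U$, so that $\rho(c)$ becomes elliptic of infinite order. Near a convex-cocompact point this is impossible, since those are discrete. This is exactly where the strict containment in the theorem is used: $\mathcal U$ reaches beyond $\mathcal{CC}$, through $\rho_0$ with its parabolic class, and there the relevant element is parabolic. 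A small deformation should then make that element elliptic of irrational rotation, while simple-stability persists because $\mathcal U$ is open.

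\textbf{Main obstacle.} The hard point is that the element of $\rho_0$ which is parabolic corresponds to a \emph{non-simple} curve. Simple-stability only controls simple curves, which is what allows this element to be made elliptic while remaining in $\mathcal{SS}$. The delicate step is exhibiting a deformation of $\rho_0$ that makes this element elliptic of infinite order; I would do this via a local perturbation of the parabolic fixing its conjugacy class's trace. Combining this with Step 1's generic Zariski density inside the open set $\mathcal U$ gives a representation that is Zariski dense and non-discrete, hence has dense image.
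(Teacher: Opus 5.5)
Your architecture is presumably what the paper's one-line deduction from Theorem~\ref{main theorem: simple-stable form large domain of discontinuity} has in mind: $\mathcal{SS}$ is open and contains the non-convex-cocompact character $[\rho_0]$, so it suffices to find nearby a representation that is Zariski dense and non-discrete. The dichotomy you then invoke, that a Zariski-dense subgroup of $\SO^+(n,1)$ is discrete or dense, is correct. But the proposal never produces such a representation. The two steps that carry all the content beyond that theorem are asserted, not proved.

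\textbf{The deformation of Step 2 is missing.} You cannot perturb the single element $\rho_0(c)$. Every deformation is a deformation of the whole representation subject to the surface relation. Since $c$ is non-simple, $\pi_1(\Sigma)$ does not split over $\langle c\rangle$, so no twist or bending move changes the conjugacy class of $\rho_0(c)$ directly. Moreover, the invariant you propose to fix is exactly the one that must move: going from parabolic to elliptic in a copy of $\mathrm{PSL}_2(\mathbb C)$ means the trace leaves $\pm 2$ for $(-2,2)$. You need an actual analysis of the deformation space near $\rho_0$ showing that $[\rho]\mapsto[\rho(c)]$ reaches elliptic classes of infinite order. One possible way is to study $\tau$-symmetric deformations of $\rho$ on $\pi_1(\widetilde\Sigma)$, where $\widetilde c$ is simple. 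Separately, your claim that non-discrete representations are dense away from a nowhere dense set is false as stated, since $\mathcal{CC}$ is a nonempty open set of discrete representations.

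\textbf{Steps 1 and 2 are never combined.} Non-discreteness is not an open condition in general. ``$\rho(c)$ elliptic of infinite order'' is never open, and for $n$ odd the elliptic elements of $\SO^+(n,1)$ even have positive codimension. So a generic Zariski-dense perturbation may destroy what Step 2 produced. Conversely, the natural Step 2 deformation stays in $\mathsf{O}^+(3,1)\times\{\pm1\}$, which is far from Zariski dense.

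The missing ingredient is an openness result such as the Zassenhaus--Kazhdan--Margulis theorem: if $\rho_k\to\rho$ with every $\rho_k$ discrete, then the identity component of $\overline{\rho(\Gamma)}$ is nilpotent. Hence if $\overline{\rho'(\pi_1(\Sigma))}$ has non-nilpotent identity component, every representation close to $\rho'$ is non-discrete. This holds as soon as $\rho'(\pi_1(\widetilde\Sigma))$ is non-elementary and non-discrete in $\Isom(\Hb^3)$. Together with openness of Zariski density, this makes ``dense image'' an open condition. The corollary then reduces to two facts:
(a) a non-discrete, non-elementary deformation $\rho'$ of $\rho_0$, which is the missing deformation above;
(b) a Zariski-dense representation arbitrarily close to $\rho'$.

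Your Step 1 does not yet give (b). The surjection onto a free group yields Zariski-dense points of $\Hom$, not of the component through $\rho_0$, as you note yourself. You would need, for example, bendings along simple closed curves by centralizing rotations that push the image off the totally geodesic $\Hb^3$, and off $\Hb^4$ when $n\geqslant 5$.
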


\subsection{Structure of the paper}

In Section~\ref{section: construction of a simple-stable representation}, we recall the simultaneous uniformization theorem for geometrically finite Kleinian groups and basic facts about simple-stable representations. We then construct the higher dimensional surface group example and prove Theorem~\ref{main theorem: simple-stable form large domain of discontinuity}.

In Section~\ref{section: dynamics on geodesic flows}, we study geodesic flows for hyperbolic groups and prove a closing lemma for the flow space (Theorem~\ref{theorem: a closing lemma}). From this we deduce Theorem~\ref{main theorem: density of multigeodesics}.

We discuss the notions of ending currents in Section~\ref{section: ending currents}. In particular, we introduce the various notions of ending current and relate them to Cannon--Thurston maps and Mj's algebraic ending prelaminations in Section~\ref{subsection: notions of ending current}, where we prove Theorem~\ref{main theorem: the relations among ending currents}. We then prove Theorem~\ref{main theorem: limit set of convex-cocompact representations}, Theorem~\ref{main theorem: triangles limit to a polygon} in Section~\ref{subsection: ending currents of hyperbolic group extensions},  Theorem~\ref{main theorem: geometric finiteness and ending currents} in Section~\ref{subsection: ending currents of geometrically finite representations}, and Theorem~\ref{main theorem: geometrically infinite example} in Section~\ref{section: geometrically infinite example}.

In Section~\ref{section: relatively hyperbolic groups}, we recall some preliminaries on relatively hyperbolic groups. In particular in preparation for the proof of Theorem~\ref{main theorem: geometrically infinite example}, we recall a criterion for relative hyperbolicity in Section~\ref{section: 1/6} and a criterion for the existence of Cannon--Thurston maps in Section~\ref{section: mj-pal}.

\subsection*{Acknowledgments}
The second author would like to thank Junzhi Huang and Eduardo Reyes for many interesting discussions.

This paper is partially based upon work supported by the National Science Foundation under Grant No. DMS-2424139, while the authors were in residence at the Simons Laufer Mathematical Sciences Institute in Berkeley, California, during the Spring 2026 semester. The first author was also supported by National Science Foundation Grant No. DMS-2505914.

\section{Construction of a simple-stable representation}\label{section: construction of a simple-stable representation}

\subsection{\texorpdfstring{Rigidity of hyperbolic $3$-manifolds}{Rigidity of hyperbolic 3-manifolds}}\label{section: rigidity}

Let $\Sigma$ be a closed oriented hyperbolic surface and $\rho:\pi_1(\Sigma)\to \Isom^+(\Hb^3)$  a geometrically finite representation. Let $\Lc_\rho$ be the limit set of $\rho$ in $\partial_\infty \Hb^3$. Then  $N = \rho(\pi_1(\Sigma))\backslash\big(\Hb^3\cup (\partial_\infty \Hb^3 \setminus\Lc_\rho)\big)$ is homeomorphic to $\Sigma\times [0,1] \setminus P$ equivariantly up to homotopy, where $P = P^+ \cup P^-$, $P^+\subset \Sigma\times \{1\}$, $P^-\subset \Sigma\times \{0\}$ are (embedded) multi-curves and no components of $P^+$ and $P^-$ are homotopic in $\Sigma\times[0,1]$. $P$ precisely records the cusp curves of $N$.

The boundary components $(\Sigma\setminus P^+)\times\{1\}$ and $(\Sigma\setminus P^-)\times\{0\}$ of $\Sigma\times [0,1]\setminus P$ are identified with the boundary components of $\partial N$. Each boundary component gives an element $\mu^\pm \in \Teich(\Sigma\setminus P^\pm)$. We call $P^\pm$ together with $\mu^+,\mu^-$ the \emph{ending data} of $\rho$. We also call $\nu^\pm = (P^\pm, \mu^\pm)$ the \emph{ending invariants} of $\rho$.

The following facts, known as a generalized simultaneous uniformization theorem, follow from results of Ahlfors--Bers \cite{ahlfors1960riemann}, Bers \cite{bers2006spaces}, Kra \cite{kra1972spaces}, Marden \cite{marden1974geometry}, Maskit \cite{maskit1970boundaries}, Mostow \cite{mostow1968quasi}, Prasad \cite{prasad1973strong}, Thurston \cite{thurston1982three} and others on the classification of geometrically finite hyperbolic $3$-manifolds.

\begin{theorem}\label{theorem: simultaneous uniformization}
    Suppose $\Sigma$ is a closed oriented hyperbolic surface.
    \begin{enumerate}
        \item For any multi-curves $P^+,P^-\subset \Sigma$ that share no common component and $\mu^\pm\in \Teich(\Sigma\setminus P^\pm)$, there exists a geometrically finite representation $\rho$ with ending data $(P^\pm,\mu^\pm)$.
        \item A geometrically finite representation $\rho: \pi_1(\Sigma) \to \Isom^+(\Hb^3)$ is determined uniquely up to conjugacy by its ending data.
    \end{enumerate}
\end{theorem}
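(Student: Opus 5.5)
The plan is to assemble the theorem from the classical deformation theory of geometrically finite Kleinian groups. I will treat the geometrically finite surface group as a geometrically finite manifold with a pared structure, and split the argument into existence and uniqueness.

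\emph{Existence.} Start from a Fuchsian (or quasi-Fuchsian) representation $\rho_0$, which exists by Teichmüller theory. Its domain of discontinuity $\Omega_{\rho_0}$ has two components, with quotients two marked surfaces. Pinching is realized in one of two ways, and I would use the first since it is more elementary.
\begin{enumerate}
\item Take a maximal cusp degeneration along $P^+$ on one side and $P^-$ on the other: pinch the curves of $P^\pm$ in the Bers slice on the respective boundary component. Maskit's theory of boundary groups \cite{maskit1970boundaries}, together with Kra's plumbing construction \cite{kra1972spaces}, produces a group in the boundary of the Bers slice in which exactly the curves of $P^\pm$ have become parabolic. The remaining part of the conformal boundary realizes an arbitrary point of $\Teich(\Sigma\setminus P^\pm)$.
\item Alternatively, apply Thurston's hyperbolization for pared manifolds \cite{thurston1982three} to the pared manifold $(\Sigma\times[0,1], P^+\times\{1\}\cup P^-\times\{0\})$. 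The pared condition holds because $P^+$ and $P^-$ share no component, so no essential annulus joins two parabolic loci. Then invoke the Ahlfors--Bers measurable Riemann mapping theorem \cite{ahlfors1960riemann,bers2006spaces} to see that the quasiconformal deformation space is $\Teich(\Sigma\setminus P^+)\times\Teich(\Sigma\setminus P^-)$ via the conformal boundary.
\end{enumerate}

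\emph{Uniqueness.} Let $\rho_1,\rho_2$ be geometrically finite with the same ending data.
\begin{enumerate}
\item By Marden \cite{marden1974geometry}, both quotient manifolds are compactifiable to the same pared manifold. There is then a homotopy equivalence, in the right homotopy class, that respects cusps and extends to a quasiconformal homeomorphism between the conformal boundaries.
\item Because the boundary Riemann surfaces agree, this map can be chosen conformal on $\Omega$. Lift it to an equivariant quasiconformal map of $\hat{\mathbb{C}}$ whose Beltrami differential is supported on the limit set.
\item Geometrically finite limit sets have measure zero (Ahlfors' measure-zero result in the geometrically finite case). Hence the map is conformal, i.e.\ Möbius. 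Equivalently, apply Mostow--Prasad rigidity \cite{mostow1968quasi,prasad1973strong} in its Sullivan/Tukia form.
\end{enumerate}

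\emph{Main obstacle.} The hardest step is building the equivariant quasiconformal conjugacy in the uniqueness direction, specifically near the cusps. One must show that rank-one cusps with matching combinatorics admit equivariant quasiconformal maps on their horoball neighborhoods glued to the boundary maps. I would handle this with Marden's standard cusp neighborhoods, which are explicit Möbius models, and interpolate inside the compact core where quasiconformality is automatic.
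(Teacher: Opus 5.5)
The paper gives no argument of its own here. It attributes the theorem to the classical work of Ahlfors--Bers, Bers, Kra, Marden, Maskit, Mostow, Prasad and Thurston, and your outline assembles essentially those results, so the overall strategy agrees. Two steps, however, do not work as stated.

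First, your preferred existence route breaks down exactly when $P^+$ and $P^-$ are both nonempty. Every group in the closure of a Bers slice has an invariant component whose quotient is the fixed base surface, so pinching happens on one side only, and no boundary group of a Bers slice has $P^+\neq\emptyset\neq P^-$. That is precisely the case used in Section~\ref{section: construction of the example}, where $P^+=\widetilde c$ and $P^-=\tau(\widetilde c)$. (Also, ``maximal cusp'' refers to pinching a full pants decomposition.) Your second route is the right one. Thurston's hyperbolization of the pared manifold $(\Sigma\times[0,1],\,P^+\times\{1\}\cup P^-\times\{0\})$ gives one geometrically finite group with the prescribed parabolic locus. Lifting Beltrami differentials from its conformal boundary, extending by zero and solving the Beltrami equation then realizes every $(\mu^+,\mu^-)$. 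Maskit's combination theorems are an alternative source of the initial group.

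Second, in the uniqueness part the load-bearing step is ``this map can be chosen conformal on $\Omega$'', and it does not follow by isotoping the induced map of boundary surfaces to a conformal one and lifting. You would still have to show that the lift extends continuously by $f|_{\Lambda}$ and that the resulting homeomorphism of $\hat{\mathbb C}$ is quasiconformal. A closed set of measure zero is in general not removable for quasiconformal homeomorphisms, so Ahlfors' theorem cannot be used at that point. The missing ingredient is the theorem of Bers and Kra describing the quasiconformal deformation space of a geometrically finite group as a quotient of $\Teich(\Omega/\Gamma)$: Teichm\"uller-equivalent invariant Beltrami differentials supported on $\Omega$ induce M\"obius-conjugate deformations.

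Apply it to the Beltrami coefficient $\mu$ of Marden's conjugacy $f$, so that $\rho_2(\gamma)=f\rho_1(\gamma)f^{-1}$. Because the limit set $\Lambda_1$ of $\rho_1$ has measure zero, $\mu$ is determined by its restriction to the domain of discontinuity $\Omega_1$. That restriction is Teichm\"uller-trivial because the marked boundary surfaces coincide, hence $\rho_2$ is conjugate to $\rho_1$.

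By contrast, the cusp analysis you single out as the main obstacle is already contained in Marden's isomorphism theorem, which you invoke in step 1. What that step does need is an orientation-preserving homeomorphism of the Kleinian manifolds inducing $\rho_2\circ\rho_1^{-1}$, not just a homotopy equivalence. Here it comes from the product structure $\Sigma\times[0,1]\setminus P$.
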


\subsection{Review of simple-stable representations}

We assume that $\Sigma$ is a closed hyperbolic surface. For a representation $\rho:\pi_1(\Sigma)\to \Isom^+(\Hb^n) \cong \SO^+(n,1)$ and a fixed base point $o\in \Hb^n$, the orbit map $\tau_{\rho,o}:\pi_1(\Sigma)\to \Hb^n$, $\gamma \mapsto \rho(\gamma)\cdot o$ reflects many significant geometric properties of $\rho$. We fix a finite generating set $S$ of $\pi_1(\Sigma)$ and let $\abs{\cdot}_S$ denote the word metric. Recall that $\rho$ is \emph{convex-cocompact} if $\tau_{\rho,o}$ is a quasi-isometric embedding. Simple-stability, as an analog of primitive-stability for free groups, requires the quasi-isometric embedding property only along simple closed geodesics in $\Sigma$.

\begin{definition}\label{definition: simple-stable}
    A representation $\rho:\pi_1(\Sigma)\to \SO^+(n,1)$ is simple-stable if there exist constants $C\geqslant 1 $ and $c\geqslant 0$, such that $\tau_{\rho,o}$ maps the axis of every element represented by a simple closed geodesic on $\Sigma$ to a $(C,c)$-quasi-geodesic in $\Hb^n$.
\end{definition}

We denote the set of all simple-stable representations of $\pi_1(\Sigma)$ into $\SO^+(n,1)$ modulo conjugation by $\mathcal{SS}(\pi_1(\Sigma),\SO^+(n,1))$. The mapping class group $\Mod(\Sigma)$ acts on the character variety $\chi(\pi_1(\Sigma),\SO^+(n,1))$ by precomposition and preserves $\mathcal{SS}(\pi_1(\Sigma),\SO^+(n,1))$ as $\Mod(\Sigma)$ preserves the set of simple closed geodesics on $\Sigma$.

\begin{theorem}\label{theorem: simple-stable form domain of discontinuity}
    $\mathcal{SS}(\pi_1(\Sigma),\SO^+(n,1))$ is an open subset in the character variety  $\chi(\pi_1(\Sigma),\SO^+(n,1))$ and the $\Mod(\Sigma)$-action on $\mathcal{SS}(\pi_1(\Sigma),\SO^+(n,1))$ is properly discontinuous.
\end{theorem}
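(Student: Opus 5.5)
The plan is to follow Minsky's argument for primitive-stable representations of free groups, replacing primitive elements by simple closed curves and the Whitehead graph argument by a compactness argument on measured laminations. There are two parts: openness, and proper discontinuity.

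\emph{Openness.} Fix a hyperbolic metric on $\Sigma$ and lift to $\widetilde\Sigma$. Identify $\pi_1(\Sigma)$ with an orbit in $\widetilde\Sigma$, so that axes of elements represented by simple closed geodesics become bi-infinite geodesics in $\widetilde\Sigma$. The leaves of the lifts of all simple closed geodesics, together with their limits, form the leaves of $\widetilde\lambda$ for $\lambda$ ranging over the compact space of geodesic laminations. Thus simple-stability of $\rho$ means the following: the orbit map $\tau_{\rho,o}$, extended piecewise geodesically, sends every leaf of every lift of a simple closed geodesic to a uniform $(C,c)$-quasi-geodesic.

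We use the local-to-global principle in $\Hb^n$. A path all of whose subsegments of length $L$ are $(C',c')$-quasi-geodesics, with $L$ large depending on $C',c',\delta$, is a global $(C'',c'')$-quasi-geodesic. A segment of length $L$ on the axis of a simple curve is one of finitely many words up to the $\pi_1(\Sigma)$-action, since it is a bounded-length path in $\Sigma$. Its image under $\tau_{\rho,o}$ depends continuously on $\rho$. Hence if the finitely many $L$-segments arising from simple curves are $(C,c)$-quasi-geodesic for $\rho$, they remain $(2C,c+1)$-quasi-geodesic for $\rho'$ near $\rho$. Choosing $L$ adapted to $(2C,c+1)$ then makes $\rho'$ simple-stable.

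The subtle point is that an $L$-subsegment of a simple geodesic's axis need not itself be a segment of a simple curve. It is, however, a segment of a leaf of a geodesic lamination. So the finite set we must control is the set of $L$-segments of leaves of laminations, realized as words in the generators. Each such segment is $C_0$-close to a segment of some simple closed geodesic, since simple closed curves are dense in the lamination space in the Hausdorff topology. This closeness, with uniform constants, is enough for the argument.

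\emph{Proper discontinuity.} Take a compact set $K\subset\mathcal{SS}$. By openness and compactness the simple-stability constants are uniform over $K$, and we also have uniform upper bounds $\ell_{\rho}(\gamma)\le A|\gamma|$ for $\rho\in K$. Uniform simple-stability then gives
\[\ell_\rho(\gamma)\ge C^{-1}\|\gamma\|-c\]
for simple $\gamma$ and $\rho\in K$. Suppose $\phi\in\Mod(\Sigma)$ satisfies $K\phi\cap K\ne\emptyset$, say $\rho,\rho\circ\phi\in K$. For every simple curve $\gamma$,
\[C^{-1}\|\phi(\gamma)\|-c\le \ell_\rho(\phi(\gamma))=\ell_{\rho\circ\phi}(\gamma)\le A\|\gamma\|.\]
Apply this to a finite filling collection of simple curves $\gamma_1,\dots,\gamma_k$, for example one whose Dehn twists generate $\Mod(\Sigma)$ or which determines a marking. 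The word lengths $\|\phi(\gamma_i)\|$ are then bounded by a constant depending only on $K$. Only finitely many mapping classes send a fixed filling system of curves to curves of bounded length, since the resulting marking has bounded length. So only finitely many $\phi$ occur, which is proper discontinuity.

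\emph{Main obstacle.} The hardest step is the openness argument, specifically making the local-to-global passage uniform over all simple curves. One must check that every long subsegment of an axis of a simple curve is controlled by a finite amount of data, namely the finitely many bounded-length segments of laminations. This requires the compactness of the space of geodesic laminations in the Hausdorff topology. I would first try to handle it through this compactness, following Lee's treatment in \cite{Lee11}.
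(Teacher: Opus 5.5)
Your proposal is correct and takes essentially the approach the paper points to, namely the Minsky/Lee local-to-global argument. Openness comes from continuity in $\rho$ on the finitely many length-$L$ segments of simple-curve axes combined with the local-to-global principle, and proper discontinuity comes from the bounds $C^{-1}\|\phi(\gamma)\|-c\le \ell_\rho(\phi(\gamma))=\ell_{\rho\circ\phi}(\gamma)\le A\|\gamma\|$ applied to a filling system of simple curves.

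The lamination detour in your openness step is superfluous, and the density claim it rests on is false as stated. An $L$-subsegment of the axis of a simple closed curve is already a segment of a simple-curve axis, so its $\tau_{\rho,o}$-image is automatically a $(C,c)$-quasi-geodesic, and the finiteness you need comes from there being finitely many words of length $L$, not from compactness of lamination space. This matters because only chain-recurrent laminations are Hausdorff limits of simple closed multicurves, so simple closed curves are not dense among all geodesic laminations.
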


The theorem can be shown by a similar argument as in \cite[Lemma~3.2, Theorem~3.3]{minsky2013dynamics} and \cite[Proposition~10]{Lee11} based on a local-to-global argument, or \cite[Theorem~5.3]{TW23} based on a contracting flow argument.

\subsection{Construction of the example}\label{section: construction of the example}
We now fix an arbitrary closed oriented hyperbolic surface $\Sigma$. Let $\pi:\widetilde{\Sigma}\to \Sigma$ be a double covering of $\Sigma$. Then there exists a simple closed geodesic $\widetilde{c}$ on $\widetilde{\Sigma}$ such that $c = \pi(\widetilde{c})$ on $\Sigma$ has essential self-intersections. We denote by $\tau$ the involution in the deck group $\Gal(\widetilde{\Sigma}/\Sigma)\cong \Zb/2\Zb$. We fix a base point $x_0\in \widetilde{\Sigma}$, let $\alpha\in \pi_1(\Sigma,\pi(x_0))$ be such that it lifts to a path from $x_0$ to $\tau(x_0)$, which induces an isomorphism $\pi_1(\widetilde{\Sigma},x_0)\cong \pi_1(\widetilde{\Sigma},\tau(x_0))$.

We fix some $\mu \in \Teich(\widetilde{\Sigma}\setminus \widetilde{c})$, and let $\nu^+=(\widetilde{c},\mu)$ be an ending invariant on $\widetilde\Sigma$. Let $\nu^- = \tau(\nu^+)$. Then by Theorem~\ref{theorem: simultaneous uniformization}~(1), there exists a geometrically finite representation $\rho: \pi_1(\widetilde{\Sigma},x_0)\to \Isom^+(\Hb^3) \cong \SO^+(3,1)$ with ending invariants $(\nu^+,\nu^-)$ with the parabolic subgroups given by the conjugates of $\ip{\rho(\widetilde{c})}$ and $\ip{\rho(\tau(\widetilde{c}))}$.

On the other hand $\rho\circ \tau:\pi_1(\widetilde{\Sigma},\tau(x_0))\to \SO^+(3,1)$ can be understood as a representation from $\pi_1(\widetilde{\Sigma},x_0)$ by precomposing the isomorphism between $\pi_1(\widetilde{\Sigma},x_0)$ and $\pi_1(\widetilde{\Sigma},\tau(x_0))$ induced by $\alpha$. In particular, we have $\rho\circ\tau(\alpha^{-1}\gamma\alpha) = \rho(\gamma)$ for any $\gamma\in\pi_1(\widetilde{\Sigma},x_0)$.

Notice that $\rho\circ \tau$ has ending invariants $(\nu^-,\nu^+)$.
Suppose $\Phi_0\in \mathsf{O}^+(3,1)$ is an orientation reversing isometry. Then $\Phi_0\rho(\gamma)\Phi_0^{-1}$ also has ending invariants $(\nu^-,\nu^+)$. Then by Theorem~\ref{theorem: simultaneous uniformization}~(2), there exists an orientation preserving isometry $\Phi_1\in \SO^+(3,1)$ such that $\Phi_1\Phi_0\rho(\gamma)\Phi_0^{-1}\Phi_1^{-1} = \rho(\tau(\gamma))$. We write $\Phi = \Phi_1\Phi_0$.

Since $\pi_1(\widetilde{\Sigma},x_0)$ is a subgroup of $\pi_1(\Sigma,\pi(x_0))$ of index $2$ and $\alpha$ is lifted to a path from $x_0$ to $\tau(x_0)$, then $\alpha$ is a representative of the non-identity coset of $\pi_1(\widetilde{\Sigma},x_0)$. By these two relations $\Phi\rho(\gamma)\Phi^{-1} = \rho(\tau(\gamma))$ and $\rho(\tau(\alpha^{-1}\gamma\alpha)) = \rho(\gamma)$ for any $\gamma\in \pi_1(\widetilde{\Sigma},x_0)$, it is immediate to see

\begin{lemma}
    The conditions $\rho_0 \vert_{\pi_1(\widetilde{\Sigma},x_0)} = \rho $ and $\rho_0(\alpha) = \Phi$ uniquely define a representation $\rho_0:\pi_1(\Sigma,\pi(x_0)) \to \mathsf{O}^+(3,1)$.
\end{lemma}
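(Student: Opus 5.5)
We have an index-2 subgroup $K=\pi_1(\widetilde{\Sigma},x_0)$ of $\Gamma=\pi_1(\Sigma,\pi(x_0))$, with $\Gamma = K \sqcup K\alpha$. The plan is to define $\rho_0$ on the two cosets and check directly that it is a homomorphism; uniqueness is then automatic. Indeed, any homomorphism restricting to $\rho$ on $K$ and sending $\alpha$ to $\Phi$ must satisfy $\rho_0(k\alpha)=\rho(k)\Phi$. Since every element of $\Gamma$ is uniquely $k$ or $k\alpha$ with $k\in K$, this forces the formula
\[\rho_0(k)=\rho(k),\qquad \rho_0(k\alpha)=\rho(k)\Phi .\]
This formula is well defined precisely because the coset decomposition is unique.

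First I would translate the two given relations into a single conjugation identity. Write $\sigma:K\to K$ for the automorphism $\gamma\mapsto\alpha\gamma\alpha^{-1}$; this is well defined since $K$ is normal (index 2). The relation $\rho\circ\tau(\alpha^{-1}\gamma\alpha)=\rho(\gamma)$ says that $\rho\circ\tau=\rho\circ\sigma$ on $K$, where $\tau$ is read via the $\alpha$-identification of base points. Combining this with $\Phi\rho(\gamma)\Phi^{-1}=\rho(\tau(\gamma))$ gives the key identity
\[\Phi\,\rho(\gamma)\,\Phi^{-1}=\rho(\alpha\gamma\alpha^{-1})\qquad(\gamma\in K).\]
I expect this bookkeeping to be the main obstacle: the conventions for $\tau$ acting on $\pi_1$ with a changed base point, and whether one gets $\alpha\gamma\alpha^{-1}$ or $\alpha^{-1}\gamma\alpha$, must be tracked carefully. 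If the conventions land on the inverse, one replaces $\Phi$ by the matching side of the formula.

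Second, I would use the fact that $\alpha^2\in K$ and check $\Phi^2=\rho(\alpha^2)$. Taking $\gamma=\alpha^2$ in the key identity only shows that $\Phi$ commutes with $\rho(\alpha^2)$. To get more, note that $\Phi^2$ and $\rho(\alpha^2)$ both conjugate $\rho(\gamma)$ to $\rho(\alpha^2\gamma\alpha^{-2})$. Hence $\rho(\alpha^{2})^{-1}\Phi^2$ centralizes $\rho(K)$. Since $\rho$ is discrete, faithful and non-elementary, its centralizer in $\mathsf{O}^+(3,1)$ is trivial: an isometry commuting with a non-elementary group fixes its limit set pointwise, and that limit set contains more than three points spanning the sphere. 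So $\Phi^2=\rho(\alpha^2)$. This is the one genuinely non-formal input. One should also note that the lemma implicitly uses the freedom in choosing $\Phi$ only up to this trivial centralizer, so $\Phi$ is in fact determined.

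Finally, I would verify multiplicativity on the four coset cases, using the key identity in the form $\Phi\rho(k)=\rho(\alpha k\alpha^{-1})\Phi$ together with $\Phi^2=\rho(\alpha^2)$.
\begin{itemize}
\item For $k\cdot k'$, the statement is just that $\rho$ is a homomorphism.
\item For $k\alpha\cdot k'$, we have $k\alpha k' = k(\alpha k'\alpha^{-1})\alpha$, so its image should be $\rho(k)\rho(\alpha k'\alpha^{-1})\Phi=\rho(k)\Phi\rho(k')$.
\item For $k\cdot k'\alpha$, the check is immediate from the definition.
\item For $k\alpha\cdot k'\alpha=k(\alpha k'\alpha^{-1})\alpha^2$, which lies in $K$, its image is $\rho(k)\rho(\alpha k'\alpha^{-1})\rho(\alpha^2)=\rho(k)\Phi\rho(k')\Phi^{-1}\Phi^2=\rho(k)\Phi\rho(k')\Phi$.
\end{itemize}
In every case the image equals the product $\rho_0(\cdot)\rho_0(\cdot)$, which completes the proof.
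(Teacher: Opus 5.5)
Your proof is correct and follows the same route as the paper. Both define $\rho_0$ coset by coset and verify multiplicativity using the conjugation identity $\Phi\rho(\gamma)\Phi^{-1}=\rho(\alpha\gamma\alpha^{-1})$ on $\pi_1(\widetilde{\Sigma},x_0)$.

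The one difference is in your favor. The paper says it suffices to check $\rho_0(\gamma\alpha)=\rho(\gamma)\Phi$, but that check does not by itself give $\Phi^2=\rho(\alpha^2)$; at $\gamma=\alpha^2$ it only says $\Phi$ commutes with $\rho(\alpha^2)$. Your trivial-centralizer argument supplies exactly this relation, which the case $k\alpha\cdot k'\alpha$ needs. You can also replace the vague ``spanning the sphere'' condition: $\rho(\alpha^2)^{-1}\Phi^2$ is orientation-preserving, so fixing three limit points already forces it to be the identity.
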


\begin{proof}
    Any element of $\pi_1(\Sigma)$ is either of the form $\gamma$ or $\alpha \gamma$ for some $\gamma\in \pi_1(\widetilde{\Sigma},x_0)$. We define $\rho_0(\gamma) = \rho(\gamma)$ and $\rho_0(\alpha\gamma) = \Phi\rho(\gamma)$. Then $\rho_0$ is defined everywhere on $\pi_1(\Sigma,\pi(x_0))$. To show it is a representation, it suffices to check $\rho_0(\gamma \alpha) = \rho(\gamma) \Phi$ for any $\gamma\in \pi_1(\widetilde{\Sigma},x_0)$, which follows from the computation \[\rho_0(\gamma \alpha) = \rho_0 (\alpha (\alpha^{-1} \gamma \alpha) ) = \Phi \rho(\alpha^{-1} \gamma \alpha) \] \[= (\Phi \rho(\alpha^{-1} \gamma \alpha) \Phi^{-1}) \Phi = \rho(\tau(\alpha^{-1} \gamma \alpha)) \Phi = \rho(\gamma) \Phi~.\]
\end{proof}

$\mathsf{O}^+(3,1)$ is embedded in 
\[\begin{bmatrix} \mathsf{O}^+(3,1) & \\ & \pm 1\end{bmatrix}\subset \SO^+(4,1)\] by adding a $1$ (respectively, -1) at $(5,5)$-entry for each orientation preserving (respectively, reversing) element. We now view $\rho_0$ as a representation from $\pi_1(\Sigma)$ to $\SO^+(4,1)$.

\begin{proposition}
    $\rho_0$ is simple-stable.
\end{proposition}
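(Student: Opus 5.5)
The plan is to reduce simple-stability of $\rho_0$ to the geometric finiteness of $\rho$ on the index-two subgroup $\pi_1(\widetilde\Sigma)$. We exploit the fact that the only non-quasi-geodesic behaviour of the orbit map comes from the cusps, which correspond to conjugates of $\widetilde c$ and $\tau(\widetilde c)$. Both project to the non-simple curve $c$, so no simple closed curve on $\Sigma$ can wind around them much.

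\textbf{Step 1: pass to the double cover.} Since $\pi_1(\widetilde\Sigma)$ has index $2$, the inclusion is a quasi-isometry of word metrics. It therefore suffices to show that $\tau_{\rho,o}$ maps the $\pi_1(\widetilde\Sigma)$-orbit along the axis of $\gamma^2$ (or of $\gamma$, if $\gamma\in\pi_1(\widetilde\Sigma)$) to uniform quasi-geodesics, for every $\gamma$ represented by a simple closed geodesic on $\Sigma$. The embedding $\mathsf O^+(3,1)\hookrightarrow\SO^+(4,1)$ preserves a totally geodesic $\Hb^3\subset\Hb^4$, so distances are unchanged and we may work in $\Hb^3$. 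The preimage of a simple closed geodesic $s\subset\Sigma$ in $\widetilde\Sigma$ is a simple multicurve (one curve or two curves) $\widetilde s$.

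\textbf{Step 2: uniform quasi-geodesicity for geometrically finite $\rho$ away from cusps.} Because $\rho$ is geometrically finite with parabolics exactly the conjugates of $\widetilde c$ and $\tau\widetilde c$, the orbit map is a quasi-isometric embedding of $\pi_1(\widetilde\Sigma)$ electrified along the cusp subgroups. More concretely, relative hyperbolicity gives the following: a geodesic in $\widetilde\Sigma$ (or in the universal cover with a fixed hyperbolic metric) maps to a uniform quasi-geodesic provided it spends bounded length fellow-travelling lifts of $\widetilde c$ and $\tau\widetilde c$. The reason is that the only distortion comes from long subsegments that run parallel to a parabolic axis, which get exponentially shortened in the horoball. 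I would cite the standard characterization of geometrically finite actions (for instance, a quasi-geodesic in the cusped space with bounded horoball penetration) and extract explicit constants depending only on the penetration bound $K$.

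\textbf{Step 3: the key bound (main obstacle).} We need a uniform $K$ such that no lift of a simple closed geodesic $s$ of $\Sigma$ fellow-travels a lift of $c$ for length more than $K$. Suppose not. Then there are simple geodesics $s_n$ with segments $\epsilon$-close to longer and longer multiples of $c$. Passing to a Hausdorff limit of $s_n$ in the space of geodesic laminations, the limit is a simple lamination containing a leaf that contains, or spirals onto, the closed geodesic $c$. A leaf $\epsilon$-fellow-travelling $c$ for more than one period already forces a transverse self-intersection, since $c$ itself crosses itself essentially at a definite angle. So a segment of $s_n$ tracking $c$ for slightly more than one full period of $c$ must cross itself, contradicting simplicity. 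To make this rigorous I would choose $\epsilon$ smaller than the injectivity and angle data at the self-intersection points of $c$, and then take $K$ to be about $\ell(c)$ plus a constant. The delicate point is making the fellow-travel constant from Step 2 compatible with this $\epsilon$. I would first try to handle this by using the thin part of the cusp: deep horoball penetration corresponds to tracking $c$ with exponentially small error.

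\textbf{Step 4: conclude.} Combining Steps 2 and 3 yields uniform constants $(C,c)$ for all simple closed curves, so $\rho_0$ is simple-stable in the sense of Definition~\ref{definition: simple-stable}.
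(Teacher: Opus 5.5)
Your argument is correct in outline, but it takes a different route from the paper. The paper gives only a sketch. It invokes Lee's Proposition~14: a discrete representation of a twisted $I$-bundle group in which every simple curve is loxodromic is simple-stable. It then asserts that Lee's proof carries over unchanged to the nonorientable $I$-bundle over the orientable base $\Sigma$ with target $\mathsf{O}^+(3,1)$, and mentions the contracting-flow/Anosov approach of \cite{TW23} as an alternative.

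That route uses only discreteness and the absence of parabolic simple curves. It therefore covers geometrically infinite representations as well, at the cost of deferring entirely to Lee's general argument. Your route is tailored to $\rho_0$ and uses two specific facts.
\begin{itemize}
\item First, $\rho$ is geometrically finite on the index-two subgroup. So the only source of distortion is long penetration of cosets of $\langle\widetilde c\rangle$ and $\langle\tau\widetilde c\rangle$. Here you use the standard fact that word geodesics with bounded peripheral penetration are uniform quasi-geodesics in the cusped space, which the orbit map embeds quasi-isometrically.
\item Second, both cusp curves project to the non-simple curve $c$, which a simple geodesic on $\Sigma$ cannot track for much longer than $\ell(c)$.
\end{itemize}
This is more elementary, avoids lamination-theoretic machinery, and makes the constants explicit.

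On the point you flag as delicate, the fix comes from the geometry of $\mathbb{H}^2$, not from the thin part of the cusp. Suppose an axis stays within $R$ of a lift of $c$ along a segment of length $L$. By exponential convergence of geodesics in $\mathbb{H}^2$, the middle subsegment of length $L-2\log(R/\epsilon)-O(1)$ is then $\epsilon$-close to that lift in the unit tangent bundle. So the coarse fellow-travel constant from Step~2 costs only an additive $O(\log(R/\epsilon))$ in $K$.

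After that reduction, your crossing argument finishes the proof. You need a margin on both sides of each self-intersection point of $c$, chosen large compared with $\epsilon/\sin\theta$, where $\theta$ is the crossing angle. The Hausdorff-limit detour in Step~3 is unnecessary: a limit of simple geodesics tracking $c$ ever longer would contain $c$ as a leaf, which is already impossible.
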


\begin{proof}[Sketch of the proof]
    In \cite[Proposition 14]{Lee11} Lee proves that, for a discrete representation of a  twisted $I$-bundle group with nonorientable base into $\SO^+(3,1)$, if every simple curve is loxodromic then the representation is simple-stable. In our case we have a twisted $I$-bundle group with orientable base (hence nonorientable $I$-bundle), with representation into $\mathsf{O}^+(3,1)$ (ignoring the action in the normal direction to $\Hb^3$). Lee's proof goes through without changes. Another approach follows \cite{TW23} using an Anosov property.
\end{proof}

\begin{proposition}
    $\rho_0$ deforms continuously to a Fuchsian representation.
\end{proposition}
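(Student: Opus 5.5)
The plan is to deform inside the class of representations built by the same recipe: a representation $\rho_t$ of $\pi_1(\widetilde{\Sigma},x_0)$ into $\SO^+(3,1)$ together with an orientation-reversing isometry $\Phi_t$ satisfying $\Phi_t\rho_t(\gamma)\Phi_t^{-1}=\rho_t(\tau(\gamma))$ and $\rho_t(\alpha^2)=\Phi_t^2$. By the Lemma, any such pair extends to a representation of $\pi_1(\Sigma)$ into $\mathsf{O}^+(3,1)\subset\SO^+(4,1)$. We reach a Fuchsian point by deforming through $\tau$-symmetric ending data. We then connect to a Fuchsian representation into $\SO^+(2,1)\subset\SO^+(4,1)$.

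\textbf{Step 1 (pinching in reverse).} Choose a continuous path $\mu_t\in\Teich(\widetilde\Sigma)$, $t\in(0,1]$, along which the hyperbolic length of $\widetilde c$ tends to $0$ as $t\to 0$. In Fenchel--Nielsen coordinates adapted to $\widetilde c$, the path should converge to $\mu$ on $\widetilde\Sigma\setminus\widetilde c$.

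Take $\rho_t$ to be the quasi-Fuchsian representation with ending data $(\mu_t,\tau(\mu_t))$ from Theorem~\ref{theorem: simultaneous uniformization}~(1). The same argument as in the text gives an orientation-reversing $\Phi_t$ with $\Phi_t\rho_t\Phi_t^{-1}=\rho_t\circ\tau$. This uses the uniqueness in Theorem~\ref{theorem: simultaneous uniformization}~(2), since $\rho_t\circ\tau$ has ending data $(\tau\mu_t,\mu_t)$ and conjugating by an orientation-reversing map swaps the two ends. Uniqueness also pins down $\Phi_t$ up to the centralizer of the non-elementary group $\rho_t$, which is trivial. Hence $\Phi_t^2$ commutes with $\rho_t$ up to the correction $\rho_t(\alpha^2)$, so the relation $\Phi_t^2=\rho_t(\alpha^2)$ holds automatically. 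This is the same reason the Lemma worked.

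\textbf{Step 2 (continuity at $t=0$).} I expect this to be the main obstacle: showing that $\rho_t\to\rho$ algebraically as $t\to0$, and that $\Phi_t\to\Phi$ after suitable normalization. Simultaneously pinching $\widetilde c$ on the top and $\tau(\widetilde c)$ on the bottom is a standard degeneration. By the results of Maskit/Kerckhoff--Thurston and the Bers--Maskit continuity of the uniformization map, or more concretely by continuity of the Maskit-type parametrization at the boundary stratum where the cusp curves $P^\pm$ are disjoint from each other's homotopy class, quasi-Fuchsian groups converge algebraically to the geometrically finite group with the pinched curves as accidental parabolics.

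I would try first to cite the fact that the ending-data map is continuous on the augmented space of geometrically finite structures, extending to Teichm\"uller space augmented at the strata $P^\pm$. Once $\rho_t\to\rho$ is established, normalize $\Phi_t$ by its action on three fixed points of $\rho_t$. Since $\Phi$ is uniquely determined by $\rho$, this normalization forces $\Phi_t\to\Phi$.

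\textbf{Step 3 (reaching Fuchsian).} Now move $\mu_1$ within the $\tau$-symmetric locus, namely along a path to a point $\mu'$ with $\tau(\mu')=\bar\mu'$. Here $\bar\mu'$ denotes the complex conjugate structure, which is the one compatible with the antiholomorphic involution, so that $\rho_1$ is joined to a Fuchsian $\rho'$ whose ending data $(\mu',\tau\mu')$ are mirror images. The group $\rho'$ preserves a plane $\Hb^2\subset\Hb^3$, and $\Phi'$ can be taken to be the composition of the reflection in that plane with the lift of $\tau$ acting on $\Hb^2$. Along the way, Step 1 gives $\Phi_t$ continuously by uniqueness.

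At the endpoint, $\rho'_0$ maps into $\mathsf{O}^+(2,1)\times\mathsf{O}(1)\times\mathsf{O}(1)$. Its $\Hb^2$-part is a Fuchsian representation of $\pi_1(\Sigma)$, since $\tau$ realizes the deck involution. The extra signs form a homomorphism $\pi_1(\Sigma)\to(\Zb/2)^2$ acting on the normal $2$-plane. Rotating the normal $2$-plane by the path $\mathrm{diag}(-1,-1)\simeq\id$ in $\SO(2)$ connects it to the trivial sign. Concretely, replace the normal block $\mathrm{diag}(-1,-1)$ of $\rho'_0(\alpha)$ and of the other generators by rotations $R_\theta$ with $\theta\in[\pi,0]$. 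Since these blocks commute, this stays a representation. Concatenating the three paths gives the continuous deformation to a Fuchsian representation.
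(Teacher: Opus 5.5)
Your proposal is correct and follows the paper's route. You deform $\rho$ through quasi-Fuchsian representations while carrying $\Phi$ along, land in $\SO^+(2,1)\times\{\pm I\}\subset\SO^+(2,1)\times\SO(2)$, and then rotate the $\SO(2)$ factor to the identity (the paper does this last step via $H_1(\Sigma,\Zb)$ being free abelian), usefully making explicit two points the paper leaves implicit: the path must stay in the $\tau$-symmetric locus $(\mu_t,\tau\mu_t)$, and $\Phi_t^2=\rho_t(\alpha^2)$. One small slip: in the paper's convention the Fuchsian points of your symmetric family are the $\tau$-invariant structures, $\tau(\mu')=\mu'$ (lifts of hyperbolic structures on $\Sigma$), not $\tau(\mu')=\bar\mu'$.
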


\begin{proof}
    Since $\rho$ is a limit of quasi-Fuchsian representations, it deforms to a representation from $\pi_1(\widetilde{\Sigma})$ to $\SO^+(2,1)\subset \SO^+(3,1)$. Notice that one can choose $\Phi\in \mathsf{O}^+(3,1)$ that deforms continuously as $\rho$ deforms. Therefore, $\rho_0$ deforms to a representation $\rho_1$ with image in \[\SO^+(2, 1)\times \Zb/2\Zb \subset \SO^+(2, 1)\times \SO(2)\subset \SO^+(4,1)~.\] We claim that $\rho_1$ deforms to a representation $\rho_2$ with image in $\SO^+(2, 1)$. Actually, it suffices to show that a representation $j: \pi_1(\Sigma) \to \SO(2)$ deforms to the trivial representation. Since $\SO(2)\cong \Sb^1$ is abelian, $j$ factors through $\pi_1(\Sigma) \to H_1(\Sigma,\Zb) \to \SO(2)$ and since $\pi_1(\Sigma)$ is torsion free, $H_1(\Sigma,\Zb)$ is a free abelian group. Therefore $j$ deforms continuously to the trivial representation.
\end{proof}

Together with Theorem~\ref{theorem: simple-stable form domain of discontinuity}, as a consequence we complete the proof of Theorem~\ref{main theorem: simple-stable form large domain of discontinuity}.

\section{Dynamics on geodesic flows}\label{section: dynamics on geodesic flows}

In this section we fix a hyperbolic group $\Gamma$. It is well known that the set of counting measures is dense in the space of geodesic currents on $\Gamma$ (\cite{bonahon1988geometry}). The goal of this section is to provide a stronger density theorem that any geodesic current on $\Gamma$ is the limit of a sequence of weighted multigeodesics (see Definition~\ref{definition: multigeodesics}), and moreover the support of the current is the Hausdorff limit of the sequence of underlying multigeodesics. We refer to this type of convergence as strong convergence (see Definition~\ref{definition: strong convergence}). Our goal is to prove Theorem~\ref{main theorem: density of multigeodesics}, which we restate here:

\begin{theorem}\label{theorem: density of multigeodesics}
    For any $\mu \in \Curr(\Gamma)$, there exists a sequence of weighted multigeodesics $(\Y_n)_{n\in \Nb}$ that strongly converges to $\mu$.
\end{theorem}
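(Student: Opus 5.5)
We pass to the flow-space picture: a current $\mu\in\Curr(\Gamma)$ corresponds to a flow-invariant, $\Gamma$-invariant locally finite measure $\tilde m=\mu\otimes dt$ on $\Gc(\Gamma)\cong\partial_\infty^{(2)}\Gamma\times\Rb$ (Mineyev's flow space), and hence to a finite flow-invariant measure $m$ on the compact quotient $\Gamma\backslash\Gc(\Gamma)$. Its support $K=\Gamma\backslash(\supp\mu\times\Rb)$ is compact and flow-invariant. Strong convergence $\Y_n\to\mu$ then amounts to two requirements:
\begin{enumerate}
\item $\widehat\Y_n\to\mu$ weak-$*$;
\item the closed orbits underlying $\Y_n$ converge to $K$ in the Hausdorff topology on compact subsets of $\Gamma\backslash\Gc(\Gamma)$, equivalently $\bigcup\supp\widehat{\gamma_i^{(n)}}\to\supp\mu$ on compacta of $\partial_\infty^{(2)}\Gamma$.
\end{enumerate}
Throughout we use the closing lemma, Theorem~\ref{theorem: a closing lemma}: an orbit segment of length $T\geqslant T_0$ whose endpoints are $\epsilon$-close is $\delta(\epsilon)$-shadowed by a closed orbit of period $\approx T$, with $\delta(\epsilon)\to0$.

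\textbf{Step 1 (weak-$*$ approximation with support control).} Fix $\epsilon>0$ and $T$ large. Choose a finite partition of $K$ (up to an $m$-null set) into Borel pieces $B_1,\dots,B_k$ of diameter $<\epsilon$, each with $m(B_j)>0$.

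First suppose $m$ is ergodic. Pick an $m$-generic point $v$ whose orbit equidistributes and returns to $B_1$ infinitely often. Take a long orbit segment $[v,\phi_Tv]$ with both endpoints in $B_1$, and close it up using the closing lemma. The resulting closed orbit $\gamma$ has normalized counting measure within $o(1)$ of $m$. Since the segment lies in $K$, the closed orbit lies in the $\delta(\epsilon)$-neighborhood of $K$.

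For general $m$, write $m$ via its ergodic decomposition. Approximate it by a finite convex combination $\sum a_i m_i$ of ergodic measures supported in $K$. Make sure each piece $B_j$ is charged by some $m_i$, or else add small-weight orbits through $B_j$ built from recurrent points of $m$, using Poincaré recurrence. Applying the ergodic construction to each $m_i$ gives closed orbits $\gamma_i$. The multigeodesic is then $\Y=\sum a_i\,\widehat{\gamma_i}/\ell(\gamma_i)$, with weights normalized by period. This settles requirement (1).

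\textbf{Step 2 (Hausdorff lower bound).} We must also ensure that every point of $K$ is near some orbit of $\Y_n$. For each $j$, choose a generic recurrent point in $B_j$ and close up one of its long return segments. Add the resulting closed orbit with weight $\eta_n\to0$; this does not affect the weak-$*$ limit. Every point of $K$ is then $\epsilon$-close to some orbit of $\Y_n$. Combined with the upper bound from Step 1, this gives Hausdorff distance at most $\epsilon+\delta(\epsilon)$ in $\Gamma\backslash\Gc(\Gamma)$. Letting $\epsilon_n\to0$ and taking a diagonal sequence gives a sequence satisfying both requirements.

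\textbf{Step 3 (transfer to $\partial_\infty^{(2)}\Gamma$).} Lift back to the universal cover. Hausdorff convergence of flow-invariant compact sets in the compact quotient implies Hausdorff convergence, on compacta of $\partial_\infty^{(2)}\Gamma$, of the corresponding $\Gamma$-invariant closed sets of endpoint pairs. This uses the fact that the metric on $\Gc$ is compatible with the product topology, and that distinct closed orbits give distinct conjugacy classes. If two of the constructed orbits coincide, merge their weights so that the conjugacy classes in $\Y_n$ are distinct.

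\textbf{Main obstacle.} The delicate step is the Hausdorff \emph{upper} bound: the shadowing closed orbit must stay near $K$ along its \emph{entire} length, not just on average. This requires the uniform closing lemma, with $\delta$ independent of $T$, which is exactly what Theorem~\ref{theorem: a closing lemma} provides. The secondary difficulty is the lack of uniqueness of flow lines in Mineyev's space. I would handle this by working only with the $\Gamma$-equivariant flow structure as packaged in Definition~\ref{definition: the geodesic flow}.
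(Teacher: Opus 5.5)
Your argument is correct in outline, but it takes a different route from the paper.

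\textbf{The paper's route.} The paper uses no ergodic theory. It fixes a global transversal with a fine partition and encodes $\mu$ by its \emph{track} on the $N$-th returning refinement. The track is the collection of $\mu$-masses of geodesics passing from one piece to another at consecutive returns; these weights satisfy balance (Kirchhoff) equations. The paper then:
\begin{enumerate}
\item approximates the track by a rational solution with the same zero pattern;
\item decomposes that solution into weighted loops;
\item closes each loop with Theorem~\ref{theorem: a closing lemma}, which is a pseudo-orbit closing lemma: consecutive pieces of a loop may come from different $\mu$-geodesics.
\end{enumerate}
Preserving the zero pattern gives both Hausdorff bounds at once. Every loop uses only transitions that $\supp\mu$ realizes, and every realized transition is used. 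Weak-$*$ convergence comes from the tracks agreeing on ever finer partitions.

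\textbf{Your route.} Your argument is Sigmund-style: ergodic decomposition, Birkhoff-generic points, Poincar\'e recurrence, then closing a single genuine orbit segment. It is shorter and more standard. It only ever closes true orbit segments, and it gets the Hausdorff lower bound cheaply by adding negligible-weight orbits near a net of $\supp\mu$. The price is the measure-theoretic machinery and a slightly stronger form of the closing lemma.

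\textbf{Points to tighten.}

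\emph{Time-parametrized shadowing.} Theorem~\ref{theorem: a closing lemma} as stated only gives Hausdorff closeness between the segments and the closed orbit. That suffices for your Hausdorff upper bound. It does not justify the weak-$*$ claim in Step~1, because Hausdorff-close sets can carry very different time averages. You need the time-parametrized version. The paper's proof does supply it: each return segment of length in $[T_2,T_3]$ is matched with a piece of the axis starting in $\gamma_i\Nc_{2\epsilon_0}(K_{\alpha_i})$. That piece's length differs from $t_i$ by an amount that tends to $0$ with $\epsilon_0$, by continuity of the cocycle for the finitely many $\eta_i$ of bounded length. So time averages differ by $o(1)$. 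You should state and use this version explicitly.

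\emph{Matching the theorem's hypothesis.} Take your return set $B_1$ to be a flow box over a piece of an $\epsilon_0$-partition of the transversal. Then cut the orbit segment at transversal hits into pieces of length in $[T_2,T_3]$.

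\emph{Normalization.} The closed orbit you obtain may be a proper power $\gamma=g^k$. Since $\widehat{g^k}=\widehat g$, normalizing by $\ell(\gamma)$ is off by a factor $k$. Use the primitive root, or equivalently normalize by the primitive period.

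\emph{Step 3 is immediate.} For $\Gamma$-invariant closed sets, Hausdorff distance in $\Gc(\Gamma)$ equals Hausdorff distance of their images in the compact quotient. So the paper's notion of strong convergence follows directly from your quotient statement, with no need to pass to compacta of $\partial_\infty^{(2)}\Gamma$.
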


We prove the theorem by fixing a global transversal for the geodesic flow and establishing a closing lemma. We then use partitions of the transversal as coarse analogues of train tracks, to approximate any given current.

The existence of global transversals is standard, but we have not found a reference giving the precise statement needed in Section~\ref{section: A global transversal}. We therefore provide an explicit construction adapted to our setting. The closing lemma is also well known, and we include a proof in our language in Section~\ref{section: Closing lemma}. The conditions we impose on the partitions are weaker than the Markov conditions used in symbolic dynamics, so we need to carefully refine the partitions in the proof of Theorem~\ref{theorem: density of multigeodesics}(see Section~\ref{section: Density of multigeodesics})

\subsection{Geodesic flows}

For some fixed generating set of $\Gamma$, we denote by $d_\Gamma$ the word metric, $\abs{\gamma}=d_\Gamma(\id,\gamma)$, and $\norm{\gamma}$ the stable length of $\gamma$, that is, the translation length of $\gamma$ along its axis in the Cayley graph. Then $\norm{\gamma} = \min\{d_\Gamma(\eta, \gamma\eta): \eta\in \Gamma\}$.

Suppose $(X,d_X)$ is a $\delta$-hyperbolic space and $\Gamma$ acts properly discontinuously on $X$ by isometries. Up to quotienting a finite normal subgroup, we may assume that no element $\gamma\in \Gamma\setminus \{\id\}$ preserves $\partial_\infty X$ pointwise.

Our goal in this section is to define the geodesic flow space associated to $(X,\Gamma)$ which plays the role of a ``unit tangent bundle'' of $X$ in the absence of a smooth structure.

We denote by $\partial_\infty^{(2)} X$ the set of ordered pairs of distinct points in $\partial_\infty X$ and by $\iota$ the orientation reversing map, that is, \[\begin{matrix} \iota: & \partial_\infty^{(2)} X& \to & \partial_\infty^{(2)} X \\ & (x,y) & \mapsto & (y,x) \end{matrix}~.\] We consider the topological space $\Gc(X) = \partial_\infty^{(2)} X \times \Rb$ with the flow \[\begin{matrix} \phi: & \Gc(X) \times \Rb & \to & \Gc(X) \\ & \big((x,y,s),\ t\big) & \mapsto & \phi^t(x,y,s) = (x,y,s+t) \end{matrix}~.\] 

\begin{definition}\label{definition: the geodesic flow}
    We call $\Gc(X)$ the unit tangent bundle of $X$ if there exists a metric $d$ on $\Gc(X)$ compatible with the product topology, and a continuous map \[c:  \partial_\infty^{(2)} X \times \Gamma  \to  \Rb~,\] that satisfies the symmetric cocycle conditions \[c(x,y,\gamma) = -c(y,x,\gamma) \quad \text{ and } \quad c(x, y, \eta\gamma) = c(\gamma x, \gamma y, \eta)+ c(x,y,\gamma)~,\] for any $(x,y)\in  \partial_\infty^{(2)}X$ and  $\gamma,\eta\in \Gamma$ such that 
    \begin{enumerate}
        \item The $\Gamma$-action on $\Gc(X)$ defined by \[\gamma\cdot(x,y,s) =(\gamma x,\gamma y,s+c(x,y,\gamma))\] is properly discontinuous, $d$-isometric and free, hence the $\Gamma$-action commutes with $\phi$.
        \item There exist constants $A_1\geqslant 1$ and $B_1\geqslant 0$, such that each flow line \[\begin{matrix} (x,y,\cdot): & \Rb & \to & \Gc(X) \\ & t & \mapsto & (x,y,t) \end{matrix}~.\] is an $(A_1,B_1)$-quasi-geodesic;
        \item The involution $\iota$ extends to a $d$-isometry on $\Gc(X)$ by $\iota(x,y,t) = (y,x,-t)$ for any $(x,y)\in\partial_\infty^{(2)}X$, and $\iota$ commutes with the $\Gamma$-action;
        \item There exist constants $A_2\geqslant 1$, $B_2\geqslant 0$, and a $\Gamma$-equivariant $(A_2,B_2)$-quasi-isometry $p_M: \Gc(X)\to X$ that maps each flow line $(x,y,\Rb)$ to a geodesic with forward endpoint $x$ and backward endpoint $y$.
    \end{enumerate}
\end{definition}

By abusing the notation, we still denote the induced flow on $\wh \Gc(X) = \Gc(X) / \Gamma$ by $\phi$.

When $(X,d_X)$ is a uniquely geodesic $\delta$-hyperbolic space (e.g. a hyperbolic manifold), it admits a unit tangent bundle $(\Gc(X),d,\phi)$ given by \[\Gc(X) = \{\ell\ |\ \ell:\Rb \to X\text{ is a geodesic parametrized by length}\}~,\] equipped with the natural $\Gamma$-action and the flow $\phi$ defined by $\phi^t(\ell): s \mapsto \ell(s + t)$. We endow $\Gc(X)$ with a metric $d$ defined by \[d(\ell,\ell') = \int_{\Rb} \frac{e^{-\abs{s}}}{2}d_X\big(\ell(s),\ell'(s)\big)ds~,\] and a projection $p_M:\Gc(X)\to X$ defined by $p_M(\ell)=\ell(0)$. It is straightforward to verify that these data define a unit tangent bundle of $X$. In particular for $\Hb^n$, $\Gc(\Hb^n)$ can be taken as the Hopf parametrized unit tangent bundle of $\Hb^n$. Such unit tangent bundle is independent of $\Gamma$.

When $X$ is a locally finite hyperbolic graph, a unit tangent bundle of $X$ is constructed by Mineyev \cite{Mineyev}. In particular when $X= \Cay(\Gamma)$ is the Cayley graph of $\Gamma$ with respect to some fixed generating set, the construction is also due to Gromov \cite{Gromov}, Mathéus \cite{Matheus} and Champetier \cite{Cham}.

From now on for any given hyperbolic group $\Gamma$, we fix a choice of its  unit tangent bundle $\Gc(\Gamma) = \Gc(\Cay(\Gamma))$.

By direct computation (see for instance \cite[Lemma~3.8]{W23a}) we have that 

\begin{lemma}\label{lemma: flow time quasi isometric to group action}
    For any compact set $F\subset \Gc(\Gamma)$, there exist constants $A_3\geqslant 1$ and $B_3\geqslant 0$ such that for any $v\in F$, if $t\in \Rb$ and $\gamma\in \Gamma$ satisfy $\gamma^{-1} \phi^t(v)\in F$, then \[A_3^{-1}\abs{t}-B_3\leqslant \abs{\gamma} \leqslant A_3\abs{t}+B_3~.\] 
\end{lemma}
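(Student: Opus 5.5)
The plan is to use that $\Gamma$ acts on $\Gc(\Gamma)$ properly discontinuously, cocompactly and isometrically, so that any orbit map $\Gamma\to\Gc(\Gamma)$ is a quasi-isometry. Cocompactness holds because $p_M:\Gc(\Gamma)\to\Cay(\Gamma)$ is a $\Gamma$-equivariant quasi-isometry onto a space on which $\Gamma$ acts cocompactly, so $\Gc(\Gamma)$ lies in a bounded neighborhood of an orbit. Combining this with condition (2) of Definition~\ref{definition: the geodesic flow}, that flow lines are $(A_1,B_1)$-quasi-geodesics, compares flow time with group displacement.

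The key steps, in order, are as follows.
\begin{enumerate}
\item Work in $\Cay(\Gamma)$ via $p_M$. Since $F$ is compact and $p_M$ is a quasi-isometry, $p_M(F)$ has diameter at most some $D$ (a quasi-isometry sends bounded sets to bounded sets). Fix $v_0\in F$ and set $x_0=p_M(v_0)$; translating by the group, we may assume $d_\Gamma(x_0,\id)$ is bounded.
\item Given $v\in F$, $t$ and $\gamma$ with $\gamma^{-1}\phi^t(v)\in F$, equivariance gives
\[d\big(p_M(\phi^t v),\,\gamma\, p_M(\gamma^{-1}\phi^t v)\big)=0,\]
since $p_M(\phi^t v)=\gamma\, p_M(\gamma^{-1}\phi^t v)$. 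Hence $p_M(\phi^t v)$ lies within $D$ of $\gamma x_0$, and $p_M(v)$ lies within $D$ of $x_0$.
\item So $\abs{\gamma}$ differs by a bounded amount from $d_\Gamma\big(p_M(v),p_M(\phi^t v)\big)$. Since $p_M$ is an $(A_2,B_2)$-quasi-isometry, this distance lies between $A_2^{-1}d(v,\phi^t v)-B_2$ and $A_2\, d(v,\phi^t v)+B_2$.
\item By (2), $A_1^{-1}\abs{t}-B_1\le d(v,\phi^tv)\le A_1\abs{t}+B_1$. Chaining the inequalities gives constants $A_3=A_1A_2$ and $B_3$ depending on $A_i,B_i,D$ and $\abs{x_0}$.
\end{enumerate}

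The only step needing care is the uniform bound $D$ on $p_M(F)$. It is immediate if $p_M$ is coarsely Lipschitz, which it is as a quasi-isometry, provided $F$ has finite $d$-diameter; this holds because $F$ is compact and $d$ is a metric. I therefore expect no real obstacle beyond bookkeeping the constants.
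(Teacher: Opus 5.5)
Your argument is correct and is essentially the ``direct computation'' the paper leaves to \cite[Lemma~3.8]{W23a}. Equivariance of $p_M$ and its quasi-isometry bounds compare $\abs{\gamma}$ with $d(v,\phi^t v)$ up to an additive error of $2D+2d_\Gamma(\id,x_0)$, and the quasi-geodesic flow lines then compare $d(v,\phi^t v)$ with $\abs{t}$. The appeal to cocompactness and the step translating $x_0$ near $\id$ are not needed, since $d_\Gamma(\id,x_0)$ is just a constant depending on $F$.
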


\subsection{The visual metric on the boundary}\label{section: visual metric}

Suppose $X$ is a $\delta$-hyperbolic space. Let $o\in X$ be a fixed base point. For any two points $x_0,y_0\in  X$, we denote by $(x_0, y_0)_o$ the Gromov product of $x_0$ and $y_0$ with base point $o$.

The Gromov product of two points $x,y \in \partial_{\infty} X$ with base point $p$ is defined by \[(x\cdot y)_o = \sup \liminf\limits_{m,n \to \infty} (x_m,y_n)_o \] where the supremum is taken from all sequences $\{x_n\}$ and $\{y_n\}$ in $X$ with $x_n\to x$ and $y_n\to y$ as $n \to \infty$. The Gromov product is coarsely well-defined by the following Lemma.

\begin{lemma}[{\cite[Part~III~Remarks~3.17~(5)]{bridson2013metric}}]\label{lemma: Gromov product coarsely well-defined}
    Let $x,y\in\partial_\infty X$, $\{x_n\}$ and $\{y_n\}$ be two sequences in $X$ with $x_n\to x$ and $y_n\to y$ as $n \to \infty$. Then \[(x, y)_o-2\delta\leqslant \liminf\limits_{n \to \infty} (x_n,y_n)_o \leqslant (x, y)_o~.\]
\end{lemma}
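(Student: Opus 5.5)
The plan is to derive both inequalities from the four-point form of $\delta$-hyperbolicity,
\[(a,c)_o \geqslant \min\{(a,b)_o,(b,c)_o\}-\delta \quad \text{for all } a,b,c\in X~,\]
together with the fact that a sequence $(z_n)$ converges to a boundary point exactly when $(z_m,z_n)_o\to+\infty$ as $m,n\to\infty$. Two sequences converging to the same boundary point also have mutual Gromov products tending to $+\infty$. Since the statement is quoted from \cite[Part~III~Remarks~3.17~(5)]{bridson2013metric}, I will follow their normalization of constants and only indicate where each $\delta$ enters.

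\emph{Lower bound.} I fix $\epsilon>0$ and choose sequences $x'_m\to x$ and $y'_n\to y$ with $\liminf_{m,n}(x'_m,y'_n)_o\geqslant (x,y)_o-\epsilon$; this is possible by the definition of $(x\cdot y)_o$ as a supremum. Applying the four-point inequality twice gives
\[(x_n,y_n)_o\geqslant \min\{(x_n,x'_m)_o,\ (x'_m,y'_k)_o,\ (y'_k,y_n)_o\}-2\delta~.\]
Because $x_n$ and $x'_m$ both converge to $x$, and $y_n$ and $y'_k$ both converge to $y$, the first and third terms tend to $+\infty$ once $n,m,k$ are large. For large $m,k$ the middle term is at least $(x,y)_o-2\epsilon$. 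Taking $\liminf_n$ and letting $\epsilon\to 0$ gives $\liminf_n(x_n,y_n)_o\geqslant (x,y)_o-2\delta$.

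\emph{Upper bound.} I pass to a subsequence $n_k$ along which $(x_{n_k},y_{n_k})_o$ converges to $L:=\liminf_n(x_n,y_n)_o$. The sequences $(x_{n_k})_k$ and $(y_{n_k})_k$ are admissible in the supremum defining $(x\cdot y)_o$, so it suffices to compare the double liminf $\liminf_{k,l}(x_{n_k},y_{n_l})_o$ with $L$.

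The key point is to make off-diagonal products close to diagonal ones. Using the four-point inequality with $x_{n_l}$ as the middle point,
\[(x_{n_k},y_{n_l})_o\geqslant \min\{(x_{n_k},x_{n_l})_o,\ (x_{n_l},y_{n_l})_o\}-\delta~.\]
Here $(x_{n_k},x_{n_l})_o\to\infty$, so off-diagonal products are asymptotically at least the diagonal ones minus $\delta$. Conversely, using $x_{n_k}$ and $y_{n_k}$ as intermediate points,
\[(x_{n_k},y_{n_k})_o\geqslant \min\{(x_{n_k},y_{n_l})_o,\ (y_{n_l},y_{n_k})_o\}-\delta~.\]
Since $(y_{n_l},y_{n_k})_o\to\infty$, this bounds off-diagonal products above by diagonal ones plus $\delta$.

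These two estimates show that the double liminf along the subsequence agrees with $L$ up to the hyperbolicity constant. This gives $L\leqslant(x\cdot y)_o$ up to that constant, and in particular the stated upper bound under the normalization of \cite{bridson2013metric}.

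The main obstacle is exactly this diagonal-versus-double-liminf comparison. Done naively it loses an additive $\delta$, so the sharp form $\liminf_n(x_n,y_n)_o\leqslant (x,y)_o$ requires matching the conventions of the cited remark. Either the remark is read with the diagonal liminf built into the supremum, or the extra $\delta$ is absorbed into the coarse constant. Everything else is a routine application of the four-point condition.
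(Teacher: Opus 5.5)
Your lower bound is correct (when $x=y$, replace $(x,y)_o-\epsilon$ by an arbitrary $M$). It even gives $(x_m,y_n)_o\geqslant (x,y)_o-2\delta-2\epsilon$ for all large $m,n$, not only on the diagonal.

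The gap is the upper bound. Your first off-diagonal estimate shows only that the double $\liminf$ along $(x_{n_k}),(y_{n_k})$ is at least $L-\delta$, i.e.\ $L\leqslant (x,y)_o+\delta$. The second estimate (off-diagonal at most diagonal plus $\delta$) points the wrong way and contributes nothing. The stated inequality has no slack on the right, so ``absorbing $\delta$ into the normalization'' does not prove it. Reading the definition as a supremum of diagonal $\liminf$s is also not available, because the paper defines $(x,y)_o$ with $\liminf_{m,n}$.

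Nor can a cleverer use of the four-point condition close the gap. With this definition, the diagonal inequality $\liminf_n(x_n,y_n)_o\leqslant (x,y)_o$ can fail in a non-proper hyperbolic space. Here is a sketch:
\begin{itemize}
\item Take two rays from $o$, with pendant edges of length $2$ attached at distance $r_n\to\infty$ and ending at $x_n$ and $y_n$.
\item For each $n$, add a private corridor from $x_n$ to $y_n$. It is tied to the tree path by unit rungs and is $2$ shorter, with the saving made next to $o$.
\item The resulting graph is quasi-isometric to the tree, and $(x_n,y_n)_o=1$ for all $n$.
\item Points on opposite sides of $o$ that do not lie in the same corridor system have Gromov product at most $1/2$. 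Hence $(x,y)_o\leqslant 1/2$.
\end{itemize}

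What the citation supports is the version with the double $\liminf_{m,n\to\infty}(x_m,y_n)_o$ in the middle. This is the quantity in the paper's own definition of $(x,y)_o$, and the form in which \cite[Part~III~Remarks~3.17~(5)]{bridson2013metric} is stated. For it, the right-hand inequality is immediate, because $(x_n)$ and $(y_n)$ are admissible sequences in the supremum. The left-hand inequality is exactly your two-step four-point argument.

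If you want the diagonal form as printed, you need an extra hypothesis such as properness. In a proper geodesic space, let $\gamma$ be a subsequential limit of geodesics $\sigma_n$ from $y_n$ to $x_n$, parametrized so that $\sigma_n(0)$ is a point closest to $o$. Since $s\mapsto d(o,\sigma_n(s))-s$ is non-increasing along a geodesic, you get $(x_n,y_n)_o\leqslant \frac12\bigl(d(o,\sigma_n(i))-i+d(o,\sigma_n(-j))-j\bigr)$ for all admissible $i,j\geqslant 0$. In the limit this gives $(\gamma(i),\gamma(-j))_o\geqslant L$ for all $i,j\geqslant 0$, hence $L\leqslant (x,y)_o$.
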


Given a constant $\kappa>0$, a pseudo-metric on $\partial_\infty X$ is defined by \[d_\kappa (x,y)= \inf \{ \sum_{i=1}^n e^{-\kappa (x_{i-1}, x_i)_o} : \{x_i\}\subset \partial_\infty X \text{ with } x=x_0, y=x_n \}\] for any $x,y\in\partial_\infty X$. By \cite[Part~III~Proposition~3.21]{bridson2013metric}, $d_\kappa$ is a metric whenever $0< \kappa \leqslant \frac{1}{4\delta} \log 2$ and \[(3-2e^{2\delta\kappa}) e^{-\kappa(x, y)_o} \leqslant d_\kappa (x,y)\leqslant e^{-\kappa (x, y)_o}~.\]

\begin{notation}\label{notation: the visual metric}
    Applying the discussion above to $\Gamma$, we fix such a visual metric $d_\infty$ on $\partial_\infty\Gamma$ given by such a constant $\kappa$, and $0< C_0\leqslant 1$, such that $\forall x,y\in\partial_\infty\Gamma$, \[C_0 e^{-\kappa(x, y)_{\id}} \leqslant d_\infty (x,y)\leqslant e^{-\kappa (x, y)_{\id}}~.\]
\end{notation}

\subsection{A global transversal}\label{section: A global transversal}

In this section, we construct global transversals of $(\Gc(\Gamma),d,\phi)$. Roughly speaking, a global transversal is a subset $K \subset \Gc(\Gamma)$ with compact closure such that the flow starting from any unit tangent vector would hit the $\Gamma$-orbit of $K$ within some uniformly bounded time $T_1$, and a flow line spends at least a uniform time $T_0>0$ between two hits.

\begin{lemma}\label{lemma: existence global transversal}
    There exists a precompact set $K \subset \Gc(\Gamma)$ such that the following hold.
    \begin{enumerate}
        \item $K$ is a union of finitely many rectangle pieces, that is, \[K = \bigsqcup_{\alpha \in \Ac_0} (U_\alpha^+\times U_\alpha^-\times\{t_\alpha\})\] where the index set $\Ac_0$ is finite and each $U_\alpha^\pm$ is an open subset of $\partial_\infty\Gamma$.
        \item There exists $T_0>0$ such that whenever $\phi^t(v), \phi^{t'}(v)\in \Gamma K$ for some $v\in \Gc(\Gamma)$ and $t,t'\in \Rb$, one has $\abs{t-t'}\geqslant T_0$;
        \item There exists $T_1 \geqslant T_0$ such that for any $v\in\Gc(\Gamma)$, there exists $t\in [T_0,T_1]$ such that $\phi^t(v)\in \Gamma K$;
        \item For any $\gamma\in \Gamma$ other than $\id$, $K \cap \gamma K = \emptyset$.
    \end{enumerate}
\end{lemma}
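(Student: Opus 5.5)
The plan is to exploit the fact that $\Gamma$ acts properly discontinuously, freely and cocompactly on $\Gc(\Gamma)$. Cocompactness follows from condition (4) of Definition~\ref{definition: the geodesic flow}: $p_M$ is an equivariant quasi-isometry onto $\Cay(\Gamma)$, on which $\Gamma$ acts cocompactly, so a bounded neighborhood of $p_M^{-1}(\id)$ is a compact set whose translates cover. We build $K$ out of small local cross-sections of the flow, centered at finitely many points chosen inside a fundamental domain.

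\textbf{Local product charts.} Since the flow is $\phi^t(x,y,s)=(x,y,s+t)$ in the product coordinates, every point $v=(x,y,s)$ has neighbourhoods of the form $U^+\times U^-\times (s-\epsilon,s+\epsilon)$ with $U^\pm$ open in $\partial_\infty\Gamma$. Because the action is free and properly discontinuous, we can shrink these so that the chart $W_v$ satisfies $\gamma W_v\cap W_v=\emptyset$ for all $\gamma\neq\id$.

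\textbf{Choosing and disjointifying the slices.} By compactness, finitely many charts $W_{v_1},\dots,W_{v_N}$ have $\Gamma$-translates covering a compact fundamental set $F$, after shrinking each to a box of flow width $2\epsilon$. Let $D_i=U_i^+\times U_i^-\times\{t_i\}$ be the central slices. A flow line through a point of $W_{v_i}$ meets $D_i$ within time $\epsilon$, so it hits $\Gamma\bigcup_i D_i$ within a uniformly bounded time. This gives (3) after adjusting, since we only need a hit in $[T_0,T_1]$: flow first by $T_0$, then hit within time $2\epsilon$, so $T_1=T_0+2\epsilon$ works once $T_0\le\epsilon$. Conditions (1) and (4) are where the work lies, because the different slices $\gamma D_i$ and $D_j$ may meet or lie close along flow lines. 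To handle this, we choose the $D_i$ inductively. Having fixed $D_1,\dots,D_{i-1}$, the set of flow times at which a line through $W_{v_i}$ meets $\Gamma(D_1\cup\dots\cup D_{i-1})$ is locally finite and uniformly discrete, by properness and finiteness of the slices. We then replace $D_i$ by several sub-boxes, partitioning $U_i^+\times U_i^-$ into finitely many open product pieces up to a closed boundary set. Each piece gets a height $t$ chosen to avoid the previously placed slices by at least some $T_0>0$. Where a flow line already hits an earlier slice near its middle, we drop that sub-box instead, since the hitting requirement is already met there.

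\textbf{Main obstacle.} The difficulty is that the product pieces are open, so their union misses boundary points, while (3) must hold for every $v$. I would handle this by covering rather than partitioning. At each stage, use finitely many overlapping open product boxes and enlarge the charts so that a line missing one slice because it lies on a boundary still passes through a neighbouring chart's slice within bounded time. The slices stay disjoint because they sit at heights separated by at least $T_0$ and pairwise distinct boxes are placed at different heights. The same height separation gives (2), together with the previously secured property $\gamma K\cap K=\emptyset$. Finally, compact closure of $K$ is clear, since $K$ is a finite union of subsets of the precompact charts.
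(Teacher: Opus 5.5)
Your proposal is correct and follows essentially the same route as the paper. Both use product charts disjoint from their nontrivial translates over a compact fundamental set $F$, refine them into finitely many small overlapping open product boxes, and choose pairwise distinct heights inductively so that no slice meets, or comes close along a flow line to, a translate of another. The differences are organizational: the paper has each chart's heights avoid the geometrically shrinking windows of later charts, and it gets (3) by placing all heights above the time coordinates of $F$ rather than by flowing forward first.

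The one ingredient you leave implicit, and should state, is why a single height can serve a whole small box. The line $(x,y,\cdot)$ meets a translate $\gamma D_j$ at time coordinate $t_j+c(\gamma^{-1}x,\gamma^{-1}y,\gamma)$. This varies continuously in $(x,y)$, only finitely many translates are relevant, and so the forbidden heights form a finite union of short intervals. This is exactly the cocycle-variation condition in the paper's refinement step.
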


\begin{proof}
    As $\Gamma$ acts on $\Gc(\Gamma)$ properly discontinuously and freely, for any two open subsets $U^+$ and $U^-$ of $\partial_\infty \Gamma$ that are small enough and such that $\overline{U^+}\cap \overline{U^-} = \emptyset$, there exists $\delta>0$ such that \[(U^+\times U^-\times [-\delta,\delta])\cap \gamma(U^+\times U^-\times [-\delta,\delta]) = \emptyset~,\] for any $\gamma\ne \id$. We consider the collection \[\Uc= \{U^+\times U^- \subset \partial_\infty^{(2)}\Gamma : U^+,U^-\text{ are open subsets of } \partial_\infty^{(2)}\Gamma, \] \[\overline{U^+}\cap \overline{U^-} = \emptyset,\text{ and } \exists\ \delta = \delta(U^\pm) \text{ such that }\forall\ \gamma\ne \id, \] \[(U^+\times U^-\times [-\delta,\delta])\cap \gamma(U^+\times U^-\times [-\delta,\delta]) = \emptyset \}~.\]

    Since $\Gamma$ acts on $\Gc(\Gamma)$ cocompactly, let $F\subset \Gc(\Gamma)$ be a compact subset such that $\Gamma F =\Gc(\Gamma)$ and $\iota(F) = F$. Let \[T_M = 1+ \sup \{t\in \Rb: \exists\ (x,y)\in \partial_\infty^{(2)}\Gamma \text{ such that } (x,y,t)\in F\}~.\] Then there exists a finite subset of $\Uc$ denoted by $\{U_i^+\times U_{i}^-\}_{i\in I}$, such that for any $(x,y,t)\in F$, there is some $i\in I$ such that $(x,y)\in U_i^+\times U_i^-$. Let $\delta_0 = \min_{i\in I}\{\delta(U_i^\pm)\}$ and fix some $t_i\geqslant T_M$ for each $i\in I$. Since the $\Gamma$-action and $\phi$ commute on $\Gc(\Gamma)$, then for each $i\in I$, \[(U_i^+\times U_i^-\times [ t_i-\delta_0,t_i+\delta_0])\cap \gamma(U_i^+\times U_i^-\times [ t_i-\delta_0,t_i+\delta_0]) = \emptyset\] for any $\gamma\ne \id$. The closure of \[K_0 = \bigcup_{i\in I} (U_i^+\times U_i^-\times [ t_i-\delta_0,t_i+\delta_0])\] is compact, so we can pick a finite subset $\Bc \subset \Gamma$ such that $K_0\cap \gamma K_0 = \emptyset$ for any $\gamma\in \Gamma\setminus \Bc $. We may also assume $\Bc ^{-1}=\Bc $ and $\id \in \Bc $. Our goal now is to refine the cover $\{U_i^+\times U_{i}^-\}_{i\in I}$. We write $I = \{1,2,\dots,n\}$ as it is finite.\\
    
    \noindent {\bf Step 1. A Refinement of $U_1^+\times U_1^-$.}
    Let $\delta_1 = \frac{\delta_0}{4n}$. Recall from Definition~\ref{definition: the geodesic flow} that $\Gamma$ acts on $\Gc(\Gamma)$ via a continuous cocycle $c$. We consider \[\Uc_1 = \{U^+\times U^- : U^\pm\subset U_1^\pm \text{ are open subsets},\ \abs{c(x,y,\gamma)-c(x',y',\gamma)}\leqslant \delta_1\] \[\forall (x,y),(x',y')\in U^+\times U^- \text{ and }\gamma\in \Bc \}~.\] For each $U_\alpha^+\times U_\alpha^- \in \Uc_1$, whenever given $t_\alpha'\in [t_1-\delta_0,t_1+\delta_0]$, $j\in I\setminus\{1\}$, $\gamma\in \Bc$ and $t_j'\in [t_j-\delta_1,t_j+\delta_1]$, a necessary condition for \[\gamma (U_\alpha^+\times U_\alpha^-\times\{t_\alpha'\}) \cap (U_j^+\times U_j^-\times\{t_j'\}) \ne \emptyset\] is that there exists $(x,y)\in U_\alpha^+\times U_\alpha^-$ such that \[c(x,y,\gamma)+t_\alpha' =t_j'~.\] Then by comparing the length of intervals, we can find a nonempty open interval \[T_\alpha=(t_\alpha^-,t_\alpha^+)\subset [t_1-\delta_0,t_1+\delta_0]\setminus \bigcup_{j=2}^n [t_j - 2\delta_1,t_j+2\delta_1]~.\] We conclude that for any $U_\alpha^+\times U_\alpha^-\in \Uc_1$, \[(U_\alpha^+\times U_\alpha^-\times{T_\alpha}) \cap \gamma(U_j^+\times U_j^-\times [t_j-\delta_1,t_j+\delta_1]) = \emptyset\] for any $\gamma\in \Bc $, $j\in I\setminus\{1\}$.\\

    \noindent {\bf Step 2. Refinements of the rest by induction.}
    By the same argument for any $i=2,3,\dots,n$, we take $\delta_{i} = \frac{\delta_{i-1}}{4n}$ and \[\Uc_i = \{U^+\times U^- : U^\pm\subset U_i^\pm \text{ are open subsets},\ \abs{c(x,y,\gamma)-c(x',y',\gamma)}\leqslant \delta_{i-1}\] \[\forall (x,y),(x',y')\in U^+\times U^- \text{ and }\gamma\in \Bc \}~,\] then find for each $U_\beta^+\times U_\beta^-\in \Uc_i$ an associated nonempty open interval $T_\beta \subset [t_i-\delta_{i-1},t_i+\delta_{i-1}]$, such that \[(U_\beta^+\times U_\beta^-\times{T_\beta}) \cap \gamma(U_j^+\times U_j^-\times [t_j-\delta_i,t_j+\delta_i]) = \emptyset\] for any $\gamma\in \Bc $, $j\in I\setminus\{1,2,\cdots,i\}$.
    
    Finally let $\{U_\alpha^+\times U_\alpha^-\}_{\alpha\in \Ac_0}$ be finite subset of $\bigcup_{i=1}^n \Uc_i$ such that \[\forall v\in F, \ (v^+,v^-) \in \bigcup_{\alpha \in \Ac_0}U_\alpha^+\times U_\alpha^-~.\] We take $t_\alpha\in T_\alpha$ for each $\alpha\in \Ac_0$ such that $t_\alpha \ne t_\beta$ for any $\alpha\ne \beta\in \Ac_0$. Let \[T_0 = \frac{1}{2}\min\big(\{t_\alpha-t_\alpha^-: \alpha \in \Ac_0 \}\cup \{t_\alpha^+ - t_\alpha: \alpha \in \Ac_0\}\big)\]  \[\text{ and }T_1 = \max\{t_\alpha : \alpha\in \Ac_0\} + 2T_M~.\] Then it is easy to verify  the set \[K=\bigsqcup_{\alpha\in \Ac_0}(U_\alpha^+\times U_\alpha^- \times \{t_\alpha\})\] satisfies the required conditions.
\end{proof}

We call such a $K$ constructed in Lemma~\ref{lemma: existence global transversal} a \emph{global transversal} of the flow $(\Gc(\Gamma),d,\phi)$. We denote by $\wh K$ the image of $K$ via the projection $\Gc(\Gamma) \to \wh \Gc(\Gamma)$. Notice that by Lemma~\ref{lemma: existence global transversal}~(4), $K$ is homeomorphic to $\wh K{K}$.

For a compact set $F\subset \Gc(\Gamma)$, we call \[F^\epsilon = \bigcup_{(x,y,t)\in F} \Nc_\epsilon(x)\times \Nc_{\epsilon}(y) \times \{t\} \] the \emph{$\epsilon$-thickening} of $F$.

\begin{remark}\label{remark: thicken a global transversal}
     Suppose we are given a global transversal $K$. There exists a sufficiently small constant $\epsilon(K)>0$, such that for any $0< \epsilon \leqslant \epsilon(K)$, $K^\epsilon$ is again a global transversal, up to shrinking the constant $T_0$.
\end{remark}

\begin{remark}\label{remark: reverse flow transversal}
    Suppose \[K = \bigsqcup_{\alpha \in \Ac} (U_\alpha^+\times U_\alpha^-\times\{t_\alpha\}) \subset \Gc(\Gamma)\] is a global transversal for $(\Gc(\Gamma),d,\phi)$ and $T_0,T_1$ are the constants given by Lemma~\ref{lemma: existence global transversal}. Then \[\iota (K) = \bigsqcup_{\alpha \in \Ac} (U_\alpha^-\times U_\alpha^+\times\{-t_\alpha\}) \subset \Gc(\Gamma)\] is a global transversal for $(\Gc(\Gamma),\phi^{-t})$ with the conditions in Lemma~\ref{lemma: existence global transversal} hold by the same constants $T_0$ and $T_1$.
\end{remark}

\begin{lemma}[Coarsely Anosov flow]\label{lemma: the geodesic flow is coarsely Anosov}
    For any compact subset $F\subset \Gc(\Gamma)$ and $\epsilon>0$ such that $F^\epsilon$, the $\epsilon$-thickening of $F$, is again compact, there exists $A_0\geqslant 1$ such that whenever $v=(x,y,s)\in F$, $t>0$ and $\gamma\in \Gamma$ are such that $\phi^t(v)\in \gamma F$, then for any $y'\in \partial_{\infty}\Gamma$ such that $d_{\infty}(y,y')\leqslant \epsilon$, \[A_0^{-1} e^{-\kappa\abs{\gamma}} d_{\infty}(y,y')\leqslant d_{\infty}(\gamma^{-1}y,\gamma^{-1}y')\leqslant A_0 e^{-\kappa\abs{\gamma}} d_{\infty}(y,y')~;\]
\end{lemma}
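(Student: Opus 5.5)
The plan is to reduce both inequalities to an estimate on Gromov products in $\Cay(\Gamma)$ and then apply the comparison of Notation~\ref{notation: the visual metric}. That comparison gives $d_\infty(a,b)\asymp e^{-\kappa(a,b)_{\id}}$ up to the factor $C_0$. Since $(\gamma^{-1}y,\gamma^{-1}y')_{\id}=(y,y')_{\gamma}$, it suffices to show
\[(y,y')_\gamma = (y,y')_{\id} - \abs{\gamma} + O(1),\]
with the error depending only on $F$, $\epsilon$ and $\delta$. Once this holds, exponentiating gives the two-sided bound with $A_0$ equal to $C_0^{-1}e^{\kappa O(1)}$.

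\textbf{Step 1: where $\id$ and $\gamma$ sit relative to $y$.}
By Definition~\ref{definition: the geodesic flow}~(4), $p_M$ maps the flow line $(x,y,\Rb)$ to a geodesic $g$ from $y$ (backward) to $x$ (forward).
\begin{itemize}
\item Since $F$ is compact, $p_M(v)$ lies within a bounded distance $D=D(F)$ of $\id$.
\item Since $\gamma^{-1}\phi^t(v)\in F$ and $p_M$ is equivariant, $p_M(\phi^t(v))$ lies within $D$ of $\gamma$.
\item Both points lie on $g$. As $t>0$ and the flow line is a quasi-geodesic, $\gamma$ is (coarsely) further along $g$ toward $x$ than $\id$ is. Lemma~\ref{lemma: flow time quasi isometric to group action} relates $t$ and $\abs{\gamma}$, but we only need that $\id$ and $\gamma$ lie within $O(1)$ of $g$, with $\id$ on the $y$-side of $\gamma$.
\end{itemize}
Coarsely, then, $\id$ lies on the geodesic ray $[\gamma,y)$. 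This gives $(\id,y)_\gamma\geqslant\abs{\gamma}-O(1)$; since the reverse inequality $(\id,y)_\gamma\leqslant d(\gamma,\id)=\abs{\gamma}$ always holds, $(\id,y)_\gamma=\abs{\gamma}-O(1)$.

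\textbf{Step 2: controlling $y'$ near $y$ — the main obstacle.}
Here we need $(x,y')$ to stay in a controlled set. The hypothesis $d_\infty(y,y')\leqslant\epsilon$ means $(x,y',s)\in F^\epsilon$, which is compact. So the flow line through $(x,y',s)$ also passes within $D'=D'(F^\epsilon)$ of $\id$, where $D'$ comes from applying $p_M$ to the compact set $F^\epsilon$. That line's image under $p_M$ is a geodesic from $y'$ to $x$ passing near $\id$. Consequently $(x,y')_{\id}=O(1)$, and similarly $(x,y)_{\id}=O(1)$. Thus $\id$ is a coarse center for the ideal triangle with vertices $x,y,y'$.

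\textbf{Step 3: combining.}
Apply the tree approximation to the finite configuration $\{\id,\gamma,x,y,y'\}$; the resulting additive error depends only on $\delta$ and the number of points. In the approximating tree:
\begin{itemize}
\item $\id$ is the branch point of $x,y,y'$, up to $O(1)$.
\item $y$ and $y'$ leave the segment $[\id,x]$ at $\id$, branching from each other at distance $(y,y')_{\id}$ along the $y$-direction.
\item $\gamma$ lies on $[\id,x]$ at distance $\abs{\gamma}$, on the opposite side of $\id$ from $y$ and $y'$.
\end{itemize}
Hence the branch point of $y,y'$ seen from $\gamma$ is at distance $\abs{\gamma}+(y,y')_{\id}$. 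This gives
\[(y,y')_\gamma=(y,y')_{\id}+\abs{\gamma}\pm O(1),\]
with a plus sign, not the minus sign written in the reduction above. The reason is that moving along the flow toward $x$ makes the backward endpoints closer as seen from $\gamma$.

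Consequently $d_\infty(\gamma^{-1}y,\gamma^{-1}y')\asymp e^{-\kappa\abs{\gamma}}d_\infty(y,y')$, which is the stated inequality, and the contraction direction is consistent with the statement. Boundary points are handled via Lemma~\ref{lemma: Gromov product coarsely well-defined}, at the cost of an extra $2\delta$ error.
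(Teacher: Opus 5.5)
Your argument is correct and is essentially the paper's: both reduce, via $(\gamma^{-1}y,\gamma^{-1}y')_{\id}=(y,y')_\gamma$ and the visual-metric comparison, to the estimate $(y,y')_\gamma=(y,y')_{\id}+\abs{\gamma}+O(1)$. This estimate follows because $\id$ and then $\gamma$ lie coarsely, in that order, along the geodesics from $y$ and from $y'$ to $x$. The only difference is that the paper checks $d_\Gamma(y_n,\gamma)-d_\Gamma(y_n,\id)\approx\abs{\gamma}$ directly along sequences $y_n\to y$, $y_n'\to y'$, whereas you use a tree approximation.

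Two slips need fixing in the write-up. First, the initial reduction should already carry $+\abs{\gamma}$. Second, $\id$ is \emph{not} a coarse center of the triangle $x,y,y'$: that center sits about $(y,y')_{\id}$ away from $\id$ toward $y$, which is unbounded as $y'\to y$. What you actually use, and what is true, is only $(x,y)_{\id},(x,y')_{\id}=O(1)$, i.e.\ that $\id$ lies coarsely on the $x$-leg of the tripod.
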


\begin{proof}
    Let $\delta>0$ be such that $\Gamma$ is $\delta$--hyperbolic and let $C_0,\kappa>0$ be the constants associated to the visual metric given by Notation~\ref{notation: the visual metric}. Since $d_\infty(y,y') \leqslant \epsilon$, $(x,y',s)\in F^\epsilon$.
    Let $F_0$ be a compact subset of $\Gc(\Gamma)$ that contains both $\id$ and $F^\epsilon$. Both geodesic rays $(x,y,[s,+\infty))$ and $(x,y',[s,+\infty))$ have one endpoint in $F^\epsilon$ and the other endpoint $x$. Then by the Morse lemma of fellow traveling geodesics, up to replacing $F_0$ by a larger compact set, both geodesics $(x,y,s+t))$ and $(x,y',s+t))$ pass through $\gamma F_0$. Let $(y_n),(y_n')\subset\Gamma$ be sequences along these two geodesics converging to $y,y'$ respectively. Then \[(y_n,y_n')_\gamma-(y_n,y_n')_{\id}=\frac{1}{2}(d_\Gamma(y_n,\gamma)+d_\Gamma(y_n',\gamma)-d_\Gamma(y_n,\id)-d_\Gamma(y_n',\id)),\] \[\abs{d_\Gamma(y_n,\gamma)-d_\Gamma(y_n,\id)-\abs{\gamma}}\leqslant 4\diam(F_0)~,\] \[\text{ and } \quad \abs{d_\Gamma(y_n',\gamma)-d_\Gamma(y_n',\id)-\abs{\gamma}} \leqslant 4\diam(F_0)~.\] Thus \[\abs{(y_n,y_n')_\gamma-(y_n,y_n')_{\id}-\abs{\gamma}}\leqslant 8\diam(F_0)~.\] Passing to the limit and applying Lemma~\ref{lemma: Gromov product coarsely well-defined} yields \[\abs{(y,y')_\gamma-(y,y')_{\id}-\abs{\gamma}}\leqslant 8\diam(F_0)+4\delta~.\] 
    Also by Lemma~\ref{lemma: Gromov product coarsely well-defined} we have that 
    \[\abs{(\gamma^{-1}y,\gamma^{-1}y')_{\id} - (y,y')_\gamma}\leqslant 4\delta~.\] Let $A_0 = \frac{1}{C_0} e^{\kappa(8\diam(F_0)+8\delta)}$, then by Notation~\ref{notation: the visual metric} we get the visual metric bounds \[d_\infty(\gamma^{-1}y,\gamma^{-1}y') \leqslant  e^{-\kappa(\gamma^{-1}y,\gamma^{-1}y')_{\id}} \leqslant  e^{-\kappa(y,y')_{\id}+ 8\delta + 8\diam(F_0)} \leqslant A_0 e^{-\kappa\abs{\gamma}}d_\infty(y,y')~,\] \[ \text{ and similarly, }\quad d_\infty(\gamma^{-1}y,\gamma^{-1}y') \geqslant A_0^{-1} e^{-\kappa\abs{\gamma}} d_\infty(y,y')~.\] These complete the proof.
\end{proof}

\begin{notation}\label{notation: the choice of constants before closing lemma}
    From now on, we fix a global transversal $K$. Let $A_3,B_3$ be the constants given by Lemma~\ref{lemma: flow time quasi isometric to group action} for $F = K$, let $\epsilon(K)>0$ be the constant given by Remark~\ref{remark: thicken a global transversal}, and let $A_0\geqslant 1$ be the constant given by Lemma~\ref{lemma: the geodesic flow is coarsely Anosov} for $F = K$ and $\epsilon = \epsilon(K)$. Since $K^{\epsilon(K)}$, the $\epsilon(K)$-thickening of $K$, is again a global transversal by Remark~\ref{remark: thicken a global transversal}, let $0<T_0\leqslant T_1$ be the constants given by Lemma~\ref{lemma: existence global transversal} for $K^{\epsilon(K)}$. Let $L_2>0$ be such that \[5Ae^{-\kappa L_2} \leqslant \frac{1}{2}~,\] and $T_2 = A_3 (L_2 +B_3)$.
\end{notation}

\begin{definition}[Partition of global transversals]
    Let $\epsilon\leqslant \epsilon(K)$. We say a decomposition of $K$ into disjoint subsets \[K = \bigsqcup_{\alpha \in \Ac} K_{\alpha} \subset \Gc(\Gamma)\] is an \emph{$\epsilon$-partition} of $K$ if 
    \begin{enumerate}
        \item $\Ac$ is finite;
        \item For each $\alpha\in \Ac$, $K_\alpha$ is of the form \[K_{\alpha} = U_{\alpha}^+\times U_{\alpha}^-\times\{t_{\alpha}\}~.\]
        Here $U_{\alpha}^\pm$ are not necessarily open.
        \item The $d_\infty$-diameters of $U_{\alpha}^+$ and $U_{\alpha}^-$, denoted by $\diam_\infty(U_{\alpha}^+)$ and $\diam_\infty(U_{\alpha}^-)$ respectively, are both bounded by $\epsilon$ for each $\alpha\in\Ac$.
    \end{enumerate}
    We denote by $\wh K_\alpha$ the image of $K_\alpha$ via the projection $\Gc(\Gamma) \to \wh \Gc(\Gamma)$ and denote \[\Nc_{\epsilon}(K_\alpha) = \Nc_{\epsilon}(U_\alpha^+)\times \Nc_{\epsilon}(U_\alpha^-)\times\{t_\alpha\}~.\]
\end{definition}

We will sometimes refer to a partition \[K = \bigsqcup_{\alpha\in \Ac} K_{\alpha} \subset \Gc(\Gamma)~,\] simply by its index set, writing $\mathcal A$ representing the partition $\{K_\alpha\}_{\alpha \in \mathcal A}$.

\subsection{Closing lemma}\label{section: Closing lemma}

\begin{theorem}[A closing lemma]\label{theorem: a closing lemma}
    For any $R_{\Haus} > 0$ and $T_3 \geqslant T_2$, there exists a constant $\epsilon_0>0$ such that the following statement holds. Suppose we are given $\Ac$, an $\epsilon_0$ partition of $K$, $(\alpha_i)_{i=0}^n\subset \Ac$ with $\alpha_0 = \alpha_n$, $\wh v_i \in \wh K_{\alpha_i}$, and $t_i >0$ such that $\phi^{t_i}(\wh v_i) \in \wh K_{\alpha_{i+1}}$ and $T_2\leqslant t_i \leqslant T_3$ for each $0\leqslant i \leqslant n-1$. Then there exists a loxodromic element $\gamma \in \Gamma$ such that
    the Hausdorff distance between $\bigcup_{i=0}^{n-1} \phi^{[0,t_i]}(\wh v_i)$ and the axis of $\gamma$ in $\wh \Gc(\Gamma)$ is bounded by $R_{\Haus}$.
\end{theorem}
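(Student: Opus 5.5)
We lift the closed chain of orbit segments to $\Gc(\Gamma)$ and read off the group element that closes it up. Write $v_0\in K_{\alpha_0}$ for the lift of $\wh v_0$, and define inductively $\gamma_1,\dots,\gamma_n\in\Gamma$ and lifts $v_i\in K_{\alpha_i}$ of $\wh v_i$ by requiring $\phi^{t_i}(\gamma_i v_i)\in \gamma_{i+1}K_{\alpha_{i+1}}$ (with $\gamma_0=\id$). By Lemma~\ref{lemma: existence global transversal}~(4) the element $g_i:=\gamma_i^{-1}\gamma_{i+1}$ is uniquely determined by $\phi^{t_i}(v_i)\in g_iK_{\alpha_{i+1}}$. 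Lemma~\ref{lemma: flow time quasi isometric to group action} with $t_i\geqslant T_2=A_3(L_2+B_3)$ gives $\abs{g_i}\geqslant L_2$, and $t_i\leqslant T_3$ gives a uniform upper bound. Set $\gamma=\gamma_n=g_0g_1\cdots g_{n-1}$. Since $\alpha_n=\alpha_0$, both $\phi^{t_{n-1}}(\gamma_{n-1}v_{n-1})$ and $\gamma v_0$ lie in $\gamma K_{\alpha_0}$, whose endpoints have $d_\infty$-diameter $\leqslant\epsilon_0$. The concatenation $\bigcup_i\phi^{[0,t_i]}(\gamma_iv_i)$ is therefore a chain of flow segments whose consecutive pieces jump by at most $\epsilon_0$ in the boundary coordinates at each transversal hit. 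The claim is that $\gamma$ is loxodromic and its axis shadows this chain.

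The core is a contraction argument using Lemma~\ref{lemma: the geodesic flow is coarsely Anosov}, in the spirit of a graph transform. Fix the rectangle $\Nc_{\epsilon}(U^-_{\alpha_0})$ of backward endpoints for a suitable $\epsilon\leqslant\epsilon(K)$. For the flow segment from $K_{\alpha_i}$ to $g_iK_{\alpha_{i+1}}$, Lemma~\ref{lemma: the geodesic flow is coarsely Anosov} says that $g_i^{-1}$ contracts backward endpoints within $\epsilon$ of $y_i$ by a factor of at most $A_0e^{-\kappa\abs{g_i}}\leqslant A_0e^{-\kappa L_2}\leqslant\frac1{10}$, by the choice of $L_2$ in Notation~\ref{notation: the choice of constants before closing lemma}. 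If $\epsilon_0\ll\epsilon$, the image of an $\epsilon$-ball around the backward endpoint of $v_i$ therefore lands inside an $\epsilon/10+\epsilon_0$ ball around the backward endpoint of $v_{i+1}$, hence inside the $\epsilon$-ball. Composing these maps, $\gamma^{-1}$ maps $\overline{\Nc_\epsilon(y_0)}$ into itself as a strict contraction of ratio $\leqslant 2^{-n}$. This gives a fixed point $y^*$ that is the attracting fixed point of $\gamma^{-1}$, i.e.\ the repelling point $\gamma^-$. Remark~\ref{remark: reverse flow transversal} lets us run the same argument for the reversed flow with forward endpoints and $\gamma$, producing $x^*=\gamma^+$ near $x_0$. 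Since $\overline{U^+_\alpha}\cap\overline{U^-_\alpha}=\emptyset$ for the original transversal pieces, we get $x^*\neq y^*$, so $\gamma$ is loxodromic. An element with a contracting self-map on an open set of the boundary has infinite order, and $\gamma\neq\id$ since $\abs{\gamma}\to\infty$ along the chain by a Morse-type argument.

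For the Hausdorff estimate, note that the axis through $(x^*,y^*)$, when lifted and translated by the $\gamma_i$, passes through each transversal piece $\gamma_i\Nc_\epsilon(K_{\alpha_i})$ with endpoints within $O(\epsilon)$ of those of $\gamma_iv_i$. The same contraction estimates applied at every intermediate step, rather than only at $i=0$, give this control. Geodesics in $\Gc(\Gamma)$ whose endpoint pairs are $\epsilon$-close and which start in a common compact set stay $\eta(\epsilon)$-close for a time $T_3$, with $\eta(\epsilon)\to0$ as $\epsilon\to0$; this follows from continuity of $d$ and compactness of $K^{\epsilon(K)}$. The time reparametrization needed at each hit is controlled by continuity of the cocycle $c$. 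Choosing $\epsilon$, and then $\epsilon_0$, small enough that $\eta<R_{\Haus}$ gives both inclusions of the Hausdorff bound, and the bound descends to $\wh\Gc(\Gamma)$.

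The main obstacle is the bookkeeping in the contraction step. Lemma~\ref{lemma: the geodesic flow is coarsely Anosov} is stated for one segment with endpoints in a fixed compact $F$, so the contraction must be iterated while staying inside the $\epsilon(K)$-thickening at every step. Simultaneously, the small jumps $\epsilon_0$ at the hits must not accumulate over the $n$ steps, and the chain must be shown to be uniformly close to the axis with constants independent of $n$. I would handle this by proving the one-step invariance $\Nc_\epsilon\to\Nc_{\epsilon/10+\epsilon_0}\subset\Nc_{\epsilon/2}$ and inducting.
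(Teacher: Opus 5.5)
Your proposal is correct and follows essentially the paper's route: lift the chain, get $\abs{g_i}\geqslant L_2$ from $t_i\geqslant T_2$, use Lemma~\ref{lemma: the geodesic flow is coarsely Anosov} to show that backward-endpoint neighborhoods are carried into one another with shrinking diameter, handle forward endpoints the same way via Remark~\ref{remark: reverse flow transversal}, and finish with the continuity choice of $\epsilon_0$. The only differences are that the paper takes the nested intersection $\bigcap_m\gamma_n^{-m}\overline{\Nc_{2\epsilon_0}(U^-_{\alpha_0})}$ where you invoke a contraction fixed point, and that composing the steps (in your argument, and tacitly in the paper's iterated diameter bound) needs the pairwise estimate $d_\infty(g_i^{-1}y',g_i^{-1}y'')\leqslant A_0e^{-\kappa\abs{g_i}}d_\infty(y',y'')$ for all $y',y''$ near $y_i$, rather than only comparison with the base point $y_i$ as the lemma is literally stated, which the lemma's Gromov-product proof supplies directly.
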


\begin{proof}
    For any $T_3 > 0$ and $R_{\Haus}>0$, we take $\epsilon_0\leqslant \frac{1}{5}\epsilon(K)$ to be a constant such that if $(x,y,s),(x',y',s)\in K^{\epsilon(K)}$ are such that $d_\infty(x,x')\leqslant 5\epsilon_0$ and $d_\infty(y,y')\leqslant 5\epsilon_0$, then for any $t\leqslant T_3$, the Hausdorff distance between $\phi^{[0,t]}(x,y,s)$ and $\phi^{[0,t]}(x',y',s)$ is bounded by $R_{\Haus}$. Roughly speaking, $T_2$ is chosen so that, after flowing for time at least $T_2$, the visual distance contracts sufficiently by Lemma~\ref{lemma: the geodesic flow is coarsely Anosov}. On the other hand, $T_3$is an arbitrary upper bound on the time interval under the consideration that once the initial points are sufficiently close in the boundary directions, the corresponding orbit segments of length at most $T_3$ remain within uniformly bounded Hausdorff distance.

    For each $0\leqslant i \leqslant n-1$, let $v_i$ be the unique preimage of $\wh v_i$ in $K_{\alpha_i}$ of $\overline{v_i}$ via the projection $\Gc(\Gamma) \to \wh \Gc(\Gamma)$. Then there exists $(\eta_i)_{i=1}^n \subset \Gamma$ such that $\phi^{t_i}(v_i) \in \eta_{i+1}K_{\alpha_{i+1}}$. Set $\gamma_0 = \id$ and $\gamma_i = \eta_1 \eta_2 \dots \eta_i$ for each $1\leqslant i \leqslant n$. (See Figure~\ref{figure: closing lemma}.)
    \begin{figure}[ht]
        \include{Figure_closinglemma.tex}
        \caption{}
        \label{figure: closing lemma}
    \end{figure}
    We write $v_i=(x_i,y_i,t_{\alpha_i})\in K_{\alpha_{i}}$, then $\phi^{t_i}(v_i)= (x_i,y_i,t_{\alpha_i}+t_i) \in \eta_{i+1}K_{\alpha_{i+1}}$. We also write $(x_i',y_i',t_{\alpha_{i+1}}) = \eta_{i+1}^{-1} (x_i,y_i,t_{\alpha_i} + t_i) = \eta_{i+1}^{-1} \phi^{t_i}(v_i) \in K_{\alpha_{i+1}}$.

    By our assumption each $K_{\alpha_i}$ satisfies \[\diam_\infty(U_{\alpha_i}^\pm)\le\epsilon_0 \quad \text{ and hence }\quad \diam_\infty(\Nc_{2\epsilon_0}(U_{\alpha_i}^\pm))\le 5\epsilon_0~.\] So we have $d_\infty(x_i',x_{i+1}) \leqslant \epsilon_0$ and $d_\infty(y_i',y_{i+1}) \leqslant \epsilon_0$. 

    Since $t_i\geqslant T_2$, then by Lemma~\ref{lemma: flow time quasi isometric to group action}, $\abs{\eta_i}\geqslant L_2$. By Lemma~\ref{lemma: the geodesic flow is coarsely Anosov}, \[\diam_\infty(\eta_{i+1}^{-1} \Nc_{2\epsilon_0}(U_{\alpha_i}^-))\leqslant 2A e^{-\kappa \abs{\eta_{i+1}}}\diam_{\infty}(\Nc_{2\epsilon_0}(U_{\alpha_i}^-))\leqslant 2Ae^{-\kappa L_2} 5\epsilon_0\leqslant \epsilon_0~.\] Since moreover \[y_i' \in (\eta_{i+1}^{-1}\Nc_{2\epsilon_0}(U_{\alpha_i}^-))\cap U_{\alpha_{i+1}}^- \ne \emptyset~.\] Then we have \[\eta_{i+1}^{-1}\Nc_{2\epsilon_0}(U_{\alpha_i}^-) \subset \Nc_{2\epsilon_0}(U_{\alpha_{i+1}}^-)~.\] We write $\eta_k = \eta_i$ whenever $k$ is congruent to $i$ modulo $n$. Then \[\eta_{i+j}^{-1}\dots \eta_{i+1}^{-1}  \Nc_{2\epsilon_0}(U_{\alpha_{i}}^-) \subset \Nc_{2\epsilon_0}(U_{\alpha_{i+j}}^-)\] Then by Lemma~\ref{lemma: the geodesic flow is coarsely Anosov} \[\diam_\infty(\eta_{i+j}^{-1}\dots \eta_{i+1}^{-1}  \Nc_{2\epsilon_0}(U_{\alpha_{i}}^-))\leqslant 2 Ae^{-\kappa j L_2}  \diam_\infty\Nc_{2\epsilon_0}(U_{\alpha_{i}}^-)\leqslant \frac{1}{10^{j-1}}\epsilon_0~.\]

    In particular, \[\Nc_{2\epsilon_0}(U_{\alpha_0}^-) \supset \gamma_{n}^{-1} \Nc_{2\epsilon_0}(U_{\alpha_0}^-)  \supset \dots  \supset \gamma_{n}^{-(m-1)} \Nc_{2\epsilon_0}(U_{\alpha_0}^-)  \supset\gamma_{n}^{-m}\Nc_{2\epsilon_0}(U_{\alpha_0}^-)\] and $\diam_\infty(\gamma_{n}^{-m}\Nc_{2\epsilon_0}(U_{\alpha_0}^-)) \leqslant \frac{1}{10^{mj-1}}\epsilon_0$. As $\partial_\infty\Gamma$ is compact, \[\bigcap_{m\in \Nb} \gamma_{n}^{-m} \overline{\Nc_{2\epsilon_0}(U_{\alpha_0}^-)}\] is the singleton $\{\gamma_n^{-}\}$. Then $d_\infty(y_0,\gamma_n^-)\leqslant 3\epsilon_0$.

    Similarly, we have $(\gamma_i\gamma_{n}\gamma_i^{-1})^{-m} \Nc_{2\epsilon_0}(U_{\alpha_i}^-)$ is also a decreasing sequence whose $d_\infty$-diameter converges to $0$, then \[\bigcap_{m\in \Nb} (\gamma_i\gamma_{n}\gamma_i^{-1})^{-m} \overline{\Nc_{2\epsilon_0}(U_{\alpha_i}^-)} = \{ (\gamma_i\gamma_{n}\gamma_i^{-1})^- = \gamma_i \gamma_n^-\}\quad \text{ and } \quad d_\infty(y_i,\gamma_i\gamma_n^-)\leqslant 3\epsilon_0~.\]

    Following Remark~\ref{remark: reverse flow transversal}, we apply the same argument for the reversed flow and obtain that \[\bigcap_{m\in \Nb} (\gamma_i\gamma_{n}\gamma_i^{-1})^{m} \overline{\Nc_{2\epsilon_0}(U_{\alpha_i}^+)} = \{ (\gamma_i\gamma_{n}\gamma_i^{-1})^+ = \gamma_i \gamma_n^+\}\quad \text{ and } \quad d_\infty(x_i,\gamma_i\gamma_n^+)\leqslant 3\epsilon_0~.\] Therefore we deduce that for each $0\leqslant i \leqslant n$, \[(\gamma_n^+,\gamma_n^-,t_{\alpha_0}+ c(\gamma_n^+,\gamma_n^-,\gamma_i))\in \gamma_i \Nc_{2\epsilon_0}(K_{\alpha_i})~.\] By the choice of $\epsilon_0$, we have that the Hausdorff distance between \[\gamma_i\phi^{[0,t_i]}(v_i)\quad \text{and} \quad (\gamma_n^+,\gamma_n^-,[t_{\alpha_0}- c(\gamma_n^+,\gamma_n^-,\gamma_{i}^{-1}), t_{\alpha_0} - c(\gamma_n^+,\gamma_n^-,\gamma_{i+1}^{-1})])\] (that is, each black arrow and the nearby blue arrow in Figure~\ref{figure: closing lemma}) is bounded by $R_{\Haus}$. By taking the union we get that the Hausdorff distance between \[\bigcup_{i=0}^{n-1}\gamma_i\phi^{[0,t_i]}(v_i)\quad \text{and} \quad (\gamma_n^+,\gamma_n^-,[t_{\alpha_0}, t_{\alpha_0} + c(\gamma_n^+,\gamma_n^-,\gamma_n)])\] is bounded by $R_{\Haus}$.
\end{proof}

\subsection{Density of multigeodesics}\label{section: Density of multigeodesics}

A geodesic current on $\Gamma$ is a locally finite, $\Gamma$-invariant, $\iota$-invariant measure on $\partial_\infty^{(2)}\Gamma$. For a loxodromic element $\gamma\in \Gamma$, recall that we denote by $\widehat{\gamma}$ the counting measure of the $\Gamma$-orbit of the axis of $\gamma$, that is, for any $E\subset \partial_\infty^{(2)}\Gamma$, \[\widehat{\gamma}(E) = \# \Big(\big(\Gamma (\gamma^+,\gamma^-)\cup \Gamma(\gamma^-,\gamma^+)\big) \cap E\Big)~.\] We denote by $\Curr(\Gamma)$ the set of geodesic currents and by $\PCurr(\Gamma)$ the quotient of $\Curr(\Gamma)$ by $\Rb_{>0}$ scaling and we abuse the notation $\widehat{\gamma}$ as element of $\PCurr(\Gamma)$.

We fix a global transversal $K$ and let $\epsilon_0$ be given by Notation~\ref{notation: the choice of constants before closing lemma}.

\begin{definition}
    Let $\epsilon\leqslant \epsilon_0$ and let $\Ac$ be a $\epsilon$-partition of $K$. Suppose $(\alpha_i)_{i=0}^k\subset \Ac$. We say $v\in \Gc(\Gamma)$ follows the \emph{pattern} $(\alpha_i)_{i=0}^k$ if 
    \begin{enumerate}
        \item there exists $0= t_0 < t_1<t_2<\dots< t_k$ such that $\phi^{t_i}(v)\in \Gamma K_{\alpha_i}$ for each $0\leqslant i \leqslant k$;
        \item $\phi^t(v)\not\in \Gamma K$ for any $t\in (0,t_k)\setminus\{t_1,t_2,\dots,t_{k-1}\}$.
    \end{enumerate}
    We say $\wh v \in \wh \Gc(\Gamma)$ follows the \emph{pattern} $(\alpha_i)_{i=0}^k$ if a (any) lift of $\wh v$ in $\Gc(\Gamma)$ via the projection follows the pattern $(\alpha_i)_{i=0}^k$.
\end{definition}

Up to further shrinking $\epsilon_0$, the following fact holds. Let $\epsilon\leqslant \epsilon_0$ and let $\Ac$ be a $\epsilon$-partition of $K$. For any $(\alpha_i)_{i=0}^k\subset \Ac$, there exists $(\gamma_i)_{i=1}^k\subset \Gamma$ depending only on $(\alpha_i)$ such that any vector $v\in K_{\alpha_0}$ following the pattern $(\alpha_i)_{i=0}^k\subset \Ac$ has successive
hits to $\Gamma K$ in the order $ K_{\alpha_0}, \gamma_1  K_{\alpha_1},\dots,  \gamma_k K_{\alpha_k}$.

\begin{definition}\label{definition: track of geodesic current on partition}
    Let $\epsilon\leqslant \epsilon_0$ and let $\Ac$ be a $\epsilon$-partition of $K$. Let $\mu \in \Curr(\Gamma)$. The \emph{track} of $\mu$ on $\Ac$ is the oriented weighted graph \[(\Graph(\Ac),w_{(\mu,\Ac)})\]consisting of the following data,
    \begin{enumerate}
        \item $\Graph(\Ac)$ is an oriented graph with set of vertices $\Ac$ and set of arrows $\Ac\times \Ac$;
        \item Each arrow $(\alpha,\beta) \in \mathcal A \times \mathcal A$ is assigned a weight \[w_{(\mu,\Ac)}(\alpha\beta) = \mu(\{(x,y)\in U_{\alpha}^+\times U_{\alpha}^- : \exists\ t>0, (x,y,t_\alpha+t) \in \Gamma K_\beta, \] \[\text{ and } \ (x,y,(t_\alpha,t_\alpha+t))\cap \Gamma K = \emptyset)\}~.\]
    \end{enumerate}
\end{definition}

In other words, for each arrow $(\alpha,\beta) \in \mathcal A \times \mathcal A$, $w_{(\mu,\Ac)}(\alpha\beta)$ is the $\mu$-measure of the set of geodesics whose endpoints lie in $U_\alpha^{+} \times U_\alpha^{-}$ and the first return to $\Gamma K$ after the time at $t_\alpha$ occurs in $\Gamma K_\beta$, i.e., follows the pattern $[(\alpha,\beta)]$.

\begin{definition}
    We follow the notation as in Definition~\ref{definition: track of geodesic current on partition}. For $(\alpha_i)_{i=0}^{m-1}\subset \Ac$, we set $\alpha_m = \alpha_0$ and we call a function $c: \Ac \times \Ac \to \Rb_{\geqslant 0}$ a weighted loop in $\Graph(\Ac)$ with vertices $(\alpha_i)_{i=1}^{m-1}$ if $c(\alpha_i,\alpha_{i+1}) = 1$ for any $0\leqslant i\leqslant m-1$, and takes value $0$ on all the other arrows.
\end{definition}

Some direct observations are that

\begin{observation}\label{observation: system of weight equations}
    If $\Ac$ is an $\epsilon$-partition of $K$ for some $\epsilon\leqslant \epsilon_0$ and $\mu \in \Curr(\Gamma)$, then $w_{(\mu,\Ac)}$ satisfies the following finitely many homogeneous linear equations, \[\forall \alpha\in \Ac~, \quad \sum_{\beta\in \Ac} w_{(\mu,\Ac)}(\alpha\beta) = \sum_{\beta\in \Ac} w_{(\mu,\Ac)}(\beta\alpha)~.\]
\end{observation}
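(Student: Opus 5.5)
The identity is a flow-conservation law. Both sides compute the $\mu$-mass of geodesics that cross $\Gamma K_\alpha$, counted once per crossing. The left side sorts these crossings by where the geodesic goes next; the right side sorts them by where it came from. So the plan is to rewrite both sums as the same $\mu$-measure of the single set $U_\alpha^+\times U_\alpha^-\subset\partial_\infty^{(2)}\Gamma$, using invariance of $\mu$ under $\Gamma$ and the fact that the $K_\beta$ are disjoint.

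\textbf{Left-hand side.} Let $E_{\alpha\beta}\subset U_\alpha^+\times U_\alpha^-$ denote the set appearing in Definition~\ref{definition: track of geodesic current on partition}. For $(x,y)\in U_\alpha^+\times U_\alpha^-$, Lemma~\ref{lemma: existence global transversal}(3) gives a first return time $t\in[T_0,T_1]$ with $(x,y,t_\alpha+t)\in\Gamma K$. Since $K=\bigsqcup_\beta K_\beta$ and $K\cap\gamma K=\emptyset$ for $\gamma\neq\id$ (Lemma~\ref{lemma: existence global transversal}(4)), the return point lies in $\Gamma K_\beta$ for exactly one $\beta$. Hence the $E_{\alpha\beta}$, $\beta\in\Ac$, partition $U_\alpha^+\times U_\alpha^-$, and
\[\sum_\beta w_{(\mu,\Ac)}(\alpha\beta)=\mu(U_\alpha^+\times U_\alpha^-).\]
This requires the $E_{\alpha\beta}$ to be Borel. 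I will argue this from the first-hit description: the transversal pieces are products of Borel sets, and the first-hit time is controlled by the finitely many group elements that can occur in the window $[T_0,T_1]$, which is finite by Lemma~\ref{lemma: flow time quasi isometric to group action}.

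\textbf{Right-hand side.} Consider $E_{\beta\alpha}\subset U_\beta^+\times U_\beta^-$. Each $(x,y)\in E_{\beta\alpha}$ has its first hit in some $\gamma K_\alpha$, and by the fact stated just before Definition~\ref{definition: track of geodesic current on partition}, only finitely many $\gamma$ can occur. Decompose
\[E_{\beta\alpha}=\bigsqcup_\gamma E_{\beta\alpha}^\gamma\]
according to this $\gamma$, and consider the map $(x,y)\mapsto\gamma^{-1}(x,y)$ on each piece. It sends $E_{\beta\alpha}^\gamma$ into $U_\alpha^+\times U_\alpha^-$, and its image consists exactly of the pairs whose previous hit to $\Gamma K$ (under the backward flow) lies in $\gamma^{-1}K_\beta$. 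For the converse direction I use Remark~\ref{remark: reverse flow transversal}: the backward first hit from $K_\alpha$ exists and lies in $\eta K_\beta$ for a unique pair $(\eta,\beta)$.

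Consequently the images $\gamma^{-1}E_{\beta\alpha}^\gamma$, taken over all $\beta$ and $\gamma$, partition $U_\alpha^+\times U_\alpha^-$. Because $\mu$ is $\Gamma$-invariant,
\[\sum_\beta w_{(\mu,\Ac)}(\beta\alpha)=\sum_{\beta,\gamma}\mu(\gamma^{-1}E^\gamma_{\beta\alpha})=\mu(U_\alpha^+\times U_\alpha^-).\]
Both sums are finite because $\mu$ is locally finite and the closure of $U_\alpha^+\times U_\alpha^-$ is compact in $\partial_\infty^{(2)}\Gamma$. Comparing with the left-hand side gives the identity.

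\textbf{Main obstacle.} The delicate step is the bijection between forward returns from $K_\beta$ into $\Gamma K_\alpha$ and backward returns from $K_\alpha$ into $\Gamma K_\beta$, which is what makes the decomposition into pieces disjoint and exhaustive. The key input is that the $\Gamma$-action commutes with the flow, so a flow segment from $v\in K_\beta$ to $\gamma w$ with $w\in K_\alpha$, meeting no other point of $\Gamma K$, is carried by $\gamma^{-1}$ to a segment from $\gamma^{-1}v$ to $w$ that likewise avoids $\Gamma K$ in its interior. Here one must use that $\Gamma K$ is $\Gamma$-invariant, and that the uniqueness of the preimage in $K$ (Lemma~\ref{lemma: existence global transversal}(4)) pins down $\gamma$.
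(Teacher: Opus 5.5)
Your argument is correct. Both sides equal $\mu(U_\alpha^+\times U_\alpha^-)$: the left side sorts points by their forward first return to $\Gamma K$. The right side translates each $E_{\beta\alpha}^\gamma$ back by $\gamma^{-1}$ and sorts by backward first return, using $\Gamma$-invariance of $\mu$ and the fact that the flow commutes with the action.

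The paper states this as a ``direct observation'' with no written proof, and your flow-conservation argument is exactly the reasoning it leaves implicit. Only countable additivity is needed for the sum over $\gamma$, so you do not actually depend on the finiteness of the set of $\gamma$'s.
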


\begin{observation}\label{observation: rational approximation of weights}
    We follow the notations as in Observation~\ref{observation: system of weight equations}. For any $\delta>0$, there exists $q: \Ac \times \Ac \to \Qb_{\geqslant 0}$ such that
    \begin{enumerate}
        \item It approximates $w_{(\mu,\Ac)}$, that is, \[\sum_{\alpha\in\Ac}\sum_{\beta\in\Ac}\abs{q(\alpha\beta)-w_{(\mu,\Ac)}(\alpha\beta)} \leqslant \delta~;\]
        \item It solves the same equations as $w_{(\mu,\Ac)}$, that is, \[\forall \alpha\in \Ac~, \quad \sum_{\beta\in \Ac} q(\alpha\beta) = \sum_{\beta\in \Ac} q(\beta\alpha)~;\]
        \item It takes value $0$ at the same place as $w_{(\mu,\Ac)}$, that is, $q(\alpha\beta) = 0 $ if and only if $w_{(\mu,\Ac)}(\alpha\beta) = 0$.
    \end{enumerate}
\end{observation}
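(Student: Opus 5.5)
The plan is a purely linear-algebraic argument. The equations of Observation~\ref{observation: system of weight equations} have integer coefficients, and rational points are dense in any rational subspace. First we check that the weights $w=w_{(\mu,\Ac)}$ are finite nonnegative reals. Each set appearing in Definition~\ref{definition: track of geodesic current on partition} lies in $U_\alpha^+\times U_\alpha^-$. Since $K$ is precompact, this set is contained in a compact subset of $\partial_\infty^{(2)}\Gamma$: its closure avoids the diagonal because $\overline{U^+}\cap\overline{U^-}=\emptyset$ for the rectangles of Lemma~\ref{lemma: existence global transversal}. Local finiteness of $\mu$ then gives finiteness of the weights.

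Next, let $S=\{(\alpha,\beta)\in\Ac\times\Ac : w(\alpha\beta)>0\}$, a finite set. Consider the linear subspace
\[V=\Big\{u\in\Rb^S : \forall \alpha\in\Ac,\ \sum_{\beta:(\alpha,\beta)\in S}u(\alpha\beta)=\sum_{\beta:(\beta,\alpha)\in S}u(\beta\alpha)\Big\}~,\]
that is, the kernel of a matrix $M$ with entries in $\{0,\pm1\}$. The restriction $w|_S$ lies in $V$, because the arrows outside $S$ contribute $0$ to the equations. It also lies in the open orthant $(\Rb_{>0})^S$.

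The key step is that $V\cap\Qb^S$ is dense in $V$. Gaussian elimination on $M$ over $\Qb$ produces a basis $b_1,\dots,b_d\in\Qb^S$ of $\ker_\Qb M$. The rank of $M$ is the same over $\Qb$ and over $\Rb$, so the $b_j$ also form a basis of $V=\ker_\Rb M$. Write $w|_S=\sum_j c_jb_j$ with $c_j\in\Rb$. Choosing rationals $c_j'$ close to the $c_j$ gives $q_S=\sum_j c_j' b_j\in V\cap\Qb^S$ arbitrarily close to $w|_S$.

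Finally, choose $q_S$ so close to $w|_S$ that $\sum_{S}|q_S-w|\le\delta$ and $|q_S(\alpha\beta)-w(\alpha\beta)|<\tfrac12\min_S w$ for every arrow in $S$. The second condition forces $q_S>0$ on $S$. Extend $q_S$ by $0$ outside $S$ to obtain $q:\Ac\times\Ac\to\Qb_{\geqslant0}$. Condition (1) holds by construction. Condition (2) holds because $q_S\in V$ and the extra zeros do not change the sums. Condition (3) holds because $q>0$ exactly on $S$. The only mildly delicate point is the rational-basis/density step for $V$, and it is standard.
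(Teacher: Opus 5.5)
Your proof is correct. The paper states this as a ``direct observation'' and gives no proof, and your argument is the standard justification it relies on: restrict to the support $S$, use a rational basis of the kernel of the integer balance matrix to get density of rational solutions, then perturb by less than $\tfrac12\min_S w_{(\mu,\Ac)}$ so the zero pattern is preserved. One small simplification: precompactness of $K$ alone already places each weight set inside a compact subset of $\partial_\infty^{(2)}\Gamma$, so you do not need $\overline{U^+}\cap\overline{U^-}=\emptyset$, which appears only in the proof of Lemma~\ref{lemma: existence global transversal} and not in its statement.
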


\begin{observation}\label{observation: loop decomposition}
    For any map $q: \Ac \times \Ac \to \Qb_{\geqslant 0}$ such that \[\forall \alpha\in \Ac, \ \sum_{\beta\in \Ac} q(\alpha\beta) = \sum_{\beta\in \Ac} q(\beta\alpha)~,\]
    there exist finitely many weighted loops $(c_j)_{j=1}^k$ in $\Graph(\Ac)$ and positive rationals $(a_j)_{j=1}^k$ such that \[q = \sum_{j=1}^k a_jc_j~.\]
\end{observation}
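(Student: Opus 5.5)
The statement to prove is Observation~\ref{observation: loop decomposition}: a nonnegative rational flow $q$ on $\Graph(\Ac)$ satisfying the balance equations decomposes as a positive rational combination of weighted loops. This is the standard cycle decomposition of a circulation. The plan is to reduce to integers and then peel off cycles by induction on the total weight.

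\textbf{Reduction to integers.} Since $\Ac\times\Ac$ is finite, choose a common denominator $N\in\Nb$ so that $Nq$ takes values in $\Zb_{\geqslant 0}$. The balance equations are homogeneous, so $Nq$ still satisfies them. It therefore suffices to write $Nq=\sum_j b_j c_j$ with $b_j\in\Zb_{>0}$; then $a_j=b_j/N$ works.

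\textbf{Induction on total weight.} Set $S(p)=\sum_{\alpha,\beta}p(\alpha\beta)\in\Zb_{\geqslant 0}$ for an integer circulation $p$. If $S(p)=0$, the empty decomposition works, taking $k=0$ (or, if one insists $k\geqslant 1$, this case only arises when $q\equiv 0$, which is harmless). Otherwise pick an arrow $(\alpha_0,\alpha_1)$ with $p(\alpha_0\alpha_1)>0$.

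\textbf{Finding a cycle.} The arrow $(\alpha_0,\alpha_1)$ contributes to the incoming sum at $\alpha_1$, so by balance the outgoing sum at $\alpha_1$ is positive. Hence there is $\alpha_2$ with $p(\alpha_1\alpha_2)>0$. Iterating gives an infinite walk along arrows of positive weight. Since $\Ac$ is finite, some vertex repeats: $\alpha_i=\alpha_j$ with $i<j$. Choosing $j$ minimal makes $\alpha_i,\dots,\alpha_{j-1}$ pairwise distinct, so each arrow $(\alpha_l,\alpha_{l+1})$ for $i\leqslant l<j$ occurs exactly once. Relabeling this segment gives a weighted loop $c$ with value $1$ on exactly these arrows. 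Self-loops $(\alpha,\alpha)$ are allowed and give loops of length one.

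\textbf{Subtracting the cycle.} Let $b=\min_l p(\alpha_l\alpha_{l+1})\geqslant 1$. Then $p-bc$ is nonnegative and integer-valued. It still satisfies the balance equations, because $c$ itself is balanced: every vertex on the loop has exactly one incoming and one outgoing loop arrow. Moreover $S(p-bc)<S(p)$, so the induction hypothesis decomposes $p-bc$, and adding $bc$ completes the step.

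The main care point is ensuring each extracted loop has value exactly $1$ on its arrows, i.e.\ that no arrow is repeated. Taking the first repetition $j$ handles this.
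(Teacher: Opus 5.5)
Your proof is correct. The paper states this as a direct observation without proof. Your argument is exactly the standard cycle decomposition of a circulation that the authors leave implicit: you clear denominators, then repeatedly peel off simple cycles of positive-weight arrows, each of which is balanced and takes value exactly $1$ on its arrows.
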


Let $\epsilon\leqslant \epsilon_0$. Suppose $\Ac$ is an $\epsilon$-partition of $K$. We now refine the partition $\Ac$ by the following process. For each $k$–tuple $(\alpha_i)_{i=0}^k \subset \Ac$, we define a subset of $K$ by \[K[(\alpha_i)_{i=0}^k] = \{v\in K_{\alpha_0} : v \text{ follows the pattern }(\alpha_i)_{i=0}^k\}~.\] In particular for each $\alpha \in \Ac$, $K_\alpha = K[(\alpha)]$. Then we have \[K[(\alpha_i)_{i=0}^k] = \bigsqcup_{\alpha_{k+1}\in \Ac} K[(\alpha_i)_{i=0}^{k+1}]~.\]  The collection \[ \{ K[(\alpha_i)_{i=0}^k] : (\alpha_i)_{i=0}^k \subset \Ac \}~\] gives a partition of $K$. We briefly denote this partition by \[K= \bigsqcup_{\alpha^{(k)}\in \Ac^{(k)}} K_{\alpha^{(k)}}~,\] or simply the index set $\Ac^{(k)}$. We denote $\Ac = \Ac^{(0)}$, then $\Ac^{(k)}$ is a refinement of $\Ac^{(k-1)}$ for any $k\in \Nb$. For any $\alpha^{(k)}\in \Ac^{(k)}$, we denote by $\alpha^{(k-1)}\in \Ac^{(k-1)}$ the index such that $K_{\alpha^{(k)}}\subset K_{\alpha^{(k-1)}}$.

\begin{definition}
    With the construction above, we call $\Ac^{(k)}$ the \emph{$k$-th returning refinement} of $\Ac$.
\end{definition}

Then the following lemma is direct by our construction of the returning refinements.

\begin{lemma}\label{lemma: refinement determines pattern}
    Let $\mu\in \Curr(\Gamma)$, $\epsilon\leqslant \epsilon_0$, and $\Ac$ be a $\epsilon$-partition of $K$. Suppose $\{\alpha_i^{(k)}\}_{i = 0}^k \subset \Ac^{(k)}$ is such that for any $0\leqslant i \leqslant k$, $w_{(\mu,\Ac^{(k)})}(\alpha_{i}^{(k)}\alpha_{i+1}^{(k)})>0$. Then for any $v\in K_{\alpha_0^{(k)}}$,
    \begin{enumerate}
        \item there exists $0=t_0<t_1<\dots<t_k$ such that $\forall 0\leqslant i\leqslant k$, $\phi^{t_i}(v)\in \Gamma K_{\alpha_i^{(k-i)}}$;
        \item $\phi^{t}(v)\notin \Gamma K$ if $t\in (0,t_k)\setminus\{t_1,\dots,t_{k-1}\}$.
    \end{enumerate} 
\end{lemma}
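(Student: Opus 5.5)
We argue by unwinding the definition of the returning refinement: the indices of $\Ac^{(k)}$ record the first $k+1$ entries of the pattern in $\Ac$, and the index of $\Ac^{(k-i)}$ assigned to $\phi^{t_i}(v)$ is read off as a shift of that pattern. Concretely, $\alpha_0^{(k)}$ corresponds to a tuple $(\beta_0,\beta_1,\dots,\beta_k)\subset\Ac$, and $K_{\alpha_0^{(k)}} = K[(\beta_j)_{j=0}^k]$. By the definition of following a pattern, every $v\in K_{\alpha_0^{(k)}}$ has hitting times $0=t_0<t_1<\dots<t_k$ with $\phi^{t_i}(v)\in \Gamma K_{\beta_i}$, and no hits of $\Gamma K$ at other times in $(0,t_k)$. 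This already gives (2), and it gives (1) at the coarsest level.

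The refined statement (1) requires $\phi^{t_i}(v)\in\Gamma K_{\alpha_i^{(k-i)}}$, where $\alpha_i^{(k-i)}$ is the ancestor of $\alpha_i^{(k)}$ in $\Ac^{(k-i)}$. The key observation is that the lift $u=\gamma_i^{-1}\phi^{t_i}(v)\in K_{\beta_i}$ follows the pattern $(\beta_i,\dots,\beta_k)$. Indeed, its successive hits are just the shifted hits of $v$, by $\Gamma$-equivariance of $\Gamma K$ and commutation of $\phi$ with $\Gamma$. Hence $u\in K[(\beta_j)_{j=i}^k]$, which is an element of $\Ac^{(k-i)}$.

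It remains to identify this set with $K_{\alpha_i^{(k-i)}}$, and here the positivity hypothesis enters. Proceed by induction on $i$. Suppose $w_{(\mu,\Ac^{(k)})}(\alpha_{i}^{(k)}\alpha_{i+1}^{(k)})>0$. Then some $v'\in K_{\alpha_i^{(k)}}$ has first return to $\Gamma K$ in $\Gamma K_{\alpha_{i+1}^{(k)}}$. Write $\alpha_i^{(k)}\leftrightarrow(\beta'_0,\dots,\beta'_k)$. By the same shift observation, the return point lies in $K[(\beta'_1,\dots,\beta'_k)]$, an element of $\Ac^{(k-1)}$. This element must coincide with $\alpha_{i+1}^{(k-1)}$, since $K_{\alpha_{i+1}^{(k)}}\subset K_{\alpha_{i+1}^{(k-1)}}$ and the elements of a partition are disjoint. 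Iterating the argument along the chain of positive-weight arrows, the ancestor $\alpha_i^{(k-i)}$ is identified with the $i$-fold shift of the pattern of $\alpha_0^{(k)}$, namely $(\beta_i,\dots,\beta_k)$. The nonemptiness supplied by positive weight is exactly what makes this comparison of tuples legitimate.

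The main obstacle is bookkeeping. One must check that the lifting elements $\gamma_i$ depend only on the pattern, which is the fact stated just before the definition of the track. One must also check that truncating a pattern corresponds to the ancestor map $\Ac^{(k)}\to\Ac^{(k-1)}$, while shifting it corresponds to first return. Both follow from uniqueness of lifts, which holds by Lemma~\ref{lemma: existence global transversal}(4).
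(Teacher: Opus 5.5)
Your proposal is correct and follows the same route as the paper's sketch. Both arguments translate the $i$-th return back into $K$ by the unique $\gamma_i$, and use positivity of $w_{(\mu,\Ac^{(k)})}(\alpha_i^{(k)}\alpha_{i+1}^{(k)})$ to identify the shifted pattern $(\beta_i,\dots,\beta_k)$ with the ancestor $\alpha_i^{(k-i)}$. Your witness-point-plus-disjointness step, which gives $\beta^{i+1}_j=\beta^{i}_{j+1}$ and hence inductively $\beta^i_j=\beta^0_{i+j}$ for $j\leqslant k-i$, makes explicit what the paper leaves implicit, and your remark about $\gamma_i$ depending only on the pattern is not actually needed for this lemma.
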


\begin{proof}[Sketch of the proof.]
    an element $K_{\alpha_0^{(k)}}$ of the $k$-th returning refinement records the first $k$ successive returns of an orbit to $\Gamma K$. Thus, for any $v\in K_{\alpha_0^{(k)}}$, since $w_{(\mu,\Ac^{(k)})}(\alpha_{0}^{(k)}\alpha_{1}^{(k)})>0$, then the first return of $v$ to $\Gamma K$ occurs in a prescribed translate $\gamma_1 K_{\alpha_1^{(k-1)}}$. After translating back by $\gamma_1^{-1}$, the returned point lies in $K_{\alpha_1^{(k-1)}}$, and this set now records the next $k-1$ returns. Repeating the same argument inductively, one obtains group elements $\gamma_1,\dots,\gamma_k$ and return times $0=t_0<t_1<\cdots<t_k$ such that \[\phi^{t_i}(v)\in \gamma_i K_{\alpha_i^{(k-i)}} \quad \text{ for every }0\le i\le k~,\] and there are no other intersections with $\Gamma K$ between these return times.
\end{proof}

\begin{definition}[Multigeodesics]\label{definition: multigeodesics}
    We call a finite collection of loxodromic elements $\{\gamma_j\}_{j=1}^k \subset \Gamma$ a multigeodesic. A weighted multigeodesic $\Y$ is the data of a multigeodesic $\{\gamma_j\}_{j=1}^k \subset \Gamma$ and positive reals $a_1,a_2,\dots, d_k$ associated to each element of $\{\gamma_j\}$. We write it as a formal sum $\Y= \sum_{i=1}^k a_i \gamma_i$.
\end{definition}

\begin{definition}[Strong convergence]\label{definition: strong convergence}
    We say a sequence of weighted multigeodesics $\Y_n = \sum_{j=1}^{k_n}a_{nj}\gamma_{nj}$
    \emph{converges} to $\mu \in \Curr(\Gamma)$ if with respect to the weak* topology, \[\lim_{n \to \infty} \Y_n :=\lim_{n \to \infty}\sum_{j=1}^{k_n}a_{nj}\widehat{\gamma}_{nj} = \mu~;\]
    We say $\Y_n$ \emph{strongly converges} to $\mu$ if moreover with respect to the Hausdorff distance, \[\lim_{n \to \infty} \bigcup_{j=1}^{k_n}\Gamma(\gamma_{nj}^+,\gamma_{nj}^-,\Rb) = (\supp \mu) \times \Rb\subset \Gc(\Gamma)~.\]
\end{definition}

\begin{proof}[Proof of Theorem~\ref{theorem: density of multigeodesics}]
    We fix a global transversal $K$ of $\Gc(\Gamma)$. Suppose $\mu \in \Curr(\Gamma)$. We fix a large enough $T_3>0$ such that we can choose integers $M_2$ and $M_3$ satisfying \[M_2 \geqslant \frac{T_2}{T_0}~, \quad M_3 \leqslant \frac{T_3}{T_1}-1~, \quad \text{and} \quad M_3 \geqslant 2M_2 +1~.\] 
    By Lemma~\ref{lemma: flow time quasi isometric to group action} and Lemma~\ref{lemma: existence global transversal}, if $v\in K$, $t>0$ and $\gamma\in\Gamma$ are such that $\phi^t(v) \in \gamma K$, then \begin{enumerate}
        \item $\#(\phi^{[0,t]}(v) \cap \Gamma K) \geqslant M_2 \ \Rightarrow \ t\geqslant T_2$.
        \item $\#(\phi^{[0,t]}(v) \cap \Gamma K) \leqslant M_3 \ \Rightarrow \ t \leqslant T_3$.
    \end{enumerate}
    
    For any $R_{\Haus}>0$, together with the constant $T_3$ above, let $\epsilon_0$ be determined by Theorem~\ref{theorem: a closing lemma}. Let $\epsilon \leqslant \epsilon_0$ and let $\Ac$ be a $\epsilon$-partition of $K$. Let $N$ be a fixed integer in between $2M_2 +1$ and $M_3$.

    By Observation~\ref{observation: system of weight equations}, we have $\forall \alpha^{(N)}\in \Ac^{(N)}$, \[\ \sum_{\beta^{(N)}\in \Ac^{(N)}} w_{(\mu,\Ac^{(N)})}(\alpha^{(N)}\beta^{(N)}) = \sum_{\beta^{(N)}\in \Ac^{(N)}} w_{(\mu,\Ac^{(N)})}(\beta^{(N)}\alpha^{(N)})~,\] and by Observation~\ref{observation: rational approximation of weights}, for any $\delta> 0$ there exists $q^{(N)}: \Ac^{(N)} \times \Ac^{(N)} \to \Qb_{\geqslant 0}$ that solves the same equations as $w_{(\mu,\Ac^{(N)})}$, \[\sum_{\alpha^{(N)}\in\Ac^{(N)}}\sum_{\beta^{(N)}\in\Ac^{(N)}}\abs{q^{(N)}(\alpha^{(N)}\beta^{(N)})-w_{(\mu,\Ac^{(N)})}(\alpha^{(N)}\beta^{(N)})} \leqslant \delta~,\] and $q^{(N)}$ takes value $0$ at the same places as $w_{(\mu,\Ac^{(N)})}$. Let $q: \Ac \times \Ac \to \Qb_{\geqslant 0}$ be the weight obtained from $q^{(N)}$ by forgetting the refinement, that is, \[q(\alpha\beta) = \sum_{\alpha^{(N)}\subset \alpha}\sum_{\beta^{(N)}\subset \beta}q^{(N)}(\alpha^{(N)}\beta^{(N)})~.\] Then \[\sum_{\alpha \in\Ac}\sum_{\beta\in\Ac}\abs{q(\alpha\beta)-w_{(\mu,\Ac)}(\alpha\beta)} \leqslant \delta~.\]
    
    By Observation~\ref{observation: loop decomposition}, there exist finitely many weighted loops $(c_j)_{j=1}^k$ in $\Graph(\Ac^{(N)})$ and $(a_j)_{j=1}^k\subset \Qb_{>0}$ such that \[q^{(N)} = \sum_{j=1}^k a_jc_j~.\] We then approximate each weighted loop $c = c_j$ from $(c_j)_{j=1}^k$ by closed flow lines. Suppose $c$ has vertices $(\alpha_0^{(N)} = \alpha_{m}^{(N)},\alpha_1^{(N)},\dots,\alpha_{m-1}^{(N)})$ with each $\alpha_i^{(N)}\in \Ac^{(N)}$.\\

    \noindent {\bf Case 1.}
    If $m\geqslant M_2$, let $0=N_0 <N_1<\dots < N_r = m,$ be such $M_2\leqslant N_{i+1}-N_{i} \leqslant N\leqslant M_3$ for each $0\leqslant i\leqslant r-1$. Then by Lemma~\ref{lemma: refinement determines pattern}, any $v_i\in K_{\alpha_{N_i}^{(N)}}$ follows the pattern \[(\alpha_{N_i}^{(0)},\alpha_{N_i+1}^{(0)},\dots, \alpha_{N_{i+1}}^{(0)})~.\] Then by Theorem~\ref{theorem: a closing lemma}, there exists $\gamma_j\in \Gamma$, such that $(\gamma_j^+,\gamma_j^-,t_{\alpha_{0}})$ follows the pattern exactly the vertices of $c_j$.

    \noindent {\bf Case 2.}
    If $m\leqslant M_2$, let $r\in \Nb$ such that $M_2\leqslant rm\leqslant N$, we write \[\alpha_{i+pm}^{(N)} = \alpha_i^{(N)}, \ \forall 0\leqslant i\leqslant m-1\text{ and }1\leqslant p \leqslant r~.\] Then by Lemma~\ref{lemma: refinement determines pattern}, any $v\in K_{\alpha_0^{(N)}}$ follows the pattern \[(\alpha_{0}^{(0)},\alpha_{1}^{(0)},\dots, \alpha_{rm}^{(0)})~.\] Then by Theorem~\ref{theorem: a closing lemma}, there exists $\gamma_j\in \Gamma$, such that $(\gamma_j^+,\gamma_j^-,t_{\alpha_{0}})$ follows the pattern precisely the vertices of repeating $c_j$ for $r$ times. In this case we replace $a_j$ by $\frac{1}{r}a_j$.
    
    Therefore we get a geodesic current \[\nu = \sum_{j=1}^k a_j \widehat{\gamma}_j\] which has track exactly $(\Graph(\Ac),q)$.

    For any two sequence of positive reals $(\epsilon_n)_{n\in \Nb}$ and $(\delta_n)_{n\in \Nb}$ that converge to $0$, let $(\Ac_n)_{n\in \Nb}$ be a sequence of partitions of $K$ such that for each $n\in \Nb$, $\Ac_n$ is an $\epsilon_n$-partition and $\Ac_{n+1}$ is a refinement of $\Ac_n$, and let \[\nu_n  = \sum_{j=1}^{k_n} a_{nj} \widehat{\gamma}_{nj}\] be constructed as the $\nu$ above for $\Ac = \Ac_n$, $\epsilon = \epsilon_n$ and $\delta = \delta_n$. Then by Theorem~\ref{theorem: a closing lemma}, the multigeodesic $(\gamma_{nj})_{j=1}^{k_n}$ is such that \[\lim_{n \to \infty} \bigcup_{j=1}^{k_n}\Gamma(\gamma_{nj}^+,\gamma_{nj}^-,\Rb) = (\supp \mu) \times \Rb\subset \Gc(\Gamma)~.\] The convergence $\nu_n \to \mu$ as $n\to \infty$ with respect to the weak* topology follows from a standard measure-theoretic argument.
\end{proof}

\section{Ending currents}\label{section: ending currents}

We develop the theory of ending currents in this section. In Section~\ref{subsection: notions of ending current}, we will add a few notes and details to the definitions in the introduction of the various ending currents and their supports and prove Theorem~\ref{main theorem: the relations among ending currents}. In Section~\ref{subsection: ending currents of hyperbolic group extensions}, we study ending currents for hyperbolic group extensions, prove Theorem~\ref{main theorem: limit set of convex-cocompact representations}, and construct the example that verifies Theorem~\ref{main theorem: triangles limit to a polygon}. In Section~\ref{subsection: ending currents of geometrically finite representations}, we discuss how ending currents characterize geometrically finite actions and prove Theorem~\ref{main theorem: geometric finiteness and ending currents}. Finally, in Section~\ref{section: geometrically infinite example}, we construct the geometrically infinite example for Theorem~\ref{main theorem: geometrically infinite example}.

\subsection{Ending currents and their relations}\label{subsection: notions of ending current}

Recall again that $\Gamma$ is a $\delta$-hyperbolic group and we fix a discrete action $\rho:\Gamma \to \Isom(X)$ with finite kernel for a $\delta$-hyperbolic space $X$. 

We remark first that we could have added an additional definition, a contracting ending current that takes limits of multigeodesics instead of single geodesics, namely:
\[\hEmcont = \left\{ \lim_{n \to \infty}\widehat\Y_n\in \PCurr(\Gamma): (\Y_n)\subset\MM(\Gamma), \quad \lim_{n \to   \infty}\frac{\ell_\rho(\Y_n)}{\norm{\Y_n}} = 0\right\}~.\]

However, in our setting they coincide:
\begin{lemma}\label{lemma: contracting definitions agree}
    $\hEcont = \hEmcont$.
\end{lemma}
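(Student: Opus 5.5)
The inclusion $\hEcont\subset\hEmcont$ is immediate, since a single geodesic $\gamma$ is a weighted multigeodesic $1\cdot\gamma$. For the reverse inclusion, take $\mu=\lim\widehat\Y_n$ in $\PCurr(\Gamma)$ with $\ell_\rho(\Y_n)/\norm{\Y_n}\to 0$, where $\Y_n=\sum_j a_{nj}\gamma_{nj}$. The task is to produce single elements $g_n$ with $\widehat g_n\to\mu$ projectively and $\ell_\rho(g_n)/\norm{g_n}\to 0$.

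The first step is to normalize. The weights can be approximated by positive rationals, changing both the current and the length ratio by an arbitrarily small amount. Clearing denominators then replaces $\Y_n$ by an integer combination, which we regard as a multiset of loxodromics $\gamma_1,\dots,\gamma_k$ (each counted with its multiplicity). Since the ratio $\ell_\rho(\Y_n)/\norm{\Y_n}$ is a weighted average of the ratios $\ell_\rho(\gamma_{nj})/\norm{\gamma_{nj}}$ with weights $a_{nj}\norm{\gamma_{nj}}$, only the components with small ratio carry most of the $\norm{\cdot}$-mass. Components of $\widehat\Y_n$ carrying negligible $\norm{\cdot}$-mass have negligible weight in the limit, after normalizing currents by total length with respect to a fixed filling current, so they can be discarded.

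The main construction is a \emph{concatenation}. After powering all elements up, $\gamma_i\mapsto\gamma_i^{N}$, choose bounded connecting elements $c_i$ (from a fixed finite set, using that $\Gamma$ is non-elementary) so that $g=\gamma_1^{N}c_1\gamma_2^{N}c_2\cdots\gamma_k^{N}c_k$ has an axis in $\Cay(\Gamma)$ that fellow travels, in order, long segments of the axes of the $\gamma_i$. This is the standard ping-pong/local-to-global argument in a hyperbolic group: the connectors prevent backtracking, and long local quasigeodesics are global quasigeodesics. We then need three estimates.
\begin{enumerate}
\item $\norm{g}=N\sum_i\norm{\gamma_i}+O(k)$.
\item $\ell_\rho(g)\le N\sum_i\ell_\rho(\gamma_i)+O(k)$. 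This follows from subadditivity of translation length under products up to bounded errors: $\ell_\rho(g)\le d_X(o,\rho(g)o)$, choosing $o$ near the axes, and the connectors contribute a bounded amount via the orbit map.
\item $\widehat g/N$ is close to $\widehat\Y_n$ in the weak-$*$ topology once $N\gg k$. Here the Anosov closing lemma and flow-space description are used: the closed orbit of $g$ in $\wh\Gc(\Gamma)$ consists of $N$ windings around each $\gamma_i$-orbit joined by transition pieces of total bounded length $O(k)$, which carry negligible mass.
\end{enumerate}
Letting $N\to\infty$ first (for fixed $n$) and then diagonalizing in $n$ gives $g_n$ with ratio $\to 0$ and $\widehat g_n\to\mu$.

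The main obstacle is the second estimate, bounding $\ell_\rho(g)$. The upper bound has to hold in $X$, where the axes of $\rho(\gamma_i)$ may lie far from one another and from the basepoint. Concretely, $\ell_\rho(\gamma_i^N)$ is close to $d_X(p_i,\rho(\gamma_i^N)p_i)$ only for $p_i$ on the $X$-axis, so the connectors must effectively join points $p_i$ whose mutual distances are not bounded in general. To handle this I would conjugate each $\gamma_i$ so that its $X$-axis passes within a controlled distance $D_n$ of $o$, and use $N\gg D_nk$ to absorb the error $O(kD_n)$. This error is finite for each fixed $n$, which suffices since $N$ is chosen after $n$. If $\rho(\gamma_i)$ has no axis (parabolic), then $\gamma_i$ contributes zero stable length, and a large power still moves $o$ sublinearly, so the same absorption works.
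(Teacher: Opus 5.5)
Your proposal is correct and follows essentially the same route as the paper. The paper also concatenates large powers $\gamma_{n1}^{m_{n1}}\cdots\gamma_{nk_n}^{m_{nk_n}}$, bounds $\norm{\cdot}$ of the product from below by a local-to-global argument, and bounds $\ell_\rho$ from above by the triangle inequality. The only differences are that it conjugates the factors to avoid backtracking instead of inserting connectors, and it chooses $m_{nj}/\sum_j m_{nj}\approx a_{nj}/\sum_j a_{nj}$ instead of rationalizing and repeating elements.

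The step you call the main obstacle is immediate and needs no conjugation toward $o$. One has $\ell_\rho(g)\le d_X(o,\rho(g)o)\le\sum_i d_X(o,\rho(\gamma_i^N)o)+\sum_i d_X(o,\rho(c_i)o)$. Moreover $d_X(o,\rho(\gamma_i^N)o)\le N(\ell_\rho(\gamma_i)+\epsilon)$ for $N$ large, simply because $\ell_\rho(\gamma_i)=\lim_N d_X(o,\rho(\gamma_i^N)o)/N$. This holds for any basepoint and whether $\rho(\gamma_i)$ is loxodromic or parabolic, and $\epsilon$ can be taken small for fixed $n$ before choosing $N$. This is exactly the paper's estimate. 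Your discarding step is likewise unnecessary, since the ratio of the product is already controlled by the weighted average.

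One caveat applies equally to the paper's unjustified assertion $\lim\widehat{\Y}_n=\lim\widehat{\gamma}_n$. Your step (3) implicitly uses the multiplicity convention $\widehat{\gamma_0^k}=k\,\widehat{\gamma_0}$: the closed orbit of $g$ winds $kN$ times around the primitive orbit of $\gamma_0$. Under the paper's literal definition (counting measure on the orbit of the endpoint pair), $\widehat{\gamma_0^k}=\widehat{\gamma_0}$ while $\norm{\gamma_0^k}=k\norm{\gamma_0}$, and then the statement itself fails. For example, suppose $\rho(\alpha)$ is parabolic, $\rho(\beta)$ is loxodromic, and $\rho$ has a Cannon--Thurston map. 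Then $\Y_n=\alpha^n+\beta$ has $\ell_\rho(\Y_n)/\norm{\Y_n}\to 0$ and $\widehat{\Y}_n=\widehat{\alpha}+\widehat{\beta}$. The support of this current contains $(\beta^+,\beta^-)$, which is not in $\Omega(\rho)$, whereas $\Econt(\rho)\subset\Omega(\rho)$, so it is not in $\hEcont$. With the multiplicity convention, your argument (like the paper's) is complete.
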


\begin{proof}
    It is clear that $\hEcont(\rho) \subset \hEmcont$ by definition. 
    It remains to show that $\hEmcont(\rho) \subset \hEcont(\rho)$.

    Let $\Y_n = \sum_{j=1}^{k_n} a_{nj}\gamma_{nj}$
    be a defining sequence for $\hEmcont(\rho)$. We fix a sequence of constants $0<\epsilon_n<1$ that converges to $0$. For each $n\in \Nb$, it follows from a standard local-to-global argument that up to replacing each $\gamma_{nj}$ by a conjugate or a conjugate of its inverse, for large enough integers $\{m_{nj}\}_{j=1}^{k_n}$, we have \[\gamma_n = \gamma_{n1}^{m_{n1}}\gamma_{n2}^{m_{n2}}\dots \gamma_{n k_n}^{m_{n k_n}}\] is loxodromic and satisfies 
    \begin{equation}\label{remark: single geodesics limit, equation: word metric}
        \norm{\gamma_n} \geqslant (1-\epsilon_n) \sum_{j=1}^{k_n}m_{nj}\norm{\gamma_{nj}}~.
    \end{equation}
    On the other hand, for large enough integers $\{m_{nj}\}_{j=1}^{k_n}$,
    \begin{equation}\label{remark: single geodesics limit, equation: stable length}
        \ell_\rho(\gamma_n) \leqslant \sum_{j=1}^{k_n} d_X(o,\rho(\gamma_{nj}^{m_{nj}})o) \leqslant \sum_{j=1}^{k_n}m_{nj}(\ell_\rho(\gamma_{nj})+\epsilon_n)~.
    \end{equation}
    
    We fix $\{m_{nj}\}_{j=1}^{k_n}\subset \Nb$ such that \eqref{remark: single geodesics limit, equation: word metric} and \eqref{remark: single geodesics limit, equation: stable length} hold and \[\sum_{j=1}^{k_n} \abs{\frac{m_{nj}}{\sum_{j=1}^{k_n}m_{nj}} - \frac{a_{nj}}{\sum_{j=1}^{k_n}a_{nj}}} \leqslant \epsilon_n~.\] Then we have \[\lim_{n \to \infty}\frac{\ell_\rho(\gamma_n)}{\norm{\gamma_n}} 
    \leqslant \lim_{n \to \infty}\left(\frac{\epsilon_n}{1-\epsilon_n} + \frac{1}{1-\epsilon_n} \left(\frac{\sum_{j=1}^{k_n}a_{nj}\ell_\rho(\gamma_{nj})} {\sum_{j=1}^{k_n}a_{nj}\norm{\gamma_{nj}} }\right)\right) = 0\]
    and $\displaystyle\lim_{n \to \infty}\widehat\Y_n = \lim_{n\to \infty} \widehat{\gamma}_n$. Therefore $\hEmcont(\rho) \subset \hEcont(\rho)$.
\end{proof}

As in the introduction we define \[\Ebdd(\rho),\ \Econt(\rho),\ \Eesc(\rho), \text{ and } \Emesc(\rho)\] as the unions of supports of \[\hEbdd(\rho),\ \hEcont(\rho),\ \hEesc(\rho) \text{ and } \hEmesc(\rho),\] respectively. 
Notice that $\hEbdd(\rho)$, $\hEcont(\rho)$, $\hEesc(\rho)$ and $\hEmesc(\rho)$ are closed subsets of $\PCurr(\Gamma)$, while $\Ebdd(\rho)$, $\Econt(\rho)$, $\Eesc(\rho)$ and $\Emesc(\rho)$ are not necessarily closed in $\partial_\infty^{(2)}\Gamma$.

The definitions of $\Ehausesc(\rho)$ and $\Lambda(\rho)$, given in the introduction, are closely related but we remark that the condition on a defining sequence $\{\gamma_n\}_{n\in \Nb}\subset \Gamma$ for $\Lambda(\rho)$ is in general weaker than the one for $\Ehausesc(\rho)$.  The former requires that the axis of $[\rho(\gamma_n)]$ escape from any compact set in $X$,  whereas the latter requires that the axis of $[\rho(\gamma_n)]$ escape from the $\Gamma$-orbit of any compact set in $X$. Thus we have $\Ehausesc(\rho)\subset \Lambda(\rho)$.

It also directly follows from the definitions that $\hEbdd(\rho) \subset \hEcont(\rho)$, $\hEesc(\rho) \subset \hEmesc(\rho)$.

\begin{proposition}\label{proposition: bounded ending currents contained in far away currents}
  In the setting above, $\hEbdd(\rho) \subset \hEesc(\rho)$ and hence $\Ebdd(\rho) \subset \Eesc(\rho)$.
\end{proposition}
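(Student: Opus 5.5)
The plan is to take a defining sequence $(\gamma_n)$ for an element $\mu=\lim\widehat\gamma_n\in\hEbdd(\rho)$, so that $\sup_n\ell_\rho(\gamma_n)\le L<\infty$ and $\norm{\gamma_n}\to\infty$. We then show that $r(\gamma_n)\to+\infty$. If so, the same sequence is a defining sequence for $\hEesc(\rho)$ with the same limit $\mu$, and $\Ebdd(\rho)\subset\Eesc(\rho)$ follows by taking unions of supports. Terms with $\rho(\gamma_n)$ parabolic already have $r(\gamma_n)=+\infty$. For elliptic or trivial $\rho(\gamma_n)$: since $\rho$ is discrete with finite kernel and acts properly, an elliptic image forces $\gamma_n$ to have finite order, which is incompatible with $\norm{\gamma_n}\to\infty$, at least for large $n$. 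So we may assume every $\rho(\gamma_n)$ is loxodromic.

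We argue by contradiction. Suppose that along a subsequence $r(\gamma_n)\le R$. Then, after conjugating $\gamma_n$ by a suitable element $\eta_n\in\Gamma$ (which changes neither $\widehat\gamma_n$, $\norm{\gamma_n}$ nor $\ell_\rho(\gamma_n)$), some axis of $\rho(\gamma_n)$ passes within $R+1$ of the base point $o$. Pick a point $z_n$ on this axis with $d_X(o,z_n)\le R+1$. By the standard estimate in a $\delta$-hyperbolic space, a loxodromic with a (quasi-)axis through $z_n$ moves $z_n$ by at most $\ell_\rho(\gamma_n)+C(\delta)$. Hence
\[d_X\big(o,\rho(\gamma_n)o\big)\le 2(R+1)+L+C(\delta).\]
The set of $g\in\Gamma$ with $d_X(o,\rho(g)o)\le D$ is finite, by proper discontinuity and finiteness of the kernel. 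So the conjugated elements $\gamma_n$ range over a finite set. Then $\norm{\gamma_n}\le\abs{\gamma_n}$ is bounded, contradicting $\norm{\gamma_n}\to\infty$. Therefore $r(\gamma_n)\to\infty$.

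The main technical point is the displacement bound. It requires relating the stable length $\ell_\rho$ to the displacement of a point on an axis. For a general $\delta$-hyperbolic $X$, "axis" should be interpreted as a quasi-axis, for example the union of geodesics between $\rho(\gamma_n)^k z$. The standard fact that $\inf_x d(x,gx)\le\ell(g)+16\delta$, with the infimum nearly attained near the axis, gives the needed uniform constant $C(\delta)$.

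Once that bound is in hand, the rest is the finiteness argument from properness.
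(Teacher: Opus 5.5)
Your proposal is correct and follows the paper's proof. You assume $r(\gamma_n)$ is bounded along a subsequence, conjugate so that an axis passes near $o$, and bound $d_X(o,\rho(\gamma_n')o)$ by $2M+L$; you add a harmless $C(\delta)$ for quasi-axes. Proper discontinuity then contradicts $\norm{\gamma_n}\to\infty$. Your treatment of parabolic and elliptic terms and of the quasi-axis constant only makes explicit details the paper leaves implicit.
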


\begin{proof}
    We show that for any sequence $(\gamma_n)_{n\in \Nb} \subset \Gamma$ such that $(\ell_\rho(\gamma_n))_{n\in \Nb}$ is uniformly bounded by a constant $L\geqslant 0$ and $\lim_{n \to \infty}\norm{\gamma_n} = +\infty$, we must have $\lim_{n \to \infty} r(\gamma_n) = +\infty$.

    Suppose otherwise, up to subsequence, $(r(\gamma_n))_{n\in \Nb}$ is uniformly bounded by $M\geqslant 0$, then for each $n\in \Nb$, there exists $\gamma_n'$, a conjugate of $\gamma_n$, such that $[\rho(\gamma_n')]$ passes through a point $a_n$ in the neighborhood of $o$ of radius $M$ in $X$. Then \[d_X(o,\rho(\gamma_n')o) \leqslant d_X(o,a_n)+d_X(a_n,\rho(\gamma_n')a_n)+d_X(\rho(\gamma_n')a_n,\rho(\gamma_n')o) \leqslant 2M+L~.\] On the other hand, $\lim_{n \to \infty}\norm{\gamma_n} = +\infty$ implies that \[\{\text{conjugacy class of }\gamma_n: n\in \Nb\}\] is an infinite set and hence $(\gamma_n')_{n\in \Nb}$ is an infinite set. However, the neighborhood of $o$ of radius $2M+L$ in $X$ contains $\{\gamma_n'o: n\in \Nb\}$, which contradicts that $\rho(\Gamma)$ acts properly discontinuously on $X$.
\end{proof}

\begin{proposition}\label{proposition: far away currents contained in ending currents}
    In the setting above, $\hEmesc(\rho) \subset \hEcont(\rho)$ and hence $\Emesc(\rho) \subset \Econt(\rho)$.
\end{proposition}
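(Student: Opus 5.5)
By Lemma~\ref{lemma: contracting definitions agree} it suffices to show that $\hEmesc(\rho)\subset\hEmcont(\rho)$. Concretely, if $\Y_n=\sum_j a_{nj}\gamma_{nj}$ satisfies $r(\Y_n)\to+\infty$, we want $\ell_\rho(\Y_n)/\norm{\Y_n}\to 0$. Since $\ell_\rho$ and $\norm{\cdot}$ are both linear in the weights, it is enough to prove a uniform statement for single elements: there is a function $f(R)\to 0$ as $R\to\infty$ with
\[\ell_\rho(\gamma)\leqslant f(r(\gamma))\,\norm{\gamma}\]
for every infinite order $\gamma$. Summing over $j$ then gives $\ell_\rho(\Y_n)\leqslant f(r(\Y_n))\norm{\Y_n}$, since $f$ may be taken non-increasing and $r(\Y_n)=\min_j r(\gamma_{nj})$. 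Parabolic $\gamma$ have $\ell_\rho(\gamma)=0$, so we only need to treat loxodromic ones.

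For a single loxodromic $\gamma$ with $r(\gamma)=R$, choose a conjugate realizing the cyclically reduced word, so that the Cayley-graph axis of $\gamma$ passes through $\id$. Write $\gamma=s_1\cdots s_k$ with $k$ comparable to $\norm{\gamma}$ up to a uniform additive constant. The orbit points $\rho(s_1\cdots s_i)o$ form a discrete path in $X$ with jumps bounded by $D=\max_{s\in S}d_X(o,\rho(s)o)$. Its bi-infinite $\gamma$-periodic extension is a $\rho(\gamma)$-invariant quasi-path, though not a quasi-geodesic. The points $\rho(g)o$ with $g$ on the axis project in $\rho(\Gamma)\backslash X$ to within distance $D$ of $p(o)$. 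The geodesic axis of $\rho(\gamma)$, by contrast, stays at distance at least $R$ from every orbit point $\rho(g)o$.

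The key estimate uses $\delta$-hyperbolicity. Take a $\rho(\gamma)$-invariant broken path $P$ through the orbit points and its periodic geodesic axis $A$, and compare them through the closest-point projection to $A$. A path segment of length $L$ lying entirely outside the $R$-neighborhood of $A$ has projection to $A$ of length at most roughly $C e^{-R/C}L+C$. This is the standard exponential contraction of projections to geodesics in hyperbolic spaces. Every point of $P$ is at distance at least $R-D$ from $A$, so over one period the projection of $P$, which has length $\leqslant Dk$, covers a fundamental domain of $A$. This yields
\[\ell_\rho(\gamma)\leqslant C e^{-(R-D)/C}Dk+C.\]
The additive $C$ is harmless because $\norm{\gamma}\to\infty$ as soon as $r(\gamma)\to\infty$. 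Indeed, $\ell_\rho(\gamma)\le D\norm{\gamma}$, and bounded word length with large $r$ is impossible by the properness argument of Proposition~\ref{proposition: bounded ending currents contained in far away currents}. Actually we only need the ratio, and we can absorb the constant by replacing $\gamma$ with a power: all quantities scale, and $r(\gamma^m)=r(\gamma)$. Letting $m\to\infty$ removes the additive term, giving $f(R)=CDe^{-(R-D)/C}$.

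The main obstacle is making the projection-contraction estimate rigorous for a general $\delta$-hyperbolic $X$. The axis is only a quasi-axis, and the closest-point projection is coarse, so the constants must be tracked carefully. I would first try the standard lemma that if a path stays outside the $R$-neighborhood of a geodesic, then its length is at least exponential in $R$ times the distance between its projected endpoints, applied at the scale $\delta$. Working with a large power $\gamma^m$ keeps all additive errors bounded.

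Finally, passing to projectivized limits: if $\widehat\Y_n\to\mu$ in $\PCurr(\Gamma)$ with $r(\Y_n)\to\infty$, the same sequence witnesses $\mu\in\hEmcont(\rho)=\hEcont(\rho)$. The inclusion of supports $\Emesc(\rho)\subset\Econt(\rho)$ follows immediately.
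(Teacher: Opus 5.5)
Your proof is correct, but it is organized differently from the paper's.

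\emph{The paper's route.} It proves only the sequential statement ($r(\gamma_n)\to+\infty\Rightarrow\ell_\rho(\gamma_n)/\norm{\gamma_n}\to0$), by contradiction. Assuming the ratio stays $\geqslant 2C_1$, it treats the piecewise geodesic $L_n$ through the points $\rho(\gamma_n^k)o$, $k\in\Zb$, as a uniform quasi-geodesic. It then invokes the Morse lemma to keep $o$ near the axis. The multigeodesic case is left implicit: apply the lemma to the $\gamma_{nj}$ with the worst ratio, then use Lemma~\ref{lemma: contracting definitions agree}.

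\emph{Your route.} You prove a uniform bound $\ell_\rho(\gamma)\leqslant f(r(\gamma))\norm{\gamma}$ by projecting the entire orbit path of the Cayley axis onto the axis of $\rho(\gamma)$. This handles weighted multigeodesics at once, by linearity and the monotonicity of $f$, and it gives an explicit rate.

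Your route is also the more robust one. The pieces of $L_n$ have length $d_X(o,\rho(\gamma_n)o)\asymp\norm{\gamma_n}$, and its corners can backtrack by a fixed fraction of that. So the ratio bound alone makes $L_n$ a quasi-geodesic only with additive constant of order $\norm{\gamma_n}$. The Morse lemma then bounds $d_X(o,\Axis_{\rho(\gamma_n)})$ only by $O(\norm{\gamma_n})$. What actually forces the contradiction is that \emph{every} intermediate orbit point, spaced at most $D$ apart, lies at distance $\geqslant r(\gamma_n)$ from the axis. That is exactly the input to your projection estimate.

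As for the obstacle you flag, choose the axis to be a quasi-geodesic line joining the two fixed points of $\rho(\gamma)$ in $\partial_\infty X$, with constants depending only on $\delta$. It is then uniformly quasiconvex. The crude contraction property is then uniform in $\gamma$: a ball of radius $R-K$ about a point at distance $\geqslant R$ from the axis projects to a set of diameter $\leqslant K'$. This already yields $f(R)=O(D/R)$, so the exponential estimate is not needed.
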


The proposition is direct by the lemma below.

\begin{lemma}
    If $(\gamma_n) \subset \Gamma$ is a sequence such that \[\lim_{n \to \infty} r(\gamma_n) = +\infty~,\] then \[\lim_{n \to \infty}\frac{\ell_\rho(\gamma_n)}{\norm{\gamma_n}} = 0~.\]
\end{lemma}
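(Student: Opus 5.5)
We prove the contrapositive. Suppose, after passing to a subsequence, that $r(\gamma_n)\to+\infty$ while $\ell_\rho(\gamma_n)\geqslant c\norm{\gamma_n}$ for some $c>0$. We will show that the axis of $\rho(\gamma_n)$ then has to come back to a fixed compact set of $\rho(\Gamma)\backslash X$. Parabolic $\gamma_n$ have $r=+\infty$ and $\ell_\rho=0$, so they do not affect the ratio. We may therefore assume each $\rho(\gamma_n)$ is loxodromic, since $r(\gamma_n)\to\infty$ rules out elliptics for large $n$.

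\textbf{Step 1: a loop of controlled shape.} Conjugate $\gamma_n$ so that a point $g_n$ of its axis in $\Cay(\Gamma)$ is at the identity. The closed path through $\id,\gamma_n$ then has word length about $\norm{\gamma_n}$, and it consists of $\norm{\gamma_n}$ generator steps $s_1,\dots,s_k$. Push the path forward by the orbit map $\tau_{\rho,o}$. This gives a closed chain of points $\rho(s_1\cdots s_i)o$ in $X$, with consecutive points at distance at most $D=\max_{s\in S}d_X(o,\rho(s)o)$. The chain projects to a loop in $\rho(\Gamma)\backslash X$ through $p(o)$, and it represents $\gamma_n$.

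\textbf{Step 2: compare with the axis.} Let $A_n$ be the axis of $\rho(\gamma_n)$ and let $\pi_n$ be a nearest-point projection of $X$ onto $A_n$. Since $r(\gamma_n)\geqslant R$, every translate $\rho(\eta)o$ is at distance at least $R$ from $A_n$. Indeed, $d_X(\rho(\eta)o,A_n)=d_X(o,\rho(\eta)^{-1}A_n)$, and this is at least $r(\gamma_n)$ because $\rho(\eta)^{-1}A_n$ is an axis of a conjugate, which projects to the same geodesic in the quotient. The standard $\delta$-hyperbolic contraction estimate says that a ball of radius $D$ at distance at least $R$ from a geodesic has projection of diameter at most about $2\delta$ once $R\gg D+\delta$. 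Hence consecutive chain points satisfy $d_X(\pi_n(x_{i}),\pi_n(x_{i+1}))\leqslant C\delta$, with $C$ independent of $n$.

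\textbf{Step 3: count.} The chain is $\rho(\gamma_n)$-equivariantly periodic: its endpoint is the image of its starting point under $\rho(\gamma_n)$. So the projections must advance along $A_n$ by at least roughly $\ell_\rho(\gamma_n)$, since $\rho(\gamma_n)$ translates $A_n$ by about its stable length, up to $O(\delta)$. Summing over the $k\approx\norm{\gamma_n}$ steps gives $\ell_\rho(\gamma_n)\leqslant C\delta\norm{\gamma_n}+O(\delta)$. This is only a bounded ratio, so it must be sharpened. The sharpening uses the distance $R$ itself: the projection contraction improves exponentially, so a ball of radius $D$ at distance $R$ projects to a set of diameter at most $C'e^{-R/C''}$ plus an additive constant. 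To remove the additive constant, group the steps into blocks of length $m$. Each block spans at most $mD$ in $X$ and stays at distance at least $R-mD$ from $A_n$. Choosing $R\gg mD$ makes each block's projection have diameter at most a uniform $C\delta$. This gives $\ell_\rho(\gamma_n)\leqslant (C\delta/m)\norm{\gamma_n}+O(\delta)$. Letting $m\to\infty$ as $r(\gamma_n)\to\infty$ yields the conclusion, because $\norm{\gamma_n}\geqslant 1$ up to finitely many conjugacy classes and $\ell_\rho(\gamma_n)\geqslant\ell_0>0$ uniformly by discreteness. The additive $O(\delta)$ term is harmless once $\norm{\gamma_n}\to\infty$, which follows from $r(\gamma_n)\to\infty$ together with proper discontinuity, as in Proposition~\ref{proposition: bounded ending currents contained in far away currents}.

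The main obstacle is Step 3. We must make sure that the blocked contraction estimate gives a uniform constant $C\delta$ per block, independent of $m$. We also need to handle the bookkeeping that the projected chain's net advance is at least $\ell_\rho(\gamma_n)-O(\delta)$; for this we would cite the standard fact that loxodromics translate their axis by $\ell_\rho$ up to $16\delta$.
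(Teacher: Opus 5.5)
Your argument is correct, and it takes a genuinely different route from the paper.

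\textbf{The paper's route.} The paper argues by contradiction. It takes $\gamma_n$ cyclically reduced with $\ell_\rho(\gamma_n)\geqslant 2C_1\norm{\gamma_n}$, and asserts that the piecewise geodesic $L_n$ through the points $\rho(\gamma_n^k)o$ is a uniform quasi-geodesic. The Morse lemma then bounds $d_X(o,\Axis_{\rho(\gamma_n)})$.

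\textbf{Your route.} You project the orbit image of the Cayley axis onto $A_n$ and count how far the projection advances. The fact you flag as the main obstacle is the standard strong contraction of geodesics in $\delta$-hyperbolic spaces. There is a universal $C$ with $d_X(\pi_n(x),\pi_n(y))\leqslant C\delta$ whenever $d_X(x,y)\leqslant d_X(x,A_n)$.
\begin{itemize}
\item Apply it with $x$ the first orbit point of a block and $mD\leqslant r(\gamma_n)$. This bounds each block's advance independently of $m$.
\item Combine it with $d_X(\pi_n(x_0),\pi_n(\rho(\gamma_n)x_0))\geqslant\ell_\rho(\gamma_n)-O(\delta)$, which follows from coarse equivariance of $\pi_n$ together with $d_X(y,\rho(\gamma_n)y)\geqslant\ell_\rho(\gamma_n)$ for every $y$.
\item This gives directly $\ell_\rho(\gamma_n)\leqslant C\delta(\norm{\gamma_n}/m_n+1)+O(\delta)$ with $m_n=\lfloor r(\gamma_n)/D\rfloor\to\infty$.
\end{itemize}
So the contrapositive framing and the ``exponential improvement'' sentence can be dropped.

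\textbf{What your route buys.} It never needs a uniform quasi-geodesic. The paper's uniformity claim for $L_n$ does not follow from $C_1\norm{\gamma_n}\leqslant\ell_\rho(\gamma_n)\leqslant d_X(o,\rho(\gamma_n)o)\leqslant C_2\norm{\gamma_n}$. The path $L_n$ backtracks at each vertex by roughly $d_X(o,A_n)$, and that two-sided bound only controls this by $O(\norm{\gamma_n})$.

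For example, take a geometrically finite Fuchsian representation of $F_2=\langle a,b\rangle$ with $\rho(a)$ parabolic, and the cyclically reduced words $a^nb^na^n$. The ratio stays bounded below, while $d_X(o,A_n)$ grows like $\log n$, because $o$ sits in the middle of the $a^{2n}$ block. The orbit point that actually lies near $A_n$ is elsewhere on the loop, and this is exactly what your block count detects. So your argument is essentially what is needed to make the paper's sketch rigorous, and it also gives an explicit rate.

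\textbf{Two small corrections.}
\begin{itemize}
\item The remark that $\ell_\rho(\gamma_n)\geqslant\ell_0>0$ uniformly by discreteness is false in general. Geometrically infinite Kleinian surface groups can have closed geodesics of length tending to $0$. You never use it, so just delete it.
\item The divergence $\norm{\gamma_n}\to\infty$ is most cleanly justified by noting that $r$ is a conjugacy invariant up to $O(\delta)$. Bounded $\norm{\gamma_n}$ would then force finitely many conjugacy classes, hence bounded $r$ among the loxodromic ones.
\end{itemize}
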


\begin{proof}
    We may assume without loss of generality that $\gamma_n$ is cyclically reduced, that is, an axis of $\gamma_n$ in the Cayley graph of $\Gamma$ passes through $\id$ and $\rho(\gamma_n)$ is loxodromic for each $n\in \Nb$. Then $\norm{\gamma_n} = \abs{\gamma_n}$. Let $L_n$ be the piecewise geodesic in $X$ that passing through $(\rho(\gamma_n^k)o)$ for all $k\in \Zb$.
    
    Suppose otherwise up to subsequence there exists a constant $C_1>0$ such that $\frac{\ell_\rho(\gamma_n)}{\norm{\gamma_n}}  \geqslant 2C_1$. Since $\Gamma$ finitely generated, there exists a constant $C_2>0$, such that \[C_1\abs{\gamma_n} \leqslant \ell_\rho(\gamma_n) \leqslant d_X(o,\rho(\gamma_n)o)\leqslant C_2 \abs{\gamma_n}~.\] Then $L_n$ is a quasi-geodesic uniform for each $n\in \Nb$. By applying the Morse lemma to $L_n$ and $[\rho(\gamma_n)]$ we have that $d_X(o, [\rho(\gamma_n)])$ is bounded above uniformly, which contradicts that $r(\gamma_n)$ diverges.
\end{proof}

We have completed the proof of Theorem~\ref{main theorem: the relations among ending currents}~(1) and (2).\\

\begin{definition}[Cannon--Thurston map]\label{definition: Cannon--Thurston map}
    A \emph{Cannon--Thurston map} of $\rho$ is a continuous, $\rho$-equivariant map $\xi_\rho : \partial_\infty \Gamma \to \partial_\infty X$ that continuously extends a (any) orbit map $\Gamma\to X$ on the boundary. When a Cannon--Thurston map $\xi_\rho$ exists, we denote \[\Omega(\rho) = \{(x,y)\in \partial_\infty^{(2)}\Gamma: \xi_\rho(x) = \xi_\rho(y)\}~.\]
\end{definition}

The following proposition is stated originally for hyperbolic group extensions but extends trivially to our setting.

\begin{proposition}[{\cite[Lemma~2.2]{mitra1999theorem}}]
    Suppose $\rho$ admits a Cannon--Thurston map $\xi_\rho$, then $\Lambda(\rho) = \Omega(\rho)$.
\end{proposition}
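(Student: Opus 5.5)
We prove the two inclusions $\Lambda(\rho)\subset\Omega(\rho)$ and $\Omega(\rho)\subset\Lambda(\rho)$ separately, following Mj's argument. Throughout we use that $\xi_\rho$ extends the orbit map $\tau_{\rho,o}$ continuously to $\Gamma\cup\partial_\infty\Gamma\to X\cup\partial_\infty X$. Both $\Lambda(\rho)$ and $\Omega(\rho)$ are $\Gamma$-invariant, by definition and by equivariance of $\xi_\rho$ respectively. So it suffices to treat pairs that arise as limits of axes through $\id$, and, conversely, to produce such axes for pairs in $\Omega(\rho)$.

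\textbf{$\Lambda\subset\Omega$.} Let $(x,y)=\lim(\gamma_n^+,\gamma_n^-)$, where an axis $g_n$ of $\gamma_n$ in $\Cay(\Gamma)$ passes through $\id$ and $d_X(o,\Axis_{\rho(\gamma_n)})\to\infty$. Fix a compact neighborhood basis argument: suppose $\xi_\rho(x)\ne\xi_\rho(y)$. Choose finite segments $[a_n,b_n]\subset g_n$ containing $\id$, with $a_n\to y$ and $b_n\to x$ in $\Gamma\cup\partial_\infty\Gamma$. By continuity of the extended orbit map, $\rho(a_n)o$ and $\rho(b_n)o$ converge to the distinct points $\xi_\rho(y)$ and $\xi_\rho(x)$. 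Hence the $X$-geodesic $[\rho(a_n)o,\rho(b_n)o]$ stays within bounded distance of $o$.

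The key step is to show that $[\rho(a_n)o,\rho(b_n)o]$ also lies uniformly close to the $\rho(\gamma_n)$-invariant quasi-axis. One uses $\rho(\gamma_n)$-equivariance: choose $a_n=\gamma_n^{-k}$ and $b_n=\gamma_n^{k}$ for large $k=k(n)$. The points $\rho(\gamma_n^{\pm k})o$ lie on the $\rho(\gamma_n)$-orbit of $o$, and the geodesic between far-apart orbit points of a loxodromic fellow-travels its axis in the middle, up to $\delta$-thin quadrilateral estimates. Since $[\rho(\gamma_n^{-k})o,\rho(\gamma_n^k)o]$ passes near $o$ while its middle part is near $\Axis_{\rho(\gamma_n)}$, we expect to conclude that $d_X(o,\Axis)$ is bounded, contradicting the hypothesis. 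This is the step I expect to be the main obstacle. The difficulty is that the geodesic might pass near $o$ only in its ``tails'' rather than its middle, and ruling this out is exactly where continuity of $\xi_\rho$ at two distinct points has to be used carefully. One must also allow $\rho(\gamma_n)$ to be parabolic, in which case the statement is read with an infinite distance.

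\textbf{$\Omega\subset\Lambda$.} Given $(x,y)$ with $\xi_\rho(x)=\xi_\rho(y)$, take the bi-infinite geodesic $g$ in $\Cay(\Gamma)$ from $y$ to $x$ and translate it so that it passes through $\id$. Since $\Gamma$ is hyperbolic, we approximate $g$ by axes $g_n$ of elements $\gamma_n$ through $\id$ whose endpoints converge to $(x,y)$. We would obtain these by a closing argument on long segments of $g$, in the spirit of Theorem~\ref{theorem: a closing lemma}, or by the standard fact that pairs of fixed points are dense in $\partial^{(2)}_\infty\Gamma$, together with conjugating $\gamma_n$ so that the axis passes through $\id$. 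This changes the pair only within the $\Gamma$-orbit, which is permitted in $\Lambda$.

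Next, continuity of $\xi_\rho$ at a point where $x$ and $y$ are glued implies uniform properness of the orbit images. Concretely, for any $R$ there is a neighborhood $U$ of $x$ and a neighborhood $V$ of $y$ in $\Gamma\cup\partial\Gamma$ such that $X$-geodesics between $\tau_{\rho,o}(U)$ and $\tau_{\rho,o}(V)$ avoid the ball $B(o,R)$. Applying this to $\rho(\gamma_n^{\pm k})o$ with $k$ large, and using equivariance to pass to the axis, gives $d_X(o,\Axis_{\rho(\gamma_n)})\to\infty$. Hence $(x,y)\in\Lambda(\rho)$.
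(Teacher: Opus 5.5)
Your two-inclusion outline is the right one; the paper gives no proof of its own and only cites Mj's Lemma~2.2, remarking that it extends to this setting. However, the step you single out as the main obstacle is left unproved, and your suggested remedy points the wrong way: continuity of $\xi_\rho$ plays no role there. The missing ingredient is the standard comparison between an axis and the Gromov product of its endpoints. For $\rho(\gamma)$ loxodromic,
\[
d_X(o,\Axis_{\rho(\gamma)}) = \bigl(\rho(\gamma)^+\cdot\rho(\gamma)^-\bigr)_o \pm C(\delta).
\]
Moreover $\rho(\gamma)^\pm=\xi_\rho(\gamma^\pm)$, because $\gamma^{\pm k}\to\gamma^\pm$ in $\Gamma\cup\partial_\infty\Gamma$ while $\rho(\gamma^{\pm k})o\to\rho(\gamma)^\pm$. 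If $\rho(\gamma)$ is parabolic, then $\xi_\rho(\gamma^+)=\xi_\rho(\gamma^-)$ is its fixed point and both sides are $+\infty$.

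In your finite-$k$ language, $d_X(o,[\rho(\gamma^{-k})o,\rho(\gamma^k)o])$ is within $C(\delta)$ of $(\rho(\gamma^{-k})o\cdot\rho(\gamma^{k})o)_o$. As $k\to\infty$ this converges, up to $C(\delta)$, to $(\rho(\gamma)^-\cdot\rho(\gamma)^+)_o$. Geometrically, the ``tails'' are harmless. Let $p_0$ be the projection of $o$ to the axis. Points of the tails project near $\rho(\gamma^{\pm k})p_0$, so they lie at distance at least $k\,\ell_\rho(\gamma)-C(\delta)$ from $o$. You are free to choose $k=k(n)$ with $k(n)\,\ell_\rho(\gamma_n)\gg d_X(o,\Axis_{\rho(\gamma_n)})$.

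With this estimate each inclusion takes one line. The key equivalence is that $d_X(o,\Axis_{\rho(\gamma_n)})\to\infty$ if and only if the visual distance between $\xi_\rho(\gamma_n^+)$ and $\xi_\rho(\gamma_n^-)$ tends to $0$.
\begin{itemize}
\item $\Lambda(\rho)\subset\Omega(\rho)$ follows from continuity of $\xi_\rho$ at $x$ and $y$; the axis-through-$\id$ condition is not even needed.
\item $\Omega(\rho)\subset\Lambda(\rho)$ follows from density of fixed-point pairs, with continuity used in the other direction.
\end{itemize}
This is exactly the mechanism of the paper's own lemmas on the topologically escaping set and on sequences of geodesics converging into $\Omega(\rho)$.

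One correction to your second inclusion: you cannot conjugate each $\gamma_n$ by its own $h_n$. In $\Lambda(\rho)$ the group acts on the limit pair, not term by term. This does no harm here, for the following reason. The axes of the $\gamma_n$ have endpoints converging to $(x,y)$, whose geodesic passes through $\id$, so they pass within uniformly bounded distance of $\id$. Hence the $h_n$ range over a finite set, and after passing to a subsequence a single $h$ works. Replacing $\gamma_n$ by $h\gamma_n h^{-1}$ changes $d_X(o,\Axis)$ by at most $d_X(o,\rho(h)o)$.
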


In the rest of this section, we show that when a Cannon--Thurston map exists, $\hEmesc(\rho) = \hEcont(\rho)$ and $\Emesc(\rho)\subset \Ehausesc(\rho) \subset \Omega(\rho)$. For any given $\mu\in \Curr (\Gamma)$, we denote by $\mu_*$ the restriction on $\partial_\infty^{(2)}X$ of the push-forward measure of $\mu$ on $\partial_\infty^2 X$ via $\xi_\rho^2$. Note that $\mu \otimes \de t$ (respectively $\mu_* \otimes \de t$) is a $\Gamma$-invariant, $\phi$-invariant measure on $\Gc(\Gamma)$ (respectively $\Gc(X)$).

\begin{lemma}\label{lemma: ending currents contained in Cannon--Thurston gluing}
    Suppose $\xi_\rho$ is a Cannon--Thurston map of $\rho$. Then $\Econt(\rho)\subset \Omega(\rho)$.
\end{lemma}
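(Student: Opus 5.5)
Since $\Econt(\rho)=\bigcup_{\mu\in\hEcont(\rho)}\supp\mu$, it suffices to fix $\mu\in\hEcont(\rho)$, realized as $\mu=\lim\widehat\gamma_n$ (after rescaling) with $\ell_\rho(\gamma_n)/\norm{\gamma_n}\to0$, and to show $\supp\mu\subset\Omega(\rho)$. Suppose instead that some $(x,y)\in\supp\mu$ has $\xi_\rho(x)\neq\xi_\rho(y)$. The plan is to exploit two facts. First, in $X$ the geodesic joining $\xi_\rho(x)$ and $\xi_\rho(y)$ exists. Second, by continuity of $\xi_\rho$ on the compact set $\partial_\infty\Gamma$, there are small neighborhoods $U^+\ni x$, $U^-\ni y$ such that every pair $(x',y')\in U^+\times U^-$ has $\xi_\rho(x')\neq\xi_\rho(y')$, and the $X$-geodesic between $\xi_\rho(x')$ and $\xi_\rho(y')$ passes within a uniform distance $D$ of a fixed point $o$. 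The constant $D$ comes from the lower bound on the visual distance between $\xi_\rho(\overline{U^+})$ and $\xi_\rho(\overline{U^-})$.

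\textbf{Step 1 (positive proportion of time in a good box).} Since $\mu(U^+\times U^-)>0$, we may use the flow-space picture: $\mu\otimes\de t$ gives positive mass to $B=U^+\times U^-\times[0,1]\subset\Gc(\Gamma)$, which projects injectively (after shrinking) to $\wh\Gc(\Gamma)$. Weak-$*$ convergence of the normalized $\widehat\gamma_n/\norm{\gamma_n}$ then gives the following: the closed orbit of $\gamma_n$ in $\wh\Gc(\Gamma)$ spends time at least $c\norm{\gamma_n}$ in the image of $B$ for some $c>0$. Here the normalization must be checked. The limit current is normalized so that $\mu\otimes\de t$ has finite total mass on $\wh\Gc(\Gamma)$, and $\widehat\gamma\otimes\de t$ has total mass comparable to $\norm{\gamma}$ by Lemma~\ref{lemma: flow time quasi isometric to group action}. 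Consequently $\norm{\gamma_n}$ must diverge: otherwise the conjugacy classes form a finite set, their ratios $\ell_\rho/\norm{\cdot}$ are bounded below, and this contradicts the hypothesis.

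\textbf{Step 2 (each visit forces definite $X$-length).} Each visit of the axis of $\gamma_n$ to the box $\eta B$ means that a conjugate $\eta^{-1}\gamma_n\eta$ has endpoints in $U^+\times U^-$. Its $\rho$-axis in $X$ joins $\xi_\rho$ of those endpoints (equivariance of $\xi_\rho$ identifies the fixed points of $\rho(\eta^{-1}\gamma_n\eta)$ with $\xi_\rho(\eta^{-1}\gamma_n^\pm)$), so it passes within $D$ of $o$. Thus the $\rho$-axis of $\gamma_n$ passes within $D$ of $\rho(\eta)o$ for each such $\eta$. The visits occur at $\Gamma$-times separated by roughly $\norm{\gamma_n}/(\#\text{visits})$, and the number of distinct $\eta$ modulo $\langle\gamma_n\rangle$ is at least $c'\norm{\gamma_n}$. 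By proper discontinuity only boundedly many of the corresponding points $\rho(\eta)o$ lie in any ball of radius $2D+1$. Hence, along one period of the axis of $\rho(\gamma_n)$ (a fundamental domain of length $\ell_\rho(\gamma_n)$ for the $\langle\rho(\gamma_n)\rangle$-action), we find at least $c''\norm{\gamma_n}$ points pairwise at distance $\ge1$. This gives $\ell_\rho(\gamma_n)\ge c'''\norm{\gamma_n}$, a contradiction.

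\textbf{Main obstacle.} The delicate step is the counting in Step 2. One must make sure that distinct visits produce $\rho$-orbit points that are distinct modulo $\langle\rho(\gamma_n)\rangle$, so that they genuinely fill out a single period rather than repeating. One must also make sure that properness bounds the multiplicity uniformly. I would handle this by choosing representatives $\eta$ in a fundamental domain of $\langle\gamma_n\rangle$ acting on the axis in $\Cay(\Gamma)$, so that they are distinct cosets $\eta\langle\gamma_n\rangle$. Their images lie within $D$ of the $\rho$-axis and can be projected to one period. Finite kernel plus properness then bounds how many cosets map near the same point of that period, uniformly in $n$, since the stabilizer of a ball is finite. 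The secondary technical point is Step 1's passage from weak-$*$ convergence to a lower bound on time spent in $B$, which uses an open set with $\mu$-null boundary.
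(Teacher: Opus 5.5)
Your argument is correct in substance, apart from one false side claim discussed at the end, but it takes a different route from the paper.

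\textbf{The paper's route.} The paper works on the $X$ side. It pushes $\mu$ forward by $\xi_\rho\times\xi_\rho$, restricts to $\partial_\infty^{(2)}X$, and uses the flow space $\Gc(X)$. The mass of $\frac{1}{\norm{\gamma_n}}(\widehat\gamma_n)_*\otimes\de t$ in a fundamental domain $F$ for $\rho(\Gamma)$ is at most $\ell_\rho(\gamma_n)/\norm{\gamma_n}$. Semicontinuity then gives $(\mu_*\otimes\de t)(F)=0$, so $(\xi_\rho^2)_*\mu$ charges only the diagonal, and $\supp\mu\subset\Omega(\rho)$.

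\textbf{Your route.} You localize at one point $(x,y)\in\supp\mu\setminus\Omega(\rho)$ and make the same principle quantitative by hand: $\mu$-mass off $\Omega(\rho)$ costs proportional $X$-length. The bound $\widehat\gamma_n(U^+\times U^-)\geqslant c\norm{\gamma_n}$ is just lower semicontinuity of vague limits on open sets, so you do not need a null-boundary condition. This gives at least $c\norm{\gamma_n}$ cosets $\eta\langle\gamma_n\rangle$ whose orbit points lie within $D$ of one period of the axis of $\rho(\gamma_n)$. Properness and finite kernel then give $\ell_\rho(\gamma_n)\gtrsim\norm{\gamma_n}$.

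\textbf{What each buys.} Your version needs no unit tangent bundle on $X$. It also bypasses two points hidden in the paper's one-line estimate:
\begin{enumerate}
\item that vague convergence survives the push-forward, which needs properness of $\xi_\rho^2$ over $\partial_\infty^{(2)}X$;
\item semicontinuity on a noncompact fundamental domain.
\end{enumerate}
The paper's argument is shorter and proves the slightly stronger statement that $(\xi_\rho^2)_*\mu$ vanishes on $\partial_\infty^{(2)}X$.

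\textbf{The false claim.} In Step 1 you say that bounded $\norm{\gamma_n}$ forces $\ell_\rho/\norm{\cdot}$ to be bounded below. This fails because a parabolic $\rho(\gamma)$ has $\ell_\rho(\gamma)=0$. For example, a constant parabolic sequence gives an element of $\hEcont(\rho)$, so $\norm{\gamma_n}$ need not diverge in general. Under your contradiction hypothesis the bounded case is still easy to exclude. Pass to a constant conjugacy class $\gamma$ with $\ell_\rho(\gamma)=0$, so $\rho(\gamma)$ is not loxodromic. But any conjugate of $\gamma$ with endpoints in $U^+\times U^-$ would map to an infinite-order isometry fixing two distinct points of $\partial_\infty X$, namely the $\xi_\rho$-images of its endpoints. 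Such an isometry is loxodromic, since properness and finite kernel exclude elliptics. Hence $\widehat\gamma(U^+\times U^-)=0$, contradicting $(x,y)\in\supp\mu$. With this repair, divergence of $\norm{\gamma_n}$ holds and Step 2 goes through as you describe.
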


\begin{proof}
    Suppose $\mu \in \hEcont(\rho)$. By definition there exists a sequence $(\gamma_n)_{n\in \Nb}\subset \Gamma$ such that \[\mu = \lim_{n \to \infty}\widehat{\gamma}_n \text{ and }\lim_{n \to \infty}\frac{\ell_\rho(\gamma_n)}{\norm{\gamma_n}} = 0~.\]

    Let $F$ be a fundamental domain of the $\rho(\Gamma)$-action on $\Gc(X)$ (see Definition~\ref{definition: the geodesic flow}). 
    We view $\mu$ and $\widehat{\gamma}_n$ as element of $\Curr(\Gamma)$ rather than projectivized and normalize $\widehat{\gamma}_n$ to be $\frac{1}{\norm{\gamma_n}}\widehat{\gamma}_n$.
    Then
	\[\int_{F}\mu_*\otimes \de t \leqslant \lim_{n \to \infty} \int_{F}\frac{1}{\norm{\gamma_n}}(\widehat{\gamma}_n)_* \otimes \de t \leqslant \lim_{n \to \infty}\frac{\ell_{\rho}(\gamma_n)}{\norm{\gamma_n}} = 0~.\] 
    This implies that the push-forward of $\mu$ on $\partial_{\infty}^2 X$ is supported in the diagonal $\partial_{\infty}^2 X \setminus \partial_{\infty}^{(2)} X$, and hence $\mu$ is supported in \[(\xi_\rho^2)^{-1}(\partial_{\infty}^2 X \setminus \partial_{\infty}^{(2)} X) = \Omega(\rho)~.\]
\end{proof}

\begin{lemma}\label{lemma: sequences of geodesics converges into Cannon--Thurston gluing escapes}
    Suppose $\xi_\rho$ is a Cannon--Thurston map of $\rho$. If $(\alpha_n)\subset \Gamma$ is a sequence such that $\sup \{d(v,\Omega(\rho)\times \Rb) :v\in \Gamma(\alpha_n^+,\alpha_n^-,\Rb)\}\to 0$ as $n\to \infty$, then $r(\alpha_n) \to +\infty$ as $n \to \infty$.
\end{lemma}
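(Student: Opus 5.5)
We argue by contradiction. Suppose, after passing to a subsequence, that $r(\alpha_n)\leqslant M$ for all $n$. If $\rho(\alpha_n)$ were parabolic we would have $r=+\infty$, so each $\rho(\alpha_n)$ is loxodromic or elliptic. Elliptic elements would have finite order, and the $\alpha_n$ are of infinite order because their axes appear in the hypothesis. So each $\rho(\alpha_n)$ is loxodromic. Replacing $\alpha_n$ by a conjugate changes neither $r(\alpha_n)$ nor the $\Gamma$-invariant set $\Gamma(\alpha_n^+,\alpha_n^-,\Rb)$. Hence we may assume that some axis of $\rho(\alpha_n)$ in $X$ passes within distance $M$ of the base point $o$.

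Next we pass to the boundary. By the hypothesis, every point $(\alpha_n^+,\alpha_n^-,s)$ lies within $d$-distance $\epsilon_n\to 0$ of $\Omega(\rho)\times\Rb$. We would like to locate a point of the orbit $\Gamma(\alpha_n^+,\alpha_n^-,\Rb)$ in a fixed compact set of $\Gc(\Gamma)$ that corresponds to the near-$o$ part of the axis in $X$. The key observation is that the orbit map $\Gamma\to X$ is Lipschitz, and the Cannon--Thurston map $\xi_\rho$ is its continuous extension. Take the Cayley-graph axis of $\alpha_n$ and move it by an element $g_n$ so that it passes close to $\id$. Then the geodesic in $X$ joining $\xi_\rho(g_n\alpha_n^+)$ and $\xi_\rho(g_n\alpha_n^-)$ is the axis of $\rho(g_n\alpha_n g_n^{-1})$. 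The subtle point is the choice of $g_n$. We need the translated pair $(g_n\alpha_n^+,g_n\alpha_n^-)$ to stay in a fixed compact subset of $\partial_\infty^{(2)}\Gamma$, and at the same time the corresponding $X$-axis must pass near $o$.

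Since $\Gamma$ acts cocompactly on $\Gc(\Gamma)$, for each $n$ we can pick such a translate with basepoint in a fixed compact fundamental set $F$. The difficulty is that nearness to $o$ in $X$ and nearness to $\id$ in the Cayley graph need not coincide, since $\rho$ may be distorted. We handle this in two steps:
\begin{enumerate}
\item The $X$-axis passes within $M$ of $o$, and $\rho$ is properly discontinuous. So, up to passing to a conjugate by an element of bounded length in $X$, we may use the finitely many elements $\gamma$ with $d_X(o,\rho(\gamma)o)\leqslant 2M+C$.
\item Consider a point of the $\Gamma$-orbit in $F$ lying on the flow line. Its endpoints are $\epsilon_n$-close to a pair $(x_n,y_n)\in\Omega(\rho)$. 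Passing to a subsequence, these pairs converge in the compact set $\overline{F}$ to some $(x,y)$. Because $\Omega(\rho)$ is closed in $\partial^{(2)}_\infty\Gamma$ (the map $\xi_\rho$ is continuous), we get $(x,y)\in\Omega(\rho)$, that is, $\xi_\rho(x)=\xi_\rho(y)$.
\end{enumerate}

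Finally we derive the contradiction. We choose the translates so that the $X$-axes of the conjugated elements $\beta_n=g_n\alpha_ng_n^{-1}$ all pass within a bounded distance $M'$ of $o$. The endpoints $\xi_\rho(\beta_n^\pm)$ converge to $\xi_\rho(x)$ and $\xi_\rho(y)$ respectively, by continuity of $\xi_\rho$. A geodesic through a fixed ball $B(o,M')$ has endpoints whose Gromov product at $o$ is at most about $M'$. This is a closed condition, so it persists in the limit and forces $\xi_\rho(x)\neq\xi_\rho(y)$. This contradicts $(x,y)\in\Omega(\rho)$, and hence $r(\alpha_n)\to+\infty$.

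The main obstacle is the previous step: selecting the translates $g_n$ so that both conditions hold at once. The pair $(g_n\alpha_n^\pm)$ must stay in a compact part of $\partial^{(2)}_\infty\Gamma$, and the $X$-axis must pass near $o$. My first attempt is to take $g_n$ so that the Cayley axis of $\beta_n$ passes through $\id$. The hypothesis is uniform along the whole flow line, so it holds at every parameter $s$; this gives freedom to slide along the axis. We then slide to the point whose image under the orbit map is nearest to the $X$-axis, where coarse nearest-point projection together with the Morse lemma controls the distance. If this fails, we would instead choose $g_n$ directly with $d_X(o,\rho(g_n)\text{-translate of axis})\leqslant M$, and argue that the Cayley-axis of $\beta_n$ cannot escape every compact set near $\id$. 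Otherwise $\beta_n^\pm$ would converge to a single point $z$, giving $\xi_\rho(\beta_n^\pm)\to\xi_\rho(z)$ from both sides. That would contradict the $X$-axis staying near $o$, by the same Gromov product estimate.
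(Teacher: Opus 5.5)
Your argument is correct once it is run through your fallback. The first attempt and the enumerated step (1) should be dropped.

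The first attempt's justification fails. The orbit-map image of the Cayley axis of $\beta_n$ is in general not a uniform quasi-geodesic in $X$; that is exactly the distortion the lemma is about. So neither the Morse lemma nor nearest-point projection bounds how close that image comes to the $X$-axis. Given the Cannon--Thurston map it does come boundedly close, but the only reason is the continuity of $\xi_\rho$, which is your fallback argument.

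The fallback closes the proof and shows the ``main obstacle'' is not real:
\begin{enumerate}
\item Choose $\beta_n=g_n\alpha_ng_n^{-1}$ with its $X$-axis within $M$ of $o$.
\item Suppose the Cayley axes of $\beta_n$ tended to infinity along some subsequence. Then $(\beta_n^+\cdot\beta_n^-)_{\id}\to\infty$, so a further subsequence has $\beta_n^\pm\to z$. Continuity of $\xi_\rho$ then forces $(\xi_\rho(\beta_n^+)\cdot\xi_\rho(\beta_n^-))_o\to\infty$, contradicting the choice of $g_n$.
\item Hence, after passing to a subsequence, the flow lines $(\beta_n^+,\beta_n^-,\Rb)$ all meet one compact set. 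Your step (2) now applies to these same $\beta_n$.
\item The nearby points of $\Omega(\rho)\times\Rb$ subconverge to some $(x,y,s)$ with $(x,y)\in\Omega(\rho)$, since $\Omega(\rho)\times\Rb$ is closed in $\Gc(\Gamma)$. The Gromov product estimate gives the contradiction.
\end{enumerate}
Equivalently, every subsequential limit of $(\beta_n^+,\beta_n^-)$ in $\partial_\infty\Gamma\times\partial_\infty\Gamma$ lies in $\Omega(\rho)$ or on the diagonal. Either way $\xi_\rho$ identifies the two coordinates.

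The paper proves the bound directly. It asserts that for large $n$ and every translate $\gamma$, the endpoints $\gamma\alpha_n^\pm$ are $\delta$-close in $d_\infty$ to the endpoints of a leaf of $\Omega(\rho)$. Uniform continuity of $\xi_\rho$ then makes $\xi_\rho(\gamma\alpha_n^\pm)$ $2\epsilon$-close for all $\gamma$ at once, giving the explicit estimate $r(\alpha_n)\geqslant C_1\log\frac{1}{2\epsilon}-C_2$.

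Your compactness argument gives no rate. It does, however, make explicit a point the paper's sketch absorbs into that assertion. Since $d$ is a $\Gamma$-invariant metric on $\Gc(\Gamma)$, the hypothesis only controls $d_\infty$-distances of endpoints for flow lines passing through a fixed compact set. The remaining translates must be handled by noting that their two endpoints are already $d_\infty$-close to each other, which is exactly your dichotomy.
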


\begin{proof}
    Let $\delta >0$ be a constant to be chosen later. There exists $N\in \Nb$ such that whenever $n\geqslant N$, for each $(\gamma\alpha_n^+,\gamma\alpha_n^-,\Rb)$ there is a leaf in $\Omega(\rho)$, i.e., a geodesic $L_n$ in $\Omega(\rho)\times \Rb$ with endpoints $L_n^+$ and $L_n^-$ identified by $\xi_\rho$ such that $d_\infty(\alpha_n^+,L_n^+) \leqslant \delta$ and $d_\infty(\alpha_n^-,L_n^-) \leqslant \delta$.
    
    Notice that $\xi_\rho$ is a continuous map between compact sets and hence uniformly continuous. For any $\epsilon>0$, we find $\delta>0$ such that whenever $d_\infty(\alpha_n^+,L_n^+) \leqslant \delta$ and $d_\infty(\alpha_n^-,L_n^-) \leqslant \delta$, the visual distance between $\xi_\rho(\gamma\alpha_n^+)$ and $\xi_\rho(L_n^+)$, and between $\xi_\rho(\gamma\alpha_n^-)$ and $\xi_\rho(L_n^-)$ in $\partial_\infty X$ are both bounded by $\epsilon$. By the properties of the visual metric on $\partial_\infty X$ (see Section~\ref{section: visual metric}), there exist constants $C_1> 0,C_2\geqslant 0$ such that \[r(\alpha_n) = d_X(o, \Axis(\rho(\alpha_n))\geqslant C_1\log \frac{1}{2\epsilon} -C_2~.\]
    Therefore $r(\alpha_n) \to +\infty$ as $n \to \infty$.
\end{proof}

We may also consider a stronger condition for the defining sequences of $\hEmesc(\rho)$ and let \[\begin{aligned}\hEpmesc(\rho) = \left\{ \mu\in \PCurr(\Gamma): \begin{array}{l}\text{There is a sequence of weighted multigeodesics} \\  \Y_n \text{ that strongly converges to } \mu \\ \text{ and with } \lim_{n \to \infty} r(\Y_n) = +\infty \end{array}\right\}~.\end{aligned}\] We again write $\Epmesc(\rho) = \bigcup_{\mu \in \hEpmesc(\rho)} \supp \mu$. It is clear that $\hEmesc'(\rho)\subset \hEmesc(\rho)$ hence $\Epmesc(\rho)\subset \Emesc(\rho)$. Since the strong convergence implies topological convergence, we have $\Epmesc(\rho) \subset \Ehausesc(\rho)$.

\begin{proposition}\label{proposition: ending currents contained in far away currents}
    Suppose $\xi_\rho$ is a Cannon--Thurston map of $\rho$. Then $\hEcont(\rho) \subset \hEpmesc(\rho)$.
\end{proposition}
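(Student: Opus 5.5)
Take $\mu\in\hEcont(\rho)$; we build weighted multigeodesics that strongly converge to $\mu$ and escape. Lemma~\ref{lemma: ending currents contained in Cannon--Thurston gluing} gives $\supp\mu\subset\Omega(\rho)$, so $(\supp\mu)\times\Rb\subset\Omega(\rho)\times\Rb$. Theorem~\ref{theorem: density of multigeodesics} then yields weighted multigeodesics $\Y_n=\sum_j a_{nj}\gamma_{nj}$ with $\widehat\Y_n\to\mu$ weak-$*$ and
\[\bigcup_j\Gamma(\gamma_{nj}^+,\gamma_{nj}^-,\Rb)\to(\supp\mu)\times\Rb\]
in the Hausdorff sense. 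The remaining task is to check $r(\Y_n)=\min_j r(\gamma_{nj})\to+\infty$. The plan is to deduce this from Lemma~\ref{lemma: sequences of geodesics converges into Cannon--Thurston gluing escapes}, applied to any choice of $\alpha_n\in\{\gamma_{nj}\}_j$.

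To apply that lemma we need
\[\sup\{d(v,\Omega(\rho)\times\Rb): v\in\Gamma(\alpha_n^+,\alpha_n^-,\Rb)\}\to 0.\]
Hausdorff convergence in $\Gc(\Gamma)$ only controls this on compact pieces. However, the closed-geodesic set and $(\supp\mu)\times\Rb$ are both $\Gamma$- and $\phi$-invariant, and the $\Gamma$-action is cocompact. So it suffices to show that every point of $\Gamma(\alpha_n^+,\alpha_n^-,\Rb)$ has a $\Gamma$-translate in a fixed compact fundamental set $F$, and that on $F$ it lies within a small distance of $(\supp\mu)\times\Rb$.

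The point to handle carefully is what ``Hausdorff limit'' means on the noncompact space $\Gc(\Gamma)$. I would interpret it as Hausdorff convergence of the intersections with a compact fundamental domain, which is what the closing-lemma construction actually produces: the approximating axes stay within $R_{\Haus}$ of flow segments of $\supp\mu$ in $\wh\Gc(\Gamma)$, with $R_{\Haus}\to0$. Since the metric $d$ is $\Gamma$-invariant, this closeness transfers along the whole orbit, giving the uniform sup bound.

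Next, the distance $d(v,\Omega(\rho)\times\Rb)$ measured in $\Gc(\Gamma)$ must control the visual distances between endpoints that the proof of Lemma~\ref{lemma: sequences of geodesics converges into Cannon--Thurston gluing escapes} uses. This holds on the compact set $F^{\epsilon}$ because the metric is compatible with the product topology. We then translate back by $\Gamma$: an axis of $\rho(\alpha_n)$ near $p(o)$ in the quotient corresponds to a translate passing near $o$, whose endpoints lie close to a glued pair, and that forces a large distance to $o$. By uniform continuity of $\xi_\rho$ this estimate is uniform in $j$, so $\min_j r(\gamma_{nj})\to\infty$. Hence $\mu\in\hEpmesc(\rho)$.

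I expect the main obstacle to be the uniformity over all translates. Lemma~\ref{lemma: sequences of geodesics converges into Cannon--Thurston gluing escapes} needs closeness of endpoints at every translate, and this has to be extracted from Hausdorff convergence in the quotient $\wh\Gc(\Gamma)$ using compactness of $\wh\Gc(\Gamma)$, rather than from convergence in $\Gc(\Gamma)$ alone.
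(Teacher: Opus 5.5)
Your proposal is correct and is essentially the paper's proof. It uses Lemma~\ref{lemma: ending currents contained in Cannon--Thurston gluing} to get $\supp\mu\subset\Omega(\rho)$ and Theorem~\ref{theorem: density of multigeodesics} to get strongly convergent multigeodesics, then applies Lemma~\ref{lemma: sequences of geodesics converges into Cannon--Thurston gluing escapes} to the approximating elements. The paper applies it to the index $j_n$ realizing $r(\Y_n)$, which your ``any choice'' covers, and your use of $\Gamma$-invariance of $d$ and of $\Omega(\rho)\times\Rb$ to get the uniform sup bound makes explicit what the paper leaves implicit.

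Your penultimate paragraph re-deriving the escape lemma is not needed once you invoke it. One claim there is also inaccurate as stated: a translate of the axis in $X$ passing near $o$ need not come from a $\Gamma$-axis passing near $\id$. Such translates are controlled anyway, because their endpoints are already $d_\infty$-close and $\xi_\rho$ is uniformly continuous.
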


\begin{proof}
    Suppose $\mu \in \hEcont(\rho)$. By Theorem~\ref{theorem: density of multigeodesics}, there exists a sequence of weighted multigeodesics $\Y_n = \sum_{j=1}^{k_n} a_{nj}\gamma_{nj}$ that strongly converges to $\mu$. In particular, \[\lim_{n \to \infty} \bigcup_{j=1}^{k_n}\Gamma(\gamma_{nj}^+,\gamma_{nj}^-,\Rb) = (\supp \mu) \times \Rb\subset \Gc(\Gamma)~.\]
    By Lemma~\ref{lemma: ending currents contained in Cannon--Thurston gluing}, we have that $\supp \mu \subset \Omega(\rho)$. Then by applying Lemma~\ref{lemma: sequences of geodesics converges into Cannon--Thurston gluing escapes}, for the sequence $(\gamma_{nj_n})_{n\in \Nb}$ where $r(\gamma_{nj_n}) = r(\Y_n)$, we have \[\lim_{n \to \infty} r(\Y_n) = \lim_{n \to \infty} r(\gamma_{nj_n}) = +\infty~.\]
\end{proof}

As a consequence, $\hEcont(\rho) \subset \hEpmesc(\rho) \subset \hEmesc(\rho)$ and together with Proposition~\ref{proposition: far away currents contained in ending currents} $\hEcont(\rho) = \hEpmesc(\rho) = \hEmesc(\rho)$ and $\Emesc(\rho) = \Econt(\rho) = \Epmesc(\rho) \subset \Ehausesc(\rho).$ Together with the following lemma, we complete the proof of Theorem~\ref{main theorem: the relations among ending currents}~(3) and (4).

The proof of Theorem~\ref{main theorem: the relations among ending currents}~(5) is similar to the argument above.

\begin{corollary}
    Suppose $\mu\in \PCurr(\Gamma)$ is such that $\supp \mu\subset \Omega(\rho)$, then $\mu \in \hEcont(\rho)$.
\end{corollary}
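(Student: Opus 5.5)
The plan is to mimic the proof of Proposition~\ref{proposition: ending currents contained in far away currents}, but to start directly from the hypothesis $\supp\mu\subset\Omega(\rho)$ instead of deriving it from Lemma~\ref{lemma: ending currents contained in Cannon--Thurston gluing}. Lift $\mu$ to a nonzero current in $\Curr(\Gamma)$. By Theorem~\ref{theorem: density of multigeodesics} there is a sequence of weighted multigeodesics $\Y_n=\sum_{j=1}^{k_n}a_{nj}\gamma_{nj}$ strongly converging to $\mu$. Thus $\widehat\Y_n\to\mu$ weak-$*$, and the flow lines $\bigcup_j\Gamma(\gamma_{nj}^+,\gamma_{nj}^-,\Rb)$ converge in the Hausdorff sense to $(\supp\mu)\times\Rb\subset\Omega(\rho)\times\Rb$.

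The Hausdorff convergence is meant in the $\Gamma$-invariant sense: one compares within a compact fundamental region in $\Gc(\Gamma)$ and translates by $\Gamma$. So for every $j$,
\[\sup\{d(v,\Omega(\rho)\times\Rb): v\in\Gamma(\gamma_{nj}^+,\gamma_{nj}^-,\Rb)\}\to 0\]
uniformly in $j$. Choose $j_n$ with $r(\gamma_{nj_n})=r(\Y_n)$ and apply Lemma~\ref{lemma: sequences of geodesics converges into Cannon--Thurston gluing escapes} to the sequence $(\gamma_{nj_n})$. This gives $r(\Y_n)\to+\infty$, so the projectivization of $\mu$ lies in $\hEmesc(\rho)$, and indeed in $\hEpmesc(\rho)$.

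Finally, Proposition~\ref{proposition: far away currents contained in ending currents} gives $\hEmesc(\rho)\subset\hEcont(\rho)$, and we conclude $\mu\in\hEcont(\rho)$. If one prefers to pass through single geodesics, Lemma~\ref{lemma: contracting definitions agree} converts the multigeodesic statement into one for single geodesics.

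The step I expect to need the most care is checking that strong convergence gives the uniform closeness hypothesis of Lemma~\ref{lemma: sequences of geodesics converges into Cannon--Thurston gluing escapes}. That lemma needs every point of every translate of each axis to be close to $\Omega(\rho)\times\Rb$, not just close to $\supp\mu\times\Rb$ along some compact piece. The first thing I would try is to use $\Gamma$-invariance of both sets together with the cocompactness of the $\Gamma$-action on $\Gc(\Gamma)$: this reduces the claim to a compact fundamental set $F$. On $F$, Hausdorff closeness of the $\Gamma$-invariant closed sets is exactly the one-sided inclusion of the approximating set in a small neighborhood of $(\supp\mu)\times\Rb$, which is contained in $\Omega(\rho)\times\Rb$ since $\Omega(\rho)$ is closed and $\Gamma$-invariant.
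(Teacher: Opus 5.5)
Your proposal is correct and follows essentially the paper's argument: strong approximation of $\mu$ by weighted multigeodesics (Theorem~\ref{theorem: density of multigeodesics}), then $\supp\mu\subset\Omega(\rho)$ together with Lemma~\ref{lemma: sequences of geodesics converges into Cannon--Thurston gluing escapes} to get $r(\Y_n)\to+\infty$, hence $\mu\in\hEpmesc(\rho)\subset\hEmesc(\rho)\subset\hEcont(\rho)$. The uniform-closeness step you flag is immediate, with no appeal to closedness of $\Omega(\rho)$ needed: $d(v,\Omega(\rho)\times\Rb)\leqslant d(v,(\supp\mu)\times\Rb)$, and the Hausdorff distance between the two $\Gamma$-invariant sets bounds the right-hand side for every $v$ on every translate of every axis (the paper's ``$r(\Y_n)\to 0$'' is a typo for $+\infty$).
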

\begin{proof}
    Similar to the proof of Proposition~\ref{proposition: ending currents contained in far away currents}, let $\Y_n$ be a sequence of weighted multigeodesics that strongly converges to $\mu$. Then $\lim_{n \to \infty} r(\Y_n) = 0$ and hence $\mu \in \hEpmesc(\rho) =\hEcont(\rho)$.
\end{proof}

\begin{lemma}\label{proposition: topologically far away currents contained in Cannon--Thurston gluing}
    Suppose $\rho$ admits a Cannon--Thurston map $\xi_\rho$, then $\Ehausesc(\rho) \subset \Omega(\rho)$.
\end{lemma}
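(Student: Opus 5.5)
The quickest route uses the inclusion $\Ehausesc(\rho)\subset\Lambda(\rho)$, already noted after the definitions of $\Ehausesc$ and $\Lambda$. I would combine it with Mj's identity $\Lambda(\rho)=\Omega(\rho)$ (\cite[Lemma~2.2]{mitra1999theorem}, quoted above). The only subtlety is that $\Lambda(\rho)$ is defined via sequences whose axes pass through $\id$, while $\Ehausesc$ allows arbitrary $\gamma_n$. So I would also check the inclusion $\Ehausesc\subset\Lambda$ directly, and give a self-contained argument that $\Ehausesc(\rho)\subset\Omega(\rho)$ via the Cannon--Thurston map.

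\textbf{Direct argument.} Let $(x,y)=\lim_n(\gamma_n^+,\gamma_n^-)$ with $r(\gamma_n)\to+\infty$. If infinitely many $\rho(\gamma_n)$ are parabolic, I would treat that case separately. Then $\xi_\rho(\gamma_n^+)=\xi_\rho(\gamma_n^-)$ is the parabolic fixed point, since the Cannon--Thurston map sends both endpoints of $\gamma_n$ to the unique fixed point of $\rho(\gamma_n)$. Passing to the limit by continuity of $\xi_\rho$ gives $\xi_\rho(x)=\xi_\rho(y)$.

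In the loxodromic case, $\xi_\rho(\gamma_n^\pm)$ are the endpoints of $\Axis_{\rho(\gamma_n)}$ by equivariance and continuity. Hence $d_X(o,\Axis_{\rho(\gamma_n)})\geqslant r(\gamma_n)\to\infty$; note that $r$ is measured in the quotient, so this bound is in fact stronger than what is needed. By the visual metric estimate of Section~\ref{section: visual metric} applied in $X$, we get
\[d_\kappa\big(\xi_\rho(\gamma_n^+),\xi_\rho(\gamma_n^-)\big)\lesssim e^{-\kappa(\xi_\rho(\gamma_n^+)\cdot\xi_\rho(\gamma_n^-))_o}\lesssim e^{-\kappa\, d_X(o,\Axis_{\rho(\gamma_n)})}\to 0.\]
Here I use that the Gromov product of the endpoints of a bi-infinite geodesic is within $O(\delta)$ of the distance from $o$ to that geodesic. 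By continuity of $\xi_\rho$, $\xi_\rho(x)=\lim\xi_\rho(\gamma_n^+)=\lim\xi_\rho(\gamma_n^-)=\xi_\rho(y)$, and since $x\neq y$, $(x,y)\in\Omega(\rho)$.

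\textbf{Expected obstacle.} The main point needing care is the identification of $\xi_\rho(\gamma_n^\pm)$ with the endpoints of the axis (respectively, the parabolic fixed point) of $\rho(\gamma_n)$. I would prove it from the defining property that $\xi_\rho$ extends the orbit map: $\gamma_n^{\pm k}\to\gamma_n^\pm$ in $\Gamma\cup\partial_\infty\Gamma$, so $\rho(\gamma_n)^{\pm k}o\to\xi_\rho(\gamma_n^\pm)$, and these orbit points converge to the attracting and repelling fixed points of $\rho(\gamma_n)$. Elliptic $\gamma_n$ do not occur, since $r=0$ there. The rest is the routine Gromov product estimate.
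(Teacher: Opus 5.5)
Your direct argument is the paper's proof. You identify $\xi_\rho(\gamma_n^\pm)$ with the endpoints of the axis of $\rho(\gamma_n)$, use $r(\gamma_n)\to+\infty$ and the visual-metric estimate to show the distance between these points tends to zero, and conclude $\xi_\rho(x)=\xi_\rho(y)$ by continuity of $\xi_\rho$. Your separate treatment of parabolic $\rho(\gamma_n)$ (where $r=+\infty$) and your justification of the endpoint identification fill in details the paper passes over.
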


\begin{proof}
    Let $(x,y) \in \Ehausesc(\rho)$. By definition, there exists a sequence $(\gamma_n)_{n\in \Nb} \subset \Gamma$ such that $\lim_{n \to \infty}(\gamma_n^+,\gamma_n^-) = (x,y)$ and $\lim_{n \to \infty} r(\gamma_n) = +\infty$. By Section~\ref{section: visual metric}, this implies that the visual distance between $\rho(\gamma_n)^+ = \xi_\rho(\gamma_n^+)$ and $\rho(\gamma_n)^- =\xi_\rho(\gamma_n^-)$, the endpoints of the axis goes to $0$. Therefore by the continuity of $\xi_\rho$ we have \[\xi_\rho(x) = \lim_{n \to \infty}\xi_\rho(\gamma_n^+) = \lim_{n \to \infty}\xi_\rho(\gamma_n^-) = \xi_\rho(y)~.\]
\end{proof}

\subsection{Hyperbolic group extensions}\label{subsection: ending currents of hyperbolic group extensions}

Suppose $G$ is a hyperbolic group and $\Gamma$ is a normal subgroup of $G$ that is also hyperbolic. Mj \cite{mitra1998cannon} showed that in this setting there exists a Cannon--Thurston map $\partial_\infty \Gamma \to \partial_\infty G$. We assume $\Gamma = \pi_1(\Sigma)$ where $\Sigma$ is a closed hyperbolic surface. $G$ acts on $\pi_1(\Sigma)$ by conjugation hence induces a representation $G \to \Aut(\pi_1(\Sigma))$ with finite kernel as $G$ is hyperbolic. Hence up to finite index, $H = G/\pi_1(\Sigma)$ is a subgroup of the mapping class group $\Mod(\Sigma)$. On the other hand, any subgroup $H$ of $\Mod(\Sigma)$ gives rise to a group extension \[1 \longrightarrow \pi_1(\Sigma) \longrightarrow G \longrightarrow H \longrightarrow 1~.\] Farb--Mosher \cite{farb2002convex} introduced the notion of convex-cocompact subgroups of the mapping class group $\Mod(\Sigma)$. Together with the work of Hamenst{\"a}dt \cite{hamenstadt2005word}, $G$ is hyperbolic if and only if $H$ is a convex-cocompact subgroup of $\Mod(\Sigma)$. Such convex-cocompactness gives rise to a limit map \[\xi_H : \partial_\infty H \to \PML(\Sigma)~,\] where $\PML(\Sigma)$ denote the set of projective measured laminations which is the boundary of $\Teich(\Sigma)$. We view $\PML(\Sigma)$ as a subset of $\PCurr(\pi_1(\Sigma))$ via the canonical embedding of measured laminations into geodesic currents.

We suppose now that $H$ is a convex-cocompact subgroup of $\Mod(\Sigma)$ that gives rise to a hyperbolic group extension \[1 \longrightarrow \pi_1(\Sigma) \stackrel{\rho}{\longrightarrow} G \stackrel{p}{\longrightarrow} H \longrightarrow 1~.\] Following the notation of \cite{mj2018algebraic}, for a $\Gamma$-invariant subset $L \subset \partial_\infty^{(2)}\Gamma$, we define its \emph{diagonal closure} $L^d$ to be the equivalence relation on $\partial_\infty \Gamma$ generated by $L$.

\begin{theorem}[{\cite[Theorem~3.10, Theorem~3.12, Theorem~4.5]{mj2018algebraic}}]\label{theorem: diagonalized limit set of convex-cocompact representations}
    In the setting above, \[\Lambda(\rho) = \Omega(\rho) = \bigcup_{z \in \partial_\infty H} (\supp \xi_H(z))^d\]
\end{theorem}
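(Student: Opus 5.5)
The first equality $\Lambda(\rho)=\Omega(\rho)$ is the proposition of Mj (\cite[Lemma~2.2]{mitra1999theorem}) recalled above. It applies because Mj \cite{mitra1998cannon} provides a Cannon--Thurston map $\xi_\rho\colon\partial_\infty\pi_1(\Sigma)\to\partial_\infty G$. So it remains to identify $\Omega(\rho)$ with $\bigcup_{z}(\supp\xi_H(z))^d$. Since $\Omega(\rho)$ is the set of off-diagonal pairs of an equivalence relation (the fibers of $\xi_\rho$), it suffices for ``$\supseteq$'' to show $\supp\xi_H(z)\subset\Lambda(\rho)$. The reverse inclusion requires showing that every identified pair is a finite chain of leaves of a single $\supp\xi_H(z)$.

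For $\supseteq$, I would fix $z\in\partial_\infty H$ and a geodesic ray $(h_k)$ in $H$ converging to $z$. By Farb--Mosher and Hamenst\"adt, its orbit in $\Teich(\Sigma)$ fellow-travels a Teichm\"uller ray, and by Kent--Leininger/Klarreich it converges in $\PML(\Sigma)$ to $\xi_H(z)$. That limit is filling and uniquely ergodic. Next take simple closed curves $c_k$ that are short on the $k$-th hyperbolic structure, so that $c_k\to\xi_H(z)$. Any leaf of $\supp\xi_H(z)$ is then a limit of lifts of the $c_k$. In $G$, the element $c_k$ has a representative of bounded length in the fiber over $h_k$. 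Since $d_H(1,h_k)\to\infty$, the axis of $\rho(c_k)$ passing near the lift through $\id$ in $\Gamma$ is far from $o$. Hence every leaf lies in $\Lambda(\rho)$, and taking the generated equivalence relation gives $(\supp\xi_H(z))^d\subset\Omega(\rho)$.

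For $\subseteq$, I would take $(x,y)\in\Omega(\rho)$ and run Mitra's ladder construction on the geodesic $\lambda=[x,y]$ in $\Gamma$. This means flowing $\lambda$ through the fibers over $H$ to obtain a quasiconvex ladder in $G$ whose projection retracts onto $H$. The condition $\xi_\rho(x)=\xi_\rho(y)$ forces the ladder to leave every ball in $G$. The plan is to show that this forces the ladder's projection to $H$ to travel out a quasigeodesic ray to a single point $z$, with the fiber geodesic of $\lambda$ over the points $h_k$ of this ray uniformly short relative to that fiber's metric. Passing to the limit, $\lambda$ is then asymptotic at both ends to leaves of $\supp\xi_H(z)$. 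Because $\xi_H(z)$ is filling and uniquely ergodic, its complementary regions are ideal polygons, so $(x,y)$ is either a leaf or a diagonal of such a polygon, hence lies in $(\supp\xi_H(z))^d$.

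I expect the main obstacle to be the step in the $\subseteq$ direction: showing that an escaping ladder projects to a quasigeodesic ray in $H$ converging to a single $z$, and that the limiting geodesic must be a leaf or diagonal of exactly that lamination. The difficulty is ruling out a ladder that spreads over several directions in $H$. I would first try to use the hyperbolicity of $G$ together with the flaring property of Mosher/Bestvina--Feighn to show that the ``thin'' part of the ladder sits over a quasigeodesic. Then I would invoke Klarreich's identification of $\partial\mathcal C(\Sigma)$ with the ending lamination space to pin down the limiting lamination.
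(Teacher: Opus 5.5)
Your first equality and your $\supseteq$ half are fine. The paper proves nothing here itself: it quotes Mitra for $\Lambda(\rho)=\Omega(\rho)$ and Mj--Rafi for the rest. In fact $\supseteq$ can be read off from the paper. The first half of the proof of Theorem~\ref{theorem: limit set of convex-cocompact representations} gives $\xi_H(z)\in\hEbdd(\rho)\subset\hEcont(\rho)$. Lemma~\ref{lemma: ending currents contained in Cannon--Thurston gluing} then gives $\supp\xi_H(z)\subset\Omega(\rho)$, and $\Omega(\rho)$ together with the diagonal is an equivalence relation.

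The gap is in $\subseteq$, and it is not only the step you flag. Your concluding step is false as stated. Knowing that $\lambda$ is asymptotic at both ends to leaves of $\supp\xi_H(z)$ does not put $(x,y)$ in $(\supp\xi_H(z))^d$. Let $x$ be an endpoint of a lifted leaf $\ell_1$, $y$ an endpoint of a lifted leaf $\ell_2$, and let a third leaf $\ell_3$ separate $\ell_1$ from $\ell_2$. Then $[x,y]$ crosses $\ell_3$ transversely and is neither a leaf nor a diagonal. What you need is that $\lambda$ crosses no leaf transversely, and this must come from shortness rather than from asymptotics. Since $h_kx_0\to\xi_H(z)$ in Thurston's compactification, the rescaled universal covers converge to the $\Rb$-tree $T_z$ dual to $\xi_H(z)$. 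So a geodesic that is a limit of segments of uniformly bounded $h_kx_0$-length has length zero in $T_z$, i.e.\ every subarc has zero $\xi_H(z)$-transverse measure. That measure has full support on $\supp\xi_H(z)$, so $\lambda$ cannot cross a leaf. Only then does filling (not unique ergodicity) make $\lambda$ a leaf or a diagonal of a complementary ideal polygon. This identifies Mitra's algebraic ending lamination $\Lambda_z$ with the dual lamination of $T_z$, hence with $(\supp\xi_H(z))^d$. It is one of the inputs supplied by the cited Mj--Rafi results, and your plan does not contain it.

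The other input is that every $(x,y)\in\Omega(\rho)$ lies in some $\Lambda_z$. This is Mitra's characterization of the Cannon--Thurston lamination \cite{mitra1997ending} (revisited in \cite{mj2018algebraic}). It is the real content of $\subseteq$, and your sketch of it needs reformulating before it can work. The ladder $L_\lambda$ contains $\lambda$ itself, so it never leaves balls in $G$. What escapes is the $G$-geodesic joining far-apart points of $\lambda$, which tracks the quasiconvex ladder; flaring is what converts this into information about where the fiber segments become short. Likewise, a bi-infinite $\lambda$ never has short fiber images. The correct conclusion is that $(x,y)$ is a limit of $\Gamma$-translates of \emph{finite} segments whose images in the fibers over $h_k$ have uniformly bounded length, with the $h_k$ converging to a single $z\in\partial_\infty H$. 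That is exactly the statement $(x,y)\in\Lambda_z$, and it is precisely the hypothesis the non-crossing argument above needs. Without both steps, the inclusion $\Omega(\rho)\subseteq\bigcup_z(\supp\xi_H(z))^d$ is not established.
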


By passing from the topological framework to our measure-theoretic one, we obtain a strict refinement of this result.

\begin{theorem}\label{theorem: limit set of convex-cocompact representations}
    In the setting above, \[\hEbdd(\rho) = \xi_H(\partial_\infty H)\quad \text{ and hence } \quad \Ebdd(\rho) = \bigcup_{z \in \partial_\infty H } \supp \xi_H(z)~.\]
\end{theorem}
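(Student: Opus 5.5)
We prove the two inclusions $\hEbdd(\rho)\subset\xi_H(\partial_\infty H)$ and $\xi_H(\partial_\infty H)\subset\hEbdd(\rho)$ separately. The statement about supports then follows by taking unions. The tool throughout is a length comparison. Fix a basepoint in $G$ and identify $\ell_\rho(\gamma)$, up to uniform multiplicative and additive error, with $\min_{h\in H}\ell_{\sigma_h}(\gamma)$. Here $\sigma_h = h\cdot\sigma_0$ runs over the $H$-orbit of a fixed hyperbolic structure $\sigma_0\in\Teich(\Sigma)$.

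This comparison comes from the coarse geometry of $G$ as a bundle over $H$ with fibres the universal covers of $(\Sigma,\sigma_h)$, as in Farb--Mosher and Hamenst\"adt. Any curve of $G$-length $L$ can be homotoped into a fibre over some $h$ with $\sigma_h$-length at most $A L + B$. Conversely, fibre lengths dominate $G$-lengths.

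\textbf{Inclusion $\xi_H(\partial_\infty H)\subset\hEbdd(\rho)$.} Let $z\in\partial_\infty H$ and pick $h_n\to z$ along a quasigeodesic in $H$. Then $\sigma_{h_n}$ converges in Thurston's compactification to $\xi_H(z)$, which is filling and uniquely ergodic. By Bers' constant, each $\sigma_{h_n}$ carries a simple closed curve $\gamma_n$ with $\ell_{\sigma_{h_n}}(\gamma_n)\le B_\Sigma$, so $\ell_\rho(\gamma_n)$ is bounded. Since $h_n\to\infty$ in $\Teich$, we get $\norm{\gamma_n}\to\infty$. A standard argument shows that short curves on $\sigma_{h_n}$ projectively converge to laminations supported in the limit point; unique ergodicity then forces $\widehat{\gamma}_n\to\xi_H(z)$ in $\PCurr$. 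Concretely, $i(\gamma_n,\sigma_{h_n})$ is bounded while $\sigma_{h_n}/\ell_{\sigma_0}$-scaling diverges, so any limit $\mu$ of $\widehat\gamma_n$ has $i(\mu,\xi_H(z))=0$; filling plus unique ergodicity gives $\mu=\xi_H(z)$ projectively.

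\textbf{Inclusion $\hEbdd(\rho)\subset\xi_H(\partial_\infty H)$.} Let $\gamma_n$ satisfy $\ell_\rho(\gamma_n)\le L$ and $\norm{\gamma_n}\to\infty$. By the comparison there are $h_n\in H$ with $\ell_{\sigma_{h_n}}(\gamma_n)\le L'$. Since $\norm{\gamma_n}\to\infty$ and only finitely many classes have bounded length on a fixed $\sigma$, cocompactness of the $H$-action on its weak hull forces $h_n\to\infty$. Passing to a subsequence, $h_n\to z\in\partial_\infty H$, because $H$ is quasi-isometrically embedded in $\mathcal C(\Sigma)$ and hence in $\Teich$ with the relevant hull property.

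Write $\sigma_{h_n}=\lambda_n$ rescaled by $s_n\to\infty$ with $\lambda_n\to\xi_H(z)$ in $\Curr$, using Bonahon's embedding of Teichm\"uller space with Liouville currents. Then $i(\widehat\gamma_n,\lambda_n)\le L'/s_n$. Normalizing $\widehat\gamma_n/\norm{\gamma_n}\to\mu\neq 0$, we have $i(\widehat\gamma_n/\norm{\gamma_n},\lambda_n)\le L'/(s_n\norm{\gamma_n})\to0$. Continuity of $i$ gives $i(\mu,\xi_H(z))=0$. Since $\xi_H(z)$ is a filling uniquely ergodic lamination, any current with zero intersection with it is a multiple of it, so $\mu=\xi_H(z)$ in $\PCurr$.

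\textbf{Main obstacle.} The delicate step is the length comparison $\ell_\rho(\gamma)\asymp\min_h\ell_{\sigma_h}(\gamma)$. Only the direction "bounded $G$-length implies bounded length in some fibre" is needed, for both inclusions after the trivial converse. I would prove it via the hyperbolic "ladder"/bundle structure. A closed $G$-geodesic for $\rho(\gamma)$ of length $\le L$ projects to a path of bounded length in $H$. Using the $H$-equivariant section $h\mapsto\sigma_h$ with uniformly bounded distortion across adjacent fibres, the loop can be pushed into a single fibre at bounded multiplicative cost.

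A secondary point is the convergence $\sigma_{h_n}\to\xi_H(z)$ in Thurston's sense along arbitrary sequences $h_n\to z$, not just along geodesics. This follows from the continuity of the limit map of Farb--Mosher and Kent--Leininger, together with the identification of $\partial\mathcal C(\Sigma)$ with ending laminations due to Klarreich.
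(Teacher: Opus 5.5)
Your outline is essentially the paper's proof. Bounded $\ell_\rho$ is converted into bounded length on some $\sigma_{h_n}$ with $h_n\to z$. Then $i(\mu,\xi_H(z))=0$ in the Liouville-current picture, together with filling and unique ergodicity, identifies the limit. Using Bers-short curves in the inclusion $\xi_H(\partial_\infty H)\subset\hEbdd(\rho)$, instead of the paper's $\widetilde h_n\alpha\widetilde h_n^{-1}$ with curve-graph dynamics, is an inessential variation. Three points need repair, none of them fatal.

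First, the comparison $\ell_\rho(\gamma)\asymp\min_h\ell_{\sigma_h}(\gamma)$ with a bound $AL+B$ is false in general, because $\pi_1(\Sigma)$ is exponentially distorted in $G$. Already in a pseudo-Anosov mapping torus, $\alpha t^k\beta t^{-k}$ with generic $\alpha,\beta$ has $\ell_\rho=O(k)$, while its length on every fibre is at least of order $\lambda^{k/2}$. Your ladder sketch, with bounded multiplicative cost per fibre crossed, only gives a bound exponential in $L$. You only use the qualitative statement, and it follows without any ladder from the paper's observation that the hyperbolic group $G$ has finitely many conjugacy classes of stable length at most $L$. After passing to a subsequence, $\gamma_n=\widetilde h_n\alpha\widetilde h_n^{-1}$ for a fixed $\alpha\in\pi_1(\Sigma)$, so $\gamma_n$ has length $\ell_{\sigma_0}(\alpha)$ on $h_n\sigma_0$.

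Second, in the first inclusion, $h_n\to\infty$ in $\Teich(\Sigma)$ does not by itself give $\norm{\gamma_n}\to\infty$: twisting repeatedly about a fixed short curve $c$ keeps $c$ a Bers curve. You need convex-cocompactness, for instance that curve stabilizers in $H$ are finite, which is the paper's reducibility argument. Alternatively, reorder your own argument. Your intersection estimate only uses $s_n\to\infty$, not $\norm{\gamma_n}\to\infty$. It forces every subsequential limit to be a multiple of $\xi_H(z)$, which no closed curve is, so no class recurs and $\norm{\gamma_n}\to\infty$ follows.

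Third, in the reverse inclusion you apply ``zero intersection with a filling uniquely ergodic lamination implies proportional'' to an arbitrary current, not a measured lamination. The statement is true: such a current is supported on leaves and diagonals of complementary polygons, and a locally finite invariant measure cannot charge the diagonals because their orbits accumulate. But it needs that argument or a reference. The paper sidesteps it. Since $\gamma_n$ is the image of the fixed $\alpha$ under a mapping class, $i(\widehat\gamma_n,\widehat\gamma_n)=i(\widehat\alpha,\widehat\alpha)$ stays constant while $\norm{\gamma_n}\to\infty$. So the normalized limit has zero self-intersection and is already a measured lamination, to which the standard fact applies.
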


\begin{proof}
    We first prove that $\xi_H(\partial_\infty H) \subset \hEbdd(\rho)$. For any $z\in \partial_\infty H$, let $(h_n)_{n\in \Nb}$ be a sequence of elements along a geodesic from $\id$ to $z$ in $H$ and for each $n\in \Nb$, let $\widetilde{h}_n$ be a preimage of $h_n$ in $G$ through the quotient $p$. We pick any $\alpha\in \pi_1(\Sigma)$, the homotopy class of an essential simple closed geodesic. Let $\alpha_n = \widetilde{h}_n \alpha \widetilde{h}_n^{-1}$. Then $\ell_\rho(\alpha_n) = \ell_\rho(\alpha)$ as the two elements are conjugated in $G$ and hence $(\ell_\rho(\alpha_n))_{n\in \Nb}$ is uniformly bounded. We claim that in $\pi_1(\Sigma)$, $\norm{\alpha_n}$ is divergent. Otherwise since there are only finitely many conjugacy classes of elements in $\pi_1(\Sigma)$ with $\norm{\cdot}$ uniformly bounded, we have up to subsequence, $\alpha_n$ is conjugated to $\alpha_m$ for any $m,n\in \Nb$. Then $h_m^{-1}h_n$ is a reducible mapping class in $H\subset \Mod(\Sigma)$, which contradicts that $H$ is convex-cocompact. Therefore we have $\widehat{\alpha}_n$ converges to an element in $\hEbdd(\rho)$. On the other hand $\widehat{\alpha}_n$ converges to $\xi_H(z)$ by the dynamics of $H$-action on the curve graph of $\Sigma$ (see \cite{kent2008shadows} and \cite{hamenstadt2005word}). Therefore $\xi_H(\partial_\infty H) \subset \hEbdd(\rho)$.

    We now prove the reverse inclusion. We fix a finite symmetric generating set $S$ of $G$ such that $S\cap \pi_1(\Sigma)$ generates $\pi_1(\Sigma)$ and $p(S)$ generates $H$. Suppose $\mu \in \hEbdd(\rho)$. By definition, there exists a sequence $(\gamma_n)_{n\in \Nb}\subset \pi_1(\Sigma)$ and a constant $L\geqslant 0$ such that $\lim_{n \to \infty} \widehat{\gamma}_n = \mu$, $\ell_\rho(\gamma_n)\leqslant L$ and $\lim_{n \to \infty} \norm{\gamma_n} = +\infty$. Notice that $\rho$ is the embedding of $\pi_1(\Sigma)$ into $G$ as a subgroup. Then $\ell_\rho(\gamma_n)$ is the stable word length of $\gamma_n$ in $G$. There are only finitely many $G$-conjugacy classes of elements in $G$ with stable word length bounded by $L$. Therefore up to subsequence, there exists $\widetilde{h}_n\in G$ such that $\gamma_n = \widetilde{h}_n \alpha \widetilde{h}_n^{-1}$ for a fixed $\alpha\in \pi_1(\Sigma)$. Let $h_n = p(\widetilde{h}_n) \in H$. We claim that $(h_n)_{n\in \Nb}$ is unbounded. Otherwise, $\widetilde{h}_n$ is contained in finitely many coset of $\pi_1(\Sigma)$ in $G$, then up to subsequence, elements of $(\gamma_n)_{n\in \Nb} = (\widetilde{h}_n \alpha \widetilde{h}_n^{-1})_{n\in \Nb}$ are $\pi(\Nb)$-conjugated, which contradicts that $\lim_{n \to \infty} \norm{\gamma_n} = +\infty$. Therefore we may assume $h_n\to z\in \partial_\infty H$ as $n \to \infty$.

    For $x\in \Teich(\Sigma)$, we denote by $L_x\in \Curr(\pi_1(\Sigma))$ the Liouville current of $x$, and by $\ell_x:\pi_1(\Sigma) \to \Rb_{\geqslant 0}$ the length function for the hyperbolic metric $x$. Let $i(\cdot,\cdot)$ denote the the intersection number function of geodesic currents.
    
    We now show that $\mu = \xi_H(z)$. Let $x_0\in \Teich(\Sigma)$ be a fixed base point. Then $\ell_{x_0}(\cdot)$ and $\norm{\cdot}$ are bilipschitz. Let $x_n = h_n x_0$. By \cite{farb2002convex}, the orbit map $\tau_{H,x_0}: H \to \Teich(\Sigma)$ defined by $h \mapsto h x_0$ extended continuously to \[\tau_{H,x_0}\cup \xi_H: H\cup \partial_\infty H \to \Teich(\Sigma) \cup \PML(\Sigma)~,\] hence $\lim_{n \to \infty}h_n x_0 = \xi_H(z)$. We denote by $\xi_H(z)_1 \in \Curr(\pi_1(\Sigma))$ the normalized geodesic current in the equivalence class of $\xi_H(z)\in \PCurr(\pi_1(\Sigma))$. Following Thurston's compactification, there exists a sequence of positive numbers $(c_n)_{n\in \Nb}$ such that \[\lim_{n \to \infty}c_n= 0 \quad \text{ and } \quad \lim_{n \to \infty} c_n L_{x_n} = \xi_H(z)_1~.\] Then for any $\gamma\in \pi_1(\Sigma)$ and $x\in \Teich(\Sigma)$, $\ell_x(\gamma) = i(L_x,\widehat{\gamma})$. Since $\gamma_n = \widetilde{h}_n \alpha \widetilde{h}_n^{-1}$, we have that the mapping class $h_n$ maps the $\pi_1(\Sigma)$-conjugacy class of $\alpha$ to the $\pi_1(\Sigma)$-conjugacy class $\gamma_n$, hence  $\ell_{x_n}(\gamma_n) = \ell_{h_n x_0}(\widetilde{h}_n \alpha \widetilde{h}_n^{-1}) = \ell_{x_0}(\alpha)$. Let $\mu_1\in \Curr(\pi_1(\Sigma))$ be the normalized geodesic current in the equivalence class of $\mu\in \PCurr(\pi_1(\Sigma))$. For now we view $\widehat{\gamma}_n \in \Curr(\pi_1(\Sigma))$ rather than $\PCurr(\Gamma)$ and denote that \[\nu_n = \frac{\widehat{\gamma}_n}{i(L_{x_0},\gamma_n)} = \frac{\widehat{\gamma}_n}{\ell_{x_0}(\gamma_n)}~.\] By $\lim_{n \to \infty}\norm{\gamma_n} = \lim_{n \to \infty} \ell_{x_0}(\gamma_n) = +\infty$ we get \[i(\nu_n, \nu_n) = \frac{i(\widehat{\gamma}_n,\widehat{\gamma}_n)}{\ell_{x_0}(\gamma_n)^2} = \frac{i(\widehat{\alpha},\widehat{\alpha})}{\ell_{x_0}(\gamma_n)^2} \to 0 \quad \text{ as }\quad n \to \infty~,\] \[\text{ and }\quad i(c_n L_{x_n}, \nu_n) = c_n\frac{\ell_{x_n}(\gamma_n)}{\ell_{x_0}(\gamma_n)} = c_n\frac{\ell_{x_0}(\alpha)}{\ell_{x_0}(\gamma_n)} \to 0 \quad \text{ as }\quad n \to \infty~.\] 
    Since  $\nu_n \to \mu_1$ as $n \to \infty$, by the continuity of $i$ following from \cite{bonahon1988geometry}, we have $i(\mu_1,\mu_1) = 0$, which indicates that $\mu_1$ is a measured lamination, and $i(\xi_H(z)_1,\mu_1) = 0$. Since $\xi_H(z)$ is filling and uniquely ergodic (\cite{farb2002convex,kent2008shadows}), then $\mu = \xi_H(z)$.
\end{proof}

\begin{example}\label{example: mapping torus}
    A cyclic subgroup in $\Mod(\Sigma)$ is convex-cocompact if and only if it is generated by a pseudo-Anosov mapping class $\phi$. In this case $\Ebdd(\rho) = \Econt(\rho) = \Eesc(\rho) = \Emesc(\rho)$ is the union of the attracting and repelling laminations of $\phi$ by Proposition~\ref{proposition: 3 dimensional ending current union of ending laminations}.
\end{example}

We follow the notations as above. When $H$ is non-elementary and hence $\partial_\infty H$ is a perfect set, we present an example such that $\Ebdd(\rho) \subsetneq \Econt(\rho)$.

\begin{example}\label{example: triangles limit to a polygon}
    We choose two pseudo-Anosov mapping classes $f,g\in \Mod(\Sigma)$ with the following properties (see \cite[Theorem~2.1]{lanneau2011minimum}, \cite{masur1993quadratic} for the existence),
    \begin{enumerate}
        \item the attracting lamination $\Lambda_f^+$ of $f$ is maximal, that is, every complementary region of $\Lambda_f^+$ is an ideal triangle;
        \item the attracting lamination $\Lambda_g^+$ of $g$ is not maximal, that is, some complementary region $P$ of $\Lambda_g^+$ is an ideal polygon with at least four sides.
    \end{enumerate}
    By \cite[Theorem~1.4]{farb2002convex}, after replacing $f$ and $g$ by sufficiently large positive powers, the subgroup $H=\langle f,g\rangle \subset \Mod(\Sigma)$ is free of rank $2$ and convex-cocompact. We still denote by \[1 \longrightarrow \pi_1(\Sigma) \stackrel{\rho}{\longrightarrow} G \stackrel{p}{\longrightarrow} H \longrightarrow 1\] the corresponding hyperbolic group extension. Let $z_0=g^+\in \partial_\infty H $ and $z_n=(g^nfg^{-n})^+=g^nf^+\in \partial_\infty H$. Then we have $z_n\to z_0$ as $n \to \infty$. Set \[\Lambda=\supp \xi_H(z_0)=\Lambda_g^+\quad \text{ and } \quad \Lambda_n=\supp \xi_H(z_n)=g^n(\Lambda_f^+)~.\] Each $\Lambda_n$ is maximal, while $\Lambda$ has a complementary ideal polygon with at least four sides. Then there is a diagonal leaf $\ell \subset \Lambda^d \setminus \Lambda$ that is contained in the Hausdorff limit $\lim_{n \to \infty}\Lambda_n$. Let $\alpha\in \pi_1(\Sigma)$ be the homotopy class of a fixed essential simple closed geodesic. 
    
    Let $\widetilde{f}$ and $\widetilde{g}$ be some $p$-preimage of $f$ and $g$ in $G$ respectively. For each $n$, let $\alpha_{n,m} = (\widetilde{g}^n \widetilde{f}^m) \alpha (\widetilde{g}^n \widetilde{f}^m )^{-1}$. Then by the same argument as in the proof of Theorem~\ref{theorem: limit set of convex-cocompact representations}, \[\lim_{m\to \infty}\widehat{\alpha}_{n,m} = \xi_H(z_n) \quad \text{ and } \quad \lim_{m \to \infty} \frac{\ell_\rho(\alpha_{n,m})}{\norm{\alpha_{n,m}}} = 0~.\] Then up to choose large enough $((m_{n,k})_{k=1}^n)_{n\in \Nb}\subset \Nb$, the sequence of multigeodesics $((\alpha_{k,m_{n,k}})_{k=1}^n)_{n\in \Nb}$ is such that \[\lim_{n \to \infty}\max_{1\leqslant k \leqslant n}\Big\{\frac{\ell_\rho(\alpha_{k,m_{n,k}})}{\norm{\alpha_{k,m_{n,k}}}}\Big\} = 0~.\] By further a proper choice of $(\{a_{nk}\}_{k=1}^{n})_{n\in \Nb}\subset \Rb_{>0}$, we have \[\lim_{n \to \infty}\sum_{k=1}^{n}a_{nk}\widehat{\alpha}_{k,m_{n,k}} = \sum_{n=1}^{\infty} \frac{1}{n^2}\xi_H(z_n)_1\] which projects to an element $\mu \in \hEcont(\rho)$. Here $\xi_H(z_n)_1 \in \Curr(\pi_1(\Sigma))$ is the normalized geodesic current in the equivalence class of $\xi_H(z_n)\in \PCurr(\pi_1(\Sigma))$. Therefore $\supp \mu = \overline{\bigcup \Lambda_n} \supset \ell$.

    Again by \cite{farb2002convex,kent2008shadows}, every point of $\xi_H(\partial_\infty H)$ is filling and uniquely ergodic. In particular, $\supp \xi_H(z)$ is a minimal lamination for each $z\in \partial_\infty H$. As $\ell$ is a leaf of $\Lambda^d$, the closure of the $\pi_1(\Sigma)$-orbit of $\ell$ contains $\Lambda$. If $\ell$ is a leaf of $\supp \xi_H(z')$ for some $z'\in \partial_\infty H \setminus\{z\}$, then the closure of the $\pi_1(\Sigma)$-orbit of $\ell$ is $\supp \xi_H(z')$. However, by \cite[Proposition~5.1]{mitra1997ending}, we have that for any distinct $y_1, y_2 \in \partial_\infty H$, $\supp \xi_H(y_1)$ and $\supp \xi_H(y_2)$ share no common leaves. Hence we have \[\ell \not\in \bigcup_{z \in \partial_\infty H } \supp \xi_H(z)~.\] Therefore together with Lemma~\ref{lemma: ending currents contained in Cannon--Thurston gluing}, Theorem~\ref{theorem: diagonalized limit set of convex-cocompact representations} and Theorem~\ref{theorem: limit set of convex-cocompact representations} we have that \[\bigcup_{z \in \partial_\infty H } \supp \xi_H(z) = \Ebdd(\rho) \subsetneq \Econt(\rho) \subset \Omega(\rho) = \bigcup_{z \in \partial_\infty H} (\supp \xi_H(z))^d~.\]
\end{example}

\subsection{Ending currents of geometrically finite actions }\label{subsection: ending currents of geometrically finite representations}

We still assume that $\Gamma$ is a hyperbolic group, $X$ is a $\delta$-hyperbolic space that admits a unit tangent bundle in the sense of Definition~\ref{definition: the geodesic flow}, $\rho:\Gamma \to \Isom(X)$ is a discrete representation with finite kernel, and $\Pc$ is a finite collection of finitely generated, infinite, quasiconvex subgroups of $\Gamma$. We also assume that $P\in\Pc$ is maximal in the sense that $P$ is the maximal group that preserves $\partial_\infty P$. Let \[\rho(\Pc) = \{\rho(P): P\in \Pc\}~.\] We discuss in this section how geometric finiteness is reflected by the ending currents.

Recall from the introduction that for any quasi-convex subgroup $P\subset \Gamma$, we denote $[P]=\Gamma\cdot\partial_\infty^{(2)}P$, and for any infinite-order element $\alpha\in\Gamma$, we denote $[\alpha]=[\langle\alpha\rangle]$.

\begin{theorem}\label{theorem: geometrically finite implies ending currents are parabolics}
    If $\rho:\Gamma \to \Isom(X)$ is geometrically finite with collection of parabolic subgroups $\Pc$, then \[\Ebdd(\rho) = \Eesc(\rho) =  \Emesc(\rho) = \Econt(\rho) = \Ehausesc(\rho) = \Omega(\rho) = \bigcup_{P\in \Pc} [P]~.\]
\end{theorem}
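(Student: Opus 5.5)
The plan is to sandwich every set in the chain between $\bigcup_{P\in\Pc}[P]$ from below and $\Omega(\rho)$ from above, then prove that these two bounds coincide. Geometric finiteness gives a Cannon--Thurston map; for a geometrically finite action of a hyperbolic group this is the standard extension of the orbit map, obtained from the relative hyperbolicity of $(\Gamma,\Pc)$. So Theorem~\ref{main theorem: the relations among ending currents} applies and yields
\[\Ebdd(\rho)\subset\Eesc(\rho)\subset\Emesc(\rho)=\Econt(\rho)\subset\Ehausesc(\rho)\subset\Omega(\rho).\]
It therefore suffices to prove two statements: (a) $\Omega(\rho)\subset\bigcup_{P\in\Pc}[P]$, and (b) $\bigcup_{P\in\Pc}[P]\subset\Ebdd(\rho)$.

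For (a), I would use the structure of a geometrically finite action. Every limit point of $\rho(\Gamma)$ is either conical or a bounded parabolic point fixed by a conjugate of some $\rho(P)$. The Cannon--Thurston map is injective on conical points: if $\xi_\rho(x)=\xi_\rho(y)$ with $x\neq y$, take the geodesic $(x,y)$ in $\Gamma$. Its image under the orbit map is a relative quasigeodesic that leaves every compact set coarsely only by entering horoball regions. Since its endpoints are identified, it must go into a single horoball of $gP$ and stay there in both directions. Hence $x$ and $y$ both lie in $g\,\partial_\infty P$, so $(x,y)\in g\,\partial^{(2)}_\infty P\subset[P]$. Here the maximality of $P$ ensures that $\xi_\rho^{-1}$ of a parabolic point is exactly $g\,\partial_\infty P$, using the quasiconvexity of $P$ and the fact that $\partial_\infty P$ is the full preimage. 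This is essentially Yaman's/Bowditch's characterization, and I would cite or reprove it via the relative thin-triangle property.

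For (b), fix $P\in\Pc$ and a pair $(x,y)\in\partial^{(2)}_\infty P$. Because $\Ebdd$ is a $\Gamma$-invariant union of supports, it is enough to exhibit a current in $\hEbdd(\rho)$ whose support contains $(x,y)$. Since $P$ is infinite, quasiconvex and hence hyperbolic, Theorem~\ref{theorem: density of multigeodesics} applied inside $P$ (or simply density of axes of loxodromics of $P$ in $\partial^{(2)}_\infty P$) produces elements $\beta\in P$ whose endpoints approximate $(x,y)$. Each $\rho(\beta)$ is parabolic, so $\ell_\rho(\beta)=0$. I would then build sequences with $\ell_\rho$ uniformly bounded and $\norm{\cdot}\to\infty$, for instance $\beta_n\in P$ with $\norm{\beta_n}\to\infty$ and $\widehat{\beta}_n$ converging to a current of $P$ (pushed to $\Gamma$) charging a neighborhood of $(x,y)$. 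Such limits lie in $\hEbdd(\rho)$ and have support inside $[P]$. Taking the closure over all such limits and using the density of $P$-currents supported near any given pair of $\partial^{(2)}_\infty P$ (e.g. $\lim \widehat{(\beta^n\beta')}$ type constructions), the union of supports covers $\partial^{(2)}_\infty P$, hence covers $[P]$ after translating by $\Gamma$.

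I expect the main obstacle to be (a), namely the injectivity of $\xi_\rho$ off the parabolic cosets in this general $\delta$-hyperbolic setting. The difficulty there is controlling geodesics that pass through several different horoballs, and I would handle it through relative hyperbolicity (bounded coset penetration). A secondary subtlety in (b) is exhausting all of $\partial^{(2)}_\infty P$ rather than only a dense subset. Since $\Ebdd$ is not known to be closed, I would need currents of $P$ whose support contains each given pair. I would get these by taking, for any $(x,y)$, limits of $\widehat{\beta_n}$ where the axes of $\beta_n$ pass near $(x,y)$ with a definite proportion of their length. This yields a nonzero measure at $(x,y)$ in the limit, via a compactness argument in $\PCurr(P)$.
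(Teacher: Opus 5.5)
Your skeleton is the paper's. You use the chain of inclusions from Theorem~\ref{main theorem: the relations among ending currents}, the upper bound $\Omega(\rho)=\bigcup_{P\in\Pc}[P]$, and the lower bound $\bigcup_{P\in\Pc}[P]\subset\Ebdd(\rho)$ coming from $\ell_\rho=0$ on infinite-order elements of $P$. For the upper bound the paper simply cites Theorem~\ref{theorem: Yaman criterion} and Theorem~\ref{theorem: conditions on relative hyperbolicity}, which is what you propose for (a); your horoball sketch there is loose, but the citation suffices. The paper asserts the lower bound in one line. You are right that every pair of $\partial_\infty^{(2)}P$ must be reached, not just a dense set, since $\Ebdd(\rho)$ is a union of supports and need not be closed. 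However, the mechanism you propose for this step fails.

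\textbf{Why the mechanism fails.} What is needed is a current in $\hEbdd(\rho)$ that gives positive mass to every neighbourhood of $(x,y)$; a current cannot have an atom at a pair that is not an axis pair. Equivalently, the closed geodesics must traverse each finite segment of the geodesic from $y$ to $x$ with frequency bounded below uniformly in $n$. Fellow-travelling $(x,y)$ for a definite proportion of their length does not give this. In $P=F(a,b)$, let $y=b^-$, $x=\lim_k aba^2b\cdots a^kb$, and $\beta_n=b^naba^2b\cdots a^nb$. A translate of each axis fellow-travels the geodesic from $y$ to $x$ for a full period, yet $\widehat{\beta}_n\to\widehat{a}$ projectively, and $(x,y)\notin[a]$. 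Likewise $\lim_n\widehat{\beta^n\beta'}=\widehat\beta$ only produces $[\beta]$.

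\textbf{A repair.} Produce a single current whose support is all of $[P]$.
\begin{enumerate}
\item If $P$ is virtually cyclic, then $[P]=[\alpha]=\supp\widehat\alpha$, and $\widehat\alpha=\lim_n\widehat{\alpha^n}$.
\item Otherwise, choose $\gamma_k\in P$ whose axis endpoint pairs are dense in $\partial_\infty^{(2)}P$, and set $\nu=\sum_k 2^{-k}\widehat{\gamma_k}/\norm{\gamma_k}\in\Curr(P)$. This current is locally finite with full support.
\item Push $\nu$ forward to $\Gamma$ by summing over $\Gamma/P$. By quasiconvexity this is continuous $\Curr(P)\to\Curr(\Gamma)$, and the image has support exactly $[P]$.
\item Approximate $\nu$ in $\PCurr(P)$ by single elements $\beta_n\in P$. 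Use Bonahon's density, or Theorem~\ref{theorem: density of multigeodesics} followed by the concatenation of high powers from Lemma~\ref{lemma: contracting definitions agree}, carried out inside $P$.
\item Each $\beta_n$ has $\ell_\rho(\beta_n)=0$. Also $\norm{\beta_n}\to\infty$: otherwise the limit would be some $\widehat\beta$, whose support is a discrete orbit.
\end{enumerate}
So the pushed-forward current lies in $\hEbdd(\rho)$ and its support covers $[P]$.
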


\begin{proof}
    Since $\rho:\Gamma \to \Isom(X)$ is geometrically finite, then any parabolic (with respect to $(\Gamma,\Pc)$) element $\gamma\in \Gamma$ has $\ell_\rho(\gamma) = 0$. Therefore, $\bigcup_{P\in \Pc} [P]\subset \Ebdd(\rho)$. By Theorem~\ref{theorem: Yaman criterion} and Theorem~\ref{theorem: conditions on relative hyperbolicity}, the Cannon--Thurston map of $\rho$ exists and exactly collapses $\gamma \cdot \partial_\infty^{(2)}P$ for each $\gamma\in \Gamma$ and $P\in \Pc$, hence $\Omega(\rho) = \bigcup_{P\in \Pc} [P]$. Therefore by Proposition~\ref{proposition: bounded ending currents contained in far away currents}, Proposition~\ref{proposition: far away currents contained in ending currents}, Proposition~\ref{proposition: ending currents contained in far away currents}, and Proposition~\ref{proposition: topologically far away currents contained in Cannon--Thurston gluing}, we have the desired equality.
\end{proof}

We directly deduce that

\begin{corollary}\label{corollay: example has ending current the self-intersecting geodesic}
    Let $\rho_0:\pi_1(\Sigma) \to  \Isom^+(\Hb^4)$ be the representation constructed in Section~\ref{section: construction of the example}, then \[\Ebdd(\rho_0) = \Eesc(\rho_0) =  \Emesc(\rho_0) = \Econt(\rho_0) = \Ehausesc(\rho_0) = \Omega(\rho_0) = [c]~.\]
\end{corollary}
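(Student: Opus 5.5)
The corollary should follow from Theorem~\ref{theorem: geometrically finite implies ending currents are parabolics} once we check its hypotheses for $\rho_0$ and identify the peripheral collection. Concretely, I would show three things: $\rho_0$ is discrete with finite (indeed trivial) kernel; it is geometrically finite; and its collection of parabolic subgroups is $\Pc=\{P\}$ with $P$ a maximal cyclic subgroup generated by the class of $c$, so that $\bigcup_{P\in\Pc}[P]=[c]$.

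\textbf{Discreteness and faithfulness.} These come from the index-two inclusion $\pi_1(\widetilde\Sigma)\subset\pi_1(\Sigma)$. The restriction $\rho_0|_{\pi_1(\widetilde\Sigma)}=\rho$ is discrete and faithful, since $\rho$ is geometrically finite in $\Isom^+(\Hb^3)$. The image $\rho_0(\pi_1(\Sigma))$ contains the discrete group $\rho(\pi_1(\widetilde\Sigma))$ with index at most two, so it is discrete. The kernel of $\rho_0$ meets $\pi_1(\widetilde\Sigma)$ trivially, hence has order at most two. Since $\pi_1(\Sigma)$ is torsion free, the kernel is trivial.

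\textbf{Geometric finiteness and the parabolics.} The group $\rho_0(\pi_1(\Sigma))$ preserves the totally geodesic copy $\Hb^3\subset\Hb^4$, where it acts through $\mathsf O^+(3,1)$. Its limit set therefore equals that of $\rho$. Since it contains the geometrically finite group $\rho(\pi_1(\widetilde\Sigma))$ with finite index, it is geometrically finite in $\Hb^4$: the convex hull of the limit set is the same, and finite-index supergroups preserve geometric finiteness.

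The maximal parabolic subgroups of $\rho$ are the conjugates of $\langle\widetilde c\rangle$ and $\langle\tau(\widetilde c)\rangle$. Under $\pi_*$, both $\widetilde c$ and $\tau(\widetilde c)$ map to conjugates of $c$, because $\tau(\widetilde c)$ is the other lift of $c$. Hence every parabolic element of $\rho_0$ has a power conjugate into $\langle c\rangle$. The maximal parabolic subgroup $P$ containing $c$ is cyclic, being virtually cyclic in the torsion-free surface group. It may be generated by a root of $c$, but $c$ is primitive as a closed geodesic, so $P=\langle c\rangle$.

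This $P$ is quasiconvex, finitely generated and infinite. It is also maximal in the sense required in Section~\ref{subsection: ending currents of geometrically finite representations}: in a surface group the stabilizer of $\{c^\pm\}$ is the maximal cyclic subgroup containing $c$. Therefore $\bigcup_{P\in\Pc}[P]=[c]=\supp\widehat c$.

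\textbf{Conclusion and main obstacle.} With these hypotheses verified, Theorem~\ref{theorem: geometrically finite implies ending currents are parabolics} gives the full chain of equalities with $[c]$. The step I expect to need the most care is the geometric finiteness of $\rho_0$ with exactly one conjugacy class of parabolics in the sense the paper uses, namely relative hyperbolicity of $(\pi_1(\Sigma),\{P\})$ together with Theorem~\ref{theorem: Yaman criterion}. Two points need checking. First, the two $\rho$-cusp classes must be swapped by $\Phi=\rho_0(\alpha)$; this holds because $\Phi$ conjugates $\rho$ to $\rho\circ\tau$ and $\tau$ exchanges $\widetilde c$ and $\tau(\widetilde c)$. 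Second, no new parabolics may arise from elements outside $\pi_1(\widetilde\Sigma)$; this holds because any such element has its square in $\pi_1(\widetilde\Sigma)$, and an element is parabolic exactly when its square is.
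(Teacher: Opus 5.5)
Your proposal is correct and follows the paper's route. The paper deduces the corollary directly from Theorem~\ref{theorem: geometrically finite implies ending currents are parabolics}, using that $\rho_0$ is geometrically finite with a single conjugacy class of parabolic subgroups generated by $c$; your checks of discreteness, trivial kernel, geometric finiteness and the peripheral structure make explicit what the paper leaves implicit, and the one loosely justified step (primitivity of $c$) is harmless, since a root $d$ of $c$ has the same axis endpoints and so $[\langle d\rangle]=[c]$.
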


\begin{lemma}\label{lemma: ending currents contained close curves are parabolic}
    Suppose $\rho$ admits a Cannon--Thurston map $\xi_\rho$. If one of $\Ebdd(\rho)$, $\Eesc(\rho)$ and $\Econt(\rho) = \Emesc(\rho)$ is identical to $\bigcup_{P\in \Pc} [P]$, then each $\rho(P)\in \rho(\Pc)$ is a parabolic subgroup, $\rho(P)$ is almost malnormal and hence $\Pc$ is almost malnormal.
\end{lemma}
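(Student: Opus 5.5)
We first reduce all three cases to a single hypothesis. By Theorem~\ref{main theorem: the relations among ending currents}, $\Ebdd(\rho)\subset\Eesc(\rho)\subset\Emesc(\rho)=\Econt(\rho)\subset\Omega(\rho)$. If any one of these sets equals $\bigcup_{P\in\Pc}[P]$, then in every case
\[\Ebdd(\rho)\subset\bigcup_{P\in\Pc}[P]\subset\Omega(\rho).\]
The plan is to extract parabolicity from the inclusion into $\Omega(\rho)$, and almost malnormality from the fact that $\Econt(\rho)$ is a union of supports of currents.

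\textbf{Parabolicity.} Fix $P\in\Pc$ and an infinite-order $\alpha\in P$. The pair $(\alpha^+,\alpha^-)$ lies in $[P]\subset\Omega(\rho)$, so $\xi_\rho(\alpha^+)=\xi_\rho(\alpha^-)$. By equivariance, $\xi_\rho(\alpha^\pm)$ are the attracting and repelling fixed points of $\rho(\alpha)$ whenever $\rho(\alpha)$ is loxodromic; since they coincide, $\rho(\alpha)$ is not loxodromic. Proper discontinuity and infinite order rule out ellipticity, so $\rho(\alpha)$ is parabolic with fixed point $\xi_\rho(\alpha^+)$.

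Next we show all elements of $P$ share this fixed point. For any $x,y\in\partial_\infty P$, the pair $(x,y)$ lies in $\partial^{(2)}_\infty P\subset\Omega(\rho)$, so $\xi_\rho$ is constant on $\partial_\infty P$, say with value $\xi_P$. Every infinite-order element of $P$ then has $\rho$-fixed point $\xi_P$. Hence $\rho(P)$ fixes $\xi_P$ and contains no loxodromics, and since it is infinite and discrete, it is a parabolic subgroup. (If the definition of parabolic subgroup used later requires maximality, we use the assumption that $P$ is maximal among subgroups preserving $\partial_\infty P$.)

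\textbf{Almost malnormality.} This is the main obstacle. The task is to show $\rho(P)\cap\rho(\gamma P\gamma^{-1})$ is finite unless $\gamma\in P$, and similarly for distinct $P,P'\in\Pc$. Suppose the intersection is infinite. It then contains an infinite-order element $\beta$, whose fixed point forces $\xi_P=\xi_{\gamma P}=\rho(\gamma)\xi_P$.

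First attempt: consider the subgroup generated by $P$ and $\gamma P\gamma^{-1}$. Its image fixes $\xi_P$ and is discrete, hence contains no loxodromics. Because $\Gamma$ is hyperbolic, this subgroup has a limit set strictly larger than $\partial_\infty P$ when $\gamma\notin P$, by maximality of $P$. So it contains an infinite-order $\eta\notin \bigcup_{Q}\Gamma Q\Gamma^{-1}$, or at least a pair $(\eta^+,\eta^-)\notin\bigcup[P]$. Since $\rho(\eta)$ is parabolic, $\ell_\rho(\eta^k)=0$ for all $k$.

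We then build a current violating the hypothesis, using two ideas.
\begin{enumerate}
\item \emph{Bounded-length sequence.} Take products $\alpha^{m}\gamma\alpha'^{m}\gamma^{-1}$ with $\alpha\in P$ and $\alpha'\in\gamma P\gamma^{-1}$, or $\eta$-type elements $\eta^{m}$ with conjugates of growing word length. These have uniformly bounded $\ell_\rho$ (they fix $\xi_P$ parabolically, so $\ell_\rho=0$) and diverging $\norm{\cdot}$.
\item \emph{Limit current.} A limit of such a sequence is a current in $\hEbdd(\rho)$ whose support contains a leaf joining $\partial_\infty P$ to $\gamma\partial_\infty P$. Such a leaf lies outside $\bigcup[P]$, contradicting $\Ebdd(\rho)\subset\bigcup[P]$.
\end{enumerate}

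Controlling this limit is delicate: the mass may concentrate on $[P]$, which is exactly the mass-vanishing phenomenon, and that would produce no contradiction. If it does, the fallback is to work directly in $\Omega(\rho)$ via the Cannon--Thurston map. The aim there is to show that geometric finiteness of the parabolic structure forces malnormality through a Yaman-type criterion, Theorem~\ref{theorem: Yaman criterion}. Almost malnormality of $\Pc$ then follows from almost malnormality of $\rho(\Pc)$, because $\rho$ has finite kernel.
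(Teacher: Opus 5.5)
Your reduction to $\Ebdd(\rho)\subset\bigcup_{P\in\Pc}[P]\subset\Omega(\rho)$ is correct, and so is your proof that each $\rho(P)$ is parabolic. Showing that $\xi_\rho$ is constant on $\partial_\infty P$, so that $\rho(P)$ fixes the single point $\xi_\rho(\partial_\infty P)$ by equivariance, is in fact cleaner than the paper's route through Lemma~\ref{lemma: sequences of geodesics converges into Cannon--Thurston gluing escapes} and a ping-pong argument.

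The genuine gap is that you never finish the almost malnormality half. You stop at the worry that the limiting current may concentrate on $[P]$, and propose a fallback through Theorem~\ref{theorem: Yaman criterion}. That fallback cannot work. Yaman's criterion needs a geometrically finite convergence action as input, and the hypotheses of the lemma do not supply one: the action in Theorem~\ref{main theorem: geometrically infinite example} satisfies $\Econt(\rho)=[\alpha]$ and is geometrically infinite.

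The missing idea is that no limiting process is needed. Suppose you have an infinite-order $\eta$ with $\rho(\eta)$ parabolic. The powers $\eta^k$ satisfy $\ell_\rho(\eta^k)=0$ and $\norm{\eta^k}=k\norm{\eta}\to\infty$, while $\widehat{\eta^k}=\widehat{\eta}$ because the axes share endpoints. Hence $\widehat\eta\in\hEbdd(\rho)$ and $[\eta]\subset\Ebdd(\rho)$. Mass vanishing only occurs when the axes in a defining sequence vary, as for your products $\alpha^m\gamma\alpha'^m\gamma^{-1}$. So any such $\eta$ with $(\eta^+,\eta^-)\notin\bigcup_P[P]$ immediately contradicts $\Ebdd(\rho)\subset\bigcup_P[P]$. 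Combined with your reduction, this covers all three hypotheses at once.

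This is essentially the paper's argument. Let $H=\ip{\rho(P),\rho(\gamma P'\gamma^{-1})}$, which fixes $p$. Either $H$ is parabolic, and its zero-length elements enlarge the ending set beyond $\bigcup_P[P]$. Or $H$ contains a loxodromic $g$ fixing $p$, and then $g^nug^{-n}$ with $u\in\rho(P)\setminus\{\id\}$ contradicts discreteness.

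You need that second case explicitly. Your claim that the image ``is discrete, hence contains no loxodromics'' does not follow from discreteness alone, since a discrete loxodromic cyclic group fixes boundary points. What is incompatible with discreteness is a parabolic and a loxodromic sharing a fixed point.

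Finally, you assert rather than prove that the generated subgroup contains an $\eta$ with $(\eta^+,\eta^-)\notin\bigcup_P[P]$. A strictly larger limit set does not by itself give this, so the step needs an argument (the paper is brief here as well).
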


\begin{proof}
    We first show that for each $P\in \Pc$, $\rho(P)$ is parabolic. By Lemma~\ref{lemma: sequences of geodesics converges into Cannon--Thurston gluing escapes} and Proposition~\ref{proposition: ending currents contained in far away currents} we have that $(\gamma^+,\gamma^-)\in \partial_\infty^{(2)}P\subset \Econt(\rho) = \Emesc(\rho) \subset \Omega(\rho)$ for any one of these cases. Then by Lemma~\ref{lemma: sequences of geodesics converges into Cannon--Thurston gluing escapes}, $r(\gamma) = +\infty$ and therefore $\rho(\gamma)$ is parabolic. Suppose $\rho(\gamma)$ fixes $p\in\partial_\infty X$ and there is another $\eta\in P$ such that $\rho(\eta)$ fixes $q\in\partial_\infty X$. If $p\ne q$, then for a large enough $N\in \Nb$, $\rho(\gamma^N\eta^N)$ is loxodromic by ping-pong lemma, which contradicts $\rho(\gamma^N\eta^N)$ being parabolic. Therefore all elements of $\rho(P)$ fix a common parabolic point and hence $\rho(P)$ is a parabolic subgroup.\\
    
    We then show that different parabolic subgroups fix different parabolic points. For any $P,P'\in \Pc$ and $\gamma\in\Gamma$, the fixed points of $\rho(P)$ and $\rho(\gamma P' \gamma^{-1})$ are the same only when $P=P'$ and $\gamma\in P$. Otherwise, suppose $\rho(P)$ and $\rho(\gamma P' \gamma^{-1})$ are distinct and both fix $p\in\partial_\infty X$. Let  $H = \ip{\rho(P), \ \rho(\gamma P' \gamma^{-1})}$ be the group generated by $\rho(P)$ and $\rho(\gamma P' \gamma^{-1})$. Then
    \begin{enumerate}
        \item either $H$ is parabolic. In this case $H$ is a strictly larger parabolic subgroup fixing $p$. In particular $\partial_\infty^{(2)}\langle P,\, \gamma P'\gamma^{-1}\rangle \subset \Emesc(\rho)$, contradicting our assumption on $\Emesc(\rho)$.
        \item or $H$ contains a loxodromic element $g$ fixing $p$. Pick $u\in \rho(P)\setminus \{\id\}$, then $g^n u g^{-n}\to \id$ as $n \to \infty$ (see for example \cite{tukia1994convergence}), contradicting the discreteness of $\rho$.
    \end{enumerate}
    In particular, $\Pc$ is almost malnormal.\\
\end{proof}

For Theorem~\ref{main theorem: geometric finiteness and ending currents}, the implications \eqref{item: contracting measure} $\Rightarrow$ \eqref{item: single far away} $\Rightarrow$ \eqref{item: bounded measure} follows immediately from Lemma~\ref{lemma: ending currents contained close curves are parabolic}. \eqref{item: yaman} $\Rightarrow$ \eqref{item: geometrically finite} follows from Theorem~\ref{theorem: Yaman criterion}. We now show \eqref{item: topological ending} $\Rightarrow$ \eqref{item: geometrically finite}.

\begin{theorem}
    Suppose $\rho$ admits a Cannon--Thurston map $\xi_\rho$. Then $\rho$ is geometrically finite with collection of parabolic subgroups $\Pc$ if and only if \[\Ehausesc(\rho) = \bigcup_{P\in \Pc} [P]~.\]
\end{theorem}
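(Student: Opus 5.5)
The forward direction is immediate from Theorem~\ref{theorem: geometrically finite implies ending currents are parabolics}, which gives $\Ehausesc(\rho)=\Omega(\rho)=\bigcup_{P\in\Pc}[P]$. For the converse we assume $\Ehausesc(\rho)=\bigcup_{P\in\Pc}[P]$. The plan is to show that then $\Omega(\rho)=\bigcup_{P\in\Pc}[P]$, and then invoke Theorem~\ref{theorem: Yaman criterion}, as in the already established implication \eqref{item: yaman} $\Rightarrow$ \eqref{item: geometrically finite}. Since $\Ehausesc(\rho)\subset\Omega(\rho)$ by Lemma~\ref{proposition: topologically far away currents contained in Cannon--Thurston gluing}, only the inclusion $\Omega(\rho)\subset\Ehausesc(\rho)$ remains.

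The first step is to check that the hypotheses of Theorem~\ref{theorem: Yaman criterion} on $\Pc$ are met. Every $\gamma\in P$ of infinite order satisfies $(\gamma^\pm)\in\Ehausesc(\rho)\subset\Omega(\rho)$. Lemma~\ref{lemma: sequences of geodesics converges into Cannon--Thurston gluing escapes}, applied to the constant sequence, then gives $r(\gamma)=\infty$, so $\rho(\gamma)$ is parabolic. The ping-pong and malnormality argument of Lemma~\ref{lemma: ending currents contained close curves are parabolic} carries over verbatim. The only change is that a strictly larger parabolic group would put $\partial^{(2)}_\infty$ of a larger subgroup into $\Ehausesc(\rho)$, contradicting the hypothesis. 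So each $\rho(P)$ is parabolic and $\Pc$ is almost malnormal.

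The main step is to show $\Omega(\rho)\subset\Ehausesc(\rho)$. Take $(x,y)\in\Omega(\rho)=\Lambda(\rho)$. By Mj's description of $\Lambda(\rho)$, up to translating by $\Gamma$ we have $(x,y)=\lim(\gamma_n^+,\gamma_n^-)$ with axes of $\gamma_n$ through $\id$ and $d_X(o,\Axis_{\rho(\gamma_n)})\to\infty$. The difficulty is to upgrade this escape from $o$ to escape from the whole orbit $\Gamma o$. The natural idea is a "leaf approximation" argument. The Hausdorff-closed set $\Omega(\rho)$ is $\Gamma$-invariant, and its leaves are exactly the pairs identified by $\xi_\rho$. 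For a bi-infinite leaf $(x,y)$, every $\Gamma$-translate is again a leaf with $\xi_\rho(gx)=\xi_\rho(gy)$. By compactness and uniform continuity of $\xi_\rho$, the $X$-image of the geodesic from $gx$ to $gy$ (through the orbit map) stays far from $o$ uniformly in $g$, in the sense of the estimate in Lemma~\ref{lemma: sequences of geodesics converges into Cannon--Thurston gluing escapes}.

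Next we use Theorem~\ref{main theorem: density of multigeodesics}, or rather the closing lemma, Theorem~\ref{theorem: a closing lemma}. We would approximate a long segment of the flow line $(x,y,[-T,T])$ followed by a bounded connecting segment by a closed orbit $\gamma_T$ whose axis is Hausdorff close to the segment. The hope is that $r(\gamma_T)\to\infty$ because every sub-segment lies near a leaf of $\Omega(\rho)$; this is where Lemma~\ref{lemma: sequences of geodesics converges into Cannon--Thurston gluing escapes} is used. The obstacle is the closing segment. It is bounded in $\Gc(\Gamma)$ but need not be near $\Omega(\rho)\times\Rb$, so it could bring $r(\gamma_T)$ back down.

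To handle the closing segment, we would use recurrence. If the projection of $(x,y,\Rb)$ to $\wh\Gc(\Gamma)$ returns near itself at times $\pm T$, close it up there. The closing segment is then short and near the leaf itself, and the closed orbit is shadowed by the leaf, giving $r(\gamma_T)\to\infty$ and $(x,y)\in\Ehausesc(\rho)$. If the leaf is not recurrent in either direction, its ends accumulate on other leaves of $\Omega(\rho)$ that are recurrent. We would then argue via the closedness of $\Ehausesc(\rho)$ under the diagonal/limit operations, or by splicing recurrent limit leaves via the local-to-global principle used in Lemma~\ref{lemma: contracting definitions agree}. I expect this recurrence/splicing step to be the real difficulty.
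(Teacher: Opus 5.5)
Your forward direction, and your first step (each $\rho(P)$ is parabolic and $\Pc$ is almost malnormal), match the paper. The paper reaches almost malnormality by first showing $\Emesc(\rho)=\Ehausesc(\rho)=\bigcup_{P\in\Pc}[P]$ and then quoting Lemma~\ref{lemma: ending currents contained close curves are parabolic}. Running that argument directly for $\Ehausesc(\rho)$, as you do, is fine.

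The gap is the main step. You reduce the converse to $\Omega(\rho)\subset\Ehausesc(\rho)$, but your sketch for it never uses the hypothesis $\Ehausesc(\rho)=\bigcup_{P\in\Pc}[P]$. It would therefore prove $\Omega(\rho)=\Ehausesc(\rho)$ for every $\rho$ with a Cannon--Thurston map, whereas the paper only has $\Ehausesc(\rho)\subset\Lambda(\rho)=\Omega(\rho)$.

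Consider what shadowing via Theorem~\ref{theorem: a closing lemma} actually gives. Escaping closed orbits are, asymptotically, those that stay close to $\Omega(\rho)\times\Rb$ on long windows around every point. So $\Ehausesc(\rho)$ is essentially the chain-recurrent part of the flow on $(\Omega(\rho)\times\Rb)/\Gamma$. Recurrent leaves are captured as you say. A non-recurrent leaf of $\Omega(\rho)$ is a different matter, for instance a diagonal-type leaf whose two ends are asymptotic to the recurrent part in incompatible directions. It is reached only if $\Omega(\rho)$ happens to contain further leaves to splice with, and nothing in the setting guarantees that. The ``closedness of $\Ehausesc(\rho)$ under diagonal operations'' you appeal to is not available. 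So the step you flag is not a technicality but the whole content of the converse, and there is no reason to expect it in this generality.

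The missing idea is to argue by contradiction, and to get escape from the whole orbit from the structure of geometrically infinite actions rather than from shadowing.

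\begin{enumerate}
\item After your first step, $(\Gamma,\Pc)$ is relatively hyperbolic by Theorem~\ref{theorem: conditions on relative hyperbolicity}.
\item By Theorem~\ref{theorem: escaping loxodromic implies geometrically infinite}, if $\rho$ were not geometrically finite there would be relatively loxodromic $\gamma_n$ with $r(\gamma_n)\to+\infty$.
\item Since $r$ is conjugation invariant, conjugate each $\gamma_n$ so that a transition point of its Cayley-graph axis sits at $\id$. A transition point is one near which the axis is not in a bounded neighborhood of any single peripheral coset; such points exist because $\gamma_n$ is not conjugate into any $P$.
\item Pass to a limit. This gives a pair in $\Ehausesc(\rho)$ whose geodesic has a transition point near $\id$.
\item By quasiconvexity of the $P\in\Pc$, such a pair cannot have both endpoints in a single $g\,\partial_\infty P$. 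This contradicts $\Ehausesc(\rho)=\bigcup_{P\in\Pc}[P]$.
\end{enumerate}

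This route needs only one escaping pair outside $\bigcup_{P\in\Pc}[P]$, not all of $\Omega(\rho)$. The equality $\Omega(\rho)=\bigcup_{P\in\Pc}[P]$ then follows a posteriori from Theorem~\ref{theorem: geometrically finite implies ending currents are parabolics}.
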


\begin{proof}
    Since $\Ehausesc(\rho)= \bigcup_{P\in \Pc} [P] \subset \Omega(\rho)$ by Proposition~\ref{proposition: topologically far away currents contained in Cannon--Thurston gluing}, we have that for each $P\in \Pc$, $\xi_{\rho\vert_P}:\partial_\infty P \to \partial_\infty X$ is a constant map. Then $\rho(P)$ has the unique fixed point the singleton $\xi_\rho(\partial_\infty P)$ and hence $\rho(P)$ is a parabolic subgroup. Therefore by Theorem~\ref{theorem: geometrically finite implies ending currents are parabolics}, we have $\Emesc(\rho\vert_P) = \Omega(\rho\vert_P) = \partial_\infty^{(2)}P \subset \Emesc(\rho)$, where the last inclusion follows the definition of $\Emesc(\cdot)$. On the other hand by Proposition~\ref{proposition: ending currents contained in far away currents} we have $\Emesc(\rho)\subset \Ehausesc(\rho)$ and hence \[\Emesc(\rho) = \Ehausesc(\rho) = \bigcup_{P\in \Pc} [P]~.\] By Lemma~\ref{lemma: ending currents contained close curves are parabolic} and Theorem~\ref{theorem: conditions on relative hyperbolicity}, we have $(\Gamma,\Pc)$ is relatively hyperbolic, then the theorem follows from Theorem~\ref{theorem: escaping loxodromic implies geometrically infinite} below.
\end{proof}

\begin{theorem}[{\cite{bonahon1986bouts},\cite{kapovich2018geometric},\cite[Theorem~1.1]{yang2026two}}]\label{theorem: escaping loxodromic implies geometrically infinite}
    Suppose $(\Gamma,\Pc)$ is a relatively hyperbolic group. Then $\rho$ is geometrically infinite if and only if there exists a sequence of loxodromic (with respect to $(\Gamma,\Pc)$) elements $(\gamma_n)_{n\in \Nb}$ such that $\lim_{n \to \infty}r(\gamma_n) = +\infty$.
\end{theorem}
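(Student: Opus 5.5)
The plan is to prove the two directions separately. Geometric finiteness will be used in the form adapted to a relatively hyperbolic pair $(\Gamma,\Pc)$: there is a $\rho(\Gamma)$-invariant family of disjoint horoballs centered at the parabolic points of the $\rho(P)$, $P\in\Pc$, and $\rho(\Gamma)$ acts cocompactly on the complement in the quasiconvex hull $\mathrm{Hull}(\xi_\rho(\partial_\infty\Gamma))$. The key tool is the Morse lemma in $X$, together with the fact that each $\rho(P)$ fixes exactly one point of $\partial_\infty X$.

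\emph{Geometrically finite $\Rightarrow$ no escaping loxodromics.} Let $\gamma$ be loxodromic with respect to $(\Gamma,\Pc)$. By relative hyperbolicity it is not conjugate into any $P$, so $\rho(\gamma)$ is loxodromic in $X$. Its axis $A$ lies within uniform distance of the hull, since its endpoints lie in the limit set. I claim $A$ cannot lie entirely inside a single horoball $B$. Otherwise $\rho(\gamma)$ would preserve $B$, because the horoballs are disjoint and equivariant. Then $\rho(\gamma)$ would fix the center of $B$ and lie in the stabilizer of that parabolic point, which is a conjugate of some $\rho(P)$. This is impossible for a loxodromic. Hence $A$ meets a fixed thickened neighbourhood of $\mathrm{Hull}\setminus\bigcup B$. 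That set has compact image $C$ in $\rho(\Gamma)\backslash X$, so $r(\gamma)\le d(p(o),C)+\mathrm{const}$ is uniformly bounded. A horoball is quasiconvex, and a geodesic between points outside it can only penetrate a bounded amount unless both endpoints lie near the center. This bounded-penetration property is what makes the uniform constant honest.

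\emph{Geometrically infinite $\Rightarrow$ escaping loxodromics.} I would first invoke the known characterization, namely Bowditch's and Yaman's equivalence between geometric finiteness and the statement that every limit point is conical or bounded parabolic. If $\rho$ is geometrically infinite, there is a limit point $\zeta$ that is neither conical nor bounded parabolic. Equivalently, the thick part of the hull is noncompact mod $\rho(\Gamma)$. Take points $x_n$ in the hull minus the horoballs whose images $p(x_n)$ leave every compact set. Since the pair $(\Gamma,\Pc)$ is relatively hyperbolic, and the Cannon--Thurston map exists in the intended applications (though here we need only the hull), each $x_n$ lies near a bi-infinite geodesic $\ell_n$ with endpoints in the limit set. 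The geodesic $\ell_n$ has a long subsegment $\sigma_n$ around $x_n$ staying in the deep region $\{y: d(p(y),p(o))\ge R_n\}$ with $R_n\to\infty$. This uses that the region of bounded depth is compact mod $\Gamma$, together with a diagonal argument.

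I would then apply a closing argument to $\sigma_n$, in the spirit of Theorem~\ref{theorem: a closing lemma} but with ping-pong in $X$. Choose group elements $g_n,h_n$ near the two ends of a long piece of $\ell_n$, with the endpoints of the piece deep and close to each other mod $\Gamma$. The element $g_nh_n^{-1}$ then has an axis fellow-travelling $\sigma_n$, and is loxodromic with $r\to\infty$. This step is delicate because "deep" points may be deep in a cusp rather than geometrically infinite. One must arrange the resulting element to be loxodromic with respect to $(\Gamma,\Pc)$ rather than parabolic. I would do this by choosing $x_n$ outside all horoballs and composing with a fixed loxodromic to break parabolicity, using local-to-global for piecewise geodesics.

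The main obstacle is the converse direction: producing genuinely relatively-loxodromic elements whose axes escape, rather than only escaping points or parabolic elements. This is exactly where the cited results of Bonahon, Kapovich--Liu, and Yang do the real work. I would defer to them for the precise closing-with-horoballs argument.
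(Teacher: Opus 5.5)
The paper gives no proof of this statement. It is imported from Bonahon (dimension $3$), Kapovich--Liu (pinched Hadamard manifolds) and Yang (general hyperbolic spaces), and used as a black box in the implication (3)$\Rightarrow$(1) of Theorem~\ref{main theorem: geometric finiteness and ending currents}. So the only question is whether your sketch stands on its own.

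\textbf{Forward direction.} This is the standard argument and is essentially fine, with two corrections.

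First, that $\rho(\gamma)$ is loxodromic in $X$ does not come from relative hyperbolicity of $(\Gamma,\Pc)$. It comes from geometric finiteness \emph{with parabolic subgroups exactly $\Pc$}: every parabolic fixed point in the limit set is bounded parabolic, with stabilizer a conjugate of some $\rho(P)$. This matters because $r(\gamma)=+\infty$ whenever $\rho(\gamma)$ is parabolic.

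Second, to pass from ``some point of the axis lies outside every horoball'' to a bound on $r(\gamma)$, you need cocompactness of a fixed neighbourhood of the hull minus the horoballs. That is one of the standard equivalent forms of geometric finiteness. Your bounded-penetration remark is not the relevant input.

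\textbf{Converse direction.} Here the sketch has a genuine gap, at the closing step.

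Every point $y\in\sigma_n$ has $d_X(y,\rho(\Gamma)o)\geqslant R_n$, so there are no group elements ``near the ends'' of $\sigma_n$. The region $\{y:d(p(y),p(o))\geqslant R\}$ is not compact modulo $\rho(\Gamma)$. Hence nothing forces two points of $\sigma_n$ to be close modulo $\rho(\Gamma)$ with matching directions.

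In Theorem~\ref{theorem: a closing lemma}, recurrence is built into the hypothesis $\alpha_0=\alpha_n$: a return to the same small piece of the \emph{compact} transversal $K$. Nothing plays that role here. For instance, $\sigma_n$ can run monotonically outward, say along a ray exiting a degenerate end, and then its points are pairwise far apart in the quotient.

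Your remedy against parabolicity makes things worse. Composing with a fixed loxodromic, or with any element whose axis passes near an orbit point, splices into the new axis a segment within bounded distance of $\rho(\Gamma)o$. That forces $r$ to stay bounded.

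What is missing is a mechanism that produces relatively loxodromic elements whose \emph{entire} closed geodesic lies in the far region. That is exactly the content of the cited works. So this direction of your proposal is, like the paper's, a citation, and the sketch should not be read as an argument.

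\textbf{Accidental parabolics.} Suppose ``geometrically infinite'' means ``not geometrically finite with parabolic subgroups $\Pc$'', as in the paper's application. Then it can also fail because some relatively loxodromic $\gamma$ has $\rho(\gamma)$ parabolic. An example is $\rho_0$ from Section~\ref{section: construction of the example} with $\Pc=\emptyset$. In that case every limit point may be conical or bounded parabolic, so your reduction to a non-conical, non-parabolic limit point is false. The case is trivial, since the constant sequence $\gamma_n=\gamma$ has $r(\gamma_n)=+\infty$, but it must be treated separately.
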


They are standard facts that a hyperbolic group contains no abelian subgroup of rank $2$, and a geometrically finite Kleinian group in $\Isom^+(\Hb^n)$ has every parabolic subgroup virtually abelian of rank at most $n-1$. It follows directly that

\begin{corollary}
    Suppose $\rho:\Gamma \to \Isom^+(\Hb^n)$ is a discrete representation with finite kernel that admits a Cannon--Thurston map. Then $\rho$ is geometrically finite if and only if $\Ehausesc(\rho)$ consists of finitely many $\Gamma$-orbits of closed geodesics in $\Gamma$.
\end{corollary}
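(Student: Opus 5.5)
The plan is to derive both directions from the geometric finiteness criterion proved just above: when a Cannon--Thurston map exists, $\rho$ is geometrically finite with parabolic collection $\Pc$ if and only if $\Ehausesc(\rho)=\bigcup_{P\in\Pc}[P]$. For $\rho:\Gamma\to\Isom^+(\Hb^n)$ the statement reduces to two claims. First, under geometric finiteness, $\bigcup_{P\in\Pc}[P]$ is a finite union of $\Gamma$-orbits of closed geodesics. Second, if $\Ehausesc(\rho)$ is a finite union of $\Gamma$-orbits of closed geodesics, say $[\alpha_1]\cup\dots\cup[\alpha_k]$, then it has the form $\bigcup_{P\in\Pc}[P]$ for a suitable finite collection $\Pc$ of maximal quasiconvex subgroups.

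\textbf{Forward direction.} Assume $\rho$ is geometrically finite with parabolic collection $\Pc$. By Theorem~\ref{theorem: geometrically finite implies ending currents are parabolics}, $\Ehausesc(\rho)=\bigcup_{P\in\Pc}[P]$. Each $\rho(P)$ is a parabolic subgroup of a geometrically finite Kleinian group in $\Isom^+(\Hb^n)$, so it is virtually abelian, hence $P$ is virtually abelian (the kernel is finite). A hyperbolic group has no $\Zb^2$, so each infinite $P$ is virtually cyclic. Thus $P$ contains an infinite-order $\alpha_P$ of finite index, $\partial_\infty P=\{\alpha_P^\pm\}$, and $[P]=[\alpha_P]$. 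Since $\Pc$ is finite, $\Ehausesc(\rho)$ is a finite union of $\Gamma$-orbits of closed geodesics.

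\textbf{Converse.} Assume $\Ehausesc(\rho)=\bigcup_{i=1}^k[\alpha_i]$. Let $P_i$ be the maximal virtually cyclic subgroup containing $\alpha_i$, i.e.\ the stabilizer of $\{\alpha_i^\pm\}$. It is finitely generated, infinite, quasiconvex and maximal in the required sense, and $[P_i]=[\alpha_i]$ because $\partial_\infty^{(2)}P_i$ consists only of the pairs $(\alpha_i^\pm,\alpha_i^\mp)$. Discard duplicates, keeping one $P_i$ per conjugacy class of these maximal elementary subgroups; this does not change the union. Set $\Pc=\{P_i\}$, so $\Ehausesc(\rho)=\bigcup_{P\in\Pc}[P]$. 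The previous theorem then gives that $\rho$ is geometrically finite with parabolic collection $\Pc$.

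\textbf{Main obstacle.} The step needing most care is checking that the chosen $\Pc$ satisfies the hypotheses of the previous theorem: maximality of each $P_i$, and that no two distinct members of $\Pc$ are conjugate. Conjugate copies would give the same $[P]$ but would break the almost malnormality invoked through Lemma~\ref{lemma: ending currents contained close curves are parabolic}. Both are handled by the choice of maximal elementary subgroups up to conjugacy. A smaller point is the convention when $\Ehausesc(\rho)$ is empty: this is the union over the empty collection, the convex-cocompact case. If the previous theorem is stated only for nonempty $\Pc$, this case follows separately from Theorem~\ref{theorem: escaping loxodromic implies geometrically infinite} applied with $\Pc=\emptyset$, since then no loxodromics escape.
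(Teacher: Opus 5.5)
Your proposal is correct and follows the paper's own route: combine the preceding criterion ($\rho$ geometrically finite with parabolic collection $\Pc$ if and only if $\Ehausesc(\rho)=\bigcup_{P\in\Pc}[P]$) with two standard facts. Parabolic subgroups of geometrically finite Kleinian groups are virtually abelian, and hyperbolic groups contain no $\Zb^2$, so each $[P]$ is a single orbit $[\alpha_P]$. Your explicit choice of $\Pc$ in the converse (one maximal elementary subgroup per conjugacy class) and your treatment of the empty case spell out what the paper leaves implicit when it says the corollary "follows directly".
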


While $\Ehausesc(\rho)$ characterizes relative geometric finiteness cleanly, things become significantly more subtle when working with $\Ebdd(\rho)$ and $\Econt(\rho)$. The following characterization for convex-cocompact case is straightforward.

\begin{proposition}
    The representation $\rho:\Gamma \to \Isom(X)$ is convex-cocompact if and only if $\Econt(\rho)$ is empty.
\end{proposition}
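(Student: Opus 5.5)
The argument splits into the two directions. Throughout, convex-cocompact means that the orbit map $\tau_{\rho,o}:\Gamma\to X$ is a quasi-isometric embedding. By a standard argument for hyperbolic groups, this is equivalent to a uniform lower bound $\ell_\rho(\gamma)\geqslant c\norm{\gamma}$ for every infinite-order $\gamma$ and some fixed $c>0$.

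\emph{Convex-cocompact implies $\Econt(\rho)=\emptyset$.} If $\tau_{\rho,o}$ is a $(C,c')$-quasi-isometric embedding, then for any infinite-order $\gamma$ the orbit of $\langle\gamma\rangle$ along an axis in the Cayley graph maps to a uniform quasi-geodesic. Hence $\ell_\rho(\gamma)\geqslant C^{-1}\norm{\gamma}-c''$. Actually, taking stable lengths (passing to powers $\gamma^k$ and dividing by $k$) removes the additive constant, giving $\ell_\rho(\gamma)\geqslant C^{-1}\norm{\gamma}$. So no sequence satisfies $\ell_\rho(\gamma_n)/\norm{\gamma_n}\to 0$, and $\hEcont(\rho)=\emptyset$. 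Since $\hEbdd\subset\hEcont$, everything else is empty too.

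\emph{$\Econt(\rho)=\emptyset$ implies convex-cocompact.} We argue by contrapositive. Suppose $\tau_{\rho,o}$ is not a quasi-isometric embedding. The plan is to produce infinite-order $\gamma_n$ with $\ell_\rho(\gamma_n)/\norm{\gamma_n}\to 0$. By compactness of $\PCurr(\Gamma)$, a subsequence of $\widehat\gamma_n$ then converges to an element of $\hEcont(\rho)$. Its support is nonempty, since it is a nonzero measure, so $\Econt(\rho)\neq\emptyset$.

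To find the $\gamma_n$, first note the orbit map is Lipschitz. Failure of the lower bound gives geodesic segments $[g_n,h_n]$ in $\Cay(\Gamma)$ with $d_X(\rho(g_n)o,\rho(h_n)o)/d_\Gamma(g_n,h_n)\to 0$. The key step, and the main obstacle, is converting this distorted segment into a distorted \emph{closed} geodesic, that is, controlling translation length rather than displacement. I would do this with the closing lemma, Theorem~\ref{theorem: a closing lemma}. Following the segment $[g_n,h_n]$ in $\Gc(\Gamma)$ gives a long flow segment. Since the transversal partition $\Ac$ is finite, the pattern of hits to $\Gamma K$ along the segment revisits a partition piece. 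Closing up via Theorem~\ref{theorem: a closing lemma} gives elements $\gamma$ whose axes fellow-travel subsegments.

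To keep distortion, choose a closing subsegment whose $X$-displacement per unit word length is small. One way is pigeonhole: cut $[g_n,h_n]$ into consecutive blocks between returns to a common piece. The average ratio of $X$-displacement to word length over the blocks is at most the overall ratio plus $o(1)$, since the gaps between returns are bounded. So some block, or a concatenation of blocks, has small ratio. A cleaner alternative is to use $\hEmcont=\hEcont$ (Lemma~\ref{lemma: contracting definitions agree}). Decompose the long segment into closed pieces between recurrences of a pattern, each closed by Theorem~\ref{theorem: a closing lemma} to a $\gamma_{n,j}$. The closed geodesics fellow-travel the pieces, so $\ell_\rho(\gamma_{n,j})$ is at most the $X$-length of the piece plus a bounded error. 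Also $\norm{\gamma_{n,j}}$ is comparable to the word length of the piece. This makes the multigeodesic $\Y_n=\sum_j\gamma_{n,j}$ satisfy $\ell_\rho(\Y_n)/\norm{\Y_n}\to 0$. Lemma~\ref{lemma: contracting definitions agree} then upgrades this to single elements.

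Two points need care. First, we need a uniform bound on the extra error from closing. This comes from the fixed Hausdorff constant $R_{\Haus}$, together with the Lipschitz property of the orbit map via $p_M$. Second, the pieces must have word length tending to infinity, so that these bounded errors become negligible.
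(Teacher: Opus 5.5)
Your forward direction and the compactness step are fine, and they match the paper. The paper's proof of the converse is only this: $\Econt(\rho)=\emptyset$ forces $\inf_\gamma \ell_\rho(\gamma)/\norm{\gamma}>0$, and the standard equivalence you quote in your own first paragraph (uniform stable-length lower bound $\Leftrightarrow$ quasi-isometrically embedded orbit map) then finishes.

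You instead try to prove that equivalence via Theorem~\ref{theorem: a closing lemma}, and that step has a genuine gap: the averaging claim is reversed. Cut $[g_n,h_n]$ at $g_n=p_0,p_1,\dots,p_k=h_n$ and set $D_j=d_X(\rho(p_{j-1})o,\rho(p_j)o)$. The triangle inequality gives only $d_X(\rho(g_n)o,\rho(h_n)o)\leqslant\sum_j D_j$. So the weighted average $\sum_j D_j/\sum_j d_\Gamma(p_{j-1},p_j)$ is \emph{at least} the overall ratio, not at most the overall ratio plus $o(1)$, and bounded gaps do not change this. (Also, only returns to $\Gamma K$ have bounded gaps; returns to a fixed piece need not.)

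Small endpoint displacement of the whole segment gives no control over its sub-blocks. The image path may have length comparable to $d_\Gamma(g_n,h_n)$, with all the cancellation happening at the largest scale. The multigeodesic variant has the same defect. Closing gives $\ell_\rho(\gamma_{n,j})\leqslant D_j+C$, but nothing bounds $\sum_j D_j$ by the displacement of the whole segment, so $\ell_\rho(\Y_n)/\norm{\Y_n}\to 0$ does not follow. The only concatenation of blocks you know to be distorted is essentially the whole segment. Its two ends need not lie in the same partition piece, and making it closable is exactly the missing step.

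The fix is to close up the whole segment rather than subdivide it. You can simply invoke the standard fact, as the paper does. Or you can prove it directly. Put $\gamma_n=g_n^{-1}h_n$. For non-elementary hyperbolic $\Gamma$ there are a finite set $F\subset\Gamma$ and a constant $C\geqslant 0$ such that every $\gamma$ has some $f\in F$ with $\norm{\gamma f}\geqslant\abs{\gamma}-C$. Then $\ell_\rho(\gamma_nf_n)\leqslant d_X(o,\rho(\gamma_n)o)+\max_{f\in F}d_X(o,\rho(f)o)$, while $\norm{\gamma_nf_n}\geqslant d_\Gamma(g_n,h_n)-C$. Hence $\ell_\rho(\gamma_nf_n)/\norm{\gamma_nf_n}\to 0$, and your compactness argument applies; the elementary case is immediate. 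In your flow language, this roughly amounts to appending a bounded connecting orbit segment from the last partition piece back to the first, using transitivity of the flow, before applying the closing lemma, instead of cutting into blocks.
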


\begin{proof}
    The forward direction is trivial, and the backward direction easily follows from that when $\Econt(\rho)$ is empty, $\frac{\ell_\rho(\gamma)}{\norm{\gamma}}$ has a positive uniform lower bound for any $\gamma\in \Gamma$.
\end{proof}

However, when discuss the characterization of geometrically finite representations, although Lemma~\ref{lemma: ending currents contained close curves are parabolic} gives strong control over the structure of $\rho(\Pc)$, it does not by itself rule out a ``hidden geodesic with vanishing mass'' phenomenon on the level of measures. Actually, there exist geometrically infinite representations $\rho$ and a sequence of wandering closed geodesics, represented by loxodromic elements of $\Gamma$, that escape to infinity and therefore contribute a point to $\Omega(\rho)$, while their normalized counting measures concentrate almost entirely on the parabolic subgroups, that is, these geodesics may spend almost all of their word length deep inside horoballs. In that case, the limiting measure in $\Econt(\rho)$ could still be supported purely on the parabolic part, and fail to detect the collapse of the thick part. We construct such an example in the section below.

\subsection{A geometrically infinite example}\label{section: geometrically infinite example}

Our construction is motivated by \cite{bigdely2012non} and \cite{chong2024embedding} that studies properties of ascending HNN extensions.

Let $\Gamma$ be a free group of two generators $\alpha,\beta$ and let $P$ be the cyclic subgroup of $\Gamma$ generated by $\alpha$. We fix $m\in \Nb$ large enough and let $\phi: \Gamma\to \Gamma$ be the injective endomorphism defined by $\phi(\alpha) = \alpha^m$ and $\phi(\beta) =\beta \alpha\beta \alpha^2\dots \beta\alpha^{m-1}\beta$. We consider the group \[G = \langle \alpha,\beta,t:t\alpha t^{-1}=\phi(\alpha),\ t\beta t^{-1}=\phi(\beta) \rangle~.\] Then the subgroup of $G$ generated by $\alpha$ and $t$ is an embedding of 
\[Q = \langle\alpha,t:t\alpha t^{-1}=\phi(\alpha)\rangle\] 
by Theorem~\ref{theorem: 1/6 embedding factor}. Note that $Q\cong \mathrm{BS}(1,m)$, the so-called Baumslag--Solitar group (\cite{baumslag1962some}), and in particular both $\alpha$ and $t$ are of infinite order. One can verify that $G$ is hyperbolic relative to $Q$ by apply Theorem~\ref{theorem: 1/6 relatively hyperbolic} to \[G = (\langle \beta\rangle * Q)/ \langle \langle t\beta^{-1} t^{-1}\beta \alpha\beta \alpha^2\dots \beta\alpha^{m-1}\beta \rangle \rangle~.\]
Then let $X = \Tc(G,Q)^h$ be the Gromov model of $(G,Q)$ constructed in Proposition~\ref{proposition: existence of Cannon--Thurston map for the example}. The representation $\rho: \Gamma \to G$ defined by $\rho(\alpha)=\alpha$ and $\rho(\beta)=\beta$ induces an action of $\Gamma$ on $X$. We show that $\rho$ admits a Cannon--Thurston map $\partial_\infty(\Gamma,P) \to \partial_\infty X$ also in Proposition~\ref{proposition: existence of Cannon--Thurston map for the example}. By composing the projection $\partial_\infty \Gamma \to \partial_\infty(\Gamma,\Pc)$ provided by Theorem~\ref{theorem: conditions on relative hyperbolicity}, we get a Cannon--Thurston map $\xi: \partial_\infty\Gamma \to \partial_\infty X$. We show that $\hEcont = \{\widehat \alpha\}$ and $\Econt(\rho) \subsetneq \Omega(\rho)$ by the following propositions.

\begin{proposition}\label{proposition: econt of geometrically finite counterexample}
    For the representation $\rho$ above, $\hEcont = \{\widehat \alpha\}$ and hence $\Econt = [\alpha]$.
\end{proposition}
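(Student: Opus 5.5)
We need to prove $\hEcont(\rho)=\{\widehat\alpha\}$ for the free group $\Gamma=\langle\alpha,\beta\rangle$ acting on the Gromov model $X=\Tc(G,Q)^h$ through $\rho$. The plan has two parts.

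\textbf{Easy inclusion.} Since $\rho(\alpha)=\alpha$ lies in the parabolic subgroup $Q$, we have $\ell_\rho(\alpha^n)=0$ while $\norm{\alpha^n}\to\infty$. So $\widehat\alpha\in\hEbdd(\rho)\subset\hEcont(\rho)$.

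\textbf{Reverse inclusion: reduction.} Take $\mu=\lim\widehat{\gamma}_n$ with $\ell_\rho(\gamma_n)/\norm{\gamma_n}\to0$, each $\gamma_n$ cyclically reduced. Since $\Gamma$ is free, write the cyclic word of $\gamma_n$ in syllables
\[\gamma_n=\beta^{b_1}\alpha^{a_1}\cdots\beta^{b_k}\alpha^{a_k}.\]
Set $B_n=\sum|b_i|$, the total $\beta$-length. The claim to prove is
\[\ell_\rho(\gamma_n)\geqslant c\,B_n-C\quad\text{for uniform } c>0,\ C\geqslant 0,\]
possibly with a bounded error per syllable. Granting this, sublinearity forces $B_n/\norm{\gamma_n}\to0$. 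The weight $\mu$ gives to any cylinder set containing a $\beta$-letter is bounded by a constant times the frequency of $\beta$ in $\gamma_n$, which tends to $0$. Hence $\mu$ is supported on bi-infinite words that are $\beta$-free, that is, on $[\alpha]$. Normalized, this gives $\mu=\widehat\alpha$ in $\PCurr(\Gamma)$. One caveat: the syllable count $k$ could also be large when $|a_i|$ are small, but then $B_n\geqslant k$, so the same estimate controls it.

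\textbf{Key step: the length estimate.} In the coned-off/horoball model $X$, $\ell_\rho(\gamma_n)$ is comparable to the relative length of $\rho(\gamma_n)$ in $G$ relative to $Q$. Each $\alpha^{a_i}$ contributes only about $\log|a_i|$ (or $O(1)$ in the coned-off graph). Each $\beta^{b_i}$ contributes $|b_i|$, provided $\beta$-syllables are not shortcut through $Q$.

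To rule out such shortcuts, use the small cancellation structure of
\[G=(\langle\beta\rangle*Q)/\langle\langle r\rangle\rangle,\qquad r=t\beta^{-1}t^{-1}\beta\alpha\beta\alpha^2\cdots\beta\alpha^{m-1}\beta,\]
with $m$ large, via Theorem~\ref{theorem: 1/6 relatively hyperbolic}. The aim is a Greendlinger-type statement: a cyclically reduced word in $\Gamma$ can be shortened using $r$ only if it contains more than about $5/6$ of a cyclic conjugate of $r^{\pm1}$. Words in $\alpha,\beta$ never contain the letter $t$, but $r$ contains $t^{\pm1}$ in the $Q$-factor pieces. So rewriting a $\Gamma$-word by relators can replace at most a bounded fraction of each $\beta$-run. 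This would give $\ell_\rho(\gamma_n)\geqslant c\sum|b_i|-O(1)$.

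Concretely, I would argue with van Kampen/annular diagrams over the relative presentation. Take an annular diagram between a relative geodesic representative of the conjugacy class of $\rho(\gamma_n)$ and the $\Gamma$-word. The $C'(1/6)$ condition bounds how many $\beta$-letters of the $\Gamma$-word can be absorbed by each relator cell on its boundary. Each such cell carries a $t$-edge that must be cancelled against the geodesic side. This yields the linear lower bound in $B_n$.

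This annular-diagram estimate is the main obstacle. In particular, I must make sure the subword $\beta\alpha\beta\alpha^2\cdots\beta$, which is a long $\Gamma$-word equal to $t\beta t^{-1}$, does not let a $\Gamma$-word with many $\beta$'s become short. Each such substitution replaces $m$ $\beta$'s by a single $t\beta t^{-1}$, which is only a bounded-factor saving, not a sublinear one. So I expect $c$ to be roughly of order $1/m$. Iterating the substitution (powers of $t$) would cost about $\log$ of the horoball depth each time, and I would need to verify that it still cannot defeat linearity.
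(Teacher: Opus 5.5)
The easy inclusion and your reduction (show $B(\gamma_n)/\norm{\gamma_n}\to 0$, then conclude the limit is supported on $[\alpha]$) are fine and match the paper's final step. The problem is the estimate you want to feed into it, $\ell_\rho(\gamma)\geqslant c\,B(\gamma)-C$: it is false, so no small-cancellation or annular-diagram argument can prove it.

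Take $\gamma_n=\phi^n(\beta)$. It is a positive word beginning and ending with $\beta$, hence cyclically reduced, and $B(\gamma_n)=m^n$ by \eqref{equation: B}. But $\rho(\gamma_n)=t^n\beta t^{-n}$ is conjugate in $G$ to $\beta$, so $\ell_\rho(\gamma_n)=\ell_\rho(\beta)$ for every $n$. This is exactly the worry in your last sentence, and it is fatal: the factor-$m$ savings compound. In your diagrams, the $t^{\pm1}$ carried by neighbouring relator cells need not be cancelled against the geodesic side. They are absorbed into the intervening $Q$-syllables (e.g.\ $t^{-1}\alpha^{mj}t=\alpha^j$). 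So one layer of cells turns $\phi^n(\beta)$ into $t\,\phi^{n-1}(\beta)\,t^{-1}$, and $n$ concentric layers reduce it to $\beta$ at no cost in translation length. Whatever per-cell bound you extract from $C'(1/6)$ cannot add up to a linear lower bound in $B$.

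The missing idea is that each application of $\phi$ also inserts many $\alpha$'s. By \eqref{equation: B} and \eqref{equation: A}, $A(\phi^n(\gamma))/B(\phi^n(\gamma))$ grows linearly in $n$. So the correct estimate bounds $B$ by a small fraction of the \emph{whole} word length. For each $\epsilon>0$ you need $C_\epsilon$ with $B(\gamma)\leqslant\epsilon\norm{\gamma}+C_\epsilon\ell_\rho(\gamma)$, and the example above forces $C_\epsilon\to\infty$ as $\epsilon\to 0$. This is Lemma~\ref{lemma: mystery computation}: $\bigl(1+\frac{r(m-1)}{2}\bigr)B(\gamma)\leqslant\norm{\gamma}+m^{r+1}\ell_\rho(\gamma)$.

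The paper proves it with the potential $F_r=\frac{r(m-1)}{2}B-A$. This function is additive under reduced concatenation, vanishes on $\omega_r=\phi^r(\beta)$, is non-positive on powers of $\alpha$, and is at most $m(m^r-1)$ on every subword of every $\omega_n$. It also satisfies $F_{N+r}(\phi^N(\gamma))=m^NF_r(\gamma)$.

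Read $\gamma=q_0\beta^{\pm1}q_1\cdots\beta^{\pm1}q_k$, with $q_i\in Q$ and $k\leqslant d_X(\id,\gamma)$, off a geodesic in $X$. Conjugating by a large power of $t$ writes $\phi^N(\gamma)$ as $\alpha$-powers interleaved with at most $k$ subwords of the words $\omega_{n_i}^{\pm1}$. This gives $F_r(\gamma)\leqslant m^{r+1}d_X(\id,\gamma)$, and replacing $\gamma$ by its powers turns $d_X$ into $\ell_\rho$. For a contracting sequence this yields $\limsup_n B(\gamma_n)/\norm{\gamma_n}\leqslant\bigl(1+\frac{r(m-1)}{2}\bigr)^{-1}$ for every $r$, and your cylinder-set argument then finishes.
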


\begin{proposition}\label{proposition: econt not Omega}
    Let $x = \lim_{n \to \infty} \phi^n(\beta)\in \partial_\infty \Gamma$. Then $x\ne \alpha^\pm$ but $\xi(x) = \xi(\alpha^+)$. In particular $(x,\alpha^+)\in \Omega(\rho)\setminus \Econt(\rho)$.
\end{proposition}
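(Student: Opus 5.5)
The plan has three parts: show $x\neq\alpha^\pm$ combinatorially in the free group, show $\xi(x)=\xi(\alpha^+)$ by an explicit sequence inside the relatively hyperbolic group $G$, and then read off $(x,\alpha^+)\notin\Econt(\rho)$ from Proposition~\ref{proposition: econt of geometrically finite counterexample}.

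\textbf{Step 1: $x$ is well defined and $x\ne\alpha^\pm$.} The words $\phi^n(\beta)$ are reduced and positive in $\alpha,\beta$, since $\phi$ sends positive words to positive words with no cancellation. Each $\phi^n(\beta)$ begins with $\beta$, since $\phi(\beta)$ begins with $\beta$. Because $\phi(\beta)$ also begins with $\phi$ applied to its first letter $\beta$, the word $\phi^n(\beta)$ is a prefix of $\phi^{n+1}(\beta)$. So these words converge to an infinite positive reduced ray $x\in\partial_\infty\Gamma$ starting with $\beta$.
\begin{itemize}
\item $x\neq\alpha^-$, because $x$ is a positive word.
\item $x\neq\alpha^+$, because $x$ contains infinitely many letters $\beta$, whereas $\alpha^+$ is the ray $\alpha\alpha\alpha\cdots$.
\end{itemize}

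\textbf{Step 2: $\xi(x)=\xi(\alpha^+)$.} Work in $G$ with the relation $t\gamma t^{-1}=\phi(\gamma)$. This gives $\phi^n(\beta)=t^n\beta t^{-n}$, so $\rho(\phi^n(\beta))\,o=t^n\beta t^{-n}o$.

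In the Gromov model $X$, the parabolic subgroup $Q=\mathrm{BS}(1,m)$ fixes a single point $q\in\partial_\infty X$. Since $t\in Q$, we have $t^{\pm n}o\to q$. Both $o$ and $\beta o$ lie within bounded distance of the horoball of $Q$ and of $\beta Q$, respectively.
\begin{itemize}
\item The geodesic from $o$ to $t^n\beta t^{-n}o$ is coarsely the concatenation of the segments $[o,t^no]$, $[t^no,t^n\beta o]$ and $[t^n\beta o,t^n\beta t^{-n}o]$. The first segment goes deep into the horoball at $q$. The last is a translate of a deep excursion into the horoball $t^n\beta Q$, which is a different horoball.
\item Hence the Gromov product $(t^n\beta t^{-n}o\cdot q)_o$ is at least about $d(o,t^no)$ minus a constant.
\item $d(o,t^no)\to\infty$, because $t$ has infinite order and $\rho(Q)$ acts properly. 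Therefore $t^n\beta t^{-n}o\to q$.
\end{itemize}

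By continuity of the Cannon--Thurston map, $\xi(x)=\lim\xi(\phi^n(\beta))$ is the limit of the orbit points $\rho(\phi^n\beta)o$, which is $q$. On the other hand, $\xi(\alpha^+)=\lim\alpha^no=q$. So $\xi(x)=\xi(\alpha^+)$. This uses the fact that $\xi$ extends the orbit map, as in Proposition~\ref{proposition: existence of Cannon--Thurston map for the example}.

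The main obstacle is justifying that the geodesic really backtracks through the $Q$-horoball, i.e.\ that consecutive horoball excursions do not cancel. I would first try a bounded coset penetration or relative geodesic argument in the coned-off graph of $(G,Q)$. There, $t^n\beta t^{-n}$ has relative length $3$ with $Q$-pieces $t^n$, $\beta$, $t^{-n}$, which is a relative geodesic because $\beta\notin Q$ by Theorem~\ref{theorem: 1/6 embedding factor}. In $X$, a piece of relative length $|t^n|_Q$ contributes roughly $2\log|t^n|_Q$ of depth. Since $t^n\notin$ any finite set, this depth goes to infinity.

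\textbf{Step 3: $(x,\alpha^+)\notin\Econt(\rho)$.} By Step 1, $(x,\alpha^+)$ is a pair of distinct points, so it lies in $\partial^{(2)}_\infty\Gamma$, and Step 2 puts it in $\Omega(\rho)$. By Proposition~\ref{proposition: econt of geometrically finite counterexample}, $\Econt(\rho)=[\alpha]=\Gamma\cdot\{(\alpha^+,\alpha^-),(\alpha^-,\alpha^+)\}$. If $(x,\alpha^+)=g(\alpha^\pm,\alpha^\mp)$ for some $g\in\Gamma$, then $x$ would be a fixed point of a conjugate of $\alpha$, hence eventually periodic with period a power of $\alpha$. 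This is impossible because $x$ has infinitely many letters $\beta$. Therefore $(x,\alpha^+)\in\Omega(\rho)\setminus\Econt(\rho)$.
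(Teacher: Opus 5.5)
Your proposal is correct. Steps 1 and 3 match the paper, and your Step 3 is in fact more explicit: you rule out $x\in\Gamma\cdot\{\alpha^\pm\}$ using the infinitely many letters $\beta$ in $x$. The paper only records $x\neq\alpha^\pm$ and leaves the passage to $(x,\alpha^+)\notin[\alpha]$ implicit.

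The genuine difference is in Step 2, where the paper never looks at horoballs. It observes that $\phi^{n+1}(\beta)=t\,\phi^n(\beta)\,t^{-1}$. Since $\rho(\phi^n(\beta))t^{-1}o$ stays at bounded distance from $\rho(\phi^n(\beta))o$, the limit point $\xi(x)$ is fixed by $t$. As $t\in Q$ has infinite order, it acts parabolically with unique fixed point $q$, so $\xi(x)=q$. Likewise $\xi(\alpha^+)=q$, because $\alpha$ fixes $\alpha^+$ and $\xi$ is equivariant. This shift-invariance argument takes a few lines and needs neither $\beta\notin Q$ nor bounded coset penetration.

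Your route instead proves directly that $t^n\beta t^{-n}o\to q$ by tracking the cusped-space geodesic through the horoball of $Q$. It can be completed with standard tools: a relative path without backtracking, here visiting the distinct cosets $Q$ and $t^n\beta Q$, is tracked by the geodesic in $X$. What it buys is a quantitative rate, $(\rho(\phi^n\beta)o\cdot q)_o\gtrsim\log|t^n|_Q$. However, it is heavier, and a few details need correcting:
\begin{itemize}
\item The Gromov product is roughly $\frac12 d(o,t^no)$, the height of the horoball excursion, not $d(o,t^no)$ minus a constant. This is harmless, since both diverge.
\item $\beta$ is an edge joining the two $Q$-pieces, not itself a $Q$-piece.
\item $\beta\notin Q$ does not follow from the embedding theorem alone. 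It does follow otherwise: if $\beta\in Q$ then $G=Q\cong\mathrm{BS}(1,m)$ would be solvable yet contain $\Gamma\cong F_2$.
\end{itemize}
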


\begin{proof}[Proof of Proposition~\ref{proposition: econt not Omega}]
    Notice that $\phi^n(\beta)$ is always a word starting with the letter $\beta$. Then $x = \lim_{n \to \infty} \phi^n(\beta) \ne \alpha^\pm$.
    Notice also that $x$ is a fixed point of $t$ by the computation
    \[\xi(x)= \lim_{n\to\infty} \rho(\phi^n(\beta))=\lim_{n\to\infty} \rho(\phi^{n+1}(\beta)) = \lim_{n\to\infty} t \rho(\phi^n(\beta))t^{-1}=t\xi(x)~.\] There is a unique parabolic point $q$ in $\partial_\infty(G,Q)$ that is fixed by $Q$. Since $\alpha,t\in Q$ are of infinite order, $q$ is the unique fixed point of them. Therefore $\xi(x) = q = \xi(\alpha^+)$.
\end{proof}


Now we go back to the proof of Proposition~\ref{proposition: econt of geometrically finite counterexample}.
For any $\gamma\in \Gamma$, we denote by $A(\gamma)$ (respectively $B(\gamma)$) the number of $\alpha^{\pm 1}$ (respectively $\beta^{\pm 1}$ appeared in the cyclically reduced conjugation of $\gamma$ written as a reduced word in letters $\alpha^{\pm 1},\beta^{\pm 1}$.

It is easy to check by direct computation that for any $\gamma\in \Gamma$ and $n\in \Nb$,
\begin{equation}\label{equation: B}
    B(\phi^n(\gamma)) = m^n B(\gamma)~,
\end{equation}
and
\begin{equation}\label{equation: A}
    A(\phi^n(\gamma)) = m^n( A(\gamma)+ \frac{n(m-1)}{2} B(\gamma))~.
\end{equation}

Therefore, for any fixed $\gamma\in \Gamma\setminus\{\id\}$ with $B(\gamma)\ne 0$, we have \[\frac{A(\phi^n(\gamma))}{B(\phi^n(\gamma))} = \frac{A(\gamma)}{B(\gamma)} + \frac{n(m-1)}{2} \to +\infty \quad \text{as} \quad n\to \infty~,\] and hence the proportion of letters $\beta^{\pm 1}$ in the word $\phi^n(\gamma)$ goes to $0$ as $n\to \infty$.
Then the sequence of counting measures on $\phi^n(\gamma)$ converges to $\widehat \alpha$. Note that $\ell_\rho(\phi^n(\gamma)) = \ell_\rho(\gamma)$ and hence $\frac{\ell_\rho(\phi^n(\gamma))}{\norm{\phi^n(\gamma)}}\to 0$ as $n\to \infty$.
To prove the proposition, we need to show that for any sequence $(\gamma_n)$ such that $\lim_{n\to\infty}\frac{\ell_\rho(\gamma_n)}{\norm{\gamma_n}}= 0$, their counting measures converges to $\widehat \alpha$, rather than only the sequence $(\phi^n(\gamma))$ for a fixed $\gamma$.

\begin{lemma}\label{lemma: mystery computation}
    For any $r\in\Nb$ and $\gamma\in\Gamma$, \[\left(1+\frac{r(m-1)}{2}\right)B(\gamma) \leqslant \norm{\gamma}+m^{r+1}\ell_\rho(\gamma)~.\]
\end{lemma}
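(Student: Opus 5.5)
We want to prove that for every $r\in\Nb$ and every $\gamma\in\Gamma$,
\[\left(1+\frac{r(m-1)}{2}\right)B(\gamma)\leqslant \norm{\gamma}+m^{r+1}\ell_\rho(\gamma)~.\]
The plan is to compare $\gamma$ with its ``$r$-fold $\phi$-desubstitution''. Roughly, $\ell_\rho$ measures how much of $\gamma$ cannot be pushed into the parabolic subgroup $Q$ by conjugating with $t^{-1}$. By \eqref{equation: A} and \eqref{equation: B}, a word of the form $\phi^r(\delta)$ has
\[\norm{\phi^r(\delta)} = A+B \geqslant \left(1+\frac{r(m-1)}{2}\right)B(\phi^r(\delta))~.\]
The strategy is therefore to split $\gamma$ (cyclically reduced, so $\norm{\gamma}=A(\gamma)+B(\gamma)$) into pieces that look like $\phi^r$-images, and to charge the remaining pieces to $\ell_\rho$, with cost at most $m^{r+1}$ per unit of $\rho$-length.

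The key steps, in order, are as follows.

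First, I would work in the Cayley graph of $G$ relative to $Q$, or in the model $X=\Tc(G,Q)^h$. There I would write a $\rho$-geodesic representative of $\gamma$ as a cyclic word in $\beta^{\pm1}$ and elements of $Q=\langle\alpha,t\rangle$. The relative length of this word is comparable to $\ell_\rho(\gamma)$, up to the horoball depths, so each $\beta^{\pm1}$-letter of the $G$-representative costs a definite amount of $\ell_\rho$.

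Second, I would use the relation $t\beta t^{-1}=\phi(\beta)$ to relate the two words. Each $\beta$-letter of the $G$-word, sandwiched between $Q$-elements $t^{k}\alpha^{j}$, expands inside $\Gamma$ into $\phi^{k}(\beta)$ up to powers of $\alpha$. Passing from the $G$-word back to the reduced $\Gamma$-word then produces $\beta$-blocks of the form $\phi^{k_i}(\beta^{\pm1})$.

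Third, I would split the blocks into two types.
\begin{itemize}
\item For blocks with $k_i\geqslant r$, I would apply \eqref{equation: A} to $\phi^{k_i}(\beta)$. This gives at least $\frac{r(m-1)}{2}$ letters $\alpha$ per letter $\beta$ inside the block. Those $\alpha$'s contribute to $\norm{\gamma}$ and pay for the factor $1+\frac{r(m-1)}{2}$.
\item For blocks with $k_i\leqslant r$, I would use \eqref{equation: B}. Each such block has $B\leqslant m^{r}$, and the $t$-exponent bound $k_i\leqslant r$ means the block has $\rho$-length at least a constant. So these blocks together contribute at most $m^{r}\cdot(\text{number of such blocks})\leqslant m^{r+1}\ell_\rho(\gamma)$ to the left side, with the extra factor $m$ absorbing the $(1+\frac{r(m-1)}{2})$ weight after adjusting constants. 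Alternatively one could bound $(1+\frac{r(m-1)}{2})m^r\leqslant m^{r+1}$ for $m$ large.
\end{itemize}
Summing the two types gives the inequality.

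The main obstacle is the cancellation problem in the second step. When consecutive blocks $\phi^{k_i}(\beta^{\pm1})$ are concatenated with $\alpha$-powers and then freely and cyclically reduced, letters may cancel. One needs to show that few $\alpha$'s and no $\beta$'s are lost. I would handle this by exploiting the structure of $\phi(\beta)=\beta\alpha\beta\alpha^2\cdots\alpha^{m-1}\beta$, whose $\alpha$-exponents are all distinct and positive. This should make cancellation between adjacent blocks bounded by a single $\alpha$-power at the junction; this is exactly the small-cancellation behaviour underlying Theorem~\ref{theorem: 1/6 embedding factor}. A second point, also needing care, is justifying that $\ell_\rho$ counts $\beta$-letters of the relative geodesic word linearly. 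I would invoke the hyperbolicity of $X$ and the fact that $\beta$-edges leave horoballs, so consecutive $\beta$'s on a geodesic axis are separated by definite distance.
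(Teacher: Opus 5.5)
There is a genuine gap, and it sits at the step you flag as the main obstacle. Write $\omega_n=\phi^n(\beta)$, so that $\omega_{n+1}=\omega_n\alpha^{m^n}\omega_n\alpha^{2m^n}\cdots\alpha^{(m-1)m^n}\omega_n$. Your hope that adjacent blocks interact only through one $\alpha$-power at the junction is not justified, and it fails in natural configurations.

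Nothing prevents the $Q$-syllable between two consecutive $\beta$-letters of a geodesic from being a pure power of $t$. A subword $\beta\,t\,\beta^{-1}$ becomes, after conjugating by $t^N$, the product $\omega_N\,\omega_{N+1}^{-1}$. Since $\omega_{N+1}$ ends in $\omega_N$, the whole block $\omega_N$ cancels, and the cancellation may continue into the neighbouring syllables.

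So the reduced $\Gamma$-word is only a concatenation of $\alpha$-powers and arbitrary \emph{subwords} of blocks. Truncated long blocks need not have $\tfrac{r(m-1)}{2}$ letters $\alpha$ per $\beta$. For example, the initial segment $\omega_j$ of $\omega_n$ with $j<r$ has $A=\tfrac{j(m-1)}{2}m^j<\tfrac{r(m-1)}{2}B$.

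The missing idea is a deficit bound that is uniform in $n$. Put $c=\tfrac{m-1}{2}$ and $F_r=crB-A$. Then $F_r$ is additive over reduced concatenations, $F_r(\alpha^j)\leqslant 0$, and $F_r(\omega_r)=0$. What you need is $F_r(u)\leqslant m(m^r-1)$ for every subword $u$ of every $\omega_n$. The paper proves this by induction on $r$ from the decomposition of $\omega_r$ into copies of $\omega_{r-1}$, together with the sharper bound $\tfrac m2(m^r-1)$ for initial and terminal segments of $\omega_r$. With it one gets $F_{N+r}(\phi^N\gamma)\leqslant k\,m(m^{N+r}-1)$, where $k\leqslant d_X(\id,\gamma)$ is the number of $\beta$-letters. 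This holds however the blocks cancel, so no small-cancellation analysis is needed.

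Two further points would also need repair.

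First, your accounting for short blocks charges them the full weight $(1+rc)B\leqslant(1+rc)m^r$. But $(1+rc)m^r\leqslant m^{r+1}$ holds only for $r\leqslant 2$, whatever $m$ is. The lemma has fixed constants and arbitrary $r$ (later $r\to\infty$), so "adjusting constants" is not available. The $\alpha$'s in short blocks must pay too, i.e.\ you must bound $F_r$ rather than $B$. For an intact $\omega_k$ with $k<r$ this gives $F_r(\omega_k)=c\,m^k(r-k)\leqslant c\,m^{r-1}$.

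Second, expanding a $\beta$-letter as $\phi^{k}(\beta)$ requires nonnegative height $k$. The $Q$-syllables are general elements of $\mathrm{BS}(1,m)$ (e.g.\ $t^{-1}\alpha t$, which is not a power of $\alpha$), so heights along the geodesic can be negative. The paper handles this by replacing $\gamma$ with $\phi^N(\gamma)=t^N\gamma t^{-N}$ for large $N$. It then uses $F_{N+r}(\phi^N\gamma)=m^NF_r(\gamma)$ and $\ell_\rho(\phi^N\gamma)=\ell_\rho(\gamma)$. It also avoids axes and cyclic words: it works with a geodesic from $\id$ to $\gamma$, bounds by $|\gamma|$ and $d_X(\id,\gamma)$, and passes to $\norm{\gamma}$ and $\ell_\rho(\gamma)$ by applying the estimate to $\gamma^n$ and letting $n\to\infty$.
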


\begin{proof}
    For convenience, we write $\omega_n=\phi^n(\beta)$ and $c=(m-1)/2$. For every integer $r\geqslant 0$ and $\gamma\in\Gamma$, we set \[F_r(\gamma)=crB(\gamma)-A(\gamma)~.\] Whenever $\gamma,\eta$ are cyclically reduced words whose concatenation $\gamma\eta$ is also in cyclically reduced form, we have
    \begin{equation}\label{equation: concatenation}
        F_r(\gamma\eta)=F_r(\gamma)+F_r(\eta).
    \end{equation}

    We first show that, for every $n,r\geqslant 0$, if $u$ is a subword of $\omega_n$,
    \begin{equation}\label{equation: subword}
        F_r(u)\leqslant m(m^r-1).
    \end{equation} and if $u$ is an initial segment or a terminal segment of $\omega_r$,
    \begin{equation}\label{equation: initial terminal}
        F_r(u)\leqslant \frac{m}{2}(m^r-1)
    \end{equation}

    We proceed by induction on $r$. It is clear that when $r=0$, \eqref{equation: subword} and \eqref{equation: initial terminal} hold. Assume \eqref{equation: subword} and \eqref{equation: initial terminal} holds for $r-1$ and every $n\in \Nb$.

    \noindent{\bf Case 1.} $n\leqslant r$. By the induction hypothesis and $B(u)\leqslant B(\omega_n) =m^n\leqslant m^r$, 
    \[F_r(u) = F_{r-1}(u)+cB(u) \leqslant m(m^{r-1}-1)+cm^r=\frac{m+1}{2}m^r-m \leqslant m(m^r-1)~.\] This proves \eqref{equation: subword} in this case.

    For \eqref{equation: initial terminal}, suppose $u$ is an initial segment of $\omega_r$. Notice that \[\omega_r = \omega_{r-1} \alpha^{m^{r-1}} \omega_{r-1} \alpha^{2m^{r-1}} \dots \omega_{r-1} \alpha^{(m-1)m^{r-1}} \omega_{r-1}~.\] 
    Then $u$ is a concatenation of several complete copies of $\omega_{r-1}$, powers of $\alpha$, and at most one copy $v$ of an initial segment of $\omega_{r-1}$. Since $F_{r-1}(\omega_{r-1})=0$ and $F_{r-1}(\alpha^j)\leqslant 0$ for $j\geqslant 0$, by \eqref{equation: concatenation}, we have \[F_r(u) \leqslant F_{r-1}(u) + cB(u) \leqslant F_{r-1}(v) + cB(u)\leqslant \frac{m}{2}(m^{r-1}-1) +cm^r \leqslant \frac{m}{2}(m^r-1)~.\] Similar argument applies to the case when $u$ is a terminal segment of $\omega_r$.

    \noindent{\bf Case 2.} $n>r$. Since $\omega_n=\phi^{n-r}(\omega_r)$, the word $\omega_n$ is a concatenation of copies of $\omega_r$ and powers of $\alpha$. If $u$ is a subword in a single copy of $\omega_r$, the desired estimate follows from Case 2. Otherwise, $u$ consists of complete copies of $\omega_r$, powers of $\alpha$, and at most two subwords $v,v'$ at the beginning or the end that are initial or terminal segment of $\omega_r$. Again since $F_r(\omega_r)=0$ and $F_r(\alpha^j)\leqslant 0$ for $j\geqslant 0$, by \eqref{equation: concatenation} and \eqref{equation: initial terminal},
    we have \[F_r(u) \leqslant F_r(v)+F_r(v') \leqslant \frac{m}{2}(m^r-1)+ \frac{m}{2}(m^r-1) = m(m^r-1)~.\]
    We completed the proof for \eqref{equation: subword} and \eqref{equation: initial terminal}.

    We now prove the inequality \[(1+cr)B(\gamma) \leqslant \norm{\gamma}+m^{r+1}\ell_\rho(\gamma)~.\]

    Since $A(\gamma)$, $B(\gamma)$, $\norm{\gamma}$ and $\ell_\rho(\gamma)$ are all $\Gamma$-conjugation invariant, we may assume that $\gamma$ is cyclically reduced. We choose a geodesic from $\id$ to $\gamma$ in $X$. Tracking all the vertices in $\Gamma$ of this geodesic determines an expression \[\gamma=q_0\beta^{\epsilon_1}q_1\dots \beta^{\epsilon_k}q_k, \quad q_i\in Q,\quad \epsilon_i=\pm 1~,\] where $k$ is the number of $\beta^{\pm1}$ in this expression. Then $k\leqslant d_X(\id,\gamma)$. By looking at which coset of $Q/P$ contains $q_i$, we can find $h_i\in \Zb$ such that $q_i t^{-h_i}$ is a power of $\alpha$. Since $\gamma\in \Gamma$, we have $h_1 + h_2 + \dots + h_k = 0$. Therefore, for a large enough $N\in \Nb$,
    \[\begin{aligned} 
        \phi^N(\gamma) = &t^N( q_0\beta^{\epsilon_1}q_1\dots \beta^{\epsilon_k}q_k)t^{-N} \\
        = &(t^N q_0 t^{-N-h_0}) (t^{N+h_0} \beta^{\epsilon_1}  t^{-N-h_0}) (t^{N+h_0} q_1 t^{-N-h_0-h_1}) \dots \\ 
        & (t^{N+h_0+\dots +h_{k-1}} \beta^{\epsilon_k}t^{-N-h_0-\dots -h_{k-1}}) (t^{N+h_0+\dots +h_{k-1}} q_k t^{-N-h_0-\dots -h_k}) \\
        = & \alpha^{s_0}\omega_{n_1}^{\epsilon_1}\alpha^{s_1} \dots \omega_{n_k}^{\epsilon_k}\alpha^{s_k}
    \end{aligned}\] for some $s_0,s_1,\dots, s_k \in \Nb$.
    Note that some $q_i$ might be $\id$, hence $\phi^N(\gamma)$ is a concatenation of finitely many powers $\alpha$ and at most $k$ many subwords of $\omega_{n_i}$. By \eqref{equation: concatenation} and \eqref{equation: subword},
    \[F_{N+r}(\phi^N(\gamma) )\leqslant k m (m^{N+r}-1)~.\]
    On the other hand, by direct computation using \eqref{equation: B} and \eqref{equation: A}, we have \[F_{N+r}(\phi^N(\gamma) ) = m^N F_r(\gamma)~.\]
    Therefore, \[cr B(\gamma) -A(\gamma) = F_r(\gamma) \leqslant k m^{1-N} (m^{N+r}-1) \leqslant k m^{r+1}\leqslant m^{r+1} d_X(\id,\gamma)~.\]
    Note also that $\abs{\gamma} = A(\gamma) + B(\gamma)$, we conclude \[(1+cr)B(\gamma) \leqslant \abs{\gamma}+m^{r+1}d_X(\id,\gamma)~.\] By applying this inequality to $\gamma^n$, we get \[(1+cr)B(\gamma) = (1+cr)\frac{B(\gamma^n)}{n}\leqslant \frac{\abs{\gamma^n}}{n}+m^{r+1}\frac{d_X(\id,\gamma^n)}{n}~.\] Finally we complete the proof by taking $n\to \infty$.
\end{proof}

\begin{proof}[Proof of Proposition~\ref{proposition: econt of geometrically finite counterexample}]
    Let $(\gamma_n)\subset \Gamma$ be a sequence such that $\frac{\ell_\rho(\gamma_n)}{\norm{\gamma_n}}\to 0$ as $n \to \infty$. Then by Lemma~\ref{lemma: mystery computation}, \[\frac{B(\gamma_n)}{\norm{\gamma_n}} \leqslant \frac{2}{r(m+1)} + \frac{2m^{r+1}}{m+1} \frac{\ell_\rho(\gamma)}{\norm{\gamma_n}}\] for any $r\in \Nb$ and $n\in \Nb$. In particular for each fixed $r\in \Nb$, \[\limsup_{n\to\infty} \frac{B(\gamma_n)}{\norm{\gamma_n}} \leqslant \frac{2}{r(m+1)}~.\] Therefore $\frac{B(\gamma_n)}{\norm{\gamma_n}} \to 0$ and hence $\widehat \gamma_n \to \widehat \alpha$ as $n\to \infty$.
\end{proof}

\medskip

\appendix

\section{Relatively hyperbolic groups}\label{section: relatively hyperbolic groups}

\subsection{Relatively hyperbolic groups}

The concept of relative hyperbolicity for finitely generated groups was introduced by Gromov \cite{Gromov}. We briefly recall the definition and several standard properties that will be used throughout the paper.

\begin{definition}\label{definition: relatively hyperbolic groups}
    Let $\Gamma$ be a finitely generated group and let $\Pc$ be a finite collection of finitely generated, infinite subgroups of $\Gamma$. Denote by \[\Pc^\Gamma := \{\gamma P \gamma^{-1} \mid \gamma \in \Gamma,\; P \in \Pc\}\] the collection of all conjugates of subgroups in $\Pc$. The pair $(\Gamma,\Pc)$ is called a \emph{relatively hyperbolic group}, if $\Gamma$ acts properly discontinuously and by isometries on a $\delta$-hyperbolic space $(Y,d_Y)$ such that,
    \begin{enumerate}
        \item There exists a $\Gamma$-invariant collection \[\Bc = \{B_P \mid P \in \Pc^\Gamma\}\] of pairwise disjoint open horoballs in $Y$, where each $B_P$ is stabilized by $P$;
        \item The induced action of $\Gamma$ on the complement \[Y^{\mathrm{th}} := Y \setminus \bigcup_{B\in\Bc} B\] is cocompact.
    \end{enumerate}
    We call such a space $Y$ a \emph{Gromov model} of $(\Gamma,\Pc)$. We also say in this case $\Gamma$ is \emph{hyperbolic relative to }$\Pc$ or $\Pc^\Gamma$ when $(\Gamma,\Pc)$ is a relative hyperbolic group.
\end{definition}

Bowditch \cite{Bow12} shows that if $(\Gamma,\Pc)$ is a relative hyperbolic group, the Gromov boundary of any two Gromov models of $(\Gamma,\Pc)$ are $\Gamma$-equivariantly homeomorphic. We denote it by $\partial_\infty(\Gamma,\Pc)$
is called the \emph{Bowditch boundary} of $(\Gamma,P)$,
denoted by $\partial_\infty(\Gamma,\Pc)$,

Bowditch \cite{Bow12} shows that for a relatively hyperbolic group $(\Gamma,\Pc)$, the Gromov boundaries of any two Gromov models of $(\Gamma,\Pc)$ are $\Gamma$-equivariantly homeomorphic. We call this boundary, well defined up to $\Gamma$-equivariant homeomorphism, the \emph{Bowditch boundary} of $(\Gamma,\Pc)$ and denote it by $\partial_\infty(\Gamma,\Pc)$.

Moreover, the $\Gamma$-action on $\partial_\infty(\Gamma,\Pc)$ is \emph{geometrically finite} (in the sense of convergence group actions), that is, $\partial_\infty(\Gamma,\Pc)$ consists only of conical limit points and bounded parabolic points for the $\Gamma$-action. A theorem of Yaman \cite{Yaman} below tells that the $\Gamma$-action being geometrically finite (in the sense of convergence group actions) is a criterion for relative hyperbolicity.

\begin{theorem}[\cite{Yaman}]\label{theorem: Yaman criterion}
    Suppose a finitely generated group $\Gamma$ acts on a nonempty perfect metrizable compact set $M$ as a geometrically finite convergence group and there are only finitely many $\Gamma$-orbits of bounded parabolic points. Then $\Gamma$ is hyperbolic relative to the collection of stabilizers of these bounded parabolic points and its Bowditch boundary is $\Gamma$-equivariantly homeomorphic to $M$.
\end{theorem}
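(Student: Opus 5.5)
Yaman's theorem is a classical result, so the plan follows the standard route: build a hyperbolic space on which $\Gamma$ acts with horoballs over the bounded parabolic points, and then check Definition~\ref{definition: relatively hyperbolic groups}. Let $\Pi\subset M$ be the set of bounded parabolic points. Choose representatives $p_1,\dots,p_k$ of the finitely many $\Gamma$-orbits and put $P_i=\operatorname{Stab}_\Gamma(p_i)$.

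The first step is the combinatorial model. I would form the space of triples
\[\Theta^3(M)=\{(x,y,z)\in M^3 \text{ pairwise distinct}\}.\]
Since $\Gamma$ is a convergence group on the perfect compactum $M$, it acts on $\Theta^3(M)$ properly discontinuously (Bowditch/Tukia). Using the \emph{annulus system} construction of Bowditch, or equivalently Yaman's own ``perspectivity'' setup, I would build a connected graph $\mathcal{K}$ and show it is Gromov hyperbolic. Its vertices are $\Gamma$-orbits of a suitable finite collection of annuli (pairs of disjoint closed sets in $M$) together with the points of $\Pi$. The hyperbolicity step uses that every non-parabolic point is conical. Concretely, I would show that the combinatorial ``crossratio'' defined by nesting of annuli satisfies the four-point condition uniformly, and that $M$ is identified with $\partial\mathcal{K}$.

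The second step is horoballs and cocompactness. Bounded parabolicity means that $P_i$ acts cocompactly on $M\setminus\{p_i\}$. This yields a $P_i$-invariant neighbourhood structure, so the part of $\mathcal{K}$ near $p_i$ quasi-isometrically looks like a combinatorial horoball over $P_i$. Since $\Gamma$ is finitely generated, each $P_i$ is finitely generated: this follows from the cocompact action on $M\setminus\{p_i\}$ via a standard argument, namely Tukia's lemma that stabilizers of bounded parabolic points are finitely generated when $\Gamma$ is. I would glue combinatorial horoballs to the cosets of the $P_i$ in a Cayley graph of $\Gamma$, giving the Groves--Manning cusped space $Y$. Then I would show $Y$ is quasi-isometric to $\mathcal{K}$ (hence hyperbolic), and that the horoballs are pairwise disjoint and $\Gamma$-invariant. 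The complement $Y^{\mathrm{th}}$ is a neighbourhood of the Cayley graph, so $\Gamma$ acts on it cocompactly. This gives conditions (1) and (2) of Definition~\ref{definition: relatively hyperbolic groups}.

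The third step is the boundary identification. The identity of $\Gamma$ extends to a $\Gamma$-equivariant map $\partial Y\to M$. To check it is a homeomorphism, I would show that conical limit points of $\partial Y$ go to conical points of $M$ and horoball centers go to the $p_i$. Injectivity follows because both actions are geometrically finite with the same parabolic points. Being a continuous bijection between compacta, the map is then a homeomorphism.

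The main obstacle is the hyperbolicity of the model in the first step, which is deducing a uniform thin-triangles condition purely from the dynamical hypotheses. I would follow Bowditch's ``hyperbolic quasi-metrics from convergence actions'' argument and use uniform conicality, which is obtained from compactness of $\Theta^3(M)$ modulo $\Gamma$ away from cusp neighbourhoods, to control crossratios.
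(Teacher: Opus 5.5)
The paper gives no proof of this statement; it quotes Yaman. Judged on its own, your outline names the right ingredients, but the step that carries the theorem is missing, and the remedy you suggest for it does not work. Bowditch's proof that the annulus crossratio is hyperbolic uses cocompactness of $\Gamma$ on $\Theta^3(M)$ in an essential way: there, every point is conical, uniformly. Under geometric finiteness this is exactly what fails.
\begin{itemize}
\item $\Theta^3(M)/\Gamma$ is noncompact along triples accumulating at points of $\Pi$.
\item Parabolic points are not conical at all.
\item Conical points are not uniformly conical, because the corresponding rays can spend arbitrarily long stretches in cusp regions.
\end{itemize}
Compactness ``away from cusp neighbourhoods'' gives no control over quadruples whose configuration sits deep in a cusp. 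There one needs a separate uniform analysis built from the cocompact action of $P_i$ on $M\setminus\{p_i\}$, and that is where the real work of Yaman's paper lies.

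Your model is also internally inconsistent. Suppose the points of $\Pi$ are vertices of $\mathcal{K}$ joined to the annuli around them. Then $\mathcal{K}$ is a coned-off graph, and the points of $\Pi$ are vertices rather than points of $\partial\mathcal{K}$, so $\partial\mathcal{K}\neq M$. Moreover, $P_i$-orbits of annuli have diameter at most $2$, while the corresponding orbits in $Y$ are unbounded, so $\mathcal{K}$ is not quasi-isometric to $Y$. Hyperbolicity of a coned-off graph does not pass to $Y$ without fineness or bounded coset penetration, which you never address.

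Two further steps are unsupported. First, finite generation of $P_i$. A proper cocompact action of $P_i$ on $M\setminus\{p_i\}$ gives finite generation only when that set is connected; any countable group acts properly and cocompactly on itself times a Cantor set. So finite generation of $\Gamma$ must enter essentially, and your appeal to a ``standard argument'' from the action on $M\setminus\{p_i\}$ does not supply it. The known route is Osin's theorem that peripheral subgroups of a finitely generated relatively hyperbolic group are finitely generated, but that presupposes the conclusion. Yaman's original statement instead assumes the parabolic stabilizers finitely generated. Since your Groves--Manning space $Y$ is built on Cayley graphs of the $P_i$ with adapted finite generating sets, you need this input before hyperbolicity is known.

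Second, the boundary map. There is no ``identity of $\Gamma$'' inducing a map $\partial Y\to M$. You would need either a compactification of $\Gamma$ (or of $\Theta^3(M)$) by $M$ to which the orbit map into $Y$ extends, or an equivariant quasi-isometry $Y\to\mathcal{K}$ with $\partial\mathcal{K}=M$; in the latter case the boundary identification is immediate. As stated, your injectivity argument appeals to uniqueness of the boundary of a geometrically finite action with prescribed parabolics. That is essentially the conclusion you are trying to prove.
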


\begin{definition}\label{definition: geometrically finite representations}
    Suppose $\Gamma$ is a finitely generated group and $X$ is a $\delta$-hyperbolic space. A representation $\rho:\Gamma \to \Isom(X)$ is \emph{geometrically finite} if the $\Gamma$-action on its limit set $\Lambda_\Gamma \subset \partial_\infty X$ is geometrically finite (in the sense of convergence group actions).
\end{definition}

Equivalently, $\rho:\Gamma \to \Isom(X)$ is geometrically finite if the convex hull of $\Gamma \cdot o \subset X$ for a (any) base point $o\in X$ is a Gromov model for $(\Gamma,\Pc)$, where $\Pc$ is the collection of the stabilizers of the bounded parabolic points in $\Lambda_\Gamma$. In the case when $X=\Hb^n$, this definition agrees with the notion of geometric finiteness in the usual sense (see for instance \cite{bowditch1993geometrical} for these equivalent characterizations).

\subsection{Relative hyperbolization of hyperbolic groups}

When $\Gamma$ itself is a hyperbolic group, relative hyperbolicity can be characterized in terms of the geometry of subgroups. We say that a finite collection $\Pc$ of subgroups of $\Gamma$ is \emph{almost malnormal} if for any $P,P'\in \Pc$ and $\gamma\in\Gamma$, $P \cap \gamma P' \gamma^{-1}$ is finite unless $P=P'$ and $\gamma\in P$.

\begin{theorem}[\cite{Bow12}, \cite{Tran13}, \cite{Man15}]\label{theorem: conditions on relative hyperbolicity}
    Let $\Gamma$ be a hyperbolic group and let $\Pc$ be a finite collection of finitely generated, infinite subgroups of $\Gamma$. Then $(\Gamma,\Pc)$ is relatively hyperbolic if and only if $\Pc$ is almost malnormal and each subgroup $P \in \Pc$ is quasiconvex in $\Gamma$. Moreover, under these conditions, there exists a continuous $\Gamma$-equivariant quotient map \[\tau : \partial_\infty \Gamma \to \partial_\infty(\Gamma,\Pc)\] that exactly collapses the limit set of each $P \in \Pc^\Gamma$ to the parabolic point fixed by $P$.
\end{theorem}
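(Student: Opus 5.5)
The plan is to prove the two directions separately, relying on the standard equivalence between relative hyperbolicity and the existence of a Gromov model. The model can be obtained by coning off or by combination theorems of Bowditch, Tran and Manning.

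\emph{Forward direction.} Suppose $(\Gamma,\Pc)$ is relatively hyperbolic.
\begin{enumerate}
\item First show that each $P\in\Pc$ is quasiconvex in $\Gamma$. Take a Gromov model $Y$ and compare it with the Cayley graph of $\Gamma$. Since $\Gamma$ is hyperbolic and $P$ is a peripheral subgroup, $P$ is undistorted, i.e.\ peripheral subgroups of a finitely generated relatively hyperbolic group are quasi-isometrically embedded. A quasi-isometrically embedded subgroup of a hyperbolic group is quasiconvex by the Morse lemma.
\item Next show almost malnormality. Distinct horoballs $B_P$ and $B_{\gamma P'\gamma^{-1}}$ are disjoint and correspond to distinct parabolic points. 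If $P\cap\gamma P'\gamma^{-1}$ were infinite, it would fix both parabolic points. But the stabilizer of a bounded parabolic point in a convergence group contains no loxodromics, and an infinite subgroup fixing two distinct points would have to contain one. This is a contradiction.
\end{enumerate}

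\emph{Backward direction.} Suppose $\Pc$ is almost malnormal and each $P$ is quasiconvex.
\begin{enumerate}
\item Build $Y$ from the Cayley graph by gluing combinatorial horoballs (Groves--Manning) along the cosets $\gamma P$.
\item The thick part is then $\Gamma$-cocompact by construction.
\item Hyperbolicity of $Y$ is the main obstacle. It follows from the fact that quasiconvex, almost malnormal subgroups have uniformly bounded coarse intersections. Concretely, the neighborhoods $N_R(\gamma P)\cap N_R(\gamma' P')$ have diameter bounded in terms of $R$ for distinct cosets. Once this is in place, the horoball-gluing criterion gives hyperbolicity.
\end{enumerate}

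\emph{The quotient map $\tau$.}
\begin{enumerate}
\item Let $\partial_\infty\Gamma\to\partial_\infty(\Gamma,\Pc)$ send a point to the limit in $Y$ of a sequence of group elements converging to it in $\Gamma$. Well-definedness and continuity come from the fact that geodesics in $\Gamma$ are quasigeodesics in $Y$ outside the horoballs, i.e.\ relative quasigeodesics with bounded penetration, as in Mj--Pal.
\item Surjectivity follows because conical limit points are reached by Cayley-graph rays, and parabolic points are images of $\Lambda P$.
\item The fibres: two points are identified exactly when the corresponding geodesic spends an unbounded time in a single horoball. This happens exactly when both points lie in the limit set of one conjugate $\gamma P\gamma^{-1}$, which gives the collapsing statement.
\end{enumerate}
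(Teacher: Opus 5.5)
The paper does not prove this statement; it quotes Bowditch for the equivalence and Tran and Manning for the map $\tau$. Your outline is a reasonable reconstruction of the standard arguments. In the backward direction, the ``horoball-gluing criterion'' you invoke is the substantive content of the cited result, so that part stands at the level of a citation. Two steps, however, fail as written.

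\textbf{Almost malnormality.} Your argument only shows the following: if $P\cap\gamma P'\gamma^{-1}$ is infinite, then $B_P$ and $B_{\gamma P'\gamma^{-1}}$ have the same centre, hence coincide, so $P=\gamma P'\gamma^{-1}$. To get $P=P'$ and $\gamma\in P$ you need two further facts. Each $P\in\Pc$ must be self-normalizing, for instance by being the full stabilizer of its parabolic point. Distinct members of $\Pc$ must be non-conjugate. Neither follows from the appendix definition, which only asks that $B_P$ be stabilized by $P$.

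For example, take $\Gamma=F_2=\langle a,b\rangle$ and $\Pc=\{\langle a^2\rangle\}$, with $Y$ the cusped space of $(F_2,\{\langle a\rangle\})$. Since $N_{F_2}(\langle a^2\rangle)=\langle a\rangle$, conjugates of $\langle a^2\rangle$ correspond bijectively to conjugates of $\langle a\rangle$, so this $Y$ satisfies the definition. Yet $\langle a^2\rangle\cap a\langle a^2\rangle a^{-1}=\langle a^2\rangle$ is infinite while $a\notin\langle a^2\rangle$. So the maximality convention has to be an explicit hypothesis, and your proof must use it at exactly this point. The paper does impose this convention at the start of its discussion of geometrically finite actions.

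\textbf{The map $\tau$.} As stated, the claim that geodesics of $\Gamma$ are quasigeodesics in $Y$ is false. A $\Gamma$-geodesic of length $L$ running along a coset $gP$ has endpoints at $Y$-distance $O(\log L)$. The lemma you actually need concerns a modified path: replace each maximal subsegment lying near a single coset by a geodesic through the corresponding horoball. The claim is that this yields a uniform quasigeodesic of $Y$. This is where quasiconvexity and the uniformly bounded coarse intersections enter. It gives both Mitra's criterion for continuity and the description of the fibres.

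Your fibre criterion is also wrong. ``Spends unbounded time in a single horoball'' holds for the geodesic of $F_2$ from $b^+$ to $a^+$ (with $P=\langle a\rangle$). Yet $\tau(b^+)$ is the attracting fixed point of the loxodromic $b$, hence conical, while $\tau(a^+)$ is parabolic. The correct condition is that the whole bi-infinite geodesic lies in a bounded neighbourhood of one coset $gP$. By quasiconvexity this is equivalent to both endpoints lying in $g\,\partial_\infty P$. When it fails, the modified path is a bi-infinite quasigeodesic of $Y$ whose two ends converge to different points. That is what proves injectivity off these sets.
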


\subsection{$C'(1/6)$-Criteria for relative hyperbolicity}\label{section: 1/6}

A good reference for the following criteria is \cite[Section~2]{martin2017combination}.

Suppose $F= A * B$ is a free product of two groups $A$ and $B$. We say $\gamma = q_1q_2\dots q_n$ is in the normal form if each $q_i$ is a non-trivial element in either $A$ or $B$ and no two consecutive $q_i$ and $q_{i+1}$ belong to the same free factor. We denote by $\abs{\gamma}_{\mathrm{A*B}} = n$ the length of the normal form.

We say a normal form $\gamma=q_1q_2\dots q_n$ is \emph{weakly cyclically reduced} if $\gamma =\id$ or $q_nq_1\ne \id$. We say $\gamma\eta$, a product of two normal form $\gamma=q_1q_2\dots q_n$ and $\eta = q_1'q_2'\dots q_m'$, is \emph{weakly reduced} if $q_n q_1' \ne \id$.

For a subset $R\subset F$ with elements represented by weakly cyclically reduced normal forms, let $G = F/ \langle \langle R \rangle \rangle$ where $\langle \langle R \rangle \rangle$ is the normal closure of $R$ in $F$. We assume $R$ is \emph{symmetrized} in the sense that if it is invartiant under taking weakly cyclically reduced conjugates and inverses.

An element $p\in F$ is a \emph{piece} of $R$ if there exists distinct $\gamma_1,\gamma_2\in R$ such that $\gamma_1 = p u_1$ and $\gamma_2 = p u_2$ are weakly reduced.

We say $R$ is $C'(1/6)$ if any piece $A$ and a relation $r=pu$ that is in weakly reduced satisfies $\abs{p}_{\mathrm{A*B}} < \frac{1}{6} \abs{r}_{\mathrm{A*B}}$.

\begin{theorem}[\cite{lyndoncombinatorial}]\label{theorem: 1/6 embedding factor}
    In the setting above, if $G$ is $C'(1/6)$, then $A$ and $B$ embed into $G$
\end{theorem}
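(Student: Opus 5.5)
The plan is to run the classical van Kampen diagram argument from small cancellation theory, with free products in place of free groups (Lyndon--Schupp, Chapter V, Section 9). We may take $R$ nonempty, every $r\in R$ nontrivial, and each relator of normal form length at least $7$. Indeed, if some $r$ had normal form length at most $6$, a piece of length $0$ could not satisfy the strict inequality.

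Fix a nontrivial element $a\in A$, say, and suppose for contradiction that $a\in\langle\langle R\rangle\rangle$. Then $a$ is a product of conjugates of elements of $R^{\pm1}$. Among all such expressions, choose one giving a reduced diagram $D$ over the free product of minimal number of regions. In such a diagram, edges are labelled by nontrivial elements of $A$ or $B$, each region boundary reads a weakly cyclically reduced relator, and the boundary of $D$ reads the normal form of $a$, which has length $1$.

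First we show that adjacent regions meet only along pieces. If two regions shared a boundary arc whose labels came from the same relator with matching orientation, they could be cancelled by the usual diamond move, contradicting minimality. So every interior arc shared by two regions is labelled by a piece. The symmetrization of $R$ ensures that the relevant cyclic conjugates again lie in $R$. By the $C'(1/6)$ hypothesis, each region therefore has interior degree at least $7$, counting arcs between vertices of degree at least $3$. Here we use that a region boundary meeting the exterior only along a short arc still has at least $7$ pieces unless it contributes many boundary edges.

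Next we apply the Euler characteristic (curvature) argument, Lyndon--Schupp Theorem V.4.3 in its $C'(1/6)$ form. For any reduced diagram with at least one region, where every interior region has degree at least $7$ and every vertex of degree at least $3$, one obtains the estimate
\[\sum_{\text{boundary regions }D_i}\bigl(3 - i(D_i)\bigr)\geqslant 6,\]
where $i(D_i)$ counts the interior arcs of $D_i$. Consequently some boundary region $D_i$ has at most $3$ interior arcs. Its boundary word has length at least $7$ and splits into at most three pieces, each of length less than $\tfrac16$ of the total. Hence its arc on $\partial D$ has normal form length greater than half the relator, and in particular greater than $3$.

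This contradicts the fact that the boundary label of $D$ has length $1$. To make this precise, we check that consecutive edges along $\partial D$ inside one region carry letters from alternating factors. This holds because region labels are in normal form and the boundary label is reduced; merging adjacent same-factor edges only shortens lengths consistently. Therefore $D$ has no regions, so $a=\id$ in $F$, which is absurd. Hence $A\to G$ is injective, and by symmetry so is $B\to G$.

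The main obstacle is adapting the diagram machinery to free products. Edges carry factor elements rather than free generators, so the reduction step (cancelling regions) and the notion of ``piece along a boundary arc'' must be phrased in terms of weakly reduced products. Vertices where letters from the same factor meet can also merge. I would handle this by citing the free-product small cancellation theorem in Lyndon--Schupp Chapter V, Section 9, rather than redoing the curvature count.
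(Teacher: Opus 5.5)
Your overall route is the paper's. The paper gives no argument for this statement; it simply cites small cancellation theory over free products in Lyndon--Schupp (Chapter~V, Section~9), which is also what you fall back on in the end.

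The sketch you add, however, rests on a false step at the start. A piece of length $0$ satisfies $0<\frac16|r|$ for every nontrivial $r$, so it gives no lower bound on $|r|$ at all. Some lower bound is genuinely needed in your final step. If $|r|=1$ were allowed, ``more than half of $r$ lies on $\partial D$'' would mean a single letter, and there would be no contradiction with the boundary label $a$. The same problem arises for a one-region diagram.

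You can repair this in one of two ways.

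\emph{Use the hypothesis as Lyndon--Schupp state it.} Their free-product condition $C'(\lambda)$ contains the explicit clause $|r|>1/\lambda$ for all $r\in R$. The paper's transcription of the definition drops this clause, but the cited result assumes it, so you may take $|r|\geqslant 7$ as a hypothesis.

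\emph{Derive the bound from pieces of length one.} Suppose $c\neq\id$ lies in a factor, say $A$.
\begin{enumerate}
\item $c$ is a weakly reduced prefix of every $r\in R$ whose first letter $q_1$ lies in $A$. Write $r=c\cdot(c^{-1}r)$; the product at the junction is $q_1$, or $q_1q_2$ if $c=q_1$, hence nontrivial.
\item If $|A|\geqslant 3$, conjugating $r$ by some $x\in A\setminus\{\id,q_1\}$ gives a second element of $R$ beginning in $A$. When $|r|=1$, pass to $r^{-1}$ instead.
\item Hence $c$ is a piece, and $C'(1/6)$ forces $1<\frac16|r|$.
\item Rotating relators then gives $|r|\geqslant 7$ throughout $R$. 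The only exception is $A\cong B\cong\mathbb{Z}/2$, where the statement can be checked directly.
\end{enumerate}

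A smaller point: the curvature inequality is misquoted. The inequality $\sum_{D\in\partial M}(3-i(D))\geqslant 6$ already fails for two regions glued along one arc. The correct $[3,6]$ form, for diagrams with at least two regions, is $\sum_{D\in\partial M}(4-i(D))\geqslant 6$. This still gives a boundary region with $i(D)\leqslant 3$, which is all you use.
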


\begin{theorem}[\cite{pankratiev1999hyperbolic}]\label{theorem: 1/6 relatively hyperbolic}
    In the setting above, if $G$ is $C'(1/6)$, then $G$ is hyperbolic relative to $\{A,B\}$. In particular, if $B$ is hyperbolic, then $(G,A)$ is relatively hyperbolic.
\end{theorem}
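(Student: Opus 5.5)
The statement to prove is Pankrat'ev's theorem, Theorem~\ref{theorem: 1/6 relatively hyperbolic}: a $C'(1/6)$ quotient $G$ of the free product $F=A*B$ is hyperbolic relative to $\{A,B\}$. I would prove it through the coned-off (electrified) Cayley graph, using the small cancellation structure to get both hyperbolicity and fineness. Bowditch's definition of relative hyperbolicity asks for exactly these two properties.

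\textbf{Step 1: set up the coned-off graph.} Let $\hat{\Gamma}$ be the Cayley graph of $G$ with respect to the generating set $A\cup B$, and cone off every coset $gA$ and $gB$. Geodesic words in $\hat\Gamma$ are then essentially normal forms in $A*B$. Distance is measured by $\abs{\cdot}_{A*B}$: each maximal syllable from one factor costs one unit.

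\textbf{Step 2: Greendlinger's lemma for free products.} Theorem~\ref{theorem: 1/6 embedding factor} rests on Lyndon--Schupp's van Kampen diagram theory over free products, and I would reuse it. A reduced diagram over the symmetrized set $R$ has faces labelled by relators. Its edges are labelled by nontrivial elements of $A$ or $B$, and its interior arcs are pieces. The $C'(1/6)$ condition makes each interior face have at least $7$ boundary arcs. The combinatorial Gauss--Bonnet curvature argument then yields Greendlinger's lemma: any nontrivial word $w$ representing $\id$ in $G$ contains a subword that is more than half of some relator $r\in R$, measured in normal-form length.

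\textbf{Step 3: linear isoperimetric inequality.} Repeatedly applying the Greendlinger reduction shows that the number of faces in a minimal diagram is at most linear in $\abs{w}_{A*B}$. This gives a linear isoperimetric inequality for the relative presentation $\langle A,B\mid R\rangle$. By Osin's characterization (linear relative Dehn function, with $R$ finite up to symmetrization, as in our application), $G$ is hyperbolic relative to $\{A,B\}$.

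\textbf{Alternative route through Bowditch.} Instead of invoking Osin, one can conclude as follows. The linear isoperimetric inequality gives hyperbolicity of $\hat\Gamma$. It remains to check fineness: each edge lies in only finitely many circuits of bounded length. A short circuit through a cone point corresponds to a short diagram. Greendlinger's lemma forces such a diagram to be made of boundedly many relator faces, and since the faces come from a finite $R$, there are only finitely many possibilities.

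\textbf{The "in particular" clause.} If $B$ is hyperbolic, then $G$ is hyperbolic relative to $\{A,B\}$. Since $B$ is hyperbolic, Osin's result on removing hyperbolic peripheral subgroups gives that $G$ is hyperbolic relative to $A$ alone.

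\textbf{Main obstacle.} The hardest step is the diagram analysis over a free product, where the factors $A$ and $B$ may be infinite and non-free. Edges carry arbitrary factor elements, so one must ensure that consecutive edges of a face boundary from the same factor are amalgamated. One must also check that "weakly cyclically reduced" prevents degenerate faces. I would handle this by working with the Lyndon--Schupp reduced diagrams exactly as in their proof of Theorem~\ref{theorem: 1/6 embedding factor}, and citing their Greendlinger lemma for free products rather than rederiving it.
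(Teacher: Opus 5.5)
The paper does not prove this statement. It imports it from Pankrat'ev and points to Martin--Steenbock for background, so your sketch reconstructs the cited result rather than following an argument in the paper. Your overall route is the standard one and is sound:
\begin{itemize}
\item $C'(1/6)$ small cancellation over $A*B$ gives a linear relative Dehn function for the finite relative presentation;
\item Osin's characterization turns this into relative hyperbolicity;
\item Osin's theorem on discarding hyperbolic peripheral subgroups gives the clause about $(G,A)$.
\end{itemize}
Two points should be explicit. First, finiteness of the relator set before symmetrization is genuinely necessary, and the statement as quoted omits it. Relative hyperbolicity forces $G$ to be finitely presented relative to $\{A,B\}$, and this fails for suitable infinite $C'(1/6)$ families, so you are right to impose it. Second, you need the equivalence between Osin's definition and the cusped-space definition in the appendix. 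It holds here because $A$, $B$, and hence $G$, are finitely generated.

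The step that does not go through as written is Step~3. Over a free product, Greendlinger's lemma places the long subword $b$ of $r=bc$ inside $w=ubv$ only in \emph{weakly} reduced form. The first and last syllables of $b$ can be consolidated with the adjacent syllables of $w$. This occurs at a boundary vertex where the outer arcs of two neighbouring boundary faces meet in letters from the same factor. Replacing $b$ by $c^{-1}$ therefore only gives
\[
|uc^{-1}v|_{A*B}\leqslant |w|_{A*B}+2-\bigl(|b|_{A*B}-|c|_{A*B}\bigr),
\]
while at $\lambda=1/6$ you only know $|b|_{A*B}>|c|_{A*B}$. Nothing in your argument rules out a move that leaves $|w|_{A*B}$ unchanged or larger. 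So iterating Greendlinger moves does not bound the number of moves, and hence the relative area, by $|w|_{A*B}$.

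The linear bound has to come from the curvature count itself. In a reduced diagram, after erasing vertices of degree two, interior faces have at least seven sides, since each side is a piece of length $<|r|/6$. Interior vertices have degree at least three. Euler's formula then bounds the number of faces linearly in the number of boundary arcs. You must still control the number of boundary arcs by $|w|_{A*B}$, again accounting for consolidation at boundary vertices. Alternatively, cite the linear relative isoperimetric inequality directly.

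Your fineness sketch in the alternative route is also too quick. Edge labels range over the infinite groups $A$ and $B$, and the symmetrization of a finite relator set is in general infinite (it contains conjugates by arbitrary factor elements). So boundedly many faces does not by itself give finitely many circuits through an edge.
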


\subsection{Cannon--Thurston maps on Bowditch boundaries}\label{section: mj-pal}

We recall the following definitions from \cite[Section~1]{mj2011relative}.

Let $(Y,d_Y)$ be a geodesic metric space and let $\Hc$ be a collection of closed subsets whose path metrics are geodesic (that is, any two points are joined by a geodesic with respect to the path metric). Assume that the sets in $\Hc$ are uniformly separated, that is, there is $\epsilon>0$ such that $d_Y(H,H')\geqslant\epsilon$ for any two distinct $H,H'\in \Hc$. For each $H\in\Hc$, attach $H\times[0,\infty)$ to $H\subset Y$ along $H\times\{0\}$, giving vertical paths their usual length and horizontal paths at height $s$ their intrinsic length in $H$ multiplied by $2^{-s}$. Denote the resulting path metric space by $Y^h$. We say the pair $(Y,\Hc)$ is a \emph{relatively hyperbolic space}, or $Y$ is \emph{hyperbolic relative to} $\Hc$, if $Y^h$ is Gromov hyperbolic and we call the sets in $\Hc$ horosphere-like sets.

The \emph{coned-off space} $\widehat Y$ of $(Y,\Hc)$ is obtained from $Y$ by adding a point $c_H$ for each $H\in\Hc$ and joining every point of $H$ to $c_H$ by an edge of length $1/2$. We equip $\widehat Y$ with the resulting path metric.

\begin{definition}
    A \emph{tree of geodesic metric spaces} is a geodesic metric space $(\Xc,d_{\Xc})$ with a continuous projection $\mathrm{Pr}:\Xc\to T$ onto a simplicial tree $T$ with the following properties:
\begin{enumerate}
    \item For each vertex $v$ of $T$, the space $\Xc_v=\mathrm{Pr}^{-1}(v)$ is geodesic for the path metric $d_v$ on $\Xc_v$ induced by $d_{\Xc}$. 
    \item The inclusion of $X_v$ into $\Xc$ is uniformly proper, that is, for every $R \geqslant 0$, there is $M(R)<\infty$, such that for any vertex $v$ of $T$, if $x,y\in \Xc_v$ are such that $d_{\Xc}(x,y)\leqslant R$, then $ d_v(x,y)\leqslant M(R)$.
    \item For each edge $e$ of $T$ identified with the interval $[0,1]$, the space $\Xc_e=\mathrm{Pr}^{-1}(1/2)$ is geodesic for the path metric $d_e$ on $\Xc_e$ induced by $d_{\Xc}$. 
    \item There is a continuous {\em edge map} \[f_e:\Xc_e\times[0,1]\to \Xc\] satisfying $\mathrm{Pr}(f_e(x,s))=s\in e$, whose restriction to $\Xc_e\times(0,1)$, equipped with the product metric, is an isometry onto $\mathrm{Pr}^{-1}(0,1)$ with its path metric induced by $d_{\Xc}$. If $v$ is an endpoint of $e$, denote by $f_{e,v}:\Xc_e\to \Xc_v$ the edge map at the endpoint $v$, that is, the restriction of $f_e$ on the corresponding endpoint $0$ or $1$.
\end{enumerate}
We say it satisfies the {\em quasi-isometrically embedded condition} if $f_{e,v}$ is a quasi-isometric embedding uniformly for each edge $e$ of $T$ and an endpoint $v$ of $e$.

We say it is a \emph{tree of relatively hyperbolic spaces} if each $\Xc_v$ and $\Xc_e$ is a relatively hyperbolic space with respect to some collections of closed subsets $\Hc_v$ and $\Hc_e$ respectively, and their constants for hyperbolicity and separation are uniform. In this case, we say $f_{e,v}$ is \emph{strictly type-preserving} if
\begin{enumerate}
    \item For every $H\in\Hc_e$, there is $H'\in\Hc_v$ such that $f_{e,v}(H)\subset H'$.
    \item For every $H'\in\Hc_v$, the set $f_{e,v}^{-1}(H')$ is either empty or in $\Hc_e$.
\end{enumerate}

Assume that all endpoint maps are strictly type-preserving. Consider a  graph $T_0 = (V(T_0),E(T_0))$ consisting of the following data. Each vertex in $V(T_0)$ is a pair $(v,H_v)$ for some vertex $v$ of $T$ and $H_v\in\Hc_v$ and each edge in $E(T_0)$ is a pair $(e,H_e)$ for some edge $e$ and $H\in\Hc_e$. If $e$ has endpoints $v,w$, the edge $(e,H_e)$ joins $(v,H_v)$ to $(w,H_w)$ if $f_{e,v}(H)\subset H_v$ and $f_{e,w}(H)\subset H_w$. Suppose $T_0$ is such a maximal connected graph (with respect to inclusion) as described above. We call \[C_{T_0} = \bigl( \bigcup_{(v,H_v)\in V(T_0)} H_v \bigr)\cup \bigl(\bigcup_{(e,H)\in E(T_0)} f_e(H_e\times[0,1])\bigr)\subset \Xc\] a \emph{maximal cone-subtree of horosphere-like sets}. We denote by $\Cc$ the collection of all the maximal cone-subtrees of horosphere-like sets of $\Xc$.

Note also that when all endpoint maps are strictly type-preserving. Each of them extends to a map of coned-off spaces $\widehat f_{e,v}:\widehat \Xc_e\to\widehat \Xc_v$, by $c_H\mapsto c_{H'}$. Therefore $\mathrm{Pr}:\Xc\to T$ gives a \emph{induced tree of coned-off spaces} $\widehat{\mathrm{Pr}}: \widehat \Xc \to T$.
\end{definition}

\begin{theorem}[{\cite[Theorem~2.8]{mj2011relative}}]\label{theorem: cannon--thurston map on bowditch boundaries}
    Let $\mathrm{Pr}: \Xc \to T$ be a tree of relatively hyperbolic spaces satisfying the quasi-isometrically embedded condition, the inclusion of edge spaces into vertex spaces is strictly type-preserving, and the induced tree of coned-off spaces satisfies the quasi-isometrically embedded condition. If $\Xc$ is hyperbolic relative to the family $\Cc$ of maximal cone-subtrees of horosphere-like sets, then a Cannon-Thurston map $\partial_\infty \Xc_v^h \to \partial_\infty \Xc^h$ exists for the proper embedding $i : \Xc_v \to \Xc$ for any vertex $v$ of $T$. 
\end{theorem}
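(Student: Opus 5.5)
This is Mj--Pal's combination theorem (Theorem~2.8 of \cite{mj2011relative}), which the paper quotes without proof. I would reduce it to Mitra's Cannon--Thurston theorem for trees of hyperbolic metric spaces, applied to the electrified (coned-off) tree $\widehat{\mathrm{Pr}}:\widehat\Xc\to T$, and then recover the Bowditch-boundary statement by de-electrifying.

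The first step builds a hyperbolic ladder. Fix a vertex $v$ and a geodesic segment $\lambda\subset\widehat\Xc_v$. Using the quasi-isometrically embedded edge maps $\widehat f_{e,v}$, I would flow $\lambda$ across the tree. Across each edge incident to a vertex already reached, take a quasi-isometric projection of the current segment onto the image of the edge space, transport it to the adjacent vertex space, and replace it there by a geodesic. This produces a ladder $\widehat B_\lambda\subset\widehat\Xc$. The aim is to show, as in Mitra's argument, that there is a coarse Lipschitz retraction $\widehat\Pi_\lambda:\widehat\Xc\to\widehat B_\lambda$. This gives two consequences:
\begin{itemize}
\item $\widehat B_\lambda$ is uniformly quasiconvex in $\widehat\Xc$;
\item if $\lambda$ stays outside a large ball around the basepoint in $\widehat\Xc_v$, then $\widehat B_\lambda$ stays far from the basepoint in $\widehat\Xc$.
\end{itemize}
Strict type-preservation is exactly what makes the edge maps extend to the cone points, so the electrified tree is a genuine tree of hyperbolic spaces with quasi-isometrically embedded edge maps.

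The second step de-electrifies. I would replace $\lambda$ by the geodesic in $\Xc_v^h$ with the same endpoints. Each electrified geodesic tracks the $\Xc^h$-geodesic outside horoballs, by relative hyperbolicity of $\Xc_v$ and bounded coset penetration. I would then replace each cone passage of $\widehat B_\lambda$ through a maximal cone-subtree $C\in\Cc$ by a geodesic in the horoball over $C$ in $\Xc^h$. The hypothesis that $\Xc$ is hyperbolic relative to $\Cc$ makes $\Xc^h$ hyperbolic, and it lets the quasiconvexity of $\widehat B_\lambda$ upgrade to a statement in $\Xc^h$: the $\Xc^h$-geodesic joining the endpoints of $\lambda$ lies near the de-electrified ladder.

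The third step applies the standard criterion. If $\lambda$ leaves every compact set of $\Xc_v^h$, then its $\Xc^h$-geodesic leaves every compact set of $\Xc^h$. This criterion yields a continuous extension $\partial_\infty\Xc_v^h\to\partial_\infty\Xc^h$ of the inclusion.

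I expect the main obstacle to be this last de-electrification in the second step. There are two distinct ways $\lambda$ can be far from the basepoint in $\Xc_v^h$:
\begin{itemize}
\item it is far in the electrified metric, which the first step handles;
\item it penetrates deep into a single horoball.
\end{itemize}
In the second case I must check that the ladder's passage through the corresponding cone-subtree $C$ also goes deep in the horoball over $C$ in $\Xc^h$. This requires uniform control of how the horosphere-like sets in the vertex spaces are glued into $C$ along $T_0$. I would try first to show that the horoball depth in $\Xc^h$ is comparable to the horoball depth in $\Xc_v^h$, using that $f_{e,v}$ is a uniform quasi-isometric embedding on each horosphere-like set.
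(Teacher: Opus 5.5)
The paper gives no proof of this statement. It imports it from Mj--Pal \cite{mj2011relative} and uses it as a black box, so your outline has to stand on its own. Its architecture (a Mitra ladder in the induced tree of coned-off spaces, then de-electrification, then Mitra's criterion) is the right one. As written, though, step one has no input that makes $\widehat{\Xc}$ hyperbolic. Without that you can use neither Mitra's theorem nor the passage from a coarsely Lipschitz retraction $\widehat{\Pi}_\lambda$ to uniform quasiconvexity of $\widehat{B}_\lambda$.

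Strict type-preservation together with the coned-off quasi-isometrically embedded hypothesis only gives a tree of hyperbolic spaces with quasi-isometrically embedded edge maps. Such a tree need not be hyperbolic: $\Zb^2$ is a tree of lines with isometric edge maps, and in general a flaring condition is needed. The hypothesis that $\Xc$ is hyperbolic relative to $\Cc$ is exactly what supplies hyperbolicity, but you use it only in step two, to make $\Xc^h$ hyperbolic. The missing link is this: $\widehat{\Xc}$ is (quasi-isometric to) the partial electrification of $\Xc$ in which each $C\in\Cc$ is collapsed onto its tree of cone points. The partial electrification lemma of Mj--Reeves then gives hyperbolicity of $\widehat{\Xc}$ and uniform quasiconvexity of these cone-trees.

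You need the quasiconvexity of the cone-trees for step two as well. $\widehat{\Xc}$ keeps the whole tree structure of each $C$, whereas $\Xc^h$ has a single horoball over $C$. So you must show that quasiconvexity of $\widehat{B}_\lambda$ survives electrifying the cone-trees, for instance that $\widehat{\Pi}_\lambda$ stays coarsely Lipschitz there. Otherwise it says nothing about $\Xc^h$-geodesics that shortcut through the horoball over $C$.

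The fix you propose for the obstacle you correctly isolate would fail. Horoball depth in $\Xc^h$ is not comparable, up to multiplicative and additive constants, with depth in $\Xc_v^h$. Each $f_{e,v}$ is a uniform quasi-isometric embedding on horosphere-like sets, but distortion accumulates along the tree $T_0$. A horosphere-like set of $\Xc_v$ can therefore be exponentially distorted inside its maximal cone-subtree.

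The paper's own application shows this. There $H_P$, a copy of $\Cay(\langle\alpha\rangle)$, sits inside $C_Q$, which is quasi-isometric to $\mathrm{BS}(1,m)$, where $\alpha^{m^n}=t^n\alpha t^{-n}$. A geodesic of $\Xc_v^h$ reaching depth about $n\log_2 m$ over $H_P$ therefore corresponds to depth only about $\log_2 n$ in $\Xc^h$.

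What does hold is a proper, non-linear comparison. It comes from the uniform properness of $\Xc_v\hookrightarrow\Xc$ (together with $d_C\geqslant d_{\Xc}$ on $C$), not from the edge maps. That suffices for a Cannon--Thurston map, but you still have to treat segments $\lambda$ that combine electric escape with deep penetration into a horoball over a horosphere-like set passing near the basepoint. For such $\lambda$, the cone passage of $\widehat{B}_\lambda$ lies within bounded $\widehat{\Xc}$-distance of the basepoint. The far-from-basepoint half of your step one then does not apply, and the two cases cannot be handled separately. This interaction is what the electro-ambient quasigeodesic machinery in Mj--Pal is built to handle: electric geodesics whose cone passages are replaced by geodesics in the horosphere-like sets, fed into a Mitra-type criterion.
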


Suppose $Y_0$ is a geodesic metric space and $\Hc_0$ is a collection of closed subsets whose path metrics are geodesic. We \emph{thicken} the pair $(Y_0,\Hc_0)$ by the following process. For each $H\in \Hc_0$, we attach $H\times [0,1]$ to $H$ in $Y_0$ at $H \times \{0\}$. We denote the resulting space by $\Tc(Y_0,\Hc_0)$, let \[\Hc(Y_0,\Hc_0) = \{H\times \{1\} \subset \Tc(Y_0,\Hc_0): H\in \Hc_0\}~.\] Then $(Y_0,\Hc_0)$ is quasi-isometric to $(\Tc(Y_0,\Hc_0),\Hc(Y_0,\Hc_0))$ by the embedding. The purpose of considering the thickening is to ensure the horosphere-like sets are uniformly separated.

Suppose $(\Gamma,\Pc)$ is a relatively hyperbolic group. We fix a finite adapted generating set $S$ of $\Gamma$, that is, $S$ is symmetric and $S\cap P$ generates $P$ for every $P\in\Pc$. We denote by $\Cay(P)$ the Cayley graph of $P$ with respect to the generating set $P\cap S$ for each $P\in \Pc$ and set \[\Hc_0(\Gamma,\Pc) = (\{(\gamma \Cay(P)): \gamma\in \Gamma, \ P\in\Pc\}~.\] We briefly denote the thickening of $(\Cay(\Gamma),\Hc_0(\Gamma,\Pc))$ by $\Tc(\Gamma,\Pc)$. Then $\Tc(\Gamma,\Pc)$ relatively hyperbolic spaces. Actually, $\Tc(\Gamma,\Pc)^h$ is a Gromov model of $(\Gamma,\Pc)$ that are quasi-isometric to the so-called Groves--Manning cusp space introduced in \cite{GM08}. In particular, $\partial_\infty \Tc(\Gamma,\Pc)^h = \partial_\infty(\Gamma,\Pc)$.

We then show the example we constructed in Section~\ref{section: geometrically infinite example} admits a Cannon--Thurston map. Recall that $\Gamma$ is a free group of two generators $\alpha,\beta$, and $P$ is the cyclic subgroup of $\Gamma$ generated by $\alpha$. Then $(\Gamma,P)$ is a relatively hyperbolic group by Theorem~\ref{theorem: conditions on relative hyperbolicity}. We fix the adapted generating set $\{\alpha^{\pm 1}, \beta^{\pm 1}\}$ of $\Gamma$. Recall also that for a fixed large enough constant, $\phi: \Gamma\to \Gamma$ is defined to be the injective endomorphism defined by $\phi(\alpha) = \alpha^m$ and $\phi(\beta) =\beta \alpha\beta \alpha^2\dots \beta\alpha^{m-1}\beta$. The group \[G = \langle \alpha,\beta,t:t\alpha t^{-1}=\phi(\alpha),\ t\beta t^{-1}=\phi(\beta)\rangle\] is hyperbolic relative to $Q = \langle \alpha,t:t\alpha t^{-1}=\phi(\alpha) \rangle \subset G$ by Theorem~\ref{theorem: 1/6 embedding factor} and Theorem~\ref{theorem: 1/6 relatively hyperbolic}. We fix the adapted generating set $\{\alpha^{\pm 1}, \beta^{\pm 1},t^{\pm 1}\}$ of $G$.

Let $\rho:\Gamma\to G$ be the representation defined by $\rho(\alpha)=\alpha$ and $\rho(\beta)=\beta$.

\begin{proposition}\label{proposition: existence of Cannon--Thurston map for the example}
    The representation $\rho$ admits a Cannon--Thurston map \[\partial_\infty(\Gamma,P)\cong \partial_\infty \Tc(\Gamma,P)^h \to \partial_\infty \Tc(G,Q)^h \cong \partial_\infty(G,Q)~.\]
\end{proposition}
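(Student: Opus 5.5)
The plan is to realize $G$ as the fundamental group of a graph of groups and apply Theorem~\ref{theorem: cannon--thurston map on bowditch boundaries}. The ascending HNN extension $G=\langle \Gamma, t : t\gamma t^{-1}=\phi(\gamma)\rangle$ acts on its Bass--Serre tree $T$. The vertex stabilizers are conjugates of $\Gamma$, and the edge stabilizers are conjugates of $\Gamma$ as well. At one end the edge group includes into the vertex group by the identity, and at the other end by $\phi$.

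We build a tree of spaces $\mathrm{Pr}:\Xc\to T$ with the following pieces:
\begin{enumerate}
\item each vertex space $\Xc_v$ is a copy of $\Tc(\Gamma,P)$, relatively hyperbolic with respect to the thickened cosets of $\Cay(P)$;
\item each edge space $\Xc_e$ is also a copy of $\Tc(\Gamma,P)$;
\item the edge maps $f_{e,v}$ are induced by $\id$ and by $\phi$, extended over the cosets of $\Cay(P)$ by sending $\gamma\alpha^j\mapsto\phi(\gamma)\alpha^{mj}$ on the thickened horosphere-like sets.
\end{enumerate}
The total space $\Xc$ is then quasi-isometric to $\Cay(G)$, equivariantly.

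Next we verify the hypotheses of Theorem~\ref{theorem: cannon--thurston map on bowditch boundaries}. First, $\phi$ is an injective endomorphism of a free group whose image $\langle \alpha^m,\phi(\beta)\rangle$ is a finitely generated, hence quasiconvex, subgroup. So $\phi:\Gamma\to\Gamma$ is a quasi-isometric embedding, and the quasi-isometrically embedded condition holds uniformly (there is one edge type up to the $G$-action).

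Second, strict type preservation. Since $\phi(\alpha)=\alpha^m$, each coset $\gamma P$ is sent into the coset $\phi(\gamma)P$. For the converse direction we need $\phi^{-1}(\eta P)$ to be empty or a single coset. This amounts to showing that $\phi(\Gamma)\cap \eta P\eta^{-1}$ is trivial unless $\eta P\eta^{-1}$ is conjugate into $\phi(P)$ in the right way. We check this from the explicit form of $\phi(\beta)=\beta\alpha\beta\alpha^2\cdots\beta$: the words $\phi(\beta)$ contain no long $\alpha$-powers beyond $\alpha^{m-1}$, and a small cancellation/normal form argument in the free group should give it.

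Third, the induced map on coned-off spaces must be a quasi-isometric embedding. The coned-off Cayley graph of $(\Gamma,P)$ is quasi-isometric to the Bass--Serre tree of $\langle\beta\rangle * P$. The map $\widehat\phi$ sends the letter $\beta$ to a path of $m$ $\beta$-letters separated by coned $\alpha$-segments, so coned length is multiplied by exactly $m$ with no cancellation. This follows from the normal form in $P*\langle\beta\rangle$: the $\alpha$-exponents in $\phi(\beta)$ are nonzero and never cancel with $\alpha^{mj}$ in reduced words.

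Finally, $\Xc$ must be hyperbolic relative to the maximal cone-subtrees $\Cc$. Each maximal cone-subtree of horosphere-like sets is exactly a coset of $Q=\langle\alpha,t\rangle$, since following $P$-cosets through the edge maps corresponds to conjugating by $t$. Thus $\Cc$ corresponds to the cosets $gQ$, and relative hyperbolicity of $(\Xc,\Cc)$ is equivalent to the relative hyperbolicity of $(G,Q)$, which was established via Theorems~\ref{theorem: 1/6 embedding factor} and \ref{theorem: 1/6 relatively hyperbolic}. Then $\Xc^h$ is quasi-isometric to $\Tc(G,Q)^h$ and $\Xc_v^h\simeq\Tc(\Gamma,P)^h$, and the theorem yields the Cannon--Thurston map, identified with the Bowditch boundaries as stated.

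The main obstacle is identifying the maximal cone-subtrees precisely with the $Q$-cosets, and checking the second half of strict type preservation. Both require showing that no unexpected $P$-coset in a vertex space is hit by the $\phi$-image. I would attack this by a direct reduced-word analysis of $\phi(\Gamma)\cap\eta\langle\alpha\rangle\eta^{-1}$ using the specific form of $\phi(\beta)$, with $m$ large.
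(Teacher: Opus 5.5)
Your proposal is essentially the paper's proof: the same tree of copies of $\Tc(\Gamma,P)$ over the Bass--Serre tree of the ascending HNN extension, with edge maps $\id$ and $\Phi$, the same identification of the maximal cone-subtrees with the cosets $gQ$, and the same transfer of relative hyperbolicity from $(G,Q)$. The only real difference is that you verify the coned-off quasi-isometric-embedding hypothesis by a direct normal-form count, where the paper cites Hruska's Theorem~10.1.

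Two corrections to your sketch. First, strict type-preservation reduces exactly to $\phi^{-1}(P)=P$, i.e.\ $\phi(\Gamma)\cap\langle\alpha\rangle=\langle\alpha^m\rangle$; no statement about other conjugates $\eta P\eta^{-1}$ is needed, and your "trivial unless conjugate into $\phi(P)$" formulation is false if read literally. This fact follows at once from your own observation that $\phi$ sends reduced words to reduced words and multiplies the number of $\beta^{\pm1}$-letters by $m$. Second, in the coned-off step the coned-off graph is not quasi-isometric to the Bass--Serre tree of $\langle\beta\rangle*P$, since powers of $\beta$ remain unbounded. Also, coned length is multiplied by a factor between $1$ and $2m-1$ rather than exactly $m$, which still gives the quasi-isometric embedding.
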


\begin{proof}
    We extend $\phi:\Gamma\to \Gamma$ as a map on $\Cay(\Gamma)$ by mapping each edge onto the geodesic segment joining the images of its endpoints, then extend the map further on $\Tc(\Gamma,P)$ by mapping each point $(x,r)$ on $\gamma \Cay(P)\times [0,1]$ to $(\phi(x),r)$ for each $\gamma\in \Gamma$. We denote the extended map by $\Phi:\Tc(\Gamma,P)\to \Tc(\Gamma,P)$. Note that $\phi$ is a quasi-isometric embedding and therefore $\Phi$ is also a quasi-isometric embedding.

    For any left coset $g\Gamma\in G/\Gamma$, there is a unique left coset $g'\Gamma\in G/\Gamma$ such that $(g\Gamma)t^{-1}\subset g'\Gamma$. Actually, \[g\Gamma t^{-1} = gt^{-1}(t\Gamma t^{-1}) = gt^{-1} \phi(\Gamma)\subset gt^{-1} \Gamma~,\] and we may take $g'=gt^{-1}$ We define $\phi_{g\Gamma}: g\Gamma \to g'\Gamma$ by $\phi_{g\Gamma}(\gamma) = \gamma t^{-1}$ for any $\gamma\in g\Gamma$. Note that by definition, $\phi_{g\Gamma}$ is independent of the choice of representatives.
    
    For any choice of representatives $g,g'\in G$ such that $g\Gamma t^{-1} \subset g' \Gamma$, we have for any $\gamma\in \Gamma$, $\phi(g\gamma) = g\gamma t^{-1} = gt^{-1} \phi(\gamma) = g' (g'^{-1}gt^{-1}\phi(\gamma))$. Hence when identify both $g\Gamma$ and $g'\Gamma$ as copies of $\Gamma$, $\phi_{g\Gamma}$ is a composition of a left translation on $\Gamma$ and $\phi$. We briefly denote \[\Tc(g \Gamma, gP):=\Tc(g \Cay(\Gamma), \{g\gamma \Cay(P):\gamma\in \Gamma\})~,\] where we view $\Cay(\Gamma)$ as a subspace of $\Cay(G)$. Then $\phi_{g\Gamma}$ extends to a map $\Phi_{g\Gamma}: \Tc(g\Gamma,gP) \to \Tc(g'\Gamma,g'P)$ in the same way as we extend $\phi$, and when regard $\Tc(g\Gamma,gP)$ and $\Tc(g'\Gamma,g'P)$ as copies of $\Tc(\Gamma,P)$, $\Phi_{g\Gamma}$ is a left translation of $\Phi$. In particular, $\Phi_{g\Gamma}$ is identified with $\Phi$ up to isometries.
    
    Let $T$ be a graph with vertices labeled by $G/\Gamma$ and there is an edge $e_{g\Gamma}$ between a left coset $g\Gamma\in G/\Gamma$ and $gt^{-1}\Gamma$ the unique left coset that contains $g\Gamma t^{-1}$. We define the height of a vertex $g\Gamma$ to be the total exponent of $t$ in an (any) element of $g\Gamma$. Then each vertex has exactly one edge joining a vertex with height one less and hence $T$ contains no cycle. Note that the edge from a vertex to another vertex with height one more is not unique. Actually, they are labeled by elements in $t^{-1}\Gamma t/\Gamma$ and $ht^{-1}\Gamma=gt^{-1}\Gamma$ if and only if $g^{-1}h\in t^{-1}\Gamma t$.

    We construct a tree of relatively hyperbolic spaces by the following process. Assign each vertex $g\Gamma$ of $T$ the relatively hyperbolic space $\Tc(g \Gamma, gP)$. Note that each $\Tc(g \Gamma, gP)$ is a copy of $\Tc(\Gamma, P)$. Assign each edge $e_{g\Gamma}$ of $T$ the space \[\Tc(g \Gamma, gP) \times e_{g\Gamma}\cong \Tc(g \Gamma, gP) \times [0,1]\] where we identify $e$ with $[0,1]$ from $g\Gamma$ to $gt^{-1}\Gamma$. For each edge $e_{g\Gamma}$, identify $\Tc(g \Gamma, gP) \times \{0\}$ to $\Tc(g \Gamma, gP)$ by identity map, and identify $\Tc(g\Gamma, gP) \times \{0\}$ to $\Tc(gt^{-1}\Gamma, gt^{-1}P)$ by $\Phi_g$. We denote the resulting space by $\Xc$. Let $\mathrm{Pr}:\Xc\to T$ be the projection such that each $\Tc(g \Gamma, gP)$ assigned to a vertex $g \Gamma$ is mapped to $g \Gamma$, and each point in $\Tc(g \Gamma, gP)\times e$ assigned to an edge $e$ is mapped to the second coordinate. Then $\mathrm{Pr}:\Xc\to T$ is a tree of relatively hyperbolic spaces. It satisfies the quasi-isometrically embedded condition since $\Phi$ is a quasi-isometric embedding and $\Phi_{g\Gamma}$ is identified with $\Phi$ up to isometries for all $g\Gamma\in G/\Gamma$.

    We set $H_P=\Cay(P)\times\{1\} \subset \Tc(\Gamma,P)$. Recall that $\{\gamma H_P: \gamma\in \Gamma\}$ is the collection of horosphere-like sets of $\Tc(\Gamma,P)$. Then one can easily verify that edge maps are all strictly type-preserving.
    
    Let $e_\Gamma$ be the edge from $\Gamma$ to $t^{-1}\Gamma$, and let $f_{e_\Gamma}: \Tc(\Gamma,P)\times[0,1]\to \Xc$ be the edge map on $e_\Gamma$. For each $gQ\in G/Q$, let \[C_{gQ} = \bigcup_{q\in Q} gq\cdot f_{e_\Gamma}(H_P\times[0,1])~.\] The definition is independent of the representative $g$ of $gQ$. Then \[\Cc=\{C_{gQ}:gQ\in G/Q\}\] is precisely the collection of maximal cone-subtrees of horosphere-like sets of $\Xc$.
    
    On the other hand, since $(G,Q)$ is relatively hyperbolic, the $G$-orbit of the set $H_Q = \Cay(Q)\}\times\{1\} \subset \Tc(G,Q)$ provides the horosphere-like sets of $\Tc(G,Q)$.

    The group $G$ acts properly and cocompactly on both $\Xc$ and $\Tc(G,Q)$. Moreover, $Q$ acts properly and cocompactly on $C_Q$ and $H_Q$, equipped with their intrinsic path metrics. Consequently, we can choose a $G$-equivariant quasi-isometry $F:\Xc\to \Tc(G,Q)$ such that $F(C_{gQ})\subset gH_Q$ and \[F\vert_{C_{gQ}}:C_{gQ}\to gH_Q \] is a quasi-isometry for the intrinsic path metrics. Then $F$ can be extended over the attached horoballs gives a quasi-isometry $\Xc^h\to \Tc(G,Q)^h$. In particular $\partial_\infty \Xc^h \cong \partial_\infty \Tc(G,Q)^h\cong \partial_\infty (G,Q)$.

    By \cite[Theorem~10.1]{hruska2010relative}, the induced map on the coned-off space $\widehat \Phi: \widehat\Tc(\Gamma,P) \to \widehat\Tc(\Gamma,P)$ is also a quasi-isometric embedding. 
    We apply again the fact that $\Phi_{g\Gamma}$ is identified with $\Phi$ up to isometries for each $g\Gamma\in G/\Gamma$ to deduce that  the induced tree of coned-off spaces $\widehat{\mathrm{Pr}}: \widehat \Xc\to T$ satisfies the quasi-isometrically embedded condition.
    
    We have verified that $\mathrm{Pr}:\Xc\to T$ is a tree of relatively hyperbolic spaces satisfying the required conditions in Theorem~\ref{theorem: cannon--thurston map on bowditch boundaries}. Therefore, there exists a Cannon--Thurston map for the embedding $\Tc(\Gamma,P) \to \Xc$, that is, a $\rho$-equivariant map from $\partial_\infty \Tc(\Gamma,P)^h\to \partial_\infty \Xc^h$. 
\end{proof}

\subsection*{Tool Use}

Both the idea of approaching Proposition~\ref{proposition: econt of geometrically finite counterexample} by Lemma~\ref{lemma: mystery computation} and an initial proof of this lemma are developed by a Large Language Model.

\bibliographystyle{alpha}
\bibliography{reference.bib}

\end{document}